\DeclareMathOperator{\ind}{ind}
\DeclareMathOperator{\End}{End}
\DeclareMathOperator{\Hom}{Hom}
\DeclareMathOperator{\ad}{ad}
\DeclareMathOperator{\charac}{char}
\DeclareMathOperator{\Sym}{Sym}
\DeclareMathOperator{\Skew}{Skew}
\DeclareMathOperator{\Alt}{Alt}
\DeclareMathOperator{\Trd}{Trd}
\DeclareMathOperator{\Tr}{Tr}
\DeclareMathOperator{\Trp}{Trp}
\DeclareMathOperator{\Int}{Int}
\DeclareMathOperator{\GO}{GO}
\DeclareMathOperator{\PGO}{PGO}
\DeclareMathOperator{\Orth}{O}
\DeclareMathOperator{\Str}{Str}
\DeclareMathOperator{\BHomot}{\mathbf{H}}
\DeclareMathOperator{\strf}{\str{f}} 
\DeclareMathOperator{\BAut}{\mathbf{Aut}}
\DeclareMathOperator{\BGL}{\mathbf{GL}}
\DeclareMathOperator{\BSim}{\mathbf{Sim}}
\DeclareMathOperator{\BSpin}{\mathbf{Spin}}
\DeclareMathOperator{\BOrth}{\mathbf{O}}
\DeclareMathOperator{\BGO}{\mathbf{GO}}
\DeclareMathOperator{\BPGO}{\mathbf{PGO}}
\DeclareMathOperator{\BGamma}{\mathbf{\Gamma}}
\DeclareMathOperator{\BOmega}{\mathbf{\Omega}}
\DeclareMathOperator{\BStr}{\mathbf{Str}}
\DeclareMathOperator{\homot}{\mathfrak{h}}
\DeclareMathOperator{\BZO}{\mathbf{Z}}
\newcommand{\Id}{\operatorname{Id}}
\newcommand{\qf}[1]{{\langle{#1}\rangle}} 
\newcommand{\invo}{\overline{\rule{2.5mm}{0mm}\rule{0mm}{4pt}}}
\newcommand{\Comp}{\mathcal{C}}
\newcommand{\Calg}{\mathcal{A}}
\newcommand{\orth}{\mathfrak{o}}
\renewcommand{\go}{\mathfrak{go}}
\newcommand{\Lie}{\mathfrak{L}}
\newcommand{\pgo}{\mathfrak{pgo}}
\newcommand{\spin}{\mathfrak{spin}}
\newcommand{\so}{\mathfrak{so}}
\newcommand{\clie}{\boldsymbol{\gamma}}
\newcommand{\xclie}{\boldsymbol{\omega}}
\newcommand{\str}[1]{\mathfrak{#1}} 
\newcommand{\TT}{\mathcal{T}}
\renewcommand{\tilde}{\widetilde}
\newcommand{\aqp}{\mathfrak{A}}
\newcommand{\Clqp}{\mathfrak{C}}
\newcommand{\Bmu}{\boldsymbol{\mu}}
\newcommand{\BGm}{\mathbf{G}_{\mathbf{m}}}
\newcommand{\Falg}{F_\text{\rm alg}}
\newtheorem{prop}{Proposition}[section]
\newtheorem{corol}[prop]{Corollary}
\newtheorem{thm}[prop]{Theorem}
\newtheorem{lemma}[prop]{Lemma}
\theoremstyle{definition}
\newtheorem{definition}[prop]{Definition}
\newtheorem{defs}[prop]{Definitions}
\newtheorem{remark}[prop]{Remark}
\newtheorem{examples}[prop]{Examples}
\numberwithin{equation}{section}
\begin{document}

\title{Trialitarian Triples}

\author{Demba Barry}
\address{Facult\'e des Sciences et Techniques, Universit\'e des
  Sciences, des Techniques et des Technologies de Bamako, Bamako,
  Mali}
\email{barry.demba@gmail.com}

\author{Jean-Pierre Tignol}
\address{ICTEAM Institute, UCLouvain, avenue Georges Lema\^itre
  4--6, Box L4.05.01, B-1348 Louvain-la-Neuve, Belgium}
\email{Jean-Pierre.Tignol@uclouvain.be}

\thanks{The first author would like to thank the second author and
  UCLouvain for their hospitality during several visits while the work
  for this paper was carried out. He gratefully acknowledges support
  from the Association pour la Promotion Scientifique de l'Afrique
  through a grant ``APSA Awards 2020.'' This work was partially
  supported by the Fonds de la Recherche Scientifique--FNRS under
  grant J.0159.19.} 

\subjclass[2010]{Primary 20G15, 11E57}

\keywords{Clifford algebras, Clifford groups, triality, composition
  algebras} 
\maketitle
\begin{abstract}
Trialitarian triples are triples of central simple algebras of
degree~$8$ with orthogonal involution that provide a convenient
structure for the representation of trialitarian algebraic groups as
automorphism groups. This paper explicitly describes the canonical
``trialitarian'' isomorphisms between the spin groups of the algebras
with involution involved in a trialitarian triple, using a rationally
defined shift operator that cyclically permutes the algebras. The
construction relies on compositions of quadratic spaces of
dimension~$8$, which yield all the trialitarian triples of split
algebras. No restriction on the characteristic of the base field is
needed. 
\end{abstract}

\section*{Introduction}

Trialitarian triples were introduced by Knus \emph{et al.}
\cite[\S42.A]{BoI} to provide the groundwork for the study of
algebraic groups of trialitarian type $\mathsf{D}_4$ as automorphism
groups of trialitarian algebras. They consist in three central simple
algebras of degree~$8$ with orthogonal involution $(A,\sigma_A)$,
$(B,\sigma_B)$, $(C,\sigma_C)$ over a field $F$ of characteristic
different from~$2$ related by the property that the Clifford algebra
of $(A,\sigma_A)$ is isomorphic to the direct product of
$(B,\sigma_B)$ and $(C,\sigma_C)$. Trialitarian algebras over $F$ are
defined in~\cite[\S43]{BoI} as algebras with orthogonal involution of
degree~$8$ over a cubic \'etale $F$-algebra that are isomorphic after
scalar extension of $F$ to the direct product of the algebras in a
trialitarian triple.

The main goal of this work is
to elucidate the trialitarian isomorphisms that arise canonically
between the spin groups of algebras with quadratic pair involved in a
trialitarian triple and also between the groups of projective
similitudes of these algebras, see~\cite[(42.5)]{BoI}.
The basic tool is a shift operator $\partial$ of period~$3$ on
trialitarian triples, which accounts for all the trialitarian features
of the theory. The cohomological approach in \cite[(42.3)]{BoI} reveals
the existence of $\partial$ but does not provide any explicit
description. By contrast, the definition of $\partial$ in
\S\ref{subsec:dertriple} below is entirely explicit and
cohomology-free, and the definition of the trialitarian isomorphisms
follows easily in \S\ref{subsec:triso}. A noteworthy feature of our
discussion is that the restriction on the characteristic of the base
field is made obsolete thanks to the ground-breaking paper \cite{DQM}
of Dolphin--Qu\'eguiner-Mathieu, in which a canonical quadratic
pair is defined on Clifford algebras (and where the cohomological
approach to the definition of $\partial$ in arbitrary characteristic
is given~\cite[Th.~4.11]{DQM}).\footnote{Prior to~\cite{DQM}, examples
  of trialitarian triples in characteristic~$2$ were given by
Knus--Villa~\cite{KV}.} Thus, all the structures
considered in this paper are (unless explicitly mentioned) over fields
of arbitrary characteristic. 

To prepare for the discussion of trialitarian triples in
\S\ref{chap:tritri}, we found it necessary to consider first
trialitarian triples of split algebras. These triples arise from
compositions of quadratic spaces, which are studied in
\S\ref{chap:compospaces}. Compositions of quadratic spaces
provide a new perspective on the classical theory of composition
algebras by triplicating their underlying vector space. They also
demonstrate more diversity, because---in contrast with compositions
arising from composition algebras---the three quadratic spaces
involved in a composition need not be isometric; this accounts for the
interpretation in \S\ref{subsec:comp8} of the $\bmod\:2$ cohomological
invariants of $\BSpin_8$, since compositions of quadratic spaces of
dimension~$8$ are torsors under $\BSpin_8$.

Compositions of three different quadratic spaces of equal dimension
have been considered earlier, for instance in Knus' monograph
\cite[V(7.2)]{Knus}, in \cite[(35.18)]{BoI} and in the papers
\cite[5.3]{AlsGille} and \cite[\S3]{Als} by Alsaody--Gille and Alsaody
respectively. However, the shift operator $\partial$ on compositions
of quadratic spaces, briefly mentioned in~\cite[(35.18)]{BoI}, seems
to have been mostly ignored so far. By attaching to every composition
on quadratic spaces $(V_1,q_1)$, $(V_2,q_2)$, $(V_3,q_3)$ two cyclic
derivatives, which are compositions on $(V_2,q_2)$, $(V_3,q_3)$,
$(V_1,q_1)$ and on $(V_3,q_3)$, $(V_1,q_1)$, $(V_2,q_2)$ respectively,
the shift operator provides the model for the operator $\partial$ on
trialitarian triples.

Compositions of quadratic spaces of dimension~$8$ also afford a
broader view of the classical Principle of Triality for similitudes of
the underlying vector space of an octonion algebra, as discussed by
Springer--Veldkamp~\cite[\S3.2]{SpV}, and also of the local version of
this principle in characteristic~$2$ described by Elduque~\cite[\S3,
\S5]{Eld}, see Corollary~\ref{corol:PoT} and
Corollary~\ref{corol:localPoT}. Automorphisms of the compositions of 
quadratic spaces arising from composition algebras are by definition
the \emph{related triples} of isometries defined in~\cite[\S3.6]{SpV},
\cite[\S1]{Eld} 
and \cite[\S3]{AlsGille} (see Remark~\ref{rem:RT}); they are closely
related to 
\emph{autotopies} of the algebra, which form the structure group
defined for alternative algebras by Petersson~\cite{P}, see
\S\ref{subsec:compalg} and \S\ref{subsec:strgrp8}.

The first section reviews background information on Clifford groups
and their Lie algebras, notably on extended Clifford groups, which
play a central r\^ole in subsequent sections.

More detail on the contents of this work can be found in the
introduction of each section.

\tableofcontents

\section{Clifford groups and Lie algebras}
\label{chap:Cligrps}

The purpose of the first subsections of this section is to recall
succinctly the Clifford groups of even-dimensional quadratic spaces
and their twisted analogues (in the sense of Galois cohomology), which
are defined in arbitrary characteristic through central simple
algebras with quadratic pair. Most of the material is taken
from~\cite{BoI}, but we incorporate a few complements that are made
possible by the definition of canonical quadratic pairs on Clifford
algebras by Dolphin--Qu\'eguiner-Mathieu~\cite{DQM}.

A detailed discussion of the corresponding Lie algebras is given
in~\S\ref{subsec:Lieorth} and \S\ref{subsec:xclie}. For a central
simple algebra with quadratic pair $\aqp$, we emphasize the difference
between the Lie algebra $\orth(\aqp)$ of the orthogonal group and the
Lie algebra $\pgo(\aqp)$ of the group of projective similitudes, which
are canonically isomorphic when the characteristic is different
from~$2$ but contain different information in characteristic~$2$.

The last subsection provides a major tool for the definition of
homomorphisms $\Clqp(\aqp)\to\aqp'$ from the Clifford algebra of a
central simple algebra with quadratic pair $\aqp$ to a central simple
algebra with quadratic pair $\aqp'$. These homomorphisms are shown to
be uniquely determined by Lie algebra homomorphisms
$\pgo(\aqp)\to\pgo(\aqp')$; see Theorem~\ref{thm:lift}.

\subsection{Quadratic forms and quadratic pairs}
\label{subsec:quadpair}

Let $(V,q)$ be a (finite-dimensional) quadratic space over $F$. The
polar form $b\colon V\times V\to F$ is defined by
\[
  b(x,y)=q(x+y)-q(x)-q(y)\qquad\text{for $x$, $y\in V$.}
\]
We only consider quadratic spaces whose polar form $b$ is
nonsingular. This restriction entails that $\dim V$ is even if
$\charac F=2$, for $b$ is 
then an alternating form. Nonsingularity of $b$ allows us to define
the adjoint involution $\sigma_b$ on $\End V$ by the condition
\[
  b\bigl(x,a(y)\bigr) = b(\sigma_b(a)(x),y) \qquad\text{for $a\in\End
    V$ and $x$, $y\in V$.}
\]
Moreover, we may identify $V\otimes V$ with $\End V$ by mapping
$x\otimes y\in V\otimes V$ to the operator $z\mapsto x\,b(y,z)$. Under
the identification $V\otimes V=\End V$, the involution $\sigma_b$ and
the (reduced) trace $\Trd$ are given by
\[
  \sigma_b(x\otimes y) = y\otimes x \qquad\text{and}\qquad
  \Trd(x\otimes y) = b(x,y) \qquad\text{for $x$, $y\in V$,}
\]
see \cite[\S5.A]{BoI}. Moreover, for $a\in\End V$ and $x$, $y\in V$ we
have
\[
  a\circ(x\otimes y)=a(x)\otimes y \qquad\text{and}\qquad (x\otimes
  y)\circ a = x\otimes\sigma_b(a)(y).
\]
The identification $V\otimes V = \End V$, which depends on the choice
of the nonsingular polar form $b$, will be used repeatedly in the
sequel. It will be referred to as a \emph{standard identification}.

Let
\[
  \Sym(\sigma_b)=\{a\in\End V\mid \sigma_b(a)=a\}.
\]
To the quadratic form $q$ on $V$ we further associate a linear form
$\strf_q$ on $\Sym(\sigma_b)$ defined by the condition
\[
  \strf_q(x\otimes x) = q(x) \qquad\text{for $x\in V$,}
\]
see \cite[(5.11)]{BoI}. Linearizing this condition yields
$\strf_q\bigl(x\otimes y+\sigma_b(x\otimes y)\bigr) = b(x,y)$ for $x$,
$y\in V$, hence
\[
  \strf_q\bigl(a+\sigma_b(a)\bigr)=\Trd(a)\qquad\text{for $a\in\End V$.}
\]
The pair $(\sigma_b,\strf_q)$ determines the quadratic form $q$ up to
a
scalar factor by \cite[(5.11)]{BoI}, which is sufficient to define the
orthogonal group $\Orth(q)$ of isometries of $(V,q)$, as well as the
group of similitudes $\GO(q)$ and the group of projective similitudes
$\PGO(q)$, as follows:
\begin{align*}
  \Orth(q) & =\{a\in\End V\mid q\bigl(a(x)\bigr)=q(x)\text{ for all
             $x\in V$}\},\\
  & = \{a\in\End V\mid \sigma_b(a)a=1\text{ and
    $\strf_q(asa^{-1})=\strf_q(s)$ for all $s\in\Sym(\sigma_b)$}\},\\
  \GO(q) & = \{a\in\End V\mid\text{ there exists $\mu\in F^\times$
           such that $q\bigl(a(x)\bigr) = \mu\, q(x)$ for all $x\in
           V$}\},\\
  & = \{a\in\End V\mid \sigma_b(a)a\in F^\times \text{ and
    $\strf_q(asa^{-1})=\strf_q(s)$ for all $s\in\Sym(\sigma_b)$}\},\\
  \PGO(q) & = \GO(q)/F^\times.
\end{align*}
In the equivalent definitions of $\GO(q)$, the scalar $\mu$ such that
$q\bigl(a(x)\bigr)= \mu\,q(x)$ for all $x\in V$ is $\sigma_b(a)a$. It
is called the \emph{multiplier} of the similitude~$a$.

Isometries and similitudes are also defined between different
quadratic spaces: if $(V,q)$ and $(\tilde V, \tilde q)$ are quadratic
spaces over a field $F$, a \emph{similitude} $u\colon(V,q)\to(\tilde
V, \tilde q)$ is a linear bijection $V\to \tilde V$ for which there
exists a scalar $\mu\in F^\times$ such that $\tilde q\bigl(u(x)\bigr)
= \mu\, q(x)$ for all $x\in V$. The scalar $\mu$ is called the
\emph{multiplier} of the similitude, and similitudes with
multiplier~$1$ are called \emph{isometries}. Abusing notation, for
every linear bijection $u\colon V\to\tilde V$ we write
\[
  \Int(u)\colon \End V\to\End\tilde V \quad\text{for the map $a\mapsto
    u\circ a\circ u^{-1}$}.
\]
It is readily verified that for every similitude $u$ the isomorphism
$\Int(u)$ restricts to group isomorphisms
\[
  \Orth(q)\xrightarrow{\sim}\Orth(\tilde q),\qquad
  \GO(q)\xrightarrow{\sim}\GO(\tilde q),\qquad
  \PGO(q)\xrightarrow{\sim}\PGO(\tilde q).
\]
\medbreak

The groups $\Orth(q)$, $\GO(q)$ and $\PGO(q)$ are groups of
rational points of algebraic groups (i.e., smooth affine algebraic
group schemes) which are denoted respectively by $\BOrth(q)$,
$\BGO(q)$ and $\BPGO(q)$, see~\cite[\S23]{BoI}. 
As pointed out in \cite{BoI}, twisted forms (in the sense of Galois
cohomology) of these groups can be defined through a notion of
quadratic pair on central simple algebras, which is recalled next.

Let $A$ be a central simple algebra over an arbitrary field
$F$. An $F$-linear involution $\sigma$ on $A$ is said to be
\emph{orthogonal} 
(resp.\ \emph{symplectic}) if after scalar extension to a splitting
field of $A$ it is adjoint to a symmetric nonalternating (resp.\ to an
alternating) bilinear form. For any involution $\sigma$ on $A$ we
write
\[
  \Sym(\sigma)=\{a\in A\mid \sigma(a)=a\}.
\]

\begin{definition}
  A \emph{quadratic pair} $(\sigma,\strf)$ on a central simple algebra
  $A$ consists of an 
  involution $\sigma$ on $A$ and a linear map
  $\strf\colon\Sym(\sigma)\to 
  F$ subject to the following conditions:
  \begin{enumerate}
  \item[(i)]
    $\sigma$ is orthogonal if $\charac F\neq2$ and symplectic if
    $\charac F=2$;
  \item[(ii)]
    $\strf\bigl(x+\sigma(x)\bigr)=\Trd_A(x)$ for $x\in A$, where
    $\Trd_A$ 
    is the reduced trace.
  \end{enumerate}
  The map $\strf$ is called the \emph{semitrace} of the quadratic pair
  $(\sigma,\strf)$. This terminology is motivated by the observation
  that 
  when $\charac F\neq2$ every $x\in\Sym(\sigma)$ can be written as
  $x=\frac12\bigl(x+\sigma(x)\bigr)$, hence
  $\strf(x)=\frac12\Trd_A(x)$. Thus, the semitrace of a quadratic pair
  $(\sigma,\strf)$ is uniquely determined by the orthogonal involution
  $\sigma$ if $\charac F\neq2$.
\end{definition}

To simplify notation, when possible without confusion we use a single
letter to denote a central simple algebra with quadratic pair, and
write
\[
  \aqp=(A,\sigma,\strf).
\]

The twisted forms of orthogonal groups are defined as follows: for
$\aqp$ as above,
\begin{align*}
  \Orth(\aqp) & = \{a\in A\mid\sigma(a)a=1\text{ and
                    $\strf(asa^{-1})=\strf(s)$ for all
                        $s\in\Sym(\sigma)$}\},\\ 
  \GO(\aqp) & = \{a\in A\mid \sigma(a)a\in F^\times \text{ and
                  $\strf(asa^{-1})=\strf(s)$ for all
                      $s\in\Sym(\sigma)$}\},\\ 
  \PGO(\aqp) & = \GO(\aqp)/F^\times.
\end{align*}
The group of similitudes $\GO(\aqp)$ can be alternatively defined as
the group of elements $a\in A^\times$ such that $\Int(a)$ is an
automorphism of $\aqp$. Therefore, by the Skolem--Noether theorem the
group $\PGO(\aqp)$ can be identified with the group of automorphisms
of $\aqp$. The groups $\Orth(\aqp)$, $\GO(\aqp)$ and $\PGO(\aqp)$ are
groups of rational points of algebraic groups denoted respectively by
$\BOrth(\aqp)$, $\BGO(\aqp)$ and $\BPGO(\aqp)$, see~\cite[\S23]{BoI}.

For $a\in \GO(\aqp)$, the scalar $\sigma(a)a\in F^\times$ is
called the \emph{multiplier} of the similitude $a$. We write
$\mu(a)=\sigma(a)a$ and thus obtain a group homomorphism
\[
  \mu\colon\GO(\aqp)\to F^\times
\]
whose kernel is $\Orth(\aqp)$. Thus, for every quadratic space
$(V,q)$ we have by definition
\[
  \Orth(\End V,\sigma_b,\strf_q)=\Orth(q),\qquad
  \GO(\End V,\sigma_b,\strf_q) = \GO(q),\qquad
  \PGO(\End V,\sigma_b,\strf_q)=\PGO(q).
\]

The following statement is given without detailed proof
in~\cite[(12.36)]{BoI}.

\begin{prop}
  \label{prop:simiso}
  Let $(V,q)$ and $(\tilde V, \tilde q)$ be quadratic spaces over an
  arbitrary field $F$. If $u\colon(V,q)\to(\tilde V,\tilde q)$ is a
  similitude, then $\Int(u)$ is an isomorphism of algebras with
  quadratic pair
  \[
    \Int(u)\colon(\End V,\sigma_b,\strf_q) \xrightarrow{\sim} (\End
    \tilde V, \sigma_{\tilde b}, \strf_{\tilde q}).
  \]
  Conversely, every isomorphism $(\End V,\sigma_b,\strf_q)
  \xrightarrow{\sim} (\End\tilde V, \sigma_{\tilde b}, \strf_{\tilde
    q})$ has the form $\Int(u)$ for some similitude
  $u\colon(V,q)\to(\tilde V,\tilde q)$ uniquely determined up to a
  scalar factor.
\end{prop}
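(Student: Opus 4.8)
The plan is to reduce both implications to a single elementary computation carried out under the standard identifications $V\otimes V=\End V$ and $\tilde V\otimes\tilde V=\End\tilde V$. First I would record the following: if $u\colon V\to\tilde V$ is a linear bijection with $\tilde b\bigl(u(x),u(y)\bigr)=\mu\,b(x,y)$ for all $x$, $y\in V$ and some $\mu\in F^\times$ (i.e.\ $u$ is a similitude of the polar forms with multiplier $\mu$), then
\[
  \Int(u)(x\otimes y)=\mu^{-1}\,u(x)\otimes u(y)\qquad\text{for all $x$, $y\in V$.}
\]
This follows at once from the definitions of the standard identifications and of $\Int(u)$, after rewriting $b\bigl(y,u^{-1}(z)\bigr)=\mu^{-1}\,\tilde b\bigl(u(y),z\bigr)$.

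For the direct implication, let $u\colon(V,q)\to(\tilde V,\tilde q)$ be a similitude with multiplier $\mu$; linearizing $\tilde q\bigl(u(x)\bigr)=\mu\,q(x)$ shows $u$ is also a similitude of the polar forms with the same multiplier, so the displayed formula applies. Applying $\sigma_{\tilde b}$ to it and using $\sigma_{\tilde b}\bigl(u(x)\otimes u(y)\bigr)=u(y)\otimes u(x)$ gives $\sigma_{\tilde b}\circ\Int(u)=\Int(u)\circ\sigma_b$ on the spanning set $\{x\otimes y\}$, hence everywhere; in particular $\Int(u)$ carries $\Sym(\sigma_b)$ onto $\Sym(\sigma_{\tilde b})$. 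For the semitraces, the displayed formula gives $\strf_{\tilde q}\bigl(\Int(u)(x\otimes x)\bigr)=\mu^{-1}\strf_{\tilde q}\bigl(u(x)\otimes u(x)\bigr)=\mu^{-1}\tilde q\bigl(u(x)\bigr)=q(x)=\strf_q(x\otimes x)$, and since $\Sym(\sigma_b)$ is spanned by the squares $x\otimes x$ — also when $\charac F=2$, because $x\otimes y+y\otimes x=(x+y)\otimes(x+y)-x\otimes x-y\otimes y$ — linearity yields $\strf_{\tilde q}\circ\Int(u)=\strf_q$ on $\Sym(\sigma_b)$. Hence $\Int(u)$ is an isomorphism of algebras with quadratic pair.

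For the converse, let $\varphi\colon(\End V,\sigma_b,\strf_q)\to(\End\tilde V,\sigma_{\tilde b},\strf_{\tilde q})$ be an isomorphism. Forgetting the quadratic pairs, $\varphi$ is an $F$-algebra isomorphism, so viewing $\tilde V$ as a left $\End V$-module through $\varphi$ gives a simple module of $F$-dimension $\dim V$, necessarily isomorphic to $V$; any such isomorphism $u\colon V\to\tilde V$ satisfies $\varphi=\Int(u)$, and $u$ is unique up to a scalar factor since $\End_{\End V}(V)=F$. It remains to prove that $u$ is a similitude of quadratic spaces. Let $b'$ be the bilinear form on $\tilde V$ obtained by transporting $b$ along $u$; by the case $\mu=1$ of the first paragraph, $\Int(u)$ is an isomorphism of algebras with involution $(\End V,\sigma_b)\to(\End\tilde V,\sigma_{b'})$, so $\sigma_{b'}=\varphi\circ\sigma_b\circ\varphi^{-1}=\sigma_{\tilde b}$, and since an involution of the first kind on $\End\tilde V$ determines its bilinear form up to a scalar, $\tilde b=\lambda b'$ for some $\lambda\in F^\times$; that is, $u$ is a similitude of the polar forms with multiplier $\lambda$. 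Now the displayed formula applies with $\mu=\lambda$, so $q(x)=\strf_q(x\otimes x)=\strf_{\tilde q}\bigl(\varphi(x\otimes x)\bigr)=\lambda^{-1}\tilde q\bigl(u(x)\bigr)$, whence $\tilde q\bigl(u(x)\bigr)=\lambda\,q(x)$ and $u\colon(V,q)\to(\tilde V,\tilde q)$ is a similitude.

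The only genuinely delicate point is this last step of the converse: compatibility of $\varphi$ with the involutions alone yields merely a similitude of the \emph{polar} forms, which in characteristic~$2$ does not determine the quadratic form, and it is precisely the compatibility with the semitraces that upgrades this to a similitude of the \emph{quadratic} forms (with the same multiplier). The two auxiliary facts invoked — that $\Sym(\sigma_b)$ is spanned by the squares $x\otimes x$, and that an involution of the first kind on $\End V$ determines its bilinear form up to a scalar — are standard (cf.\ \cite[\S5.A]{BoI}); everything else is the single computation of the first paragraph.
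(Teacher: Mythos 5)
Your proof is correct and follows essentially the same line as the paper's: compute $\Int(u)$ on the standard identification $V\otimes V=\End V$, verify compatibility with the involution and (using that $\Sym(\sigma_b)$ is spanned by squares) with the semitrace, and for the converse invoke Skolem--Noether and then use the semitrace to upgrade a similitude of polar forms to a similitude of quadratic forms. The only cosmetic difference is in how one shows that the $u$ produced by Skolem--Noether is a similitude of the polar forms: the paper introduces the auxiliary map $\hat u$ defined by $\tilde b(\hat u(x),\tilde y)=b(x,u^{-1}(\tilde y))$ and deduces $\hat u=\mu^{-1}u$ from $\Int(u)(x\otimes x)$ being $\sigma_{\tilde b}$-symmetric, whereas you transport $b$ to a form $b'$ on $\tilde V$ and appeal to the fact that an involution of the first kind on $\End\tilde V$ determines its adjoint form up to a scalar --- the same fact, packaged slightly differently.
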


\begin{proof}
  Observe that for every linear bijection $u\colon V\to\tilde V$ there
  exists a map $\hat u\colon V\to \tilde V$ such that
  \[
    \tilde b(\hat u(x),\tilde y) = b\bigl(x,u^{-1}(\tilde y)\bigr)
    \qquad\text{for all $x\in V$ and $\tilde y\in\tilde V$,}
  \]
  because the polar forms $b$ and $\tilde b$ are nonsingular. Under
  the standard identifications $\End V=V\otimes V$ and $\End\tilde
  V=\tilde V\otimes\tilde V$ afforded by $b$ and $\tilde b$, we 
  have
  \[
  \Int(u)(x\otimes y)=u(x)\otimes \hat u(y) \qquad\text{for all $x$,
    $y\in V$.}
  \]
  If $u$ is a similitude with multiplier $\mu$, then $\hat
  u=\mu^{-1}u$, hence $\Int(u)\circ\sigma_b=\sigma_{\tilde
    b}\circ\Int(u)$ and 
  \[
    \strf_{\tilde q}\bigl(\Int(u)(x\otimes x)\bigr) = \mu^{-1}\,\tilde
    q\bigl(u(x)\bigr) = q(x)=\strf_q(x\otimes x)
    \qquad\text{for all $x\in V$}.
  \]
  Since $\Sym(\sigma_b)$ is spanned by elements of the form $x\otimes
  x$, 
  it follows that $\Int(u)$ is an isomorphism of algebras with
  quadratic pair.

  For the converse, note that the Skolem--Noether theorem shows that
  every $F$-algebra isomorphism $\End V\xrightarrow{\sim}\End\tilde V$
  has the form $\Int(u)$ for some linear bijection $u\colon V\to\tilde
  V$. If $\Int(u)$ is an isomorphism of algebras with quadratic pair,
  then $\Int(u)(x\otimes x)\in\Sym(\sigma_{\tilde b})$ for every $x\in
  V$, hence $\hat u = \mu^{-1}u$ for some $\mu\in F^\times$. Since
  $\strf_{\tilde q}\bigl(\Int(u)(x\otimes x)\bigr) = \strf_q(x\otimes
  x)$ for all $x\in V$, it follows that $\tilde q\bigl(u(x)\bigr) =
  \mu\,q(x)$ for all $x\in V$, hence $u$ is a similitude.

  To complete the proof, suppose that $u$, $u'\colon(V,q)\to(\tilde V,
  \tilde q)$ are similitudes such that $\Int(u)=\Int(u')$. Then
  $\Int(u^{-1}u') = \Id_V$, hence $u^{-1}u'$ lies in the center of
  $\End V$, which is $F$. Therefore, $u$ and $u'$ differ by a scalar
  factor. 
\end{proof}

\subsection{Clifford algebras}
\label{subsec:ClAlg}

For any quadratic space $(V,q)$ over $F$ we let $C(V,q)$ denote the
Clifford algebra of $(V,q)$ and $C_0(V,q)$ its even Clifford
algebra. We will only consider even-dimensional quadratic spaces; if
$\dim V=2m$, then the algebra $C(V,q)$ is central simple of
degree~$2^m$ and $C_0(V,q)$ is semisimple with center a quadratic
\'etale $F$-algebra $Z$ given by the discriminant or Arf invariant of
$q$, see~\cite[Ch.~9]{Sch}. In most cases considered through this
text, the algebra $Z$ is split, i.e., $Z\simeq F\times F$. We may then
define a polarization of $(V,q)$ as follows:

\begin{definition}
  \label{defn:orientationqs}
  If $(V,q)$ is an even-dimensional quadratic space with trivial
  discriminant or Arf invariant, a \emph{polarization} of $(V,q)$ is
  a 
  designation of the primitive central idempotents of $C_0(V,q)$ as
  $z_+$ and $z_-$. Given a polarization of $(V,q)$, we let
  $C_+(V,q)=C_0(V,q)z_+$ and $C_-(V,q)=C_0(V,q)z_-$, so
  \[
    C_0(V,q)=C_+(V,q)\times C_-(V,q).
  \]
  Each even-dimensional quadratic space of trivial discriminant or Arf
  invariant thus has two possible polarizations.
\end{definition}

The algebra $C(V,q)$ carries an involution $\tau$ such
that $\tau(x)=x$ for all $x\in V$. This involution
preserves $C_0(V,q)$ and restricts to an involution
$\tau_0$ on $C_0(V,q)$. The type of the involutions
$\tau$ and $\tau_0$ is determined in~\cite[(8.4)]{BoI} as follows:
\begin{itemize}
\item
  If $\dim V\equiv2\bmod4$ the involution $\tau_0$ does
  not leave $Z$ fixed; we will not need to consider this case.
\item
  If $\dim V\equiv4\bmod8$, then the involutions $\tau$
  and $\tau_0$ are symplectic. When $Z\simeq F\times F$, this
  means that $\tau_0$ restricts to symplectic involutions
  on each of the simple components of $C_0(V,q)$.
\item
  If $\dim V\equiv0\bmod8$ and $\charac F\neq2$, then the involutions
  $\tau$ and $\tau_0$ are orthogonal.
\item
  If $\dim V\equiv0\bmod8$ and $\charac F=2$, then the involutions
  $\tau$ and $\tau_0$ are symplectic.
\end{itemize}

Following Dolphin--Qu\'eguiner-Mathieu \cite[Prop.~6.2]{DQM}, 
a canonical quadratic pair $(\tau,\str{g})$ can be
defined on $C(V,q)$
when\footnote{Dolphin--Qu\'eguiner-Mathieu only assume $\dim V$ even,
  $\dim V\geq6$, but they restrict to $\charac F=2$.} $\dim
V\equiv0\bmod8$ by associating to $\tau$ the following semitrace:
\[
  \str{g}(s) = \Trd_{C(V,q)}(e e' s)\in F \qquad\text{for
    $s\in\Sym(\tau)$},
\]
where $e$, $e'\in V$ are arbitrary vectors such that $b(e,e')=1$.  If
$\charac F\neq2$, then for any such vectors $e$, $e'$ and for every
$s\in\Sym(\tau)$ we have
\[
  \Trd_{C(V,q)}(ee's)=\Trd_{C(V,q)}\bigl(\tau(ee's)\bigr)
  = \Trd_{C(V,q)}(se'e)=\Trd_{C(V,q)}(e'es).
\]
Therefore,
\[
  \Trd_{C(V,q)}(ee's)={\textstyle\frac12}\Trd_{C(V,q)}\bigl((ee'+e'e)s\bigr)
  = {\textstyle\frac12}\Trd_{C(V,q)}(s),
\]
as expected.

Likewise, Dolphin--Qu\'eguiner-Mathieu show in \cite[Prop.~3.6]{DQM}
that a canonical quadratic pair $(\tau_0,\str{g}_0)$
can be defined on $C_0(V,q)$ when $\dim V\equiv0\bmod8$ by associating
to $\tau_0$ the following semitrace:
\[
  \str{g}_0(s)=\Trd_{C_0(V,q)}(ee's)\in Z \qquad\text{for
    $s\in\Sym(\tau_0)$},
\]
where $e$, $e'\in V$ are arbitrary vectors such that $b(e,e')=1$. If
$Z\simeq F\times F$, then $C_0(V,q)\simeq C_+(V,q)\times C_-(V,q)$ for
some central simple $F$-algebras $C_+(V,q)$, $C_-(V,q)$, and the
quadratic pair $(\tau_0,\str{g}_0)$ defined above is
a pair of quadratic pairs $(\tau_+,\str{g}_+)$ on $C_+(V,q)$ and
$(\tau_-,\str{g}_-)$ on $C_-(V,q)$.

Every similitude of quadratic spaces $u\colon (V,q)\to(\tilde V,\tilde
q)$ with multiplier $\mu$ defines an $F$-isomorphism $C_0(u)\colon
C_0(V,q)\xrightarrow{\sim} C_0(\tilde V,\tilde q)$ such that
\[
  C_0(u)(x\cdot y) = \mu^{-1} u(x)\cdot u(y) \qquad\text{for $x$,
    $y\in V$.}
\]
It is clear from the definition that $C_0(u)$ preserves the canonical
involutions $\tau_0$ and $\tilde\tau_0$ on $C_0(V,q)$ and $C_0(\tilde
V,\tilde q)$. If $\dim V\equiv0\bmod8$, then $C_0(u)$ also preserves
the semitraces $\str{g}_0$ and $\tilde{\str{g}}_0$. To see this,
observe that the images
$u(e)$, $u(e')$ of vectors $e$, $e'\in V$ such that $b(e,e')=1$
satisfy $\tilde b\bigl(u(e),u(e')\bigr)=\mu$. We may therefore use
$\mu^{-1}u(e)$ and $u(e')$ to compute the semitrace
$\tilde{\str{g}}_0$: for $s\in\Sym(\tau_0)$,
\[
  \tilde{\str{g}}_0\bigl(C_0(u)(s)\bigr) = \Trd_{C_0(\tilde V,\tilde
    q)}\bigl(\mu^{-1}u(e)u(e')C_0(u)(s)\bigr).
\]
Now, $\mu^{-1}u(e)u(e')=C_0(u)(ee')$, hence by substituting in the
preceding equation and using the property that algebra isomorphisms
preserve reduced traces, we obtain
\[
  \tilde{\str{g}}_0\bigl(C_0(u)(s)\bigr)
= \Trd_{C_0(\tilde V,\tilde
    q)}\bigl(C_0(u)(ee's)\bigr) =
  C_0(u)\bigl(\Trd_{C_0(V,q)}(ee's)\bigr) = 
  C_0(u)\bigl(\str{g}_0(s)\bigr).
\]
Thus, $C_0(u)$ is an isomorphism of algebras with involution
\[
  C_0(u)\colon(C_0(V,q),\tau_0) \xrightarrow{\sim} (C_0(\tilde V,
  \tilde q), \tilde\tau_0)
\]
and an isomorphism of algebras with quadratic pair if $\dim V=\dim
\tilde V\equiv0\bmod8$
\[
  C_0(u)\colon(C_0(V,q),\tau_0,\str{g}_0) \xrightarrow{\sim}
  (C_0(\tilde V, \tilde q), \tilde\tau_0, \tilde{\str{g}}_0).
\]

Among auto-similitudes $u\in\GO(q)$ we may distinguish proper
similitudes by considering the restriction of $C_0(u)$ to the center
$Z$ of $C_0(V,q)$: the similitude $u$ is said
to be \emph{proper} if $C_0(u)$ fixes $Z$ and \emph{improper} if
$C_0(u)$ restricts to the nontrivial $F$-automorphism of $Z$, see
\cite[\S13.A]{BoI}. The proper similitudes form a subgroup $\GO^+(q)$
of index~$2$ in $\GO(q)$, and we let
\[
  \Orth^+(q)=\Orth(q)\cap\GO^+(q),\qquad
  \PGO^+(q)=\GO^+(q)/F^\times.
\]

\subsubsection*{Twisted forms}
Following ideas of Jacobson and Tits, an analogue of the even Clifford
algebra for a central simple algebra with quadratic pair
$\aqp=(A,\sigma,\strf)$ of even degree is defined in~\cite[\S8.B]{BoI}. The
Clifford algebra $C(\aqp)$ is obtained by a functorial
construction such that for every quadratic space $(V,q)$ of even
dimension, the identification $\End V=V\otimes V$ set up in
\S\ref{subsec:quadpair} yields an identification
\[
  C(\End V,\sigma_b,\strf_q)=C_0(V,q).
\]
This property implies that $C(\aqp)$ is a semisimple algebra
with center a quadratic \'etale $F$-algebra given by the discriminant
of the quadratic pair $(\sigma,\strf)$.

\begin{definition}
  \label{defn:orientationaqp}
  If the discriminant of $(\sigma,\strf)$ is trivial, a
  \emph{polarization} of $\aqp$ is a designation of the primitive
  central idempotents of $C(\aqp)$ as $z_+$ and $z_-$. A polarization
  induces the labeling of the simple components of $C(\aqp)$ as
  $C_+(\aqp)=C(\aqp)z_+$ and $C_-(\aqp)=C(\aqp)z_-$, so
  \[
    C(\aqp)=C_+(\aqp)\times C_-(\aqp).
  \]
\end{definition}

The algebra $C(\aqp)$ comes equipped with a canonical linear map
\[
  c\colon A\to C(\aqp)
\]
whose image generates $C(\aqp)$ as an $F$-algebra. In the
split 
case $A=\End V$, the map $c$ is given by multiplication in
$C(V,q)$:
\[
  c\colon V\otimes V\to C_0(V,q),\qquad x\otimes y\mapsto x\cdot y.
\]

The algebra $C(\aqp)$ carries a canonical involution
$\underline\sigma$ characterized by the condition that
$\underline\sigma\bigl(c(a)\bigr)= c\bigl(\sigma(a)\bigr)$ for $a\in
A$. If $\deg A\equiv0\bmod8$, Dolphin--Qu\'eguiner-Mathieu show that a
canonical quadratic pair $(\underline\sigma,\underline \strf)$ is defined
on $C(\aqp)$ by associating to $\underline\sigma$ the following
semitrace: 
\[
  \underline \strf(s)=\Trd_{C(\aqp)}(c(a)s) \qquad\text{for
    $s\in\Sym(\underline\sigma)$,}
\]
where $a\in A$ is any element such that $\Trd_A(a)=1$,
see~\cite[Def.~3.3]{DQM}. These constructions are compatible with the
corresponding definitions in the split case, in the sense that for
every even-dimensional quadratic space $(V,q)$ the standard
identification 
$\End V=V\otimes V$ of \S\ref{subsec:quadpair} yields identifications
of algebras with involution or quadratic pair:
\begin{align*}
  (C(\End V, \sigma_b,\strf_q),\underline\sigma) & = (C_0(V,q),\tau_0)
     &\text{if $\dim V\equiv 0\bmod4$},\\
  (C(\End V,\sigma_b,\strf_q),\underline\sigma,\underline \strf)
                                             &
                                               =(C_0(V,q),\tau_0,\str{g}_0)
      &\text{if $\dim V\equiv 0\bmod8$}.
\end{align*}

By functoriality of the Clifford algebra construction, every
isomorphism of algebras with quadratic pair $\varphi\colon\aqp
\xrightarrow{\sim} \tilde\aqp$ induces an isomorphism of algebras with
involution or with quadratic pair
\[
  C(\varphi)\colon (C(\aqp),\underline\sigma) \xrightarrow{\sim}
  (C(\tilde\aqp), \underline{\tilde\sigma}) \quad\text{or}\quad
  (C(\aqp),\underline\sigma,\underline\strf) \xrightarrow{\sim}
  (C(\tilde\aqp), \underline{\tilde\sigma},\underline{\tilde\strf})
\]
such that
\[
  C(\varphi)\bigl(c(a)\bigr) = c\bigl(\varphi(a)\bigr)
  \qquad\text{for $a\in A$.}
\]

As in the split case, we may distinguish between proper and improper
similitudes: every similitude $u\in\GO(\aqp)$ induces an
$F$-automorphism $\Int(u)$ of $\aqp$, hence an
$F$-automor\-phism $C\bigl(\Int(u)\bigr)$ of $C(\aqp)$.
The similitude $u$ is said to be \emph{proper} if
$C\bigl(\Int(u)\bigr)$ leaves the 
center of $C(\aqp)$ elementwise fixed; otherwise it is said
to be \emph{improper}. This definition agrees with the previous
definition of proper similitude in the case where $\aqp=(\End V,
\sigma_b, \strf_q)$ for a quadratic space $(V,q)$, because
$C\bigl(\Int(u)\bigr)=C_0(u)$ for every similitude $u\in\GO(q)$,
see~\cite[(13.1)]{BoI}.

Proper similitudes form a subgroup
$\GO^+(\aqp)$ of index~$1$ or $2$ in $\GO(\aqp)$, and
we let 
\[
  \Orth^+(\aqp)=\Orth(\aqp)\cap\GO^+(\aqp),
  \qquad  
  \PGO^+(\aqp)=\GO^+(\aqp)/F^\times.
\]
These groups are groups of rational points of algebraic groups
$\BOrth^+(\aqp)$, $\BGO^+(\aqp)$ and $\BPGO^+(\aqp)$, which are the
connected components of the identity in $\BOrth(\aqp)$, $\BGO(\aqp)$
and $\BPGO(\aqp)$, see~\cite[\S23.B]{BoI}.

\subsection{Clifford groups}
\label{subsec:ClGrp}

Let $(V,q)$ be a quadratic space of even dimension. The multiplicative
group of $C_0(V,q)$ acts on $C(V,q)$ by conjugation. The \emph{special
  Clifford group} $\BGamma^+(q)$ is defined in~\cite[p.~349]{BoI} as
the normalizer of the subspace $V$. Thus, for every commutative
$F$-algebra $R$, letting $V_R=V\otimes_FR$,
\[
  \BGamma^+(q)(R)=\{\xi\in C_0(V,q)_R^\times\mid \xi\cdot
  V_R\cdot\xi^{-1} = V_R\}.
\]
For $\xi\in\BGamma^+(q)(R)$, the map $\Int(\xi)\rvert_{V_R}\colon
V_R\to V_R$ is a proper isometry. Mapping $\xi$ to
$\Int(\xi)\rvert_{V_R}$ defines a morphism of algebraic groups $\chi$
known as the \emph{vector representation}, which fits in an exact
sequence
\begin{equation}
  \label{eq:exseqBGamma}
  1\to\BGm \to\BGamma^+(q)\xrightarrow{\chi}\BOrth^+(q)\to 1,
\end{equation}
where $\BGm$ is the multiplicative group, see~\cite[p.~349]{BoI}.

Mapping $\xi\in\BGamma^+(q)(R)$ to $\tau_0(\xi)\xi$ defines a morphism
\[
  \underline\mu\colon\BGamma^+(q)\to \BGm.
\]
Its kernel is the Spin group $\BSpin(q)$. It is an algebraic group to
which we may restrict the vector representation to obtain the
following exact sequence:
\[
  1\to\Bmu_2\to \BSpin(q) \xrightarrow{\chi} \BOrth^+(q)\to 1,
\]
where $\Bmu_2$ is the algebraic group scheme defined by
\[
  \Bmu_2(R)=\{\rho\in R\mid \rho^2=1\}
  \quad\text{for every commutative $F$-algebra $R$}.
\]
Note that $\Bmu_2$ is not smooth if $\charac F=2$.

\subsubsection*{Extended Clifford groups}

Let $Z$ be the center of $C_0(V,q)$.
Henceforth, we assume $\dim V\equiv 0\bmod4$, so the canonical
involution $\tau_0$ acts trivially on $Z$.

Let $\BSim(\tau_0)$ be the group of similitudes of
$(C_0(V,q),\tau_0)$, whose rational points over any commutative
$F$-algebra $R$ is
\[
  \BSim(\tau_0)(R)=\{\xi\in C_0(V,q)_R^\times\mid \tau_0(\xi)\xi \in
  Z_R^\times\}.
\]
The multiplier map $\xi\mapsto \tau_0(\xi)\xi$ is a morphism
\[
  \underline\mu\colon \BSim(\tau_0)\to R_{Z/F}(\BGm),
\]
where $R_{Z/F}(\BGm)$ is the corestriction (or Weil's
\emph{restriction of scalars}) of the multiplicative group. Mapping
$x\in C(V,q)_R$ and $\xi\in\BSim(\tau_0)(R)$ to $\tau_0(\xi)x\xi$
defines an action of $\BSim(\tau_0)$ on $C(V,q)$ (on the right).
The \emph{extended Clifford group} $\BOmega(q)$ is
defined\footnote{For a more general definition covering the case where
  $\dim V\equiv2\bmod4$, see~\cite[\S13.B]{BoI}.} as the normalizer of
$V$. Thus, for every commutative $F$-algebra $R$,
\[
  \BOmega(q)(R)=\{\xi\in\BSim(\tau_0)(R) \mid \tau_0(\xi)\cdot
  V_R\cdot \xi = V_R\}.
\]
We proceed to show that $\BGamma^+(q)$ is a subgroup of $\BOmega(q)$
by reformulating the condition that $\tau_0(\xi)\cdot V_R\cdot\xi =
V_R$. 

Let $\iota\colon Z\to Z$ denote the nontrivial $F$-automorphism of
$Z$. Note that $xz=\iota(z)x$ for all $x\in V$ and $z\in Z$.

\begin{lemma}
  \label{lem:eqcondXClifgrp}
  Let $R$ be a commutative $F$-algebra and
  let $\xi\in\BSim(\tau_0)(R)$ and $u\in\BGL(V)(R)$. The following are
  equivalent:
  \begin{enumerate}
  \item[(a)]
    $\tau_0(\xi)x\xi=\sigma_b(u)(x)$ for all $x\in V_R$;
  \item[(b)]
    $u(y)=\iota\bigl(\underline\mu(\xi)\bigr) \xi y\xi^{-1}$ for all
    $y\in V_R$.
  \end{enumerate}
  When these conditions hold, then $u\in\BGO^+(q)(R)$,
  $C_0(u)=\Int(\xi)$ and 
  $\mu(u)=N_{Z/F}\bigl(\underline\mu(\xi)\bigr)$. 
\end{lemma}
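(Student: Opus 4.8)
The plan is to first establish the equivalence (a)$\Leftrightarrow$(b) by a direct manipulation using the relation $xz=\iota(z)x$ for $x\in V$, $z\in Z$, and then to derive the three consequences. For the equivalence, I would start from (a) and act on both sides by $\xi^{-1}$ on the right: the condition $\tau_0(\xi)x\xi=\sigma_b(u)(x)$ becomes $\tau_0(\xi)x=\sigma_b(u)(x)\xi^{-1}$, but it is cleaner to multiply (a) on the left by $\xi$ and use $\xi\tau_0(\xi)=\iota\bigl(\tau_0(\xi)\xi\bigr)=\iota\bigl(\underline\mu(\xi)\bigr)$ — here I use that $\tau_0$ acts trivially on $Z$ and that conjugation by an element of $\BGamma^+$ or more generally the multiplier lies in $Z_R^\times$, together with the commutation rule pushing the central factor past an odd element. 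Thus $\xi\cdot\tau_0(\xi)x\xi=\iota\bigl(\underline\mu(\xi)\bigr)\,x\,\xi$ should be reconciled with $\xi\,\sigma_b(u)(x)$; rearranging gives $\sigma_b(u)(x)=\iota\bigl(\underline\mu(\xi)\bigr)\,\xi^{-1}x\xi$ up to bookkeeping, and then applying $\sigma_b$-adjunction, i.e. replacing the action of $\sigma_b(u)$ by that of $u$ via the defining property of $\sigma_b$ together with the fact that $V$ is $\sigma_b$-stable in the relevant sense, converts this into (b). The key subtlety is keeping track of where $\iota$ versus the identity acts: since $x\in V$ anticommutes (in the twisted sense $xz=\iota(z)x$) with $Z$, conjugation $\xi y\xi^{-1}$ naturally produces the $\iota$-twist on the multiplier, which is exactly what appears in (b).

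Next I would show that conditions (a)/(b) force $u$ to be a proper similitude. From (b), $\Int(\xi)$ restricted to $V_R$ agrees with $\iota\bigl(\underline\mu(\xi)\bigr)^{-1}u$, a scalar multiple of $u$; since $\xi\in C_0(V,q)_R^\times$ normalizes $V_R$ (this is immediate from (b), as the right-hand side manifestly lies in $V_R$), the element $\xi$ lies in the similitude group of the algebra $C_0(V,q)$ and in particular $\Int(\xi)$ preserves $V_R$. Comparing with the extended Clifford group setup, $\xi\in\BOmega(q)(R)$ and the induced map on $V_R$ is a similitude of $q$; a scalar multiple of it is $u$, so $u\in\BGO(q)(R)$. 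That $u$ is \emph{proper} follows because $\xi\in C_0(V,q)_R^\times$ acts by an inner automorphism on $C_0(V,q)_R$, which therefore fixes the center $Z_R$ pointwise; by the characterization of proper similitudes via the action of $C_0(u)$ on $Z$ recalled before Lemma~\ref{lem:eqcondXClifgrp}, this gives $u\in\BGO^+(q)(R)$.

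For the identity $C_0(u)=\Int(\xi)$, I would use the defining property $C_0(u)(x\cdot y)=\mu(u)^{-1}u(x)\cdot u(y)$ for $x,y\in V_R$ together with (b): substituting $u(x)=\iota\bigl(\underline\mu(\xi)\bigr)\xi x\xi^{-1}$ and $u(y)=\iota\bigl(\underline\mu(\xi)\bigr)\xi y\xi^{-1}$, the product $u(x)\cdot u(y)$ becomes $\iota\bigl(\underline\mu(\xi)\bigr)^2\xi (x\cdot y)\xi^{-1}$ after cancelling the inner $\xi^{-1}\xi$, and one must check that $\mu(u)=N_{Z/F}\bigl(\underline\mu(\xi)\bigr)=\underline\mu(\xi)\cdot\iota\bigl(\underline\mu(\xi)\bigr)$ matches $\iota\bigl(\underline\mu(\xi)\bigr)^2$ up to the correct twist — the point being that $\mu(u)\in F_R^\times$ is fixed by $\iota$, so $\mu(u)^{-1}\iota\bigl(\underline\mu(\xi)\bigr)^2 = \bigl(\underline\mu(\xi)^{-1}\iota(\underline\mu(\xi))\bigr)$, and one uses $x\cdot y\in C_0(V,q)_R$ so that the leftover central factor can be moved to act as the identity. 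Since $C_0(V,q)_R$ is generated by such products $x\cdot y$, the equality $C_0(u)=\Int(\xi)$ on generators gives it everywhere. The multiplier formula $\mu(u)=N_{Z/F}\bigl(\underline\mu(\xi)\bigr)$ can be extracted along the way, or obtained independently by computing $\sigma_b(u)u$: from (a), $u$ sends $x$ to (a scalar times) $\xi^{-1}x\xi$-type expression, and composing with its $\sigma_b$-adjoint produces $\tau_0(\xi)\xi\cdot\iota(\tau_0(\xi)\xi)=\underline\mu(\xi)\iota(\underline\mu(\xi))=N_{Z/F}(\underline\mu(\xi))$ acting as a scalar on $V_R$.

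The main obstacle I anticipate is the careful bookkeeping of the $\iota$-twist: because $V$ does not centralize $Z$ but twists it by $\iota$, every time an odd element is moved past a central factor one picks up $\iota$, and it is easy to drop or misplace one. I would handle this by fixing the convention $xz=\iota(z)x$ at the outset and verifying each displayed line against it, and by checking the whole computation on the split case $(V,q)=(\End V,\sigma_b,\strf_q)$ where $Z=F\times F$ and $\iota$ is the swap, which makes the twists transparent.
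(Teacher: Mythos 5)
Your overall strategy matches the paper's: manipulate (a) into (b) algebraically using $xz=\iota(z)x$, square to get the multiplier, and compute $C_0(u)$ on generators $x\cdot y$. But the bookkeeping of the $\iota$-twist — the exact spot you flagged as the danger — has two genuine errors, and the second one breaks the argument. First, $\xi\tau_0(\xi)=\underline\mu(\xi)$, \emph{without} a twist: $\underline\mu(\xi)=\tau_0(\xi)\xi$ is central in $C_0(V,q)_R$, so $\xi\tau_0(\xi)=\xi\underline\mu(\xi)\xi^{-1}=\underline\mu(\xi)$. Your claim $\xi\tau_0(\xi)=\iota(\underline\mu(\xi))$ is incorrect, and propagates: one correctly gets $\sigma_b(u)(x)=\underline\mu(\xi)\,\xi^{-1}x\xi$, not $\iota(\underline\mu(\xi))\,\xi^{-1}x\xi$. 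Second, and more seriously, your computation of $u(x)\cdot u(y)$ drops a twist: $\xi x\xi^{-1}$ is an \emph{odd} element, so $\xi x\xi^{-1}\cdot\iota(\underline\mu(\xi))=\iota^2(\underline\mu(\xi))\,\xi x\xi^{-1}=\underline\mu(\xi)\,\xi x\xi^{-1}$, giving
\[
u(x)u(y)=\iota(\underline\mu(\xi))\,\underline\mu(\xi)\,\xi(xy)\xi^{-1}
=N_{Z/F}\bigl(\underline\mu(\xi)\bigr)\,\xi(xy)\xi^{-1},
\]
so that $C_0(u)(xy)=\mu(u)^{-1}u(x)u(y)=\xi(xy)\xi^{-1}$ \emph{exactly}, with no leftover. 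Your coefficient $\iota(\underline\mu(\xi))^2$ is wrong, and the proposed patch (invoking $\mu(u)\in F_R^\times$ to cancel a residual $\underline\mu(\xi)^{-1}\iota(\underline\mu(\xi))$) is circular: that factor equals $1$ only when $\underline\mu(\xi)$ is $\iota$-fixed, i.e.\ when $\xi\in\BGamma^+(q)(R)$, which is not assumed.

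There is also a gap in the passage from (b) to $u\in\BGO^+(q)(R)$. You argue that the right-hand side of (b) ``manifestly lies in $V_R$,'' hence $\xi$ normalizes $V_R$ and belongs to the Clifford group. But (b) gives $\xi y\xi^{-1}=\iota(\underline\mu(\xi))^{-1}u(y)$, and $Z_R\cdot V_R\neq V_R$ unless $\underline\mu(\xi)\in R^\times$; so $\xi V_R\xi^{-1}=V_R$ does \emph{not} follow, and the deduction that $\xi\in\BGamma^+(q)(R)$ or $\BOmega(q)(R)$ is unjustified (the lemma is precisely what is used later to identify $\BOmega(q)$, so this would be circular anyway). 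The paper avoids all of this by squaring (a), respectively (b): the square of $\tau_0(\xi)x\xi$, respectively of $\iota(\underline\mu(\xi))\xi y\xi^{-1}$, collapses to $N_{Z/F}(\underline\mu(\xi))q(x)$, resp.\ $N_{Z/F}(\underline\mu(\xi))q(y)$, giving $u\in\BGO(q)(R)$ with the stated multiplier directly; properness then falls out of $C_0(u)=\Int(\xi)$ since $\xi\in C_0(V,q)_R$. I'd redo the two displayed computations with the convention $\eta z=\iota(z)\eta$ for \emph{all} odd $\eta$ (not just $\eta\in V_R$), and obtain the multiplier by squaring rather than via the normalizer claim.
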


\begin{proof}
  Suppose~(a) holds. Squaring each side of the equation yields
  \[
    \tau_0(\xi)x\underline\mu(\xi)x\xi = q\bigl(\sigma_b(u)(x)\bigr)
    \qquad\text{for all $x\in V_R$}
  \]
  hence, since $\tau_0(\xi)x\underline\mu(\xi)x\xi = \tau_0(\xi)x^2\xi
  \iota\bigl(\underline\mu(\xi)\bigr) =
  q(x)N_{Z/F}\bigl(\underline\mu(\xi)\bigr)$, 
  \[
    q\bigl(\sigma_b(u)(x)\bigr) =
    N_{Z/F}\bigl(\underline\mu(\xi)\bigr) q(x)\qquad\text{for all
      $x\in V_R$.}
  \]
  It follows that $\sigma_b(u)\in\BGO(q)(R)$ and
  $\mu\bigl(\sigma_b(u)\bigr) =
  N_{Z/F}\bigl(\underline\mu(\xi)\bigr)$, hence also $u\in\BGO(q)(R)$
  and $\mu(u)=N_{Z/F}\bigl(\underline\mu(\xi)\bigr)$. 

  On the other hand, multiplying each side of~(a) on the left by $\xi$
  and on the right by $\xi^{-1}$ yields
  \[
    \underline\mu(\xi)x=\xi\sigma_b(u)(x)\xi^{-1}.
  \]
  Letting $y=\sigma_b(u)(x)$, we have $u(y)=\mu(u)x$. By substituting
  in the last displayed equation we obtain
  \[
    \mu(u)^{-1}\underline\mu(\xi)u(y)=\xi y\xi^{-1}.
  \]
  As $\mu(u)=N_{Z/F}\bigl(\underline\mu(\xi)\bigr)$, condition~(b)
  follows.

  Now, suppose (b) holds. Squaring each side of the equation yields
  \[
    q\bigl(u(y)\bigr)=\iota\bigl(\underline\mu(\xi)\bigr)
    \underline\mu(\xi) \xi y^2\xi^{-1} =
    N_{Z/F}\bigl(\underline\mu(\xi)\bigr) q(y) \qquad\text{for all
      $y\in V_R$,}
  \]
  hence $u\in\BGO(q)(R)$ and
  $\mu(u)=N_{Z/F}\bigl(\underline\mu(\xi)\bigr)$. On the other hand,
  multiplying each side of~(b) by $\tau_0(\xi)$ on the left and by
  $\xi$ on the right yields
  \[
    \tau_0(\xi)u(y)\xi = N_{Z/F}\bigl(\underline\mu(\xi)\bigr) y =
    \mu(u)y \qquad\text{for all $y\in V_R$.}
  \]
  Letting $x=u(y)$, we have $\sigma_b(u)(x)=\mu(u) y$, hence by
  substituting in the last displayed equation we obtain~(a).

  To complete the proof, we compute $C_0(u)$ using~(b). For $x$, $y\in
  V_R$, taking into account that
  $\mu(u)=N_{Z/F}\bigl(\underline\mu(\xi)\bigr)$ we find
  \[
    C_0(u)(xy) = \mu(u)^{-1}u(x)u(y) =
    \mu(u)^{-1}\iota\bigl(\underline\mu(\xi)\bigr) \xi x\xi^{-1}
    \iota\bigl(\underline\mu(\xi)\bigr)\xi y\xi^{-1} = \xi xy
    \xi^{-1}.
  \]
  Since $\xi\in C_0(V,q)_R$, it follows that $C_0(u)$ restricts to the
  identity on $Z_R$, hence $u$ is a proper similitude.
\end{proof}

For $\xi\in\BOmega(q)(R)$, the map $x\mapsto \tau_0(\xi)x\xi$ is an
invertible 
linear operator on $V_R$. If $u\in\BGL(V)(R)$ is the image of this
operator under $\sigma_b$, then condition~(a) of
Lemma~\ref{lem:eqcondXClifgrp} holds 
for this $u$. We write $u=\chi_0(\xi)$, so
$\chi_0(\xi)\in\BGO^+(q)(R)$ is 
equivalently defined by any of the two equations
\begin{equation}
  \label{eq:defchi0}
  \tau_0(\xi)x\xi=\sigma_b\bigl(\chi_0(\xi)\bigr)(x)
  \quad\text{and}\quad
  \chi_0(\xi)(x)=\iota\bigl(\underline\mu(\xi)\bigr)\xi x \xi^{-1}
  \quad\text{for all $x\in V_R$.}
\end{equation}
The map $\chi_0$ is a morphism
\[
  \chi_0\colon\BOmega(q)\to\BGO^+(q).
\]
Lemma~\ref{lem:eqcondXClifgrp} yields
\begin{equation}
  \label{eq:propchi0}
  \Int\rvert_{\BOmega(q)} = C_0\circ\chi_0\in\BAut\bigl(C_0(V,q)\bigr)
  \qquad\text{and}\qquad 
  N_{Z/F}\circ\underline\mu = \mu\circ\chi_0\colon\BOmega(q)\to\BGm.
\end{equation}

\begin{prop}
  \label{prop:ClifinXClif}
  The special Clifford group $\BGamma^+(q)$ is a subgroup of
  $\BOmega(q)$. More precisely,
  \[
    \BGamma^+(q)=\underline\mu^{-1}(\BGm)\subset\BOmega(q).
  \]
  Moreover,
  $\chi_0\rvert_{\BGamma^+(q)}=
  \underline\mu\cdot\chi\colon\BGamma^+(q)\to\BOrth^+(q)$, 
  hence $\chi_0$ and $\chi$ coincide on $\BSpin(q)$. 
\end{prop}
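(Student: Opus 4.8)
The plan is to verify the three assertions in turn, exploiting the two equivalent descriptions of $\BOmega(q)$ supplied by Lemma~\ref{lem:eqcondXClifgrp} and the defining equations \eqref{eq:defchi0} of $\chi_0$. First I would show $\BGamma^+(q)\subseteq\BOmega(q)$ together with the identification $\BGamma^+(q)=\underline\mu^{-1}(\BGm)$. Let $R$ be a commutative $F$-algebra. If $\xi\in\BGamma^+(q)(R)$, then $\xi\in C_0(V,q)_R^\times$ normalizes $V_R$, so $\underline\mu(\xi)=\tau_0(\xi)\xi\in R^\times\subseteq Z_R^\times$, whence $\xi\in\BSim(\tau_0)(R)$ and $\underline\mu(\xi)\in\BGm(R)$. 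Since $\underline\mu(\xi)$ is then a scalar fixed by $\iota$, condition~(b) of Lemma~\ref{lem:eqcondXClifgrp} with $u(y)=\underline\mu(\xi)\,\xi y\xi^{-1}$ holds for all $y\in V_R$ (using that $\xi$ normalizes $V_R$), so by the lemma the equivalent condition~(a) holds as well, i.e. $\tau_0(\xi)\cdot V_R\cdot\xi=V_R$; thus $\xi\in\BOmega(q)(R)$. Conversely, if $\xi\in\BOmega(q)(R)$ satisfies $\underline\mu(\xi)\in\BGm(R)$, then $\iota\bigl(\underline\mu(\xi)\bigr)=\underline\mu(\xi)$, and the second equation of \eqref{eq:defchi0} gives $\chi_0(\xi)(x)=\underline\mu(\xi)\,\xi x\xi^{-1}$, so $\xi x\xi^{-1}=\underline\mu(\xi)^{-1}\chi_0(\xi)(x)\in V_R$ for all $x\in V_R$; hence $\xi$ normalizes $V_R$ and lies in $\BGamma^+(q)(R)$. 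This proves the displayed equality of subgroups of $\BOmega(q)$.

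Next I would compute $\chi_0$ on $\BGamma^+(q)$. For $\xi\in\BGamma^+(q)(R)$ we have just seen that $\chi_0(\xi)(x)=\underline\mu(\xi)\,\xi x\xi^{-1}=\underline\mu(\xi)\cdot\chi(\xi)(x)$ for all $x\in V_R$, since $\chi(\xi)=\Int(\xi)\rvert_{V_R}$ by the definition of the vector representation. Therefore $\chi_0\rvert_{\BGamma^+(q)}=\underline\mu\cdot\chi$ as morphisms $\BGamma^+(q)\to\BGO^+(q)$, the right-hand side landing in $\BOrth^+(q)$ precisely because $\underline\mu(\xi)$ is the multiplier of the similitude $\underline\mu(\xi)\cdot\chi(\xi)$ up to the isometry $\chi(\xi)$; more directly, $\mu\bigl(\chi_0(\xi)\bigr)=N_{Z/F}\bigl(\underline\mu(\xi)\bigr)=\underline\mu(\xi)^2$ by \eqref{eq:propchi0}, which is indeed the square of the scalar $\underline\mu(\xi)$, consistent with scaling the isometry $\chi(\xi)$ by $\underline\mu(\xi)$. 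Finally, restricting to $\BSpin(q)=\ker\underline\mu$, the factor $\underline\mu(\xi)$ becomes $1$, so $\chi_0$ and $\chi$ agree on $\BSpin(q)$.

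I do not expect a serious obstacle here: the statement is essentially a bookkeeping exercise once Lemma~\ref{lem:eqcondXClifgrp} is in hand. The one point requiring a little care is the first inclusion $\BGamma^+(q)\subseteq\BOmega(q)$, where one must check that the hypothesis $u\in\BGL(V)(R)$ of the lemma is met---that is, that $y\mapsto\underline\mu(\xi)\,\xi y\xi^{-1}$ genuinely maps $V_R$ into $V_R$ and is invertible---but this is immediate from the defining property of $\BGamma^+(q)$ that $\xi$ normalizes $V_R$. The compatibility of the various multiplier maps is then handled by \eqref{eq:propchi0}, so no independent computation is needed.
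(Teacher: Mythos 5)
Your argument is correct and follows essentially the same route as the paper's proof: both directions of the identification $\BGamma^+(q)=\underline\mu^{-1}(\BGm)$ rest on Lemma~\ref{lem:eqcondXClifgrp} (you via condition~(b) and the second equation of~\eqref{eq:defchi0}, the paper by citing the equivalence (a)$\Leftrightarrow$(b) directly, after noting that $\underline\mu(\xi)\in R^\times$ for $\xi\in\BGamma^+(q)(R)$), and the formula $\chi_0(\xi)=\underline\mu(\xi)\chi(\xi)$ is read off from $\chi_0(\xi)(x)=\iota(\underline\mu(\xi))\xi x\xi^{-1}=\underline\mu(\xi)\xi x\xi^{-1}$ in both. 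One small point of confusion in your closing remark: $\underline\mu\cdot\chi$ does \emph{not} land in $\BOrth^+(q)$ for general $\xi\in\BGamma^+(q)$ -- its multiplier is $\underline\mu(\xi)^2$, so it lands in $\BGO^+(q)$, and only on $\BSpin(q)=\ker\underline\mu$ does it drop into $\BOrth^+(q)$ (where it coincides with $\chi$); the codomain in the proposition's display should be read as labeling $\chi$, not the product. This does not affect your computation, which is correct.
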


\begin{proof}
  As pointed out in the definition of $\BSpin(q)$ above, for every
  commutative $F$-algebra $R$ the
  multiplier 
  $\underline\mu(\xi)$ of any $\xi\in\BGamma^+(q)(R)$ lies in
  $R^\times$. Therefore, Lemma~\ref{lem:eqcondXClifgrp} shows that
  $\xi V_R\xi^{-1}=V_R$ implies $\tau_0(\xi)V_R\xi=V_R$, hence
  $\BGamma^+(q)(R)\subset\BOmega(q)(R)$. Conversely, if 
  $\xi\in\BOmega(q)(R)$ and $\underline\mu(\xi)\in R^\times$,
  Lemma~\ref{lem:eqcondXClifgrp} shows that $\tau_0(\xi)V_R\xi=V_R$
  implies $\xi V_R\xi^{-1}=V_R$, hence
  $\xi\in\BGamma^+(q)(R)$. Therefore $\BGamma^+(q)(R)$ is the subgroup 
  of elements in $\BOmega(q)(R)$ whose multiplier lies in $R^\times$. 
  
  Moreover, for $\xi\in\BGamma^+(q)(R)$ we have
  $\chi(\xi)(x)=\xi x\xi^{-1}$ and
  $\chi_0(\xi)(x)=\underline\mu(\xi)\xi x 
  \xi^{-1}$ for all $x\in V_R$, hence
  $\chi_0(\xi)=\underline\mu(\xi)\chi(\xi)$. 
\end{proof}

\subsubsection*{Twisted forms}
Twisted forms of $\BGamma^+(q)$ and $\BOmega(q)$ are defined in
\cite[\S13.B and \S23.B]{BoI} by using a Clifford bimodule $B(\aqp)$
associated to any central simple algebra of even degree with quadratic
pair $\aqp=(A,\sigma,\strf)$. This bimodule is defined in such a way
that for 
every even-dimensional quadratic space $(V,q)$ the standard
identification $\End V=V\otimes V$ yields
\[
  B(\End V,\sigma_b,\strf_q) = V\otimes C_1(V,q),
\]
where $C_1(V,q)$ is the odd part of the Clifford algebra $C(V,q)$. The
left action $*$ and the right action $\cdot$ of $C(\aqp)$ on
$B(\aqp)$ are given in the split case by
\[
  \xi*(x\otimes\eta) = x\otimes(\xi\eta) \qquad\text{and}\qquad
  (x\otimes\eta)\cdot\xi=x\otimes(\eta\xi)
\]
for $\xi\in C_0(V,q)$, $\eta\in C_1(V,q)$ and $x\in V$.
The bimodule $B(\aqp)$ also carries a left $A$-module structure
and a canonical left $A$-module homomorphism $b\colon A\to
B(\aqp)$ (for which 
we use the exponential notation) given in the split case by
\[
  a(x\otimes\eta)=a(x)\otimes\eta\quad\text{and}\quad
  (x\otimes y)^b=x\otimes y\in V\otimes C_1(V,q)
\]
for $a\in\End V$, $x$, $y\in V$ and $\eta\in C_1(V,q)$.
\medbreak

The multiplicative group of $C(\aqp)$ acts on $B(\aqp)$ on the right
as follows: $\eta\mapsto \xi^{-1}*\eta\cdot\xi$ for $\xi\in
C(\aqp)^\times$ and $\eta\in B(\aqp)$. 
The \emph{Clifford group} $\BGamma(\aqp)$ is the normalizer of the
subspace $A^b\subset B(\aqp)$, hence for every commutative $F$-algebra
$R$
\[
  \BGamma(\aqp)(R)=\{\xi\in C(\aqp)_R^\times\mid
  \xi^{-1}*A_R^b\cdot\xi=A_R^b\}.
\]
On the same model, when $\deg A\equiv0\bmod4$, we define\footnote{An
  alternative definition, which also covers the case where $\deg
  A\equiv2\bmod4$, is given in~\cite[\S23.B]{BoI}.} the \emph{extended
  Clifford group} $\BOmega(\aqp)$ as the normalizer of $A^b$ under the
action on $B(\aqp)$ of the group of similitudes of the canonical
involution $\underline\sigma$ by
\[
\xi\mapsto(\eta\mapsto\underline\sigma(\xi)*\eta\cdot\xi).
\]
Thus, letting $Z$ denote the center of $C(\aqp)$, 
\[
\BOmega(\aqp)(R)=\{\xi\in C(\aqp)^\times_R \mid
\underline\sigma(\xi)\xi\in Z^\times_R \text{ and }
\underline\sigma(\xi)*A^b\cdot\xi = A^b\}
\]
for every commutative $F$-algebra $R$.
Let $\underline\mu$ denote the multiplier map
\[
  \underline\mu\colon\BOmega(\aqp)\to R_{Z/F}(\BGm),\qquad
  \xi\mapsto \underline\sigma(\xi)\xi
\]
and define morphisms
\[
  \chi\colon\BGamma(\aqp)\to\BOrth^+(\aqp)
  \qquad\text{and}\qquad
  \chi_0\colon\BOmega(\aqp)\to\BGO^+(\aqp)
\]
by
\[
  \xi^{-1}*1^b\cdot\xi=\chi(\xi)^b \qquad\text{and}\qquad
  \underline\sigma(\xi)*1^b\cdot\xi=\chi_0(\xi)^b,
\]
see \cite[(13.11) and (13.29)]{BoI}. In the split case where
$(\sigma,\strf)=(\sigma_b,\strf_q)$ for some quadratic space $(V,q)$,
the standard identification yields
\[
  \BGamma(\End V,\sigma_b,\strf_q)=\BGamma^+(q) \qquad\text{and}\qquad
  \BOmega(\End V,\sigma_b,\strf_q)=\BOmega(q),
\]
and the maps $\chi$ and $\chi_0$ are identical respectively to the
vector representation and to the map $\chi_0$ defined
in~\eqref{eq:defchi0}. We next show that they satisfy analogues of
\eqref{eq:propchi0} and Proposition~\ref{prop:ClifinXClif}.

\begin{prop}
  \label{prop:ClifinXClifbis}
  Let $\aqp=(A,\sigma,\strf)$ be an $F$-algebra with quadratic
  pair of degree divisible by~$4$. The Clifford group $\BGamma(\aqp)$
  is a subgroup of $\BOmega(\aqp)$. More precisely,
  \[
    \BGamma(\aqp)=\underline\mu^{-1}(\BGm)\subset\BOmega(\aqp).
  \]
  Moreover, $R_{Z/F}(\BGm)\subset\BOmega(\aqp)$ and
  $\chi_0\rvert_{R_{Z/F}(\BGm)}=N_{Z/F}\colon R_{Z/F}(\BGm)\to\BGm$,
  \[
    \Int\rvert_{\BOmega(\aqp)}=C\circ\Int\circ\chi_0\in\BAut\bigl(C(\aqp)\bigr)
    \qquad\text{and}\qquad 
    N_{Z/F}\circ\underline\mu=\mu\circ\chi_0\colon
    \BOmega(\aqp)\to\BGm, 
  \]
  and
  \[
    \chi_0\rvert_{\BGamma(\aqp)}=\underline\mu\cdot\chi\colon
    \BGamma(\aqp)\to\BOrth^+(\aqp). 
  \]
\end{prop}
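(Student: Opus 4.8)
The plan is to deduce Proposition~\ref{prop:ClifinXClifbis} from its split counterpart (Proposition~\ref{prop:ClifinXClif} together with the relations \eqref{eq:propchi0}) by a descent argument, using that all the groups, morphisms and defining conditions in sight are obtained from the split case by the functorial twisted-form construction recalled above. Concretely, choose a separable closure $F_{\mathrm{sep}}$ of $F$; after scalar extension there is an isometry $\aqp_{F_{\mathrm{sep}}}\simeq(\End V,\sigma_b,\strf_q)$ for a suitable quadratic space $(V,q)$ over $F_{\mathrm{sep}}$ of dimension $\deg A\equiv0\bmod4$, and under the standard identification all the objects $C(\aqp)$, $B(\aqp)$, $A^b$, $\underline\sigma$, $\underline\mu$, $\BGamma(\aqp)$, $\BOmega(\aqp)$, $\chi$, $\chi_0$ become the corresponding split objects $C_0(V,q)$, $V\otimes C_1(V,q)$, $\BGamma^+(q)$, $\BOmega(q)$, $\chi$, $\chi_0$ of \S\ref{subsec:ClGrp}. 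All these identifications are compatible with the Galois action, so each asserted equality of (closed sub)group schemes or of morphisms of $F$-group schemes can be checked after the faithfully flat base change $F\to F_{\mathrm{sep}}$, where it holds by Proposition~\ref{prop:ClifinXClif} and \eqref{eq:propchi0}.

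First I would record that $R_{Z/F}(\BGm)$ sits inside $\BOmega(\aqp)$: an element $z\in Z_R^\times$ acts on $B(\aqp)_R$ by $\eta\mapsto\underline\sigma(z)*\eta\cdot z = z^\iota*\eta\cdot z$ (where $\iota$ is the nontrivial automorphism of $Z/F$, which $\underline\sigma$ induces on $Z$), and since $Z$ is central in $C(\aqp)$ and acts on $A^b$ through $N_{Z/F}$ by the relation $x z = \iota(z) x$ for $x\in V$ in the split case, one gets $z^\iota * 1^b\cdot z = N_{Z/F}(z)\,1^b = N_{Z/F}(z)^b$ after extending scalars and identifying; hence $z\in\BOmega(\aqp)(R)$ with $\chi_0(z)=N_{Z/F}(z)$ and $\underline\mu(z)=\underline\sigma(z)z = z^\iota z = N_{Z/F}(z)$. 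This proves simultaneously $R_{Z/F}(\BGm)\subset\BOmega(\aqp)$, the formula $\chi_0\rvert_{R_{Z/F}(\BGm)}=N_{Z/F}$, and (combined with the next point) the consistency of all the norm/multiplier relations on the subgroup $R_{Z/F}(\BGm)$.

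Next I would establish $\BGamma(\aqp)=\underline\mu^{-1}(\BGm)$ inside $\BOmega(\aqp)$, hence in particular $\BGamma(\aqp)\subset\BOmega(\aqp)$. One inclusion is immediate: if $\xi\in\BGamma(\aqp)(R)$ then $\xi^{-1}*A_R^b\cdot\xi=A_R^b$, so multiplying by $\underline\mu(\xi)=\underline\sigma(\xi)\xi$ — which one first checks lies in $R^\times$ by the same squaring argument as in the proof of Lemma~\ref{lem:eqcondXClifgrp}, transported through the twisted-form identifications — yields $\underline\sigma(\xi)*A_R^b\cdot\xi=A_R^b$, i.e. $\xi\in\BOmega(\aqp)(R)$ with $\underline\mu(\xi)\in\BGm(R)$. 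Conversely, if $\xi\in\BOmega(\aqp)(R)$ with $\underline\mu(\xi)\in R^\times$, then from $\underline\sigma(\xi)*A_R^b\cdot\xi=A_R^b$ one divides by the scalar $\underline\mu(\xi)$ to recover $\xi^{-1}*A_R^b\cdot\xi=A_R^b$, so $\xi\in\BGamma(\aqp)(R)$. Then the three displayed relations follow: $\Int\rvert_{\BOmega(\aqp)}=C\circ\Int\circ\chi_0$ and $N_{Z/F}\circ\underline\mu=\mu\circ\chi_0$ are equalities of morphisms of $F$-group schemes which, after scalar extension to $F_{\mathrm{sep}}$ and the standard identification, become exactly the two relations of \eqref{eq:propchi0}, hence hold over $F$ by descent; and $\chi_0\rvert_{\BGamma(\aqp)}=\underline\mu\cdot\chi$ is checked the same way, becoming the last assertion of Proposition~\ref{prop:ClifinXClif} over $F_{\mathrm{sep}}$.

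The main obstacle is bookkeeping rather than mathematical depth: one must verify carefully that the chosen $F_{\mathrm{sep}}$-isomorphism $\aqp_{F_{\mathrm{sep}}}\simeq(\End V,\sigma_b,\strf_q)$ transports \emph{all} the relevant structure — the Clifford algebra with its canonical involution, the Clifford bimodule with its two $C(\aqp)$-actions and its left $A$-module structure, the canonical map $b\colon A\to B(\aqp)$ and its image $A^b$, the multiplier morphism $\underline\mu$, and the morphisms $\chi$, $\chi_0$ — to the corresponding split structures of \S\ref{subsec:ClGrp}, and that these transports are Galois-equivariant so that descent applies. This compatibility is exactly what the functoriality of the Clifford algebra and Clifford bimodule constructions (recalled in \S\ref{subsec:ClAlg} and \S\ref{subsec:ClGrp}) and the cited results \cite[\S13.B, \S23.B]{BoI} provide; once it is in place, every claim reduces formally to Proposition~\ref{prop:ClifinXClif} and \eqref{eq:propchi0} over the separable closure. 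A minor point requiring attention is that $\BSim(\underline\sigma)$ and $\BGamma(\aqp)$ need not be smooth in characteristic~$2$, so the descent must be phrased functorially (equality of the $R$-points for every commutative $F$-algebra $R$, checked after the faithfully flat extension $R\to R\otimes_F F_{\mathrm{sep}}$) rather than on geometric points only.
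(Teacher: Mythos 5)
Your overall strategy — reduce to the split case of Proposition~\ref{prop:ClifinXClif} and~\eqref{eq:propchi0} by scalar extension to a separable closure, with a direct check that $R_{Z/F}(\BGm)\subset\BOmega(\aqp)$ — is exactly the paper's proof (the paper cites \cite[(13.25)]{BoI} for the first assertion but offers the descent argument as an alternative, and does the $R_{Z/F}(\BGm)$ computation directly). Your smoothness caveat is in fact unnecessary here since $\BGamma(\aqp)$ and $\BOmega(\aqp)$ are both smooth (see the remarks after~\eqref{eq:commdiagClif} and Definition~\ref{defn:xclie}), but checking functorially is harmless.

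However, there is a genuine error in your direct computation for $R_{Z/F}(\BGm)$. You assert that $\underline\sigma$ induces the nontrivial automorphism $\iota$ on $Z$, i.e.\ $\underline\sigma(z)=\iota(z)$. This is false: since $\deg A\equiv0\bmod4$, the canonical involution $\underline\sigma$ (like $\tau_0$ in the split case) fixes $Z$ elementwise — this is precisely the standing hypothesis made at the start of the Extended Clifford groups discussion in \S\ref{subsec:ClGrp}. So $\underline\sigma(z)=z$. The discrepancy is not cosmetic: in the split model $z*1^b=1^b\cdot\iota(z)$ (because $zy=y\,\iota(z)$ for $y\in V$, $z\in Z$), hence $\underline\sigma(z)*1^b\cdot z = z*1^b\cdot z = 1^b\cdot \iota(z)z = N_{Z/F}(z)^b\in A^b$, which is what one needs. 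Your version $\iota(z)*1^b\cdot z$ instead equals $1^b\cdot z^2$, which lies in $A^b$ only when $z^2\in F$, so taken literally your claim would not give $z\in\BOmega(\aqp)(R)$ for general $z$. You reach the correct conclusion $\chi_0(z)=N_{Z/F}(z)$ only because of a second compensating slip in the computation of $\iota(z)*1^b\cdot z$. Relatedly, $\underline\mu(z)=\underline\sigma(z)z=z^2$, not $N_{Z/F}(z)$ as you write; this side claim is not needed for the proposition, but it is another symptom of the same confusion about $\underline\sigma|_Z$. Fixing $\underline\sigma(z)=z$ repairs the paragraph and makes it agree with the paper's argument.
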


\begin{proof}
  The first part is proved in \cite[(13.25)]{BoI}. (Alternatively, it
  follows from Proposition~\ref{prop:ClifinXClif} by Galois descent
  from a Galois splitting field of $A$.)

  Let $R$ be a commutative $F$-algebra.
  For $z\in Z_R^\times$ we have $\underline\sigma(z)=z$ and $z*1^b =
  1^b\cdot\iota(z)$, hence $z\in\BOmega(\aqp)(R)$ with
  $\chi_0(z)=N_{Z/F}(z)$. 
  The rest follows from
  Proposition~\ref{prop:ClifinXClif} by scalar 
  extension to a splitting field of $A$.
\end{proof}

Define $\chi'\colon\BOmega(\aqp)\to\BPGO^+(\aqp)$ by composing
$\chi_0$ with the canonical map $\BGO^+(\aqp)\to\BPGO^+(\aqp)$. Recall
from~\cite[p.~352]{BoI} the 
following commutative diagram with exact rows, whose vertical maps are
canonical:
\begin{equation}
  \label{eq:commdiagClif}
  \begin{aligned}
    \xymatrix{1\ar[r]&\BGm\ar[r]\ar[d]&
      \BGamma(\aqp)\ar[r]^{\chi}\ar[d]&\BOrth^+(\aqp)\ar[d]\ar[r]&1\\
      1\ar[r]&R_{Z/F}(\BGm)\ar[r]&\BOmega(\aqp)\ar[r]^-{\chi'}&
      \BPGO^+(\aqp)\ar[r]&1
      }
  \end{aligned}
\end{equation}
The exact rows of this diagram show that $\BGamma(\aqp)$ and
$\BOmega(\aqp)$ are connected, since $\BGm$, $\BOrth^+(\aqp)$,
$R_{Z/F}(\BGm)$ and $\BPGO^+(\aqp)$ are connected.
\medbreak

In the next proposition, we write $R^1_{Z/F}(\BGm)$ for the kernel of
the norm map 
\[
  N_{Z/F}\colon R_{Z/F}(\BGm)\to \BGm.
\]

\begin{prop}
  \label{prop:chi0ontobis}
  Let $\aqp=(A,\sigma,\strf)$ be an algebra with quadratic
  pair of degree divisible by~$4$. The following sequence is exact:
  \[
    1\to
    R^1_{Z/F}(\BGm)\to\BOmega(\aqp)\xrightarrow{\chi_0}\BGO^+(\aqp) 
    \to1.
  \]
\end{prop}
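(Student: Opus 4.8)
The plan is to read the exact sequence off from Proposition~\ref{prop:ClifinXClifbis} together with the commutative diagram~\eqref{eq:commdiagClif} with exact rows; this requires no direct computation in the Clifford bimodule, and the characteristic plays no r\^ole.

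\emph{The kernel.} The inclusion $R^1_{Z/F}(\BGm)\subseteq\BOmega(\aqp)$ makes sense because $R_{Z/F}(\BGm)\subseteq\BOmega(\aqp)$ by Proposition~\ref{prop:ClifinXClifbis}, and since $\chi_0$ restricts to $N_{Z/F}$ on $R_{Z/F}(\BGm)$ we get $R^1_{Z/F}(\BGm)=\ker N_{Z/F}\subseteq\ker\chi_0$. Conversely, let $R$ be a commutative $F$-algebra and $\xi\in\ker\chi_0(R)$. The identity $\Int\rvert_{\BOmega(\aqp)}=C\circ\Int\circ\chi_0$ of Proposition~\ref{prop:ClifinXClifbis} shows that conjugation by $\xi$ on $C(\aqp)_R$ equals $C\bigl(\Int(1)\bigr)=\Id$, so $\xi$ lies in the center $Z_R$ of $C(\aqp)_R$, i.e.\ $\xi\in R_{Z/F}(\BGm)(R)$; then $\chi_0(\xi)=N_{Z/F}(\xi)=1$ gives $\xi\in R^1_{Z/F}(\BGm)(R)$. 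Hence $\ker\chi_0=R^1_{Z/F}(\BGm)$, which settles exactness at $R^1_{Z/F}(\BGm)$ and at $\BOmega(\aqp)$.

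\emph{Surjectivity of $\chi_0$.} Write $\pi\colon\BGO^+(\aqp)\to\BPGO^+(\aqp)$ for the canonical projection, whose kernel is the central subgroup $\BGm$ of scalars, so that $\chi'=\pi\circ\chi_0$. A homomorphism of affine algebraic groups factors as a faithfully flat quotient followed by a closed immersion, so $\chi_0$ is faithfully flat onto its scheme-theoretic image $H=\BOmega(\aqp)/\ker\chi_0\subseteq\BGO^+(\aqp)$. Since the bottom row of~\eqref{eq:commdiagClif} is exact, $\chi'$ is faithfully flat; together with the faithful flatness of $\BOmega(\aqp)\to H$ this forces $\pi\rvert_H$ to be faithfully flat, so $\pi(H)=\BPGO^+(\aqp)$. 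On the other hand, by Proposition~\ref{prop:ClifinXClifbis} the subgroup $R_{Z/F}(\BGm)$ of $\BOmega(\aqp)$ is carried by $\chi_0$ onto the image of $N_{Z/F}\colon R_{Z/F}(\BGm)\to\BGm$, which is all of $\BGm$: the norm of a quadratic \'etale $F$-algebra is surjective, as one sees after scalar extension to a separable closure of $F$, where $Z$ splits and $N_{Z/F}$ becomes the multiplication $\BGm\times\BGm\to\BGm$. Hence $\ker\pi=\BGm\subseteq H$. Finally, a closed subgroup $H$ of $\BGO^+(\aqp)$ containing $\ker\pi$ satisfies $H=\pi^{-1}\bigl(\pi(H)\bigr)$, and here $\pi^{-1}\bigl(\pi(H)\bigr)=\pi^{-1}\bigl(\BPGO^+(\aqp)\bigr)=\BGO^+(\aqp)$, so $H=\BGO^+(\aqp)$. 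Thus $\chi_0$ is faithfully flat onto $\BGO^+(\aqp)$, which is the asserted surjectivity.

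The determination of $\ker\chi_0$ is immediate once Proposition~\ref{prop:ClifinXClifbis} is available; the substance is in the surjectivity, and more precisely in the remark that the image of $\chi_0$ already contains both the scalars $\BGm$ (via the norm on $R_{Z/F}(\BGm)$) and a full set of representatives of $\BPGO^+(\aqp)$ (via $\chi'$), which together exhaust $\BGO^+(\aqp)$. The auxiliary facts invoked there—that the scheme-theoretic image of a homomorphism is the quotient by its kernel, that faithful flatness is preserved under passing through that quotient, that the norm of a quadratic \'etale algebra is surjective, and that $\pi^{-1}\bigl(\pi(H)\bigr)=H$ whenever $\ker\pi\subseteq H$—are standard and insensitive to $\charac F$. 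I therefore expect the main obstacle to be organizational rather than mathematical; a more computational alternative would be to establish the split-algebra case directly from~\eqref{eq:defchi0} and then descend along a separable splitting field of $A$, but the route above bypasses this.
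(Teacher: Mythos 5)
Your proof is correct, and it draws on the same two ingredients the paper uses—the exactness of the bottom row of~\eqref{eq:commdiagClif} and the surjectivity of $N_{Z/F}$—but it packages the surjectivity argument differently. The paper invokes \cite[(22.3)]{BoI} (surjectivity of a morphism onto a smooth group scheme can be tested on rational points over an algebraic closure), passes to $\Falg$-points, and does a short diagram chase: lift $\pi(g)$ along $\chi'$, then correct by a scalar in the image of $N_{Z/F}$. You instead work scheme-theoretically throughout: factor $\chi_0$ as an fppf quotient onto its scheme-theoretic image $H$, then show $H$ both surjects onto $\BPGO^+(\aqp)$ and contains $\ker\pi=\BGm$, and close with the subgroup correspondence $H=\pi^{-1}(\pi(H))$. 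This avoids the explicit appeal to \cite[(22.3)]{BoI} at the cost of invoking the cancellation property of fppf morphisms and the correspondence theorem for fppf quotients—reasonable trades, and both routes are characteristic-free. For the kernel your argument is also slightly different: you use the identity $\Int\rvert_{\BOmega(\aqp)}=C\circ\Int\circ\chi_0$ to place $\ker\chi_0$ inside the center $Z$ of $C(\aqp)$, whereas the paper just observes $\ker\chi_0\subset\ker\chi'=R_{Z/F}(\BGm)$, which is a touch more economical; both conclusions then combine with $\chi_0\rvert_{R_{Z/F}(\BGm)}=N_{Z/F}$ in the same way.
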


\begin{proof}
  Since $\ker\chi_0\subset\ker\chi'$, it follows from the exactness of
  the lower row in \eqref{eq:commdiagClif} that
  $\ker\chi_0\subset R_{Z/F}(\BGm)$. Moreover, the following diagram
  is commutative with exact rows:
  \[
    \xymatrix{1\ar[r]& R_{Z/F}(\BGm)\ar[r]\ar[d]_{N_{Z/F}} &
      \BOmega(\aqp)\ar[r]^-{\chi'}\ar[d]_{\chi_0}&
      \BPGO^+(\aqp)\ar[r] \ar@{=}[d]&1\\
      1\ar[r]& \BGm\ar[r]& \BGO^+(\aqp) \ar[r]&
      \BPGO^+(\aqp)\ar[r]&1}
  \]
  Since we already know that $\ker\chi_0\subset R_{Z/F}(\BGm)$, it
  follows that $\ker\chi_0=R^1_{Z/F}(\BGm)$.

  As $\BGO^+(\aqp)$ is
  smooth, to prove that $\chi_0$ is onto it suffices
  by~\cite[(22.3)]{BoI} to see that $\chi_0$ defines a surjective map
  on the group of rational points over an algebraic closure. This is
  clear from the last commutative diagram above, because the norm
  $N_{Z/F}$ is surjective when $F$ is algebraically closed.
\end{proof}

As in the split case, we define the Spin group
\[
  \BSpin(\aqp)=\ker(\underline\mu\colon\BGamma(\aqp)\to \BGm)
  =\ker\bigl(\underline\mu\colon\BOmega(\aqp)\to R_{Z/F}(\BGm)\bigr)
\]
and we have an exact sequence (see~\cite[p.~352]{BoI}):
\[
  1\to\Bmu_2\to\BSpin(\aqp)\xrightarrow{\chi}\BOrth^+(\aqp) \to1.
\]
We may also restrict the map $\chi'$ to $\BSpin(\aqp)$ to obtain a
morphism $\chi'\colon\BSpin(\aqp)\to\BPGO^+(\aqp)$. This morphism is
surjective since the vector representation $\chi$ is surjective and
the canonical map $\BOrth^+(\aqp)\to\BPGO^+(\aqp)$ is surjective. Its
kernel is $R_{Z/F}(\BGm)\cap\BSpin(\aqp)=R_{Z/F}(\Bmu_2)$, hence the
following sequence is exact:
\begin{equation}
  \label{eq:exseqBSpinBPGO}
  1\to R_{Z/F}(\Bmu_2)\to \BSpin(\aqp) \xrightarrow{\chi'}
  \BPGO^+(\aqp)\to 1.
\end{equation}

The last proposition refers to the canonical quadratic pair
$(\underline\sigma,\underline \strf)$ on $C(\aqp)$ defined by
Dolphin--Qu\'eguiner-Mathieu (see \S\ref{subsec:ClAlg}). Assuming
$\deg A\equiv0\bmod8$, we write
$\Clqp(\aqp)$ for the Clifford algebra of $\aqp$ with its canonical
quadratic pair:
\[
  \Clqp(\aqp) = (C(\aqp),\underline\sigma, \underline\strf).
\]

\begin{prop}
  \label{prop:OmegainGObis}
  Let $\aqp=(A,\sigma,\strf)$ be an algebra with quadratic
  pair. If $\deg A\equiv0\bmod8$, then
  $\BOmega(\aqp)\subset\BGO^+\bigl(\Clqp(\aqp)\bigr)$. 
\end{prop}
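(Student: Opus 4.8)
The plan is to show that every $\xi \in \BOmega(\aqp)(R)$ lies in $\BGO^+(\Clqp(\aqp))(R)$, i.e. that $\Int(\xi)$ is a \emph{proper} automorphism of the algebra with quadratic pair $(C(\aqp),\underline\sigma,\underline\strf)$. There are three things to verify: that $\underline\sigma(\xi)\xi$ lies in $F^\times$ (not merely in $Z^\times$), that $\Int(\xi)$ preserves $\underline\sigma$, that $\Int(\xi)$ preserves the semitrace $\underline\strf$, and finally that it fixes the center $Z$ of $C(\aqp)$ elementwise. First I would reduce to the split case: by Proposition~\ref{prop:ClifinXClifbis} the morphism $\chi_0\colon\BOmega(\aqp)\to\BGO^+(\aqp)$ and all the structures in sight are defined over $F$ and behave well under scalar extension to a splitting field of $A$; since $\BGO^+(\Clqp(\aqp))$ is a closed subgroup scheme of $\BGL(C(\aqp))$ and the inclusion is a scheme-theoretic statement, it suffices to check it after faithfully flat base change, hence we may assume $\aqp = (\End V,\sigma_b,\strf_q)$ for an $8$-dimensional quadratic space $(V,q)$, in which case $\Clqp(\aqp) = (C_0(V,q),\tau_0,\str{g}_0)$ by the identifications of \S\ref{subsec:ClAlg}, and $\BOmega(\aqp) = \BOmega(q)$.

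So the core computation is in the split case. Let $\xi \in \BOmega(q)(R)$ and set $u = \chi_0(\xi) \in \BGO^+(q)(R)$, defined by~\eqref{eq:defchi0}. By the definition of $C_0(u)$ recalled in \S\ref{subsec:ClAlg} (and reproved inside Lemma~\ref{lem:eqcondXClifgrp}), we have $C_0(u) = \Int(\xi)$ as automorphisms of $C_0(V,q)$; this is exactly the first displayed identity in~\eqref{eq:propchi0}. Now the discussion in \S\ref{subsec:ClAlg} shows that for \emph{any} similitude $u$ of quadratic spaces the induced map $C_0(u)$ is an isomorphism of algebras with quadratic pair $(C_0(V,q),\tau_0,\str{g}_0)\xrightarrow{\sim}(C_0(V,q),\tau_0,\str{g}_0)$ — that is precisely the content of the computation there, using that $\mu^{-1}u(e)u(e') = C_0(u)(ee')$. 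Since $\Int(\xi) = C_0(u)$, this means $\Int(\xi)$ preserves $\tau_0$ and $\str{g}_0$, so $\Int(\xi)$ is an automorphism of $\Clqp(\aqp)$; by the Skolem--Noether description of $\BGO$ this says exactly $\xi \in \BGO(\Clqp(\aqp))(R)$, with $\underline\sigma(\xi)\xi = \mu(\xi\text{ as a similitude of }\Clqp(\aqp)) \in F^\times$. Finally, Lemma~\ref{lem:eqcondXClifgrp} tells us $u = \chi_0(\xi)$ is a \emph{proper} similitude, i.e. $C_0(u) = \Int(\xi)$ restricts to the identity on the center $Z$ of $C_0(V,q)$; this is what is needed for $\xi$ to lie in the identity component $\BGO^+(\Clqp(\aqp))$ rather than merely in $\BGO(\Clqp(\aqp))$.

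The step I expect to require the most care is not any single computation but keeping straight \emph{which} multiplier is which: the element $\underline\mu(\xi) = \tau_0(\xi)\xi \in Z_R^\times$ (the multiplier of $\xi$ for the involution $\tau_0$ on $C_0(V,q)$) is a priori only in $Z_R^\times$, whereas membership in $\BGO(\Clqp(\aqp))$ demands that the multiplier \emph{of $\xi$ viewed as a similitude of the quadratic pair $\Clqp(\aqp) = (C_0(V,q),\tau_0,\str{g}_0)$} lie in $F^\times$ — and these two "multipliers" coincide only because $\tau_0$ is the involution underlying the quadratic pair $\str{g}_0$. Once one notices that the condition $\underline\sigma(\xi)\xi \in Z^\times$ in the definition of $\BOmega(\aqp)$ is literally the statement $\xi \in \BSim(\underline\sigma)$, and that preservation of $\underline\strf$ by $\Int(\xi)$ then upgrades $\xi$ to $\BGO(\Clqp(\aqp))$, everything falls out of~\eqref{eq:propchi0}, Lemma~\ref{lem:eqcondXClifgrp}, and the functoriality computation already carried out in \S\ref{subsec:ClAlg}; no new calculation is needed beyond assembling these.
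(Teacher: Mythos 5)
The first part of your argument is sound and essentially the same as the paper's: establishing that $\Int(\xi) = C\bigl(\Int(\chi_0(\xi))\bigr)$ is an automorphism of $\Clqp(\aqp)$, hence $\xi\in\BGO\bigl(\Clqp(\aqp)\bigr)$, is exactly what the paper does, just with an extra (harmless but unnecessary) reduction to the split case.

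The final step, however, has a genuine gap. You argue that because $u=\chi_0(\xi)$ is a proper similitude of $\aqp$, meaning $C_0(u)=\Int(\xi)$ fixes the center $Z$ of $C(\aqp)$ pointwise, it follows that $\xi$ lies in the identity component $\BGO^+\bigl(\Clqp(\aqp)\bigr)$. But these are not the same condition. First, the fact that $\Int(\xi)$ fixes $Z$ is vacuous: $\xi$ lies in $C(\aqp)^\times$, so $\Int(\xi)$ is an \emph{inner} automorphism of $C(\aqp)$ and fixes its center automatically. Second, and more to the point, membership in $\BGO^+\bigl(\Clqp(\aqp)\bigr)$ is a statement about $\xi$ being in the identity component of the similitude group \emph{of} $\Clqp(\aqp)$; unwinding the definitions, this amounts to $\xi$ being a \emph{proper} similitude of $\Clqp(\aqp)$, which by definition requires $C\bigl(\Int(\xi)\bigr)$ to fix the center of the Clifford algebra $C\bigl(\Clqp(\aqp)\bigr)$ — a second Clifford algebra, entirely unrelated to $Z$. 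Your argument never touches that object, so you have shown only $\xi\in\BGO\bigl(\Clqp(\aqp)\bigr)$, not $\xi\in\BGO^+\bigl(\Clqp(\aqp)\bigr)$. The paper closes this gap by a different, global argument: $\BOmega(\aqp)$ is \emph{connected} (this was established from the exact rows of diagram~\eqref{eq:commdiagClif}), so once the inclusion into $\BGO\bigl(\Clqp(\aqp)\bigr)$ is known, the image of a connected group through the identity must land inside the identity component $\BGO^+\bigl(\Clqp(\aqp)\bigr)$. You should replace your pointwise "proper similitude" reasoning by this connectedness argument.
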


\begin{proof}
  Let $R$ be a commutative $F$-algebra and let
  $\xi\in\BOmega(\aqp)(R)$. Since $\chi_0(\xi)\in\BGO(\aqp)(R)$, it
  follows 
  that $\Int\bigl(\chi_0(\xi)\bigr)$ is an automorphism of $\aqp_R$,
  hence $C\bigl(\Int(\chi_0(\xi))\bigr)$ is an automorphism of
  $\Clqp(\aqp)_R$. But
  Proposition~\ref{prop:ClifinXClifbis} shows that
  $C\bigl(\Int(\chi_0(\xi))\bigr)=\Int(\xi)$, hence
  $\xi\in\BGO\bigl(\Clqp(\aqp)\bigr)(R)$.
  We thus see that
  $\BOmega(\aqp)\subset\BGO\bigl(\Clqp(\aqp)\bigr)$. Since
  $\BOmega(\aqp)$ is connected, it actually 
  lies in the connected component
  $\BGO^+\bigl(\Clqp(\aqp)\bigr)$.
\end{proof}

\subsection{Lie algebras of orthogonal groups}
\label{subsec:Lieorth}

Throughout this subsection, $A$ is a central simple algebra of even
degree~$n=2m$ over an arbitrary field $F$, and $(\sigma,\strf)$ is a
quadratic pair on $A$. We discuss several Lie algebras related to the
algebra with quadratic pair
$\aqp=(A,\sigma,\strf)$, and obtain different results depending on
whether the characteristic is~$2$ or not. The discrepancies derive
from the observation that the Lie algebra of the algebraic group
scheme $\boldsymbol{\mu}_2$ is $F$ when $\charac F=2$, whereas it
vanishes when $\charac F\neq2$.
\medbreak

The bracket $[a,b]=ab-ba$ turns $A$ into a Lie algebra denoted by
$\Lie(A)$. As usual, for $a\in A$ we let $\ad_a\colon A\to A$ denote
the linear operator defined by
\[
  \ad_a(x)=[a,x] \qquad\text{for $x\in A$.}
\]

The following are subalgebras of $\Lie(A)$ associated with
the quadratic pair $(\sigma,\strf)$; they are the Lie algebras of the
algebraic group schemes $\BOrth(\aqp)$ and
$\BGO(\aqp)$ respectively, see \cite[\S23.B]{BoI}:
\begin{align*}
  \orth(\aqp) & = \Alt(\sigma) = \{a-\sigma(a)\mid a\in A\}\\
  \go(\aqp) & = \{g\in A\mid \sigma(g)+g\in F \text{ and }
                  \strf([g,s])=0\text{ for all $s\in\Sym(\sigma)$}\}.
\end{align*}
Note that $\orth(\aqp)$ depends only on $\sigma$ and not on
$\strf$. Clearly, $F\subset\go(\aqp)$. We let
\[
  \pgo(\aqp) = \go(\aqp)/F
\]
and define
\[
  \dot\mu\colon \go(\aqp)\to F \qquad\text{by}\quad
  \dot\mu(g)=\sigma(g)+g.
\]
This map is the differential of the multiplier morphism
$\mu\colon\BGO(\aqp)\to\BGm$, hence it is a Lie algebra homomorphism.

\begin{prop}
  \label{prop:Lie1}
  Let $\ell\in A$ be such that $\strf(s)=\Trd_A(\ell s)$ for all
  $s\in\Sym(\sigma)$. Then
  \begin{equation}
    \label{eq:Lie1}
    \begin{array}{rcl}
      \go(\aqp)&=&\{g\in A\mid \ad_g\circ\sigma=\sigma\circ\ad_g
                       \text{ and 
          $(\strf\circ\ad_g)(s)=0$ for all $s\in\Sym(\sigma)$}\}
                     \\
      &=&\{g\in A\mid 
    \Trd_A(gs)=(\sigma(g)+g)\strf(s) \text{ for all
          $s\in\Sym(\sigma)$}\}\\
       &=& \orth(\aqp)+\ell F
    \end{array}
  \end{equation}
  and the following sequence is exact:
  \begin{equation}
    \label{eq:Lie2}
    0\to\orth(\aqp)\to\go(\aqp) \xrightarrow{\dot\mu}
    F \to 0. 
  \end{equation}
  Moreover,
  \[
    \dim\orth(\aqp)=\dim\pgo(\aqp)=m(2m-1)
    \qquad\text{and}\qquad 
    \dim\go(\aqp)=m(2m-1)+1.
  \]
  If $\charac F\neq2$, the inclusion
  $\orth(\aqp)\hookrightarrow\go(\aqp)$ is split by
  the map 
  $\frac12(\Id-\sigma)\colon\go(\aqp)\to\orth(\aqp)$,
  and it induces a canonical isomorphism
  \[
    \orth(\aqp)\stackrel\sim\to\pgo(\aqp).
  \]
  If $\charac F=2$, the map $\dot\mu$ induces a map
  $\pgo(\aqp)\to 
  F$ for which we also use the notation $\dot\mu$, and the map
  $\orth(\aqp)\to\pgo(\aqp)$ induced by the inclusion
  $\orth(\aqp)\hookrightarrow\go(\aqp)$ fits into an
  exact sequence
  \[
    0\to
    F\to\orth(\aqp)\to\pgo(\aqp)\xrightarrow{\dot\mu} 
    F\to 0.
  \]
\end{prop}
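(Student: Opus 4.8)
The plan is to establish the three characterizations of $\go(\aqp)$ in \eqref{eq:Lie1} first, then deduce the exact sequence \eqref{eq:Lie2}, compute dimensions, and finally treat the characteristic dichotomy. First I would fix $\ell\in A$ with $\strf(s)=\Trd_A(\ell s)$ for all $s\in\Sym(\sigma)$; such an $\ell$ exists because the bilinear form $(x,y)\mapsto\Trd_A(xy)$ is nonsingular and $\strf$ is a linear form on $\Sym(\sigma)$. Condition (ii) in the definition of a quadratic pair, $\strf\bigl(x+\sigma(x)\bigr)=\Trd_A(x)$, translates into $\Trd_A\bigl(\ell(x+\sigma(x))\bigr)=\Trd_A(x)$ for all $x$, i.e.\ $\ell+\sigma(\ell)=1$; this identity is the key algebraic fact used throughout. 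For the first equality in \eqref{eq:Lie1}, observe that $\ad_g\circ\sigma=\sigma\circ\ad_g$ is equivalent to $\sigma(g)+g$ being central, hence (since $A$ is central simple) to $\sigma(g)+g\in F$; combined with $(\strf\circ\ad_g)(s)=\strf([g,s])=0$ this is literally the defining condition of $\go(\aqp)$.

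For the second equality, I would use the identity $\strf([g,s])=\strf(gs-sg)$ and rewrite $sg=\sigma(sg)=\sigma(g)s$ modulo $\Alt(\sigma)$ — more precisely, $gs-sg$ and $gs+\sigma(gs)=gs+\sigma(s)\sigma(g)=gs+s\sigma(g)$ differ by an element of $\Alt(\sigma)$, on which $\strf$ need not vanish, so I instead compute directly: $\strf(gs-sg)$ where $gs+sg$ can be paired against condition (ii). The cleanest route is: for $s\in\Sym(\sigma)$, $gs-sg = (gs+\sigma(g)s) - (sg+s\sigma(g)) = (g+\sigma(g))s - s(g+\sigma(g)) +$ correction; since $g+\sigma(g)\in F$ when $g\in\go(\aqp)$ this last line is zero, which is circular, so instead I apply $\strf$ to the general element $gs$ and use $gs+\sigma(gs)=gs+s\sigma(g)$, giving $\strf(gs)+\strf(s\sigma(g)) = \Trd_A(gs)$ by (ii); pairing with the analogous expansion of $sg$ and subtracting yields $\strf([g,s]) = \Trd_A(gs) - \strf\bigl(s(g+\sigma(g))\bigr) = \Trd_A(gs) - (g+\sigma(g))\strf(s)$, using that $g+\sigma(g)$ acts as a scalar inside $\strf$ when it lies in $F$. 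So $g\in\go(\aqp)$ iff $\sigma(g)+g\in F$ and $\Trd_A(gs)=(\sigma(g)+g)\strf(s)$; one then checks the second condition alone forces the first by specializing $s=1$ (giving $\Trd_A(g)=(\sigma(g)+g)\strf(1)$ — need a small argument here that this plus linearity pins down $\sigma(g)+g\in F$, perhaps by testing against $\Sym(\sigma)$ spanning enough of $A$). For the third equality, $\orth(\aqp)=\Alt(\sigma)$ is contained in $\go(\aqp)$ since $\dot\mu$ kills it and $\strf([a-\sigma(a),s])$ vanishes by the trace computation with $\ell+\sigma(\ell)=1$; and $\ell\in\go(\aqp)$ because $\Trd_A(\ell s)=\strf(s)=(\ell+\sigma(\ell))\strf(s)$. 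Then $\orth(\aqp)+\ell F\subseteq\go(\aqp)$, and the reverse inclusion: given $g\in\go(\aqp)$ with $\dot\mu(g)=\lambda\in F$, the element $g-\lambda\ell$ has $\dot\mu(g-\lambda\ell)=\lambda-\lambda\cdot 1=0$, so it lies in $\ker\dot\mu$; I must then show $\ker\dot\mu=\Alt(\sigma)$. The inclusion $\Alt(\sigma)\subseteq\ker\dot\mu$ is clear; for the converse, if $\sigma(g)+g=0$ then $g\in\Skew(\sigma)$, and in the orthogonal (char $\neq 2$) case $\Skew(\sigma)=\Alt(\sigma)$, while in the symplectic (char $2$) case one needs $\strf([g,s])=0$ for all $s$ to push $g$ from $\Skew(\sigma)$ into $\Alt(\sigma)$ — this is the delicate point, and I expect it to follow from the second characterization together with a counting/nondegeneracy argument, or by scalar extension to a splitting field where the statement is classical (\cite[\S12]{BoI}).

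Granting $\ker\dot\mu=\orth(\aqp)$, the exact sequence \eqref{eq:Lie2} is immediate: $\dot\mu$ is surjective since $\dot\mu(\ell)=1$. The dimension count follows: $\dim\go(\aqp)=\dim\orth(\aqp)+1$, and $\dim\Alt(\sigma)=m(2m-1)$ is standard for an orthogonal-type involution on a degree-$2m$ algebra (\cite[\S2]{BoI}), giving $\dim\pgo(\aqp)=\dim\go(\aqp)-1=m(2m-1)$ as well. For the characteristic dichotomy: if $\charac F\neq 2$, the map $\tfrac12(\Id-\sigma)$ sends $A$ onto $\Skew(\sigma)=\Alt(\sigma)$, restricts to the identity on $\Alt(\sigma)$, and kills $F$, hence splits $\orth(\aqp)\hookrightarrow\go(\aqp)$ and identifies $\pgo(\aqp)=\go(\aqp)/F$ with $\orth(\aqp)$. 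If $\charac F=2$, then $F\subseteq\Alt(\sigma)$ (since $1=a+\sigma(a)$ for suitable $a$, e.g.\ $a=\ell$, and char $2$ makes $\Alt$ contain symmetrized elements that can be scalars — here $F\subseteq\Sym(\sigma)$ and $F\subseteq\Alt(\sigma)$ both hold as $1=\ell+\sigma(\ell)$), so the composite $F\to\orth(\aqp)\to\pgo(\aqp)$ is zero and $\dot\mu$ descends to $\pgo(\aqp)\to F$; exactness of $0\to F\to\orth(\aqp)\to\pgo(\aqp)\xrightarrow{\dot\mu}F\to 0$ then follows by diagram-chasing the inclusion-induced map against \eqref{eq:Lie2} modulo $F$, the only nontrivial verification being that $F=\orth(\aqp)\cap F$ maps to $0$ in $\pgo(\aqp)$ and that the induced map $\orth(\aqp)/F\to\pgo(\aqp)$ is injective with cokernel $F$ via $\dot\mu$. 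The main obstacle is the symplectic-case identification $\ker\dot\mu=\Alt(\sigma)$ inside $\Skew(\sigma)$: unlike char $\neq 2$, these do not coincide a priori, and one must use the semitrace condition $\strf([g,s])=0$ — I would either argue by descent to a split form where $\Alt(\sigma_b)$ is the space of alternating matrices and the computation is explicit, or invoke the already-cited identification of $\go(\aqp)$ with $\Lie\BGO(\aqp)$ from \cite[\S23.B]{BoI} to get the dimension, then conclude by the inclusion $\orth(\aqp)\subseteq\ker\dot\mu$ being an equality by dimension.
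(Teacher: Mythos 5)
Your overall plan tracks the paper's: establish the three characterizations of $\go(\aqp)$, deduce the exact sequence, count dimensions, then split into characteristics. But there are two concrete gaps, and one muddled step, which the paper resolves more directly than the workarounds you sketch.

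First, the derivation of the second equality in \eqref{eq:Lie1}: the expression $\strf(gs)+\strf(s\sigma(g))$ does not make sense, because $gs$ and $s\sigma(g)$ are not individually in $\Sym(\sigma)$; condition (ii) yields $\strf\bigl(gs+s\sigma(g)\bigr)=\Trd_A(gs)$ as a single evaluation. From there, \emph{assuming} $\sigma(g)+g=\mu\in F$, one writes $gs+s\sigma(g)=[g,s]+\mu s$ (note $[g,s]\in\Sym(\sigma)$ precisely when $\sigma(g)+g$ is central) and gets $\Trd_A(gs)=\strf([g,s])+\mu\strf(s)$, which is what you needed. So your end formula is right but the route through it is not.

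Second, and more importantly, you leave two things as placeholders that the paper handles in one clean stroke. (a) You say that $\Trd_A(gs)=(\sigma(g)+g)\strf(s)$ for all $s\in\Sym(\sigma)$ should force $\sigma(g)+g\in F$, but you only gesture at it ("need a small argument here"). The paper's argument: choose $x\in A$ with $\Trd_A(x)=1$; then $s:=\sigma(x)+x\in\Sym(\sigma)$ has $\strf(s)=1$, and the hypothesis specialized to this $s$ reads $\sigma(g)+g=\Trd_A\bigl(g(\sigma(x)+x)\bigr)\in F$. (b) You treat $\ker\dot\mu=\Alt(\sigma)$ as the delicate point and propose either Galois descent or a dimension count against $\dim\BGO(\aqp)$. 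Both would work, but the paper extracts this directly from the second characterization it just proved: once $\sigma(g)+g=0$, that characterization says $g\in\go(\aqp)$ iff $\Trd_A(gs)=0$ for all $s\in\Sym(\sigma)$, i.e.\ $\ker\dot\mu$ is the orthogonal complement of $\Sym(\sigma)$ under the trace form, which is $\Alt(\sigma)$ by \cite[(2.3)]{BoI}. This is valid in all characteristics and dispenses with any case split between $\Skew(\sigma)$ and $\Alt(\sigma)$, which is the source of the difficulty you flag in char $2$ (where $\Skew(\sigma)=\Sym(\sigma)\supsetneq\Alt(\sigma)$). Once you have that, the rest of your argument — the dimension count, $\ell\notin\orth(\aqp)$, the char-$2$ inclusion $F\subset\Alt(\sigma)$ via $1=\ell+\sigma(\ell)$, and the final exact sequence — is correct and matches the paper.
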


\begin{proof}
  For $g$, $x\in A$,
  \[
    (\ad_g\circ\sigma-\sigma\circ\ad_g)(x) = [g,\sigma(x)] -
    \sigma\bigl([g,x]\bigr) = [g+\sigma(g),\sigma(x)].
  \]
  Therefore, $\ad_g\circ\sigma = \sigma\circ\ad_g$ if and only if
  $g+\sigma(g)\in F$, and the definition of $\go(\aqp)$
  readily yields
  \[
    \go(\aqp) = \{g\in A\mid \ad_g\circ\sigma=\sigma\circ\ad_g
    \text{ and $(\strf\circ\ad_g)(s)=0$ for all $s\in\Sym(\sigma)$}\}.
   \]
                     
  Now, suppose $g\in A$ satisfies $\sigma(g)+g\in F$, and let
  $\mu=\sigma(g)+g$. For $s\in\Sym(\sigma)$ we have 
  \begin{equation}
    \label{eq:Lie2bis}
    \Trd_A(gs)=\strf\bigl(gs+\sigma(gs)\bigr) =
    \strf\bigl(gs+s\sigma(g)\bigr) 
    =\strf\bigl(gs+s(\mu-g)\bigr)
    = \strf([g,s]) + \mu \strf(s).
  \end{equation}
  Therefore, $\Trd_A(gs)=\mu\strf(s)$ for $g\in\go(\aqp)$ and
  $s\in\Sym(\sigma)$, 
  hence
  \[
    \go(\aqp)\subset \{g\in A\mid 
    \Trd_A(gs)=(\sigma(g)+g)\strf(s) \text{ for all
      $s\in\Sym(\sigma)$}\}. 
  \]
  
  To prove the reverse inclusion, suppose $g\in A$ satisfies
  $\Trd_A(gs)=(\sigma(g)+g)\strf(s)$ for all $s\in\Sym(\sigma)$. We
  first show that $\sigma(g)+g\in F$. If $x\in
  A$ is such that $\Trd_A(x)=1$, then $\strf(\sigma(x)+x)=1$, hence
  the hypothesis on $g$ yields
  $\Trd_A\bigl(g(\sigma(x)+x)\bigr)=\sigma(g)+g$, which shows that
  $\sigma(g)+g\in F$. Letting $\mu=\sigma(g)+g$, we have
  by~\eqref{eq:Lie2bis} above $\Trd_A(gs)=\strf([g,s])+\mu \strf(s)$
  for 
  all $s\in\Sym(\sigma)$. On the other hand, the hypothesis on $g$
  yields $\Trd_A(gs)=\mu\strf(s)$, hence $\strf([g,s])=0$, proving 
  $g\in\go(\aqp)$.
  
  The first two equations
  in~\eqref{eq:Lie1} are thus proved. The second one shows that
  $\ell\in\go(\aqp)$ since $\Trd_A(\ell s)=\strf(s)$ for all
  $s\in\Sym(\sigma)$ and $\sigma(\ell)+\ell=1$. This last equation
  also reads $\dot\mu(\ell)=1$, hence the map
  $\dot\mu\colon\go(\aqp)\to F$ is onto. The second
  characterization of $\go(\aqp)$ in~\eqref{eq:Lie1} also
  shows that 
  \[
    \ker(\dot\mu\colon\go(\aqp)\to F) = \{g\in A\mid
    \Trd_A(gs)=0\text{ for all $s\in\Sym(\sigma)$}\},
  \]
  which means that $\ker(\dot\mu)$ is the orthogonal complement of
  $\Sym(\sigma)$ for the bilinear form $\Trd_A(XY)$. This orthogonal
  complement is known to be $\Alt(\sigma)$ by~\cite[(2.3)]{BoI}. As
  $\orth(\aqp)=\Alt(\sigma)$, it follows that
  $\orth(\aqp)\subset\go(\aqp)$ and the
  sequence~\eqref{eq:Lie2} is exact.

  From the above observations it follows that
  \[
    \orth(\aqp)+\ell F\subset\go(\aqp).
  \]
  We use dimension count to show that this inclusion is an equality,
  completing the proof of~\eqref{eq:Lie1}. Note that
  $\ell\notin\orth(\aqp)$ since
  $\orth(\aqp)=\ker(\dot\mu)$ whereas
  $\dot\mu(\ell)=1$. Therefore, $\dim(\orth(\aqp)+\ell
  F)=1+\dim\orth(\aqp)$. On the other hand, the exact
  sequence~\eqref{eq:Lie2} yields
  $\dim\go(\aqp)=1+\dim\orth(\aqp)$, hence the proof
  of~\eqref{eq:Lie1} is complete. Since $\dim\Alt(\sigma) = m(2m-1)$
  by~\cite[(2.6)]{BoI}, we obtain
  \[
    \dim\orth(\aqp)=m(2m-1) \quad\text{and}\quad
    \dim\go(\aqp)=m(2m-1)+1.
  \]
  It follows that $\dim\pgo(\aqp)=m(2m-1)$ because
  $\pgo(\aqp)=\go(\aqp)/F$. 

  If $\charac F\neq2$, then we may take $\ell=\frac12$ in the
  discussion above, so
  $\go(\aqp)=\orth(\aqp)\oplus F$ and
  $\pgo(\aqp)\simeq\orth(\aqp)$ canonically.

  If $\charac F=2$, then $F\subset\Alt(\sigma)$ because the involution
  $\sigma$ is symplectic, and the map
  $\dot\mu\colon\go(\aqp)\to F$ vanishes on $F$. Therefore,
  $\dot\mu$ induces a map $\pgo(\aqp)\to F$ whose kernel is
  the image of $\orth(\aqp)$.
\end{proof}

When the algebra $A$ is split, we may represent it as $A=\End V$
for some $F$-vector space $V$ of dimension~$n$. The quadratic pair
$(\sigma,\strf)$ is then the quadratic pair $(\sigma_b,\strf_q)$
adjoint to a nonsingular quadratic form $q$ on $V$ (see
\S\ref{subsec:quadpair}), and we write simply $\go(q)$ for
$\go(\End V, \sigma_b,\strf_q)$. 

\begin{prop}
  \label{prop:Lie1bis}
  Let $g\in\End V$ and $\mu\in F$. We have $g\in\go(q)$ and
  $\dot\mu(g)=\mu$ if and only if
  \begin{equation}
    \label{eq:trial7}
    b(g(u),u) = \mu\,q(u) \qquad\text{for all $u\in V$.}
  \end{equation}
\end{prop}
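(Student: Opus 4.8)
The statement to prove, Proposition~\ref{prop:Lie1bis}, characterizes membership in $\go(q)$ via the single condition $b(g(u),u)=\mu\,q(u)$ for all $u\in V$. The natural approach is to unwind the definition of $\go(\aqp)$ in the split case $\aqp=(\End V,\sigma_b,\strf_q)$ using the standard identification $\End V=V\otimes V$, for which $\sigma_b(x\otimes y)=y\otimes x$, $\Trd(x\otimes y)=b(x,y)$, and $\strf_q(x\otimes x)=q(x)$. From Proposition~\ref{prop:Lie1} I would use the second characterization, namely that $g\in\go(q)$ with $\dot\mu(g)=\mu$ iff $\sigma_b(g)+g=\mu$ and $\Trd(gs)=\mu\,\strf_q(s)$ for all $s\in\Sym(\sigma_b)$. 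Since $\Sym(\sigma_b)$ is spanned by the elements $u\otimes u$ (equivalently by $x\otimes y+y\otimes x$), it suffices to test the trace identity on these generators, and the plan is to show that the full system of conditions collapses to the single equation \eqref{eq:trial7}.

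First I would record the computation $\Trd\bigl(g\circ(u\otimes u)\bigr)=\Trd\bigl(g(u)\otimes u\bigr)=b(g(u),u)$, using $g\circ(x\otimes y)=g(x)\otimes y$ and $\Trd(x\otimes y)=b(x,y)$. Since $\strf_q(u\otimes u)=q(u)$, the condition $\Trd(gs)=\mu\,\strf_q(s)$ evaluated at $s=u\otimes u$ becomes exactly $b(g(u),u)=\mu\,q(u)$. This gives the implication $(\Rightarrow)$ immediately. For the converse, the subtle point is that \eqref{eq:trial7} must be shown to force $\sigma_b(g)+g=\mu\cdot\Id$ (i.e.\ $g+\sigma_b(g)\in F$ with the right value) as well as the trace identity on all of $\Sym(\sigma_b)$, not just on the rank-one squares. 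The trace identity on all symmetric elements follows by linearity once it holds on the spanning set $\{u\otimes u\}$ — and here one must be a little careful in characteristic $2$, where $\Sym(\sigma_b)$ is spanned by $\{u\otimes u : u\in V\}$ together with possibly needing the polarizations $u\otimes v+v\otimes u$; linearizing \eqref{eq:trial7} at $u+v$ yields $b(g(u),v)+b(g(v),u)=\mu\,b(u,v)$, which is precisely $\Trd\bigl(g\circ(u\otimes v+v\otimes u)\bigr)=\mu\,\strf_q(u\otimes v+v\otimes u)$, so the trace identity does extend to all of $\Sym(\sigma_b)$.

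The remaining task is to deduce $g+\sigma_b(g)\in F$ from \eqref{eq:trial7}. I would argue exactly as in the proof of Proposition~\ref{prop:Lie1}: pick $x\in\End V$ with $\Trd(x)=1$; then $x+\sigma_b(x)\in\Sym(\sigma_b)$ and $\strf_q(x+\sigma_b(x))=\Trd(x)=1$, so the (already established) trace identity applied to $s=x+\sigma_b(x)$ gives $\Trd\bigl(g(x+\sigma_b(x))\bigr)=\mu$; but the left side equals $\Trd(gx)+\Trd(g\sigma_b(x))=\Trd(gx)+\Trd(\sigma_b(g)x)=\Trd\bigl((g+\sigma_b(g))x\bigr)$, and since this holds for \emph{every} $x$ of reduced trace~$1$ — equivalently, $\Trd\bigl((g+\sigma_b(g)-\mu)x\bigr)=0$ for all $x$ with $\Trd(x)=1$, hence for all $x\in\End V$ by linearity — nondegeneracy of the trace form forces $g+\sigma_b(g)=\mu\in F$. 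This is also the place where $\dot\mu(g)=\mu$ is read off. At this point both defining conditions of $\go(q)$ hold with multiplier $\mu$, completing the converse.

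**Main obstacle.** There is no deep difficulty here; the proof is a direct translation through the standard identification. The one point demanding genuine care is the characteristic-$2$ bookkeeping: one must make sure that \eqref{eq:trial7} together with its polarization really does capture the trace condition on \emph{all} of $\Sym(\sigma_b)$ (where, in characteristic $2$, scalars lie in $\Sym(\sigma_b)=\Skew(\sigma_b)$ and the semitrace is not simply half the trace), and that the deduction of $g+\sigma_b(g)\in F$ genuinely needs the semitrace identity (via an element of reduced trace $1$) rather than being automatic. Once those points are handled carefully, the equivalence is immediate from the formulas $g\circ(u\otimes u)=g(u)\otimes u$, $\Trd(x\otimes y)=b(x,y)$, and $\strf_q(u\otimes u)=q(u)$.
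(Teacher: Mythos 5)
Your proof is correct and follows essentially the same strategy as the paper: reduce to the second characterization of $\go(q)$ in~\eqref{eq:Lie1} via the standard identification $\End V=V\otimes V$, evaluate the trace identity on rank-one elements $u\otimes u$, and extract $\sigma_b(g)+g=\mu$ from the polarization. The one place you diverge is in how $\sigma_b(g)+g=\mu$ is deduced: the paper reads it off directly from the polarized identity $b(g(u),v)+b(u,g(v))=\mu\,b(u,v)$ via the defining property of the adjoint involution $\sigma_b$ (so $b((g+\sigma_b(g)-\mu)(u),v)=0$ for all $u,v$ and nondegeneracy of $b$ finishes it), whereas you route through an element of reduced trace~$1$ and nondegeneracy of the trace form, mirroring the argument in the proof of Proposition~\ref{prop:Lie1}. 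Both are valid; the paper's version is a line shorter and avoids choosing an auxiliary element, while yours reuses the template already established for the twisted case. Your characteristic-$2$ worry about whether $\{u\otimes u\}$ spans $\Sym(\sigma_b)$ is unfounded (it does, in every characteristic, as the paper uses in the proof of Proposition~\ref{prop:simiso}), but flagging it does no harm.
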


\begin{proof}
  We use the standard identification $V\otimes V=\End V$ set up in
  \S\ref{subsec:quadpair}. 
  For $s=u\otimes u\in\Sym(\sigma_b)$ we have $gs=g(u)\otimes u$,
  hence $\Trd(gs)=b(g(u),u)$. On the other hand $\strf_q(s)=q(u)$,
  hence if $g\in\go(q)$ and $\dot\mu(g)=\mu$ then the second
  characterization of $\go(\End V,\sigma_b,\strf_q)$
  in~\eqref{eq:Lie1} shows 
  that~\eqref{eq:trial7} holds. 

  Conversely, if \eqref{eq:trial7} holds, then $\Trd(gs)=\mu
  \strf_q(s)$ for 
  all $s\in\Sym(\sigma_b)$ of the form $s=u\otimes u$ with $u\in
  V$. Applying this to $s=(u+v)\otimes(u+v)$ with $u$, $v\in V$ yields
  \[
    \Trd\bigl(g(u\otimes v+v\otimes u)\bigr) = \mu \strf_q(u\otimes
    v+v\otimes u) = \mu\Trd(u\otimes v),
  \]
  hence $b(g(u),v)+b\bigl(u,g(v)\bigr)=\mu\, b(u,v)$. Since $\sigma_b$  is 
  the adjoint involution of $b$, it follows that
  $\sigma_b(g)+g=\mu$. We 
  thus see that $\Trd(gs)=(\sigma_b(g)+g)\strf_q(s)$ for all
  $s\in\Sym(\sigma_b)$, which proves $g\in\go(q)$ by the second
  characterization of $\go(\End V,\sigma_b,\strf_q)$ in~\eqref{eq:Lie1}.
\end{proof}

Returning to the general case, where the algebra $A$ is not
necessarily split,
let $C(\aqp)$ denote the Clifford algebra of $\aqp=(A,\sigma,\strf)$, and
write $c\colon A\to C(\aqp)$ for the canonical map. Every
$g\in\go(\aqp)$ defines a derivation $\delta_g$ of $C(\aqp)$ such that
\begin{equation}
  \label{eq:defdeltag}
  \delta_g\bigl(c(a)\bigr) = c([g,a]) \qquad\text{for $a\in A$;}
\end{equation}
this can be checked directly from the definition of $C(\aqp)$ or by
viewing the map $g\mapsto\delta_g$ as the differential of the morphism
$\BGO(\aqp)\to\BAut\bigl(C(\aqp)\bigr)$ defined on rational points by
mapping $g\in\GO(\aqp)$ to $C\bigl(\Int(g)\bigr)$.
The derivation $\delta_g$ is uniquely determined
by~\eqref{eq:defdeltag}, because $c(A)$ generates $A$ as an
associative algebra. 

Recall
from~\cite[\S8.C]{BoI} that $c(A)$ is a Lie subalgebra of
$\Lie\bigl(C(\aqp)\bigr)$. By \cite[p.~351]{BoI}, $c(A)$ is the
Lie algebra of the algebraic group $\BGamma(\aqp)$, whose
group of rational points is the Clifford group $\Gamma(\aqp)$,
hence we call it the \emph{Clifford Lie algebra} of $\aqp$ and write 
\[
  \clie(\aqp)=c(A)\subset\Lie\bigl(C(\aqp)\bigr).
\]
The kernel of the map $c\colon A\to\clie(\aqp)$ is
$\ker(\strf)\subset\Sym(\sigma)$ by~\cite[(8.14)]{BoI}, hence
\begin{equation}
  \label{eq:dimclie}
  \dim\clie(\aqp)=m(2m-1)+1.
\end{equation}

Let $\underline{\sigma}$ be the canonical involution on
$C(\aqp)$, which is characterized by the condition that
$\underline{\sigma}\bigl(c(a)\bigr)=c\bigl(\sigma(a)\bigr)$ for $a\in
A$. We have
\[
  \underline{\sigma}\bigl(c(a)\bigr) + c(a) = c(\sigma(a)+a) =
  \strf(\sigma(a)+a)=\Trd_A(a),
\]
hence $\underline{\sigma}(\xi)+\xi\in F$ for $\xi\in\clie(\aqp)$,
and we may define a Lie algebra homomorphism
\begin{equation}
  \label{eq:Lie2ter}
  \underline{\dot\mu}\colon\clie(\aqp)\to F \qquad\text{by}
  \quad
    \underline{\dot\mu}(\xi)=\underline{\sigma}(\xi)+\xi,
\end{equation}
so $\underline{\dot\mu}\bigl(c(a)\bigr)=\Trd_A(a)$ for $a\in A$. We
let $\spin(\aqp)$ denote the kernel
\[
  \spin(\aqp)=\ker\underline{\dot\mu}=\{c(a)\mid \Trd_A(a)=0\}
  \subset \clie(\aqp),
\]
which is the Lie algebra of the algebraic group $\BSpin(\aqp)$ defined
in~\S\ref{subsec:ClGrp}. By definition of $\spin(\aqp)$, 
the following sequence is exact:
\[
  0\to\spin(\aqp) \to\clie(\aqp)
  \xrightarrow{\underline{\dot\mu}} F\to 0,
\]
and therefore
\begin{equation}
  \label{eq:dimspin}
  \dim\spin(\aqp)=m(2m-1).
\end{equation}

Recall from \cite[(8.15)]{BoI} the Lie homomorphism
\[
  \dot\chi\colon\clie(\aqp)\to\orth(\aqp),
  \qquad
  c(a)\mapsto a-\sigma(a) \text{ for $a\in A$,}
\]
which fits in the following exact sequence
\begin{equation}
  \label{eq:Lie3}
  0\to F\to \clie(\aqp) \xrightarrow{\dot\chi} \orth(\aqp) \to
  0. 
\end{equation}
That sequence is the Lie algebra version of the exact sequence of
algebraic groups from~\cite[p.~352]{BoI}:
\[
  1\to\BGm\to\BGamma(\aqp)\xrightarrow{\chi} \BOrth^+(\aqp)
  \to 1.
\]
We let
\[
  \so(\aqp)=\dot\chi\bigl(\spin(\aqp)\bigr) =
  \{a-\sigma(a)\mid \Trd_A(a)=0\}\subset\orth(\aqp).
\]
If $\charac F\neq 2$, then $\orth(\aqp)=\Skew(\sigma)$, hence
every $a\in\orth(\aqp)$ satisfies $\Trd_A(a)=0$ and $a=\frac12a -
\sigma(\frac12a)$, hence
\[
  \so(\aqp)=\orth(\aqp).
\]
Moreover, in $\clie(\aqp)$ we have $F\cap\spin(\aqp)=0$ because
$\dot\mu(\lambda)=2\lambda$ for $\lambda\in F$, hence the restriction
of $\dot\chi$ is an isomorphism
\begin{equation}
  \label{eq:isospino}
  \dot\chi\colon\spin(\aqp)\xrightarrow{\sim}\orth(\aqp).
\end{equation}

By contrast, if $\charac F=2$ we may define a map
\[
  \Trp\colon\orth(\aqp)\to F \qquad\text{by}\quad
    \Trp\bigl(a-\sigma(a)\bigr)=\Trd_A(a),
\]
because $\Trd_A\bigl(\Sym(\sigma)\bigr)=0$. (The map $\Trp$ is the
\emph{pfaffian trace}, see \cite[(2.13)]{BoI}.) For $a$, $b\in A$ we
have 
\[
  [a-\sigma(a),b-\sigma(b)] = [a-\sigma(a),b] -
  \sigma([a-\sigma(a),b]),
\]
hence
\[
  \Trp([a-\sigma(a),b-\sigma(b)])=\Trd_A([a-\sigma(a),b]) = 0.
\]
Therefore, $\Trp$ is a Lie algebra homomorphism. Note also that
$F\subset\spin(\aqp)$ because $\dot\mu(\lambda)=2\lambda=0$ for
$\lambda\in F$. Therefore, there is a commutative diagram with exact
rows and columns: 
\begin{equation}
  \label{eq:commdiagspinchar2}
  \begin{aligned}
  \xymatrix{
    &0\ar[d]&0\ar[d]&&\\
    &F\ar[d]\ar@{=}[r]&F\ar[d]&&\\
    0\ar[r]&\spin(\aqp)\ar[r]\ar[d]_{\dot\chi}&
    \clie(\aqp) \ar[r]^{\underline{\dot\mu}} \ar[d]_{\dot\chi} & F
    \ar[r] \ar@{=}[d]&0\\
    0\ar[r]&\so(\aqp)\ar[r]\ar[d]&\orth(\aqp)\ar[r]^{\Trp}\ar[d]&
    F\ar[r]&0\\
    &0&0&&}
  \end{aligned}
\end{equation}

\subsection{Extended Clifford Lie algebras}
\label{subsec:xclie}

Throughout this subsection $A$ is a central simple algebra of
degree~$n=2m$ over an arbitrary field $F$, and we assume $m$ is
even. Let $(\sigma,\strf)$ be a quadratic pair on $A$, and let
$\aqp=(A,\sigma,\strf)$. Recall from 
\S\ref{subsec:ClGrp} the Clifford bimodule $B(\aqp)$ with
its canonical left $A$-module homomorphism $b\colon A\to
B(\aqp)$. We write $Z$ for the center of $C(\aqp)$
and $\iota$ for the nontrivial $F$-automorphism of $Z$.

Since the left $A$-module action on $B(\aqp)$ commutes with the left
and right $C(\aqp)$-module actions, the condition
$\underline\sigma(\xi)*A_R^b\cdot\xi=A_R^b$ in the definition of the
extended Clifford group $\BOmega(\aqp)$ is equivalent to
$\underline\sigma(\xi)*1^b\cdot\xi\in A_R^b$. The Lie algebra of
$\BOmega(\aqp)$ is therefore as follows:

\begin{definition}
  \label{defn:xclie}
  The \emph{extended Clifford Lie algebra of $\aqp$} is
  \[
    \xclie(\aqp)= \{\xi\in C(\aqp)\mid
    \underline\sigma(\xi)+\xi\in Z \text{ and }
    \underline\sigma(\xi)*1^b+1^b\cdot\xi\in A^b\}.
  \]
  It is shown in~\cite[p.~352]{BoI} that the algebraic group scheme
  $\BOmega(\aqp)$ is smooth, because $R_{Z/F}(\BGm)$ and
  $\BPGO^+(\aqp)$ are smooth and the lower row of the
  diagram~\eqref{eq:commdiagClif} is exact.
  Since $\dim R_{Z/F}(\BGm)=2$ and $\dim\BPGO^+(\aqp)=m(2m-1)$, it
  follows that
  \[
    \dim\BOmega(\aqp) = \dim\xclie(\aqp)=m(2m-1)+2.
  \]
  
  For $\xi\in\xclie(\aqp)$ we write
  \[
    \underline{\dot\mu}(\xi)=\underline\sigma(\xi)+\xi\in Z.
  \]
  Since the map $b$ is injective, for each
  $\xi\in\xclie(\aqp)$ there is a uniquely determined element
  $\dot\chi_0(\xi)\in A$ such that
  \[
    \underline\sigma(\xi)*1^b+1^b\cdot\xi = \dot\chi_0(\xi)^b.
  \]
  Thus, letting $F[\varepsilon]$ denote the algebra of dual numbers,
  where $\varepsilon^2=0$, we have
  \[
    \underline\sigma(1+\varepsilon\xi)*1^b\cdot (1+\varepsilon\xi) =
    \bigl(1+\varepsilon\dot\chi_0(\xi)\bigr)^b
    \qquad\text{for $\xi\in\xclie(\aqp)$.}
  \]
  This shows that $\dot\chi_0$ is the differential of
  $\chi_0\colon\BOmega(\aqp)\to \BGO^+(\aqp)$.
\end{definition}

For the next statement, recall from~\eqref{eq:defdeltag} that every
$g\in\go(\aqp)$ defines a derivation $\delta_g$ of $C(\aqp)$ such that
$\delta_g\bigl(c(a)\bigr)=c([g,a])$ for all $a\in A$.

\begin{prop}
  \label{prop:xclie1new}
  The Lie algebra $\xclie(\aqp)$ is a subalgebra
  of 
  $\Lie\bigl(C(\aqp)\bigr)$ containing $Z$ and
  $\clie(\aqp)$, and $\dot\chi_0$, $\underline{\dot\mu}$ are
  Lie algebra homomorphisms
  \[
    \dot\chi_0\colon\xclie(\aqp)\to\go(\aqp)
    \qquad\text{and}\qquad
    \underline{\dot\mu}\colon \xclie(\aqp)\to Z.
  \]
  Moreover, $\dot\chi_0(z)=\Tr_{Z/F}(z)\in F$ for $z\in Z$,
  \[
    \ad_\xi = \delta_{\dot\chi_0(\xi)}
    \quad\text{and}\quad
  \dot\mu\bigl(\dot\chi_0(\xi)\bigr)=
  \Tr_{Z/F}\bigl(\underline{\dot\mu}(\xi)\bigr)
    \quad\text{for
      $\xi\in\xclie(\aqp)$,}
  \]
  and
  \[
    \dot\chi_0(\xi)=\underline{\dot\mu}(\xi)+\dot\chi(\xi)
    \qquad\text{for $\xi\in\clie(\aqp)$.}
  \]
\end{prop}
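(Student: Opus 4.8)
The plan is to verify each assertion by reducing to the split case via scalar extension to a splitting field of $A$ and then using the explicit description of $B(\End V,\sigma_b,\strf_q)=V\otimes C_1(V,q)$ together with the standard identification $\End V = V\otimes V$, but several points can be checked intrinsically. First I would note that $\xclie(\aqp)$ is closed under the bracket of $\Lie(C(\aqp))$: this is automatic from Definition~\ref{defn:xclie} since $\xclie(\aqp)$ is the Lie algebra of the (smooth) algebraic group $\BOmega(\aqp)$, as recorded there; alternatively one checks directly that if $\underline\sigma(\xi)+\xi\in Z$ and $\underline\sigma(\eta)+\eta\in Z$ then $\underline\sigma([\xi,\eta])+[\xi,\eta]=[\underline\sigma(\eta)+\eta,\underline\sigma(\xi)]+[\xi,\eta]+\cdots$ lands in $Z$, and similarly for the $1^b$ condition using that the left $A$-action commutes with the $C(\aqp)$-bimodule structure. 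That $Z\subset\xclie(\aqp)$ follows from the computation already done in the proof of Proposition~\ref{prop:ClifinXClifbis}: for $z\in Z$ we have $\underline\sigma(z)=z$ and $z*1^b = 1^b\cdot\iota(z)$, so $\underline\sigma(z)*1^b+1^b\cdot z = 1^b\cdot(\iota(z)+z)=\bigl(\Tr_{Z/F}(z)\bigr)^b$, which simultaneously shows $z\in\xclie(\aqp)$ and $\dot\chi_0(z)=\Tr_{Z/F}(z)\in F$. That $\clie(\aqp)\subset\xclie(\aqp)$ follows because for $\xi=c(a)$ we have $\underline\sigma(\xi)+\xi=\Trd_A(a)\in F\subset Z$ by~\eqref{eq:Lie2ter}, and the second condition will come out of the last displayed formula once that is established.

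Next I would identify $\dot\chi_0$ as the differential of $\chi_0\colon\BOmega(\aqp)\to\BGO^+(\aqp)$ — this is already observed at the end of Definition~\ref{defn:xclie} via the dual numbers computation — so that $\dot\chi_0$ is automatically a Lie algebra homomorphism with values in $\go(\aqp)=\Lie(\BGO^+(\aqp))$, and likewise $\underline{\dot\mu}$ is the differential of the multiplier morphism $\underline\mu\colon\BOmega(\aqp)\to R_{Z/F}(\BGm)$, hence a Lie homomorphism into $Z=\Lie(R_{Z/F}(\BGm))$. The relation $\ad_\xi=\delta_{\dot\chi_0(\xi)}$ is the Lie-algebra shadow of the group identity $\Int|_{\BOmega(\aqp)}=C\circ\Int\circ\chi_0$ from Proposition~\ref{prop:ClifinXClifbis}: differentiating $\Int(1+\varepsilon\xi)$ on $C(\aqp)$ gives $\ad_\xi$, while differentiating $C\bigl(\Int(1+\varepsilon\dot\chi_0(\xi))\bigr)$ gives $\delta_{\dot\chi_0(\xi)}$ by the very definition~\eqref{eq:defdeltag} of $\delta_g$ as the differential of $g\mapsto C(\Int(g))$; comparing the two via the chain rule and the fact that $\dot\chi_0$ is the differential of $\chi_0$ yields the claim. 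The identity $\dot\mu\bigl(\dot\chi_0(\xi)\bigr)=\Tr_{Z/F}\bigl(\underline{\dot\mu}(\xi)\bigr)$ is obtained by differentiating the group relation $N_{Z/F}\circ\underline\mu=\mu\circ\chi_0$ (also from Proposition~\ref{prop:ClifinXClifbis}), using that the differential of $N_{Z/F}\colon R_{Z/F}(\BGm)\to\BGm$ at the identity is the trace $\Tr_{Z/F}\colon Z\to F$.

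For the final formula $\dot\chi_0(\xi)=\underline{\dot\mu}(\xi)+\dot\chi(\xi)$ for $\xi\in\clie(\aqp)$, I would differentiate the group-level statement $\chi_0|_{\BGamma(\aqp)}=\underline\mu\cdot\chi$ from Proposition~\ref{prop:ClifinXClifbis}. At the identity, the differential of the product morphism $(\underline\mu,\chi)\mapsto\underline\mu\cdot\chi$ from $R_{Z/F}(\BGm)\times\BOrth^+(\aqp)$ into $\BGO^+(\aqp)$ sends $(z,a)$ to $z+a$ inside $\go(\aqp)=\orth(\aqp)+Z$ (recalling $Z$ embeds in $\go(\aqp)$ via the inclusion $R_{Z/F}(\BGm)\hookrightarrow\BOmega(\aqp)\xrightarrow{\chi_0}\BGO^+(\aqp)$, whose differential on $Z$ is what we must match); so differentiating gives $\dot\chi_0(\xi)=\underline{\dot\mu}(\xi)+\dot\chi(\xi)$, where $\underline{\dot\mu}$ and $\dot\chi$ on $\clie(\aqp)$ are exactly the differentials of $\underline\mu$ and $\chi$ as in~\eqref{eq:Lie2ter} and~\eqref{eq:Lie3}. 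Alternatively, and more robustly, one checks this directly in the split case: for $\xi=c(x\otimes y)=x\cdot y\in C_0(V,q)$ one computes $\underline\sigma(\xi)*1^b+1^b\cdot\xi$ in $V\otimes C_1(V,q)$ using the explicit left/right actions and the formula $(x\otimes y)^b=x\otimes y$, expands the Clifford products $y\cdot x$ and $x\cdot y$ via the polar form $b$, and reads off $\dot\chi_0(x\otimes y)=b(x,y)+\bigl(x\otimes y-y\otimes x\bigr)$, matching $\underline{\dot\mu}(\xi)=\Trd(x\otimes y)=b(x,y)$ and $\dot\chi(\xi)=x\otimes y-\sigma_b(x\otimes y)$. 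Then the general case follows by faithfully flat descent since both sides are $F$-linear in $\xi$.

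The main obstacle I anticipate is bookkeeping in the split-case computation of $\dot\chi_0$ on $\clie(\aqp)$: one must carefully track the left $A=\End V$ action versus the $C(\aqp)=C_0(V,q)$ bimodule action on $B=V\otimes C_1(V,q)$, keep the two occurrences of the polar form $b$ straight when turning $y\cdot x\in C(V,q)$ back into something of the form $(\text{scalar})\cdot 1 + (\text{element of }C_0)$, and confirm the sign and the $Z$-versus-$F$ placement of the resulting terms. Everything else is a mechanical differentiation of already-proved group identities, so the only real content is making that one computation — or the corresponding chain-rule argument — watertight.
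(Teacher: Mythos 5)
Your proof is correct and follows essentially the same route as the paper's: identify $\xclie(\aqp)$, $\dot\chi_0$, $\underline{\dot\mu}$ as the Lie algebra of $\BOmega(\aqp)$ and the differentials of $\chi_0$, $\underline\mu$, prove $\ad_\xi=\delta_{\dot\chi_0(\xi)}$ by the dual-numbers computation applied to $\Int|_{\BOmega(\aqp)} = C\circ\Int\circ\chi_0$, then obtain the remaining three identities by differentiating the corresponding group-level relations from Proposition~\ref{prop:ClifinXClifbis}. One small inaccuracy in the parenthetical for the last formula: the claim that ``$Z$ embeds in $\go(\aqp)$'' (via $\dot\chi_0$, and the write-up $\go(\aqp)=\orth(\aqp)+Z$) is off --- $\dot\chi_0|_Z = \Tr_{Z/F}$ has kernel $Z^0$, and by Proposition~\ref{prop:Lie1} the correct decomposition is $\go(\aqp)=\orth(\aqp)+\ell F$, not $\orth(\aqp)+Z$; since the relevant multiplier $\underline{\dot\mu}(\xi)$ for $\xi\in\clie(\aqp)$ lies in $F$, not merely in $Z$, the Leibniz differentiation of $\chi_0(\xi)=\underline\mu(\xi)\cdot\chi(\xi)$ still gives $\dot\chi_0(\xi)=\underline{\dot\mu}(\xi)+\dot\chi(\xi)$ cleanly and the parenthetical is unnecessary rather than load-bearing.
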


\begin{proof}
  That $\xclie(\aqp)$ is a Lie subalgebra of
  $\Lie\bigl(C(\aqp)\bigr)$ and $\dot\chi_0$, $\underline{\dot\mu}$
  are Lie algebra homomorphisms is clear because $\xclie(\aqp)$ is the
  Lie 
  algebra of $\BOmega(\aqp)$ and $\dot\chi_0$, $\underline{\dot\mu}$
  are the differentials of $\chi_0$ and $\underline\mu\colon
  \BOmega(\aqp)\to R_{Z/F}(\BGm)$ respectively.

  Over the algebra $F[\varepsilon]$ of dual numbers,
  Proposition~\ref{prop:ClifinXClifbis} yields
  \[
    \Int(1+\varepsilon\xi) =
    C\bigl(\Int(\chi_0(1+\varepsilon\xi))\bigr) \qquad\text{for
      $\xi\in\xclie(\aqp)$},
  \]
  Hence for $\xi\in\xclie(\aqp)$ and $a\in A$
  \[
    (1+\varepsilon\xi)\,c(a)\,(1-\varepsilon\xi) =
    c\bigl((1+\varepsilon\dot\chi_0(\xi))\,a
    \,(1-\varepsilon\dot\chi_0(\xi))\bigr).
  \]
  Comparing the coefficients of $\varepsilon$ yields $[\xi,c(a)] =
  c([\dot\chi_0(\xi),a])$. Therefore, the derivations $\ad_\xi$ and
  $\delta_{\dot\chi_0(\xi)}$ coincide on $c(A)$, hence
  $\ad_\xi=\delta_{\dot\chi_0(\xi)}$ because $c(A)$ generates
  $C(\aqp)$ as an associative algebra.
  
  The other equations similarly follow by
  taking the differentials of $\chi_0(z)=N_{Z/F}(z)$ for $z\in
  Z^\times$,  $\mu\bigl(\chi_0(\xi)\bigr) =
  N_{Z/F}\bigl(\underline\mu(\xi)\bigr)$ for $\xi\in\BOmega(\aqp)$ and
  $\chi_0(\xi)=\underline\mu(\xi)\chi(\xi)$ for $\xi\in\Gamma(\aqp)$
  (see Proposition~\ref{prop:ClifinXClifbis}).
\end{proof}

\begin{corol}
  \label{corol:xcliecharnot2}
  If $\charac F\neq2$, then $\xclie(\aqp)=\clie(\aqp)+Z$.
\end{corol}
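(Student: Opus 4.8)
The plan is to prove the equality by a dimension count, the only nontrivial input being the computation of $\clie(\aqp)\cap Z$.

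Recall from Proposition~\ref{prop:xclie1new} that $\xclie(\aqp)$ is a Lie subalgebra of $\Lie\bigl(C(\aqp)\bigr)$ containing both $Z$ and $\clie(\aqp)$; hence $\clie(\aqp)+Z\subseteq\xclie(\aqp)$, and it suffices to show these subspaces have equal dimension. We already know $\dim\clie(\aqp)=m(2m-1)+1$ by~\eqref{eq:dimclie}, $\dim\xclie(\aqp)=m(2m-1)+2$ by Definition~\ref{defn:xclie}, and $\dim_F Z=2$ since $Z$ is quadratic \'etale. Thus $\dim\bigl(\clie(\aqp)+Z\bigr)=m(2m-1)+3-\dim\bigl(\clie(\aqp)\cap Z\bigr)$, and the corollary is equivalent to the assertion $\clie(\aqp)\cap Z=F$.

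First, $F\subseteq\clie(\aqp)$: choosing $a\in A$ with $\Trd_A(a)=1$, the relation $c\bigl(a+\sigma(a)\bigr)=\strf\bigl(a+\sigma(a)\bigr)=\Trd_A(a)$ recalled in \S\ref{subsec:Lieorth} gives $1=c\bigl(a+\sigma(a)\bigr)\in c(A)=\clie(\aqp)$. For the reverse inclusion $\clie(\aqp)\cap Z\subseteq F$, take $z\in\clie(\aqp)\cap Z$ and write $z=c(a)$ for some $a\in A$. Since $\deg A=2m\equiv0\bmod4$, the canonical involution $\underline\sigma$ fixes $Z$ elementwise, so $\underline\sigma(z)=z$, and therefore $2z=c(a)+\underline\sigma\bigl(c(a)\bigr)=c\bigl(a+\sigma(a)\bigr)=\Trd_A(a)\in F$. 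Because $\charac F\neq2$, this forces $z\in F$. Hence $\clie(\aqp)\cap Z=F$, $\dim\bigl(\clie(\aqp)+Z\bigr)=m(2m-1)+2=\dim\xclie(\aqp)$, and equality of these finite-dimensional spaces follows.

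I do not expect a genuine obstacle here: the argument rests only on the identity $c(s)=\strf(s)$ for $s\in\Sym(\sigma)$ (a defining feature of $C(\aqp)$, used in the two displayed computations) and on the triviality of $\underline\sigma$ on $Z$ when $\deg A\equiv0\bmod4$, both already available. Should one wish to avoid the dimension count, an alternative is to use $\dot\chi_0\colon\xclie(\aqp)\to\go(\aqp)$ from Proposition~\ref{prop:xclie1new}: since $\ad_\xi=\delta_{\dot\chi_0(\xi)}$, any $\xi\in\ker\dot\chi_0$ is central in $C(\aqp)$, so $\ker\dot\chi_0\subseteq Z\subseteq\clie(\aqp)+Z$; and by the formula $c(a)\mapsto\Trd_A(a)+\bigl(a-\sigma(a)\bigr)$ together with the surjectivity of $\Trd_A$ on $\Sym(\sigma)$ in characteristic different from~$2$, the restriction of $\dot\chi_0$ already maps $\clie(\aqp)$ onto $\go(\aqp)=\orth(\aqp)\oplus F$. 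Thus $\dot\chi_0$ maps $\clie(\aqp)+Z$ onto $\dot\chi_0\bigl(\xclie(\aqp)\bigr)$ while containing $\ker\dot\chi_0$, which again forces $\clie(\aqp)+Z=\xclie(\aqp)$.
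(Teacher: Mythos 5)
Your main argument is correct and is essentially the paper's own proof: a dimension count resting on the inclusion $\clie(\aqp)+Z\subseteq\xclie(\aqp)$ together with the identity $\clie(\aqp)\cap Z=F$; you simply spell out the intersection computation (via $\underline\sigma\rvert_Z=\Id$ when $\deg A\equiv0\bmod4$ and $c\rvert_{\Sym(\sigma)}=\strf$) that the paper takes as read. The alternative route through $\dot\chi_0$ is also sound, though it repackages the same bookkeeping via the exact sequence~\eqref{eq:exseqxclienew}.
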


\begin{proof}
  If $\charac F\neq2$, then $Z\cap\clie(\aqp)=F$, while
  Proposition~\ref{prop:xclie1new} shows that
  $\clie(\aqp)+Z\subset\xclie(\aqp)$. Dimension count then shows that
  $\xclie(\aqp)=\clie(\aqp)+Z$.
\end{proof}

Note that $Z\subset\clie(\aqp)$ if $\charac F=2$
(see~\cite[(8.27)]{BoI}), hence
$\clie(\aqp)+Z=\clie(\aqp)\subsetneq\xclie(\aqp)$ in that case.
\medbreak

The following Lie algebra versions of the commutative
diagram~\eqref{eq:commdiagClif} and of
Proposition~\ref{prop:chi0ontobis} can be derived from their algebraic
group scheme versions. We give a direct proof instead.

\begin{prop}
  \label{prop:xclie2new}
  Let $Z^0=\ker(\Tr\colon Z\to F)$ and let
  $\dot\chi'\colon\xclie(\aqp)\to\pgo(\aqp)$ be defined by
  $\dot\chi'(\xi)=\dot\chi_0(\xi)+F$ for $\xi\in\xclie(\aqp)$. The
  following sequence is exact: 
  \begin{equation}
    \label{eq:exseqxclienew}
    0\to Z^0\to\xclie(\aqp)\xrightarrow{\dot\chi_0}
    \go(\aqp)\to 0.
  \end{equation}
  The following diagram is commutative with exact rows and canonical
  vertical maps:
  \begin{equation}
    \label{eq:commdiagxclienew}
    \begin{aligned}
    \xymatrix{0\ar[r]&F\ar[r]
      \ar[d]&\clie(\aqp)
      \ar[d]\ar[r]^{\dot\chi}&
      \orth(\aqp)\ar[r]\ar[d]&0\\  
      0\ar[r]&Z\ar[r]&\xclie(\aqp)\ar[r]^{\dot\chi'}&
      \pgo(\aqp)\ar[r]&0}
    \end{aligned}
  \end{equation}
  Moreover,
  \[
    \clie(\aqp)=\{\xi\in\xclie(\aqp)\mid
  \underline{\dot\mu}(\xi)\in F\}\quad\text{and}\quad
  \spin(\aqp)=
  \ker(\underline{\dot\mu}\colon\xclie(\aqp)\to Z).
  \]
\end{prop}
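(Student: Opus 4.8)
The plan is to deduce all four assertions from the relations collected in Proposition~\ref{prop:xclie1new} --- notably $\ad_\xi=\delta_{\dot\chi_0(\xi)}$, $\dot\chi_0(z)=\Tr_{Z/F}(z)$ for $z\in Z$, $\dot\mu\circ\dot\chi_0=\Tr_{Z/F}\circ\underline{\dot\mu}$, and $\dot\chi_0(\xi)=\underline{\dot\mu}(\xi)+\dot\chi(\xi)$ for $\xi\in\clie(\aqp)$ --- combined with the dimension data $\dim\xclie(\aqp)=m(2m-1)+2$, $\dim\go(\aqp)=\dim\clie(\aqp)=m(2m-1)+1$, $\dim\spin(\aqp)=m(2m-1)$, $\dim Z^0=1$, and with the fact that $\underline{\dot\mu}\bigl(c(a)\bigr)=\Trd_A(a)$.

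First I would prove exactness of~\eqref{eq:exseqxclienew}. If $\xi\in\ker\dot\chi_0$, then $\ad_\xi=\delta_0=0$, so $\xi$ lies in the center $Z$ of $C(\aqp)$; since $\dot\chi_0$ restricts to $\Tr_{Z/F}$ on $Z$ this forces $\xi\in Z^0$, and conversely $Z^0\subset\ker\dot\chi_0$ by definition of $Z^0$. Hence $\ker\dot\chi_0=Z^0$ has dimension~$1$, so the image of $\dot\chi_0$ has dimension $m(2m-1)+1$; as it is contained in $\go(\aqp)$ (Proposition~\ref{prop:xclie1new}), it is all of $\go(\aqp)$, and~\eqref{eq:exseqxclienew} is exact.

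Next, for the diagram~\eqref{eq:commdiagxclienew}: the top row is~\eqref{eq:Lie3}. Writing $\dot\chi'=\pi\circ\dot\chi_0$ with $\pi\colon\go(\aqp)\to\pgo(\aqp)$ the projection (kernel $F$), one has $\ker\dot\chi'=\dot\chi_0^{-1}(F)$; since $\dot\chi_0$ sends $Z$ into $F$ with kernel $Z^0$ while being onto $\go(\aqp)\supset F$, a dimension count gives $\dot\chi_0^{-1}(F)=Z$, and $\dot\chi'$ is onto because $\dot\chi_0$ and $\pi$ are. The left-hand square commutes because both composites $F\hookrightarrow\clie(\aqp)\hookrightarrow\xclie(\aqp)$ and $F\hookrightarrow Z\hookrightarrow\xclie(\aqp)$ are the inclusion of scalars of $C(\aqp)$; the right-hand square commutes because for $\xi\in\clie(\aqp)$ the identity $\dot\chi_0(\xi)=\underline{\dot\mu}(\xi)+\dot\chi(\xi)$ together with $\underline{\dot\mu}(\xi)\in F$ shows $\dot\chi'(\xi)=\dot\chi_0(\xi)+F=\dot\chi(\xi)+F$, which is the image of $\dot\chi(\xi)$ under $\orth(\aqp)\to\pgo(\aqp)$.

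For the last two identities the crux is surjectivity of $\underline{\dot\mu}\colon\xclie(\aqp)\to Z$. When $\charac F\neq2$ this is immediate, since $Z\subset\xclie(\aqp)$ and $\underline{\dot\mu}(z)=2z$ on $Z$. The hard part --- the only genuinely delicate point --- is $\charac F=2$, where $\underline{\dot\mu}$ vanishes on $Z$: there I would use that $\dot\mu\circ\dot\chi_0=\Tr_{Z/F}\circ\underline{\dot\mu}$ is onto $F$ (a composite of the surjections $\dot\chi_0$ and $\dot\mu$), so the image of $\underline{\dot\mu}$ contains an element of reduced trace~$1$; since it also contains $\underline{\dot\mu}\bigl(c(a)\bigr)=\Trd_A(a)=1$ for suitable $a$, and in characteristic~$2$ a trace-$1$ element is not a scalar, the image is $2$-dimensional, hence equals $Z$. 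Granting surjectivity, $\underline{\dot\mu}^{-1}(F)$ has dimension $m(2m-1)+1=\dim\clie(\aqp)$, while $\underline{\dot\mu}\bigl(c(a)\bigr)=\Trd_A(a)\in F$ gives $\clie(\aqp)\subset\underline{\dot\mu}^{-1}(F)$, so $\clie(\aqp)=\{\xi\in\xclie(\aqp)\mid\underline{\dot\mu}(\xi)\in F\}$. Finally $\ker(\underline{\dot\mu}\colon\xclie(\aqp)\to Z)\subset\underline{\dot\mu}^{-1}(F)=\clie(\aqp)$, on which $\underline{\dot\mu}$ is $F$-valued, so this kernel equals $\ker(\underline{\dot\mu}\colon\clie(\aqp)\to F)=\spin(\aqp)$.
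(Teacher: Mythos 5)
Your proof is correct and follows essentially the same route as the paper: identify $\ker\dot\chi_0=Z^0$ via $\ad_\xi=\delta_{\dot\chi_0(\xi)}$, use dimension counts for surjectivity, and conclude the last two identities from surjectivity (in characteristic $2$) via an element of trace $1$ produced through $\dot\mu\circ\dot\chi_0=\Tr_{Z/F}\circ\underline{\dot\mu}$. The one cosmetic difference is that you prove $\underline{\dot\mu}\colon\xclie(\aqp)\to Z$ is onto, whereas the paper only needs (and proves) surjectivity of the induced map $\xclie(\aqp)\to Z/F$; both yield the same dimension count.
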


\begin{proof}
  We first show $Z^0=\ker\dot\chi_0$. The inclusion
  $Z^0\subset\ker\dot\chi_0$ follows from
  Proposition~\ref{prop:xclie1new}. To prove the reverse inclusion,
  let 
  $\xi\in\ker\dot\chi_0$. Proposition~\ref{prop:xclie1new} yields
  $[\xi,c(a)]=0$ 
  for all $a\in A$. As $c(A)$ generates $C(\aqp)$, we conclude that
  $\xi\in Z$. But then Proposition~\ref{prop:xclie1new} shows that
  $\dot\chi_0(\xi)=\Tr_{Z/F}(\xi)$, hence $\xi\in Z^0$.

  Dimension count now shows that $\dot\chi_0$ is surjective, hence
  \eqref{eq:exseqxclienew} is an exact sequence.

  The upper sequence of diagram~\eqref{eq:commdiagxclienew}
  is~\eqref{eq:Lie3}. We have just seen that $\dot\chi_0$
  is surjective, hence $\dot\chi'$ also is surjective. By
  Proposition~\ref{prop:xclie1new}, its kernel contains $Z$. Dimension
  count then yields $\ker\dot\chi'=Z$, hence the lower sequence of the
  diagram is exact. Commutativity of the diagram follows from
  Proposition~\ref{prop:xclie1new}, since $\underline{\dot\mu}(\xi)\in
  F$ for $\xi\in\clie(\aqp)$.

  This last observation shows that $\clie(\aqp)$ lies in the
  kernel of the map
  \[
    \dot\varkappa\colon\xclie(\aqp)\to Z/F,\qquad
    \xi\mapsto\underline{\dot\mu}(\xi)+F.
  \]
  We have to prove that $\clie(\aqp)=\ker\dot\varkappa$. To
  see this, it suffices to show that $\dot\varkappa$ is
  onto, because
  $\dim\clie(\aqp) = (\dim\xclie(\aqp))-1$ and
  $\dim(Z/F)=1$.

  If $\charac F\neq2$, surjectivity is clear because
  $\underline{\dot\mu}(z)=2z$ for all $z\in Z$. If $\charac F=2$, we
  pick an element $\ell\in\go(\aqp)$ such that
  $\dot\mu(\ell)=1$. Since $\dot\chi_0$ is onto, we may find
  $\xi\in\xclie(\aqp)$ such that $\dot\chi_0(\xi)=\ell$. Then by
  Proposition~\ref{prop:xclie1new} we have
  $\Tr_{Z/F}\bigl(\underline{\dot\mu}(\xi)\bigr) = 1$, hence
  $\underline{\dot\mu}(\xi)\notin F$. This shows $\dot\varkappa$ is
  onto. 

  To complete the proof, it suffices
  to observe that 
  $\spin(\aqp)=\ker(\underline{\dot\mu}\colon
  \clie(\aqp)\to F)$ by definition.
\end{proof}

When $\charac F=2$ we have $\underline{\dot\mu}(Z)=0$, hence
$Z\subset\spin(\aqp)$ and we may define a Lie algebra
homomorphism $\dot S\colon \pgo(\aqp)\to Z$ by
\[
  \dot S(g+F) = \underline{\dot\mu}(\xi)
  \quad\text{for any $\xi\in\xclie(\aqp)$ such that $\dot\chi'(\xi) =
    g+F$.}
\]

\begin{corol}
  \label{corol:spinpgo}
  If $\charac F\neq2$, then $\dot\chi'$ yields an isomorphism
  $\spin(\aqp)\xrightarrow{\sim}\pgo(\aqp)$.

  If $\charac F=2$, the restriction of $\dot\chi'$ fits in the exact
  sequence
  \[
    0\to Z\to\spin(\aqp)\xrightarrow{\dot\chi'} \pgo(\aqp)
    \xrightarrow{\dot S} Z\to 0.
  \]
\end{corol}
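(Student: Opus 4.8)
The plan is to reduce everything to Proposition~\ref{prop:xclie2new}, which already provides the exact sequence
\[
  0\to Z\to\xclie(\aqp)\xrightarrow{\dot\chi'}\pgo(\aqp)\to 0
\]
and the identification $\spin(\aqp)=\ker\bigl(\underline{\dot\mu}\colon\xclie(\aqp)\to Z\bigr)$, and to combine it with the elementary fact, used repeatedly above, that $\underline{\dot\mu}(z)=\underline\sigma(z)+z=2z$ for every $z\in Z$. The first step is to identify the kernel of the restriction $\dot\chi'\rvert_{\spin(\aqp)}$ meant by ``the restriction of $\dot\chi'$'' in the statement: since $\ker\dot\chi'=Z$, this kernel equals $\spin(\aqp)\cap Z=\{z\in Z\mid 2z=0\}$. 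If $\charac F\neq2$ it is $\{0\}$, so $\dot\chi'\rvert_{\spin(\aqp)}$ is injective; as $\dim\spin(\aqp)=m(2m-1)=\dim\pgo(\aqp)$ by~\eqref{eq:dimspin} and Proposition~\ref{prop:Lie1}, injectivity forces $\dot\chi'\rvert_{\spin(\aqp)}$ to be an isomorphism, settling the characteristic-not-$2$ case. If $\charac F=2$ the same kernel is all of $Z$; this reproves $Z\subset\spin(\aqp)$ and yields exactness of the asserted sequence at its left-hand $Z$ (trivially) and at $\spin(\aqp)$, the image of $Z\hookrightarrow\spin(\aqp)$ being precisely $\ker\dot\chi'\rvert_{\spin(\aqp)}$.

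It then remains, in characteristic~$2$, to check exactness at $\pgo(\aqp)$ and at the right-hand $Z$. For $\pgo(\aqp)$ I would simply unwind the definition of $\dot S$: because $\underline{\dot\mu}$ vanishes on $\ker\dot\chi'=Z$, the element $\dot S(g+F)=\underline{\dot\mu}(\xi)$ is independent of the chosen lift $\xi\in\xclie(\aqp)$ of $g+F$ along $\dot\chi'$, so $g+F\in\ker\dot S$ if and only if some---equivalently every---such lift lies in $\ker\underline{\dot\mu}=\spin(\aqp)$, that is, if and only if $g+F\in\dot\chi'\bigl(\spin(\aqp)\bigr)$. Exactness at the right-hand $Z$ means $\dot S$ is onto, and for this it is enough that $\underline{\dot\mu}\colon\xclie(\aqp)\to Z$ be surjective: its image contains $\underline{\dot\mu}\bigl(\clie(\aqp)\bigr)=\Trd_A(A)=F$ (the reduced trace being a nonzero $F$-linear form), and it surjects onto $Z/F$ because $\dot\varkappa$ does, as established inside the proof of Proposition~\ref{prop:xclie2new} using an element $\ell\in\go(\aqp)$ with $\dot\mu(\ell)=1$ together with the identity $\dot\mu\circ\dot\chi_0=\Tr_{Z/F}\circ\underline{\dot\mu}$ of Proposition~\ref{prop:xclie1new}; a subgroup of $Z$ that contains $F$ and maps onto $Z/F$ is all of $Z$.

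I do not expect any real obstacle: the argument is bookkeeping with the two exact sequences of Proposition~\ref{prop:xclie2new} and the formula $\underline{\dot\mu}(z)=2z$ on $Z$. The only point requiring a little care is noticing that the lift-independence built into the very definition of $\dot S$ is exactly what makes $\ker\dot S$ coincide with $\dot\chi'\bigl(\spin(\aqp)\bigr)$ in characteristic~$2$; once that is observed, the remaining verifications are immediate, and the dimension counts $\dim Z=2$ and $\dim\spin(\aqp)=\dim\pgo(\aqp)=m(2m-1)$ can be kept on hand as a consistency check on the exact sequence.
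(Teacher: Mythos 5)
Your argument is correct, and it takes a somewhat more computational route than the paper's in the characteristic-$2$ case. Both proofs get exactness at $\pgo(\aqp)$ the same way, by unwinding the definition of $\dot S$ so that $\ker\dot S=\dot\chi'\bigl(\spin(\aqp)\bigr)$. The divergence is at the two ends of the sequence: the paper records only the inclusions $Z\subset\ker\bigl(\dot\chi'\rvert_{\spin(\aqp)}\bigr)$ and $\dot S\bigl(\pgo(\aqp)\bigr)\subset Z$, and then forces both to be equalities in one stroke by the dimension count $\dim\spin(\aqp)=\dim\pgo(\aqp)=m(2m-1)$ together with $\dim Z=2$. You instead compute $\ker\bigl(\dot\chi'\rvert_{\spin(\aqp)}\bigr)=\spin(\aqp)\cap Z=\{z\in Z\mid 2z=0\}$ outright, which handles both characteristics uniformly (the paper treats $\charac F\neq2$ separately via~\eqref{eq:isospino} and the canonical isomorphism $\orth(\aqp)\simeq\pgo(\aqp)$ of Proposition~\ref{prop:Lie1}); and you prove $\dot S$ is onto by showing $\underline{\dot\mu}\colon\xclie(\aqp)\to Z$ is onto, combining $\underline{\dot\mu}\bigl(\clie(\aqp)\bigr)=\Trd_A(A)=F$ with the surjectivity of $\dot\varkappa$ established inside the proof of Proposition~\ref{prop:xclie2new}. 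The paper's dimension count is terser and settles two statements at once; your version isolates why each piece of exactness holds, relegating the count to the consistency check you already noted.
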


\begin{proof}
  If $\charac F\neq2$ we saw in \eqref{eq:isospino} that $\dot\chi$
  yields an isomorphism $\spin(\aqp)\simeq\orth(\aqp)$, and in
  Proposition~\ref{prop:Lie1} we saw that the canonical map is an
  isomorphism $\orth(\aqp)\xrightarrow{\sim}\pgo(\aqp)$, hence
  $\dot\chi'$ is an isomorphism $\spin(\aqp)\simeq\pgo(\aqp)$.

  For the rest of the proof, assume $\charac F=2$. Since
  $\dot\chi'\colon\xclie(\aqp)\to\pgo(\aqp)$ is onto and
  $\spin(\aqp)=\ker \underline{\dot\mu}$ by
  Proposition~\ref{prop:xclie2new}, it is clear from the definition of
  $\dot S$ that $\ker\dot S=\dot\chi'\bigl(\spin(\aqp)\bigr)$. As
  $\dot S\bigl(\pgo(\aqp)\bigr) \subset Z$, it follows that
  \begin{equation}
    \label{eq:ineq1}
    \dim\pgo(\aqp)-\dim\dot\chi'\bigl(\spin(\aqp)\bigr)\leq 2.
  \end{equation}
  On the other hand we have $Z\subset\spin(\aqp)$ because
  $\underline{\dot\mu}(Z)=0$, and $Z\subset\ker\dot\chi'$ by
  Proposition~\ref{prop:xclie2new}, hence
  \begin{equation}
    \label{eq:ineq2}
    \dim\dot\chi'\bigl(\spin(\aqp)\bigr)\leq \dim \spin(\aqp) - 2.
  \end{equation}
  As $\dim\pgo(\aqp) = m(2m-1) = \dim\spin(\aqp)$ by
  Proposition~\ref{prop:Lie1} and \eqref{eq:dimspin}, the inequalities
  \eqref{eq:ineq1} and \eqref{eq:ineq2} cannot be strict. Therefore,
  $Z=\ker\dot\chi'=\dot S\bigl(\pgo(\aqp)\bigr)$ and the corollary is
  proved. 
\end{proof}

Finally, we consider the case where $m$ is divisible by~$4$; then
$C(\aqp)$ carries a canonical quadratic pair
$(\underline\sigma,\underline\strf)$ defined by
Dolphin--Qu\'eguiner-Mathieu: see the end of \S\ref{subsec:ClAlg}. As
in Proposition~\ref{prop:OmegainGObis}, we let
\[
  \Clqp(\aqp) = (C(\aqp),\underline\sigma, \underline\strf).
\]

\begin{prop}
  \label{prop:omega}
  If $\deg A\equiv0\bmod8$, then
  $\xclie(\aqp)\subset\go\bigl(\Clqp(\aqp)\bigr)$.
\end{prop}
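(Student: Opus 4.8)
The statement is the Lie-algebra shadow of Proposition~\ref{prop:OmegainGObis}, and the plan is to obtain it by running the proof of that proposition over the ring of dual numbers $F[\varepsilon]$, with $\varepsilon^2=0$. Concretely, let $\xi\in\xclie(\aqp)$. Since $\xclie(\aqp)$ is the Lie algebra of $\BOmega(\aqp)$, the element $1+\varepsilon\xi$ lies in $\BOmega(\aqp)\bigl(F[\varepsilon]\bigr)$, and what must be shown is that it then lies in $\BGO\bigl(\Clqp(\aqp)\bigr)\bigl(F[\varepsilon]\bigr)$, because the Lie algebra of the latter group is exactly $\go\bigl(\Clqp(\aqp)\bigr)$.

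From here I would reproduce, over $F[\varepsilon]$, the three moves in the proof of Proposition~\ref{prop:OmegainGObis}. First, $\chi_0(1+\varepsilon\xi)=1+\varepsilon\,\dot\chi_0(\xi)$ with $\dot\chi_0(\xi)\in\go(\aqp)$ by Proposition~\ref{prop:xclie1new}, so $\chi_0(1+\varepsilon\xi)$ is a similitude of $\aqp\otimes F[\varepsilon]$ and $\Int\bigl(\chi_0(1+\varepsilon\xi)\bigr)$ is an automorphism of the algebra with quadratic pair $\aqp\otimes F[\varepsilon]$. Second, since $\deg A\equiv0\bmod8$, functoriality of the Clifford construction on algebras with quadratic pair promotes this to an automorphism $C\bigl(\Int(\chi_0(1+\varepsilon\xi))\bigr)$ of $\Clqp(\aqp)\otimes F[\varepsilon]$. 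Third, Proposition~\ref{prop:ClifinXClifbis} applied over $F[\varepsilon]$ identifies $C\bigl(\Int(\chi_0(1+\varepsilon\xi))\bigr)$ with $\Int(1+\varepsilon\xi)$, so $\Int(1+\varepsilon\xi)$ is an automorphism of $\Clqp(\aqp)\otimes F[\varepsilon]$ as an algebra with quadratic pair, that is, $1+\varepsilon\xi\in\BGO\bigl(\Clqp(\aqp)\bigr)\bigl(F[\varepsilon]\bigr)$; differentiating gives $\xi\in\go\bigl(\Clqp(\aqp)\bigr)$. For an intrinsic reading, the same computation shows $\ad_\xi=\delta_{\dot\chi_0(\xi)}$ commutes with $\underline\sigma$ and kills $\underline\strf$ on $\Sym(\underline\sigma)$, which together with $\underline\sigma(\xi)+\xi\in Z$, built into the definition of $\xclie(\aqp)$, is the membership criterion for $\go\bigl(\Clqp(\aqp)\bigr)$ furnished by the relative form of Proposition~\ref{prop:Lie1}.

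The one step deserving care is the bookkeeping forced by the fact that $C(\aqp)$ is not central simple over $F$ but has center the quadratic \'etale algebra $Z$, so that $\BGO\bigl(\Clqp(\aqp)\bigr)$ and $\go\bigl(\Clqp(\aqp)\bigr)$ must be read relative to $Z$ (multipliers in $Z^\times$, respectively $Z$) and Proposition~\ref{prop:Lie1} invoked in its relative version. The clean way to handle this is to pass to a field over which $Z$ splits, where $\Clqp(\aqp)$ breaks up as a product of two central simple algebras with quadratic pair to which Proposition~\ref{prop:Lie1} literally applies, and then descend; the condition $\underline\sigma(\xi)+\xi\in Z$ rather than $\in F$ is exactly what keeps the two components coupled. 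Alternatively, one may simply note that Proposition~\ref{prop:OmegainGObis} already exhibits $\BOmega(\aqp)$ as a subgroup scheme of $\BGO^+\bigl(\Clqp(\aqp)\bigr)$, whose Lie algebra coincides with that of $\BGO\bigl(\Clqp(\aqp)\bigr)$, so that passing to Lie algebras yields the inclusion with no further work.
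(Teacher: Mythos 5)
Your proof is correct and takes a genuinely different route from the paper's. The paper verifies the membership conditions for $\go\bigl(\Clqp(\aqp)\bigr)$ directly: given $\xi\in\xclie(\aqp)$ and $s\in\Sym(\underline\sigma)$, it writes $\underline\strf([\xi,s])=\Trd_{C(\aqp)}\bigl(c(a)[\xi,s]\bigr)$ with $\Trd_A(a)=1$, uses cyclicity of the reduced trace to transfer the bracket onto $c(a)$, applies Proposition~\ref{prop:xclie1new} to rewrite $[\xi,c(a)]=c\bigl([\dot\chi_0(\xi),a]\bigr)\in\spin(\aqp)$, and then invokes the Dolphin--Qu\'eguiner-Mathieu inclusion $\spin(\aqp)\subset\Alt(\underline\sigma)$ together with the orthogonality of $\Alt$ and $\Sym$ under the trace form to conclude. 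You instead obtain the Lie-algebra inclusion by differentiating the group-scheme inclusion $\BOmega(\aqp)\subset\BGO^+\bigl(\Clqp(\aqp)\bigr)$ of Proposition~\ref{prop:OmegainGObis}, either by rerunning that proof over $F[\varepsilon]$ or, more economically, by simply applying $\Lie$.

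The trade-off is the one you yourself flag: your argument tacitly identifies the explicitly defined $\go\bigl(\Clqp(\aqp)\bigr)$ with the Lie algebra of the group scheme $\BGO^+\bigl(\Clqp(\aqp)\bigr)$. Since $C(\aqp)$ has center $Z$ rather than $F$, Proposition~\ref{prop:Lie1} does not literally apply, and the paper never formalizes the ``relative'' version you need (working component-wise after splitting $Z$ and descending, or allowing multipliers in $Z$). The paper's direct computation sidesteps this bookkeeping entirely, at the modest cost of an appeal to \cite[Lemma~3.2]{DQM}, which your argument does not need. Both arguments are valid; yours makes explicit that the proposition is the formal Lie-algebra shadow of Proposition~\ref{prop:OmegainGObis}, which the paper's proof somewhat obscures.
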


\begin{proof}
  The definition of $\xclie(\aqp)$ entails that
  $\underline\sigma(\xi)+\xi\in Z$ for all
  $\xi\in\xclie(\aqp)$, hence it suffices to prove
  $\underline\strf([\xi,s])=0$ for $\xi\in\xclie(\aqp)$ and
  $s\in\Sym(\underline\sigma)$. By the definition of
  $\underline\strf$, this amounts to showing that if $a\in A$
  is such that $\Trd_A(a)=1$, then
  \[
    \Trd_{C(\aqp)}(c(a)\,[\xi,s])=0 \qquad\text{for
      $\xi\in\xclie(\aqp)$ and $s\in\Sym(\underline\sigma)$.}
  \]
  For this, observe that
  \[
    \Trd_{C(\aqp)}\bigl(c(a)(\xi s-s\xi)\bigr) =
    \Trd_{C(\aqp)}\bigl((\xi c(a)-c(a)\xi)s\bigr).
  \]
  Now, by Proposition~\ref{prop:xclie1new} we have
  $[\xi,c(a)]=c([\dot\chi_0(\xi),a])$. As
  $\Trd_A([\dot\chi_0(\xi),a])=0$, it 
  follows that 
  $c([\dot\chi_0(\xi),a])\in\spin(\aqp)$.
  Now,
  $\spin(\aqp)\subset\Alt(\underline\sigma)$
  by~\cite[Lemma~3.2]{DQM}, hence 
  \[
    \Trd_{C(\aqp)}\bigl(c([\dot\chi_0(\xi),a])s\bigr)=0
    \qquad\text{for all $s\in\Sym(\underline\sigma)$.}
    \qedhere
  \]
\end{proof}

\begin{remark}
When $\charac F=2$, the Lie algebra
$\Lie(A)$ has an additional structure given by the squaring map
$a\mapsto a^2$, which turns it into a \emph{restricted Lie
  algebra}. It can be verified that the Lie algebras $\orth(\aqp)$,
$\so(\aqp)$,  $\go(\aqp)$, $\pgo(\aqp)$, $\clie(\aqp)$, $\spin(\aqp)$, $\xclie(\aqp)$
are all restricted (i.e., preserved under the squaring map), and the
maps  
$\dot\mu$, $\underline{\dot\mu}$, $\dot\chi$, $\Trp$, $\dot\chi_0$,
$\dot S$ are
homomorphisms of restricted Lie algebras (i.e., commute with the
squaring map). The proof is omitted, as the restricted Lie algebra
structure will not be used in this work.
\end{remark}

\subsection{Homomorphisms from Clifford algebras}
\label{subsec:homos}

Throughout this subsection, $\aqp=(A,\sigma,\strf)$ is an algebra with
quadratic pair of
degree~$2m$ over an arbitrary field $F$. We assume $m\equiv0\bmod4$
and the discriminant 
of $(\sigma,\strf)$ is trivial, which implies that the Clifford
algebra $C(\aqp)$ decomposes as an algebra with quadratic
pair into a direct product of two central simple $F$-algebras with
quadratic pair of degree~$2^{m-1}$. We further choose a polarization
of $\aqp$ (see Definition~\ref{defn:orientationaqp}), which provides a
designation of the primitive central idempotents of $C(\aqp)$ as $z_+$
and $z_-$. The simple components of $C(\aqp)$ are then
\[
  C_+(\aqp)=C(\aqp)z_+ \qquad\text{and}\qquad
  C_-(\aqp)=C(\aqp)z_-.
\]
We write $\pi_+\colon C(\aqp)\to C_+(\aqp)$ and $\pi_-\colon
C(\aqp)\to C_-(\aqp)$ for the projections:
\[
  \pi_+(\xi)=\xi z_+,\qquad
  \pi_-(\xi)=\xi z_-\qquad\text{for $\xi\in C(\aqp)$,}
\]
and let
\[
  \Clqp(\aqp) = (C(\aqp),\underline\sigma, \underline\strf).
\]

Given another central simple $F$-algebra with quadratic pair
$\aqp'=(A',\sigma',\strf')$ of degree~$2^{m-1}$, we define a
\emph{homomorphism of algebras with quadratic pair}
\begin{equation}
  \label{eq:dephi}
  \varphi\colon \Clqp(\aqp)
  \to \aqp'
\end{equation}
to be an $F$-algebra homomorphism 
$\varphi\colon C(\aqp)\to A'$ such that
\[
  \varphi\circ\underline\sigma=\sigma'\circ\varphi
  \quad\text{and}\quad
  \varphi\bigl(\underline\strf(s)\bigr) = \strf'\bigl(\varphi(s)\bigr)
  \text{ for all $s\in\Sym(\underline\sigma)$.}
\]
Since we assume $\dim A'=\frac12\dim C(\aqp)$, such a
homomorphism factors through one of the projections $\pi_+$ or
$\pi_-$, and maps the 
center $Z$ of $C(\aqp)$ to $F$. It readily follows that $\varphi$
defines a morphism $\BGO\bigl(\Clqp(\aqp)\bigr)
\to\BGO(\aqp')$ and maps
$\go\bigl(\Clqp(\aqp)\bigr)$ to $\go(\aqp')$.

\begin{definition}
  \label{defn:signofhomo}
We say that \emph{$\varphi$ has the $+$ sign} if it factors through
$\pi_+$ (i.e., $\varphi(z_+)=1$ and $\varphi(z_-)=0$), and that
\emph{$\varphi$ has the $-$ sign} if it factors through $\pi_-$ (i.e.,
$\varphi(z_+)=0$ and $\varphi(z_-)=1$).
\end{definition}

Since $\BOmega(\aqp)\subset\BGO^+\bigl(\Clqp(\aqp)\bigr)$ by
Proposition~\ref{prop:OmegainGObis}, we may 
restrict $\varphi$ to $\BOmega(\aqp)$ to obtain the following
commutative diagram with exact rows, where $\pm$ is the sign
of $\varphi$:
\begin{equation}
  \label{eq:commdiaghomoClif2}
  \begin{aligned}
  \xymatrix{1\ar[r]& R_{Z/F}(\BGm)\ar[r]\ar[d]_{\pi_\pm} &
    \BOmega(\aqp) 
    \ar[d]_{\varphi} \ar[r]^-{\chi'} & \BPGO^+(\aqp) \ar[r]
    \ar[d]_{\overline\varphi} & 1\\
    1\ar[r]& \BGm\ar[r] & \BGO^+(\aqp') \ar[r] &
    \BPGO^+(\aqp')\ar[r] &
    1}
  \end{aligned}
\end{equation}
We also consider the corresponding diagram with exact rows involving
the differentials: 
\begin{equation}
    \label{eq:commdiagxcliehomo}
    \begin{aligned}
    \xymatrix{0\ar[r]&Z\ar[r]\ar[d]_{\pi_\pm}&
      \xclie(\aqp)\ar[r]^{\dot\chi'}
      \ar[d]_{\varphi}
        & \pgo(\aqp)\ar[r]\ar[d]_{\theta}&0
        \\
        0\ar[r]& F\ar[r]& \go(\aqp') \ar[r]&
        \pgo(\aqp')\ar[r]&0
      }
    \end{aligned}
  \end{equation}

Since $\varphi\circ\underline\sigma = \sigma'\circ\varphi$, it follows
that $\varphi\circ\underline\mu =
\mu\circ\varphi$ on $\BOmega(\aqp)$, hence $\varphi$
maps $\BSpin(\aqp)$ to $\BOrth^+(\aqp')$. Restricting the morphism
$\varphi$ to $\BSpin(\aqp)$, we obtain
from~\eqref{eq:commdiaghomoClif2} 
the following commutative diagram of algebraic group schemes with
exact rows: 
\begin{equation}
  \label{eq:commdiaghomoClif3}
  \begin{aligned}
  \xymatrix{1\ar[r]& R_{Z/F}(\Bmu_2) \ar[r] \ar[d]_{\pi_\pm} &
    \BSpin(\aqp)\ar[r]^-{\chi'} \ar[d]_{\varphi} & \BPGO^+(\aqp) \ar[r]
    \ar[d]_{\overline\varphi} &1\\
    1\ar[r]&\Bmu_2\ar[r]& \BOrth^+(\aqp') \ar[r] & \BPGO^+(\aqp')
    \ar[r] & 1}
  \end{aligned}
\end{equation}

Our goal in the rest of this subsection is to show that the map
$\theta$ in~\eqref{eq:commdiagxcliehomo} determines the homomorphism
$\varphi$ in~\eqref{eq:dephi} uniquely.

\begin{definition}
  \label{defn:lift}
  Given $\varphi$ as in~\eqref{eq:dephi}, of sign $\pm$, the
  Lie algebra homomorphism $\theta\colon\pgo(\aqp)\to\pgo(\aqp')$ in
  diagram~\eqref{eq:commdiagxcliehomo} is said to be \emph{induced} by
  $\varphi$. 
    Changing the perspective, a Lie algebra homomorphism
    $\theta\colon\pgo(\aqp)\to\pgo(\aqp')$ is said to
    be \emph{liftable} if it is induced by some homomorphism of
    algebras with quadratic pair
    $\varphi$, which is then called a \emph{lift} of
    $\theta$. If $\theta$ is induced by a homomorphism $\varphi$, the
    \emph{sign} of $\theta$ is defined to be the same as the sign of
    $\varphi$. 
\end{definition}

The following theorem shows that the latter definition is not
ambiguous:

\begin{thm}
  \label{thm:lift}
  If a Lie algebra homomorphism
  $\theta\colon\pgo(\aqp)\to\pgo(\aqp')$ is liftable,
  then its lift is unique.
\end{thm}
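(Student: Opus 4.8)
The plan is to show that a liftable Lie algebra homomorphism $\theta$ determines its lift $\varphi$ uniquely by reconstructing $\varphi$ from $\theta$ on generators of $C(\aqp)$. The key point is that $C(\aqp)$ is generated as an $F$-algebra by the image $c(A)$ of the canonical map, and in fact — via the Clifford Lie algebra $\clie(\aqp)=c(A)$ — by any set of elements whose image spans $\clie(\aqp)$. Since $\clie(\aqp)\subset\xclie(\aqp)$ and $\varphi$ restricted to $\xclie(\aqp)$ is a Lie algebra homomorphism $\xclie(\aqp)\to\go(\aqp')$, it suffices to show that $\theta$ determines $\varphi|_{\xclie(\aqp)}$, and then invoke the generation statement together with the fact that $\varphi$ is an $F$-algebra homomorphism.

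First I would record the structure of diagram~\eqref{eq:commdiagxcliehomo}: the top row is the exact sequence $0\to Z\to\xclie(\aqp)\xrightarrow{\dot\chi'}\pgo(\aqp)\to 0$ from Proposition~\ref{prop:xclie2new}, the bottom row is $0\to F\to\go(\aqp')\to\pgo(\aqp')\to 0$, the left vertical map is $\pi_\pm$ (restriction to $Z$ of $\varphi$, which is the projection onto the $\pm$ component followed by the identification $F z_\pm\cong F$), and $\theta$ is the induced map on $\pgo$. Suppose $\varphi_1,\varphi_2$ are two lifts of $\theta$ of signs $\epsilon_1,\epsilon_2\in\{+,-\}$. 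Then on $\xclie(\aqp)$ both $\varphi_1,\varphi_2$ sit in diagrams of the shape~\eqref{eq:commdiagxcliehomo} over the same $\theta$. For $\xi\in\xclie(\aqp)$ we have $\dot\chi_0(\varphi_i(\xi))\bmod F = \theta(\dot\chi'(\xi))$, so $\varphi_1(\xi)$ and $\varphi_2(\xi)$ have the same image in $\pgo(\aqp')$, i.e.\ $\varphi_1(\xi)-\varphi_2(\xi)\in F$ for every $\xi\in\xclie(\aqp)$. In particular $\varphi_1-\varphi_2$ restricted to $\clie(\aqp)=c(A)$ takes values in $F$.

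The main obstacle is to upgrade ``$\varphi_1$ and $\varphi_2$ agree modulo scalars on $c(A)$'' to ``$\varphi_1=\varphi_2$ on $c(A)$'', and then to ``$\varphi_1=\varphi_2$'' on all of $C(\aqp)$. For the second implication: once $\varphi_1$ and $\varphi_2$ agree on $c(A)$ they agree on the subalgebra generated by $c(A)$, which is all of $C(\aqp)$, so they are equal as algebra homomorphisms; in particular they have the same sign. For the first implication I would argue as follows. The map $\lambda:=\varphi_1-\varphi_2\colon c(A)\to F$ is $F$-linear; compose with $c\colon A\to\clie(\aqp)$ to get an $F$-linear form $\lambda\circ c\colon A\to F$ whose kernel contains $\ker(c)=\ker(\strf)$. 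The crucial constraint comes from the fact that both $\varphi_i$ are algebra homomorphisms, so for $a,b\in A$ one has $\varphi_i(c(a)c(b))=\varphi_i(c(a))\varphi_i(c(b))$; expanding $\varphi_1(c(a)c(b))-\varphi_2(c(a)c(b))$ and using $\varphi_i(c(a))=\varphi_2(c(a))+\lambda(c(a))$ etc.\ yields a relation forcing, for products $c(a)c(b)$ that happen to lie in $c(A)$ again (e.g.\ because $c(A)$ is a Lie subalgebra, the brackets $[c(a),c(b)]=c([\dot\chi(\,\cdot\,),\,\cdot\,])$ lie in $c(A)$), that $\lambda([c(a),c(b)])=0$; since such brackets span $\so(\aqp)$ (equivalently $c$ of $\Alt(\sigma)$, essentially all of $c(A)$ except a one-dimensional piece in characteristic~$2$), $\lambda$ is supported on at most a one-dimensional subspace. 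A cleaner route, and the one I would actually write, is to use the multiplier: $\varphi_i\circ\underline{\dot\mu}=$ (on $\clie(\aqp)$, which is where $\underline{\dot\mu}$ is $F$-valued) $\dot\mu\circ\dot\chi_0\circ\varphi_i$, and the multiplier data is fixed by $\theta$ together with the sign; comparing $\underline{\dot\mu}$ values pins down the scalar $\lambda$ on the line complementary to $\spin(\aqp)$, while on $\spin(\aqp)=c(\ker\Trd)$ one uses that $\varphi_i$ maps $\spin(\aqp)$ into $\orth(\aqp')=\ker(\dot\mu)$, so no scalar ambiguity is possible there because $\dot\chi'$ restricted to $\spin(\aqp)$ has image with small kernel (Corollary~\ref{corol:spinpgo}). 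Either way, once $\lambda\equiv 0$ on $c(A)$ we conclude $\varphi_1=\varphi_2$. The characteristic~$2$ case is the delicate one, because there $F\subset\Alt(\sigma)$ and $F\subset\spin(\aqp)$, so the dimension counts leave exactly the room where a spurious scalar could hide; handling it requires carefully tracking the extra row $0\to F\to\clie(\aqp)\to\ldots$ in~\eqref{eq:commdiagspinchar2} and using that $\varphi_i$ respects $\underline{\dot\mu}$, so that the scalar $\varphi_i$ assigns to $c(1)=1$ (namely $1$) is forced.
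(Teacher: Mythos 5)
Your core observation is the same as the paper's: since $\varphi_1$ and $\varphi_2$ induce the same $\theta$, the difference $\lambda:=\varphi_1-\varphi_2$ takes values in $F$ on $\xclie(\aqp)$, and because scalars are central this forces $\varphi_1$ and $\varphi_2$ to agree on every bracket $[\xi_1,\xi_2]$ with $\xi_1,\xi_2\in\xclie(\aqp)$. From there, though, you pivot to trying to show $\lambda\equiv 0$ on all of $c(A)$, and this detour has a genuine gap in characteristic~$2$ that your two suggested patches do not close. The multiplier comparison $\dot\mu(\varphi_1(\xi))=\underline{\dot\mu}(\xi)=\dot\mu(\varphi_2(\xi))$ only yields $\dot\mu(\lambda(\xi))=0$, and since $\dot\mu$ restricted to $F\subset\go(\aqp')$ is multiplication by~$2$, this is vacuous in characteristic~$2$. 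The fallback ``$\varphi_i(c(1))=1$ is forced'' rests on $c(1_A)=1_{C(\aqp)}$, which is false: under the standard identification $1_{\End V}=\sum_i(e_i\otimes e'_i+e'_i\otimes e_i)$, so $c(1_A)=\sum_i b(e_i,e'_i)=m\cdot 1_{C(\aqp)}$, which is $0$ in characteristic~$2$ whenever $4\mid m$ --- and that is exactly the case of interest here. Moreover $1_{C(\aqp)}\in\spin(\aqp)$ in characteristic~$2$, so knowing $\lambda(1)=0$ does not reach the line you are worried about.

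The cleaner resolution --- and the one the paper uses --- is to notice that you never need $\lambda\equiv 0$ on all of $c(A)$. After extending scalars so that $\aqp=(\End V,\sigma_b,\strf_q)$ with $q$ hyperbolic, the products $e_ie_j$, $e_ie'_j$, $e'_ie_j$, $e'_ie'_j$ for $i\ne j$ generate $C_0(V,q)$ as an $F$-algebra, and each such generator is itself a bracket of two elements of $\clie(q)$, e.g.\ $[e_ie_j,\,e'_je_j]=e_ie_j$. Since $\varphi_1$ and $\varphi_2$ agree on brackets and those brackets generate the algebra, $\varphi_1=\varphi_2$ outright, with no need to pin down the scalar on a complementary line, and no case split on the characteristic. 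I would recommend restructuring your proof around this explicit generating set rather than attempting to force $\lambda$ to vanish everywhere on $c(A)$.
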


\begin{proof}
  It suffices to prove the theorem after scalar extension. We may
  therefore assume $\aqp=(\End V, \sigma_b, \strf_q)$ for some
  hyperbolic quadratic space $(V,q)$ of dimension~$2m$. We use the
  standard identification 
  $V\otimes V=\End V$ set up in \S\ref{subsec:quadpair}.
  
  Since $q$ is hyperbolic, by decomposing $V$ into an orthogonal sum
  of hyperbolic planes we may find a base $(e_i,e'_i)_{i=1}^m$ of $V$
  such that
  \[
    q(e_i)=q(e'_i)=b(e_i,e_j) = b(e'_i,e'_j)=0\qquad\text{for all $i$,
      $j=1$, \ldots, $m$}
\]
and
\[
b(e_i,e'_j)=
\begin{cases}
  1&\text{if $i=j$},\\
0&\text{if $i\neq j$.}
\end{cases}
\]
The products $e_ie_j$, $e_ie'_j$, $e'_ie_j$, $e'_ie'_j$ for $i$,
$j=1$, \ldots, $m$ span $V\cdot V\subset C_0(V,q)$, hence they
generate $C_0(V,q)$ as an $F$-algebra. Since $q(e_i)=q(e'_i)=0$ for
all $i$, we do not need to count $e_ie_j$ nor $e'_ie'_j$ among the
generators if $i=j$. Moreover, $e_je'_j+e'_je_j=b(e_j,e'_j)=1$ for all
$j$, hence if $i\neq j$
\[
  e_ie'_i=e_i(e_je'_j+e'_je_j)e'_i = (e_ie_j)(e'_je'_i) +
  (e_ie'_j)(e_je'_i)
\]
and similarly
\[
  e'_ie_i=e'_i(e_je'_j+e'_je_j)e_i = (e'_ie_j)(e'_je_i) +
  (e'_ie'_j)(e_je_i).
\]
These equations show that $e_ie'_i$ and $e'_ie_i$ lie in the
subalgebra of $C_0(V,q)$ generated by $e_ke_\ell$, $e_ke'_\ell$,
$e'_ke_\ell$, $e'_ke'_\ell$ for all $k\neq\ell$ in
$\{1,\ldots,m\}$. Therefore, these elements generate $C_0(V,q)$.

Consequently, if $\varphi_1$, $\varphi_2\colon C_0(V,q)\to A'$ are two
lifts of a given
$\theta\colon\pgo(\aqp)\to\pgo(\aqp')$, it suffices
to prove that $\varphi_1$ and $\varphi_2$ coincide on $e_ke_\ell$,
$e_ke'_\ell$, $e'_ke_\ell$, $e'_ke'_\ell$ for all $k\neq\ell$ in
$\{1,\ldots,m\}$ to conclude that $\varphi_1=\varphi_2$. This is what
we proceed to show.

The condition that $\varphi_1$ and $\varphi_2$ induce the same
$\theta$ means that $\varphi_1(\xi)-\varphi_2(\xi)\in F$ for all
$\xi\in\xclie(q)$, hence
\[
  \varphi_1([\xi_1,\xi_2]) = \varphi_2([\xi_1,\xi_2])
  \qquad\text{for all $\xi_1$, $\xi_2\in\xclie(q)$}.
\]
We apply this to $\xi_1=c(u_1\otimes v_1)=u_1v_1$ and
$\xi_2=c(u_2\otimes v_2)=u_2v_2\in\clie(q)\subset\xclie(q)$ for $u_1$,
$u_2$, $v_1$, $v_2\in V$. If $i\neq j$, we have
\[
  [e_ie_j,e'_je_j] = e_ie_je'_je_j - e'_je_je_ie_j.
\]
Since $e_i$ and $e_j$ anticommute and $e_j^2=0$, the second term on
the right side vanishes. In the first term, we may substitute
$1-e'_je_j$ for $e_je'_j$  and use $e_j^2=0$ to obtain
\[
  [e_ie_j,e'_je_j] = e_i(1-e'_je_j)e_j = e_ie_j.
\]
Similar computations yield for all $i\neq j$ in $\{1,\ldots,
m\}$ 
\[
  [e_ie'_j, e_j e'_j]= e_ie'_j,\qquad
  [e'_ie_j,e'_j e_j]=e'_ie_j, \qquad
  [e'_i e'_j,e_j e'_j]=e'_ie'_j.
\]
Since $\varphi_1$ and $\varphi_2$ take the same value on each
$[\xi_1,\xi_2]$ for $\xi_1$, $\xi_2\in\xclie(q)$, it follows that
$\varphi_1$ and $\varphi_2$ coincide on each $e_ie_j$, $e_ie'_j$,
$e'_ie_j$ and $e'_ie'_j$ for $i\neq j$, hence $\varphi_1=\varphi_2$.
\end{proof}

\begin{corol}
  \label{corol:trial3}
  Let $\theta\colon\pgo(\aqp)\to\pgo(\aqp')$ be a
  homomorphism of Lie algebras and let $K$ be a Galois field
  extension of $F$. If $\theta_K\colon \pgo(\aqp)_K\to
  \pgo(\aqp')_K$ is liftable, then $\theta$ is liftable.
\end{corol}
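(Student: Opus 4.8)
The plan is to descend the lift from $K$ to $F$ using a Galois-cohomology argument, built on the uniqueness in Theorem~\ref{thm:lift}. Suppose $\theta_K$ is liftable, so there is a homomorphism of algebras with quadratic pair $\varphi_K\colon\Clqp(\aqp)_K\to\aqp'_K$ inducing $\theta_K$, say of sign~$+$ (the other case is symmetric, and replacing the polarization of $\aqp$ swaps the two signs, so we may fix one). For each $\gamma\in\Gal(K/F)$ we twist $\varphi_K$ by $\gamma$ to obtain another $F$-algebra homomorphism ${}^\gamma\varphi_K\colon C(\aqp)_K\to A'_K$, explicitly ${}^\gamma\varphi_K=\gamma\circ\varphi_K\circ\gamma^{-1}$, using that $C(\aqp)$, $\underline\sigma$, $\underline\strf$, and $\aqp'$ are all defined over $F$ so that $\gamma$ acts on $C(\aqp)_K$ and on $A'_K$. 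Because $\theta$ itself is defined over $F$, the homomorphism ${}^\gamma\varphi_K$ also induces $\theta_K$ on $\pgo(\aqp)_K$, and it has the same sign as $\varphi_K$ (the primitive central idempotents $z_\pm$ of $C(\aqp)$ lie in a subalgebra defined over $F$ when the discriminant is trivial, hence are permuted among themselves but, having been labelled, are individually $\gamma$-stable up to the ambiguity that the sign convention absorbs).

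The next step is to invoke Theorem~\ref{thm:lift}: since ${}^\gamma\varphi_K$ and $\varphi_K$ are both lifts of $\theta_K$ over $K$ of the same sign, uniqueness of the lift forces ${}^\gamma\varphi_K=\varphi_K$ for every $\gamma\in\Gal(K/F)$. Thus $\varphi_K$ is $\Gal(K/F)$-equivariant, so by Galois descent for morphisms of finite-dimensional $F$-algebras it is the scalar extension of an $F$-algebra homomorphism $\varphi\colon C(\aqp)\to A'$. The descended map $\varphi$ automatically satisfies $\varphi\circ\underline\sigma=\sigma'\circ\varphi$ and $\varphi(\underline\strf(s))=\strf'(\varphi(s))$ for $s\in\Sym(\underline\sigma)$, because these identities hold after the faithfully flat base change to $K$ and all the data involved ($\underline\sigma$, $\underline\strf$, $\sigma'$, $\strf'$, the subspace $\Sym(\underline\sigma)$) are defined over $F$. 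Hence $\varphi$ is a homomorphism of algebras with quadratic pair $\Clqp(\aqp)\to\aqp'$. Finally, the Lie algebra homomorphism induced by $\varphi$ agrees with $\theta$ after extension to $K$, and since $\pgo(\aqp)\hookrightarrow\pgo(\aqp)_K$, it agrees with $\theta$ over $F$; therefore $\theta$ is liftable, with lift $\varphi$.

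The main obstacle is the bookkeeping around the sign and the polarization: one must make sure that twisting by $\gamma$ does not inadvertently turn a $+$-lift into a $-$-lift, which would block the direct application of Theorem~\ref{thm:lift}. The clean way to handle this is to observe that the polarization of $\aqp$ (a designation of $z_+$, $z_-$ among the primitive central idempotents of $C(\aqp)$) is an $F$-rational piece of data precisely because the discriminant of $(\sigma,\strf)$ is trivial, so the center $Z$ of $C(\aqp)$ is $F\times F$ and the idempotents are $F$-rational; hence $\gamma$ fixes each of $z_+$, $z_-$, and ${}^\gamma\varphi_K$ has the same sign as $\varphi_K$ by inspection of $\varphi_K(z_\pm)$. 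With that settled, everything else is routine faithfully-flat descent, so no further difficulty is expected.
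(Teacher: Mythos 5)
Your proof is correct and follows essentially the same route as the paper: twist the lift $\varphi_K$ by a Galois automorphism, observe that the twist still lifts $\theta_K$ since $\theta$ is defined over $F$, apply uniqueness of the lift (Theorem~\ref{thm:lift}) to conclude that $\varphi_K$ is Galois-equivariant, and descend. One remark: the entire second paragraph of your proposal, devoted to checking that $\gamma$ does not flip the sign of the lift, is unnecessary. Theorem~\ref{thm:lift} asserts that the lift of a liftable $\theta$ is unique \emph{tout court}, with no qualification about signs --- indeed the whole point of stating it is to show that the sign is well-defined. So once you know ${}^\gamma\varphi_K$ and $\varphi_K$ both lift $\theta_K$, you can immediately conclude ${}^\gamma\varphi_K=\varphi_K$ without any sign bookkeeping. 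Your sign analysis is accurate (the primitive central idempotents $z_\pm$ are indeed $F$-rational because the discriminant is trivial), but it solves a problem that the uniqueness theorem has already solved for you.
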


\begin{proof}
  Let $\varphi\colon C(\aqp)_K\to A'_K$ be the lift of
  $\theta_K$, and let $\rho$ be an element of the Galois group of
  $K/F$. Then
  $(\Id_{A'}\otimes\rho)\circ
  \varphi\circ(\Id_{C(\aqp)}\otimes\rho^{-1})\colon  
  C(\aqp)_K\to A'_K$ is a lift of
  $(\Id_{\pgo(\aqp')}\otimes\rho)\circ\theta_K\circ
  (\Id_{\pgo(\aqp)}\otimes\rho^{-1})=\theta_K$, 
  hence, by uniqueness of the lift,
  \[
    (\Id_{A'}\otimes\rho)\circ\varphi
    \circ(\Id_{C(\aqp)}\otimes\rho^{-1})=\varphi.
  \]
  Therefore, $\varphi\rvert_{C(\aqp)}$ maps
  $C(\aqp)$ to $A'$; it lifts $\theta$ since $\varphi$ lifts
  $\theta_K$.  
\end{proof}

\section{Compositions of quadratic spaces}
\label{chap:compospaces}

This section introduces the notion of a composition of quadratic
spaces.
We emphasize an important feature of compositions, which will be
central to the definition of trialitarian automorphisms in the next
section: each composition gives rise to two other compositions on the
quadratic spaces cyclically permuted.
Restricting to the case where the quadratic spaces have the
same finite dimension, we show that this dimension is~$1$, $2$, $4$ or
$8$, the comparatively trivial case of dimension~$1$ arising only when
the characteristic of the base field is different from~$2$. In order
to prove this fairly classical result we set up isomorphisms of
algebras with involution or with quadratic pair involving Clifford
algebras. In dimension~$8$, these isomorphisms will provide in the
next section examples of trialitarian triples of split algebras. In
\S\ref{subsec:simiso} we investigate similitudes and isometries of
compositions of quadratic spaces, which define algebraic groups that
are close analogues of those attached to quadratic spaces.

Even though the quadratic spaces in a composition are not necessarily
isometric, it is easy to see that every composition of quadratic
spaces is similar to a composition of \emph{isometric} quadratic
spaces (see Proposition~\ref{prop:simtoPfistercomp}).
The focus in the last two subsections is on this type of
compositions. Using a related notion of composition of \emph{pointed}
quadratic spaces, we
show in \S\ref{subsec:pointedcomp} 
that every composition of isometric quadratic spaces is isomorphic to
its derivatives and also to
a composition that is its own derivative, and in
\S\ref{subsec:compalg} we discuss compositions of quadratic spaces
arising from the classical notion of composition algebra. To each
composition algebra is associated a composition of isometric quadratic
spaces, and isotopies of composition algebras are shown in
Theorem~\ref{thm:isot} to be similitudes of the associated
compositions of quadratic spaces.
\medbreak

Throughout this section, $F$ is an arbitrary field. Unless explicitly
specified, there is no restriction on its characteristic $\charac F$.

\subsection{Composition maps and their cyclic derivatives}
\label{subsec:compmap}

Let $(V_1,q_1)$, $(V_2, q_2)$, $(V_3,q_3)$ be (finite-dimensional)
quadratic spaces over $F$. Write $b_1$, $b_2$,
$b_3$ for the associated polar bilinear forms
\[
  b_i\colon V_i\times V_i\to F, \qquad
  b_i(x_i,y_i) = q_i(x_i+y_i) - q_i(x_i) - q_i(y_i) \quad\text{for
    $i=1$, $2$, $3$}.
\]
We assume throughout that the forms $b_1$, $b_2$, $b_3$ are
nonsingular, hence each $\dim V_i$ is even if $\charac F=2$, and we
may use the polar forms to identify each $V_i$ with its dual
$V_i^*$. Bilinear maps $V_1\times V_2\to V_3$ are then identified with
tensors in $V_3\otimes V_2\otimes V_1$, so that for $v_i\in V_i$ the
tensor $v_3\otimes v_2\otimes v_1$ is regarded as the bilinear map
\[
  V_1\times V_2 \to V_3, \qquad (x_1,\,x_2)\mapsto v_3\,b_2(v_2,x_2)\,
  b_1(v_1,x_1).
\]
Let
\[
  \partial\colon V_3\otimes V_2\otimes V_1 \to V_1\otimes V_3 \otimes
  V_2
  \qquad\text{and}\qquad
  \partial^2\colon V_3\otimes V_2\otimes V_1 \to V_2\otimes V_1\otimes
  V_3
\]
be the isomorphisms that permute the tensor factors cyclically. These
maps allow us to derive bilinear maps $V_2\times V_3\to V_1$ and
$V_3\times V_1\to V_2$ from a given bilinear map $V_1\times V_2\to 
V_3$. In our notation, bilinear maps are adorned with the same index
as the target space.

\begin{prop}
  \label{prop:defnder}
  Let $*_3\colon V_1\times V_2\to V_3$ be a bilinear map, and let
  $*_1=\partial(*_3)$ and $*_2=\partial^2(*_3)$ be the derived maps
  \[
    *_1\colon V_2\times V_3\to V_1,\qquad
    *_2\colon V_3\times V_1\to V_2.
  \]
  The maps $*_1$ and $*_2$ are uniquely determined by the following
  property: for all $x_1\in V_1$, $x_2\in V_2$, $x_3\in V_3$,
  \begin{equation}
    \label{eq:comp45}
    b_1(x_1,\,x_2*_1x_3) = b_2(x_2,\,x_3*_2x_1) = b_3(x_3,\,x_1*_3x_2).
  \end{equation}
\end{prop}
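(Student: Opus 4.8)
The plan is to show that equation~\eqref{eq:comp45} determines $*_1$ and $*_2$ and that the maps defined by $\partial$ and $\partial^2$ satisfy it. First I would record the key identification: under the nonsingular polar forms, a bilinear map $V_1\times V_2\to V_3$ is the same datum as a tensor in $V_3\otimes V_2\otimes V_1$, and the pairing that recovers the map from the tensor is precisely the one displayed before the proposition, namely $(v_3\otimes v_2\otimes v_1)(x_1,x_2)=v_3\,b_2(v_2,x_2)\,b_1(v_1,x_1)$. The point is that this same tensor, read off against a third vector $x_3\in V_3$ via $b_3$, produces a scalar $b_3\bigl(x_3,(v_3\otimes v_2\otimes v_1)(x_1,x_2)\bigr)=b_3(x_3,v_3)\,b_2(x_2,v_2)\,b_1(x_1,v_1)$, which is manifestly invariant under cyclic permutation of the roles of $(x_1,v_1)$, $(x_2,v_2)$, $(x_3,v_3)$.

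Next I would make this precise. Write $t\in V_3\otimes V_2\otimes V_1$ for the tensor corresponding to $*_3$, so $\partial(t)\in V_1\otimes V_3\otimes V_2$ corresponds to $*_1$ and $\partial^2(t)\in V_2\otimes V_1\otimes V_3$ corresponds to $*_2$. By trilinearity it suffices to check \eqref{eq:comp45} for a pure tensor $t=v_3\otimes v_2\otimes v_1$, in which case $\partial(t)=v_1\otimes v_3\otimes v_2$ and $\partial^2(t)=v_2\otimes v_1\otimes v_3$. Then $x_2*_1x_3=v_1\,b_3(v_3,x_3)\,b_2(v_2,x_2)$, so $b_1(x_1,x_2*_1x_3)=b_1(x_1,v_1)\,b_2(v_2,x_2)\,b_3(v_3,x_3)$; similarly $b_2(x_2,x_3*_2x_1)=b_2(x_2,v_2)\,b_1(v_1,x_1)\,b_3(v_3,x_3)$ and $b_3(x_3,x_1*_3x_2)=b_3(x_3,v_3)\,b_2(v_2,x_2)\,b_1(v_1,x_1)$. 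Using symmetry of each $b_i$ (they are polar forms, hence symmetric—and in characteristic $2$ also symmetric, being alternating), all three expressions equal the single product $b_1(v_1,x_1)\,b_2(v_2,x_2)\,b_3(v_3,x_3)$, establishing \eqref{eq:comp45}.

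For uniqueness, I would argue that the value of $x_2*_1x_3\in V_1$ is pinned down by knowing $b_1(x_1,x_2*_1x_3)$ for all $x_1\in V_1$, because $b_1$ is nonsingular; and \eqref{eq:comp45} prescribes exactly this quantity in terms of $*_3$. The same applies to $*_2$ via nonsingularity of $b_2$. Hence any bilinear map satisfying the stated identity must agree with $\partial(*_3)$, respectively $\partial^2(*_3)$.

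The only mild subtlety—rather than a genuine obstacle—is bookkeeping with the identifications $V_i\simeq V_i^*$ and making sure the cyclic permutation $\partial$ on tensors matches the cyclic rotation of the roles of the three variables in \eqref{eq:comp45}; once the pure-tensor computation above is written out this is transparent. In particular no hypothesis on $\charac F$ or on the dimensions is needed here: only nonsingularity and symmetry of the polar forms are used.
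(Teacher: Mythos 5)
Your proof is correct and follows the same route as the paper: reduce to a pure tensor $v_3\otimes v_2\otimes v_1$ by linearity, compute that all three sides of \eqref{eq:comp45} equal $b_1(v_1,x_1)\,b_2(v_2,x_2)\,b_3(v_3,x_3)$, and deduce uniqueness from nonsingularity of $b_1$ and $b_2$. The paper's proof is terser but makes the identical computation; your added remarks about symmetry of the polar forms and the bookkeeping of identifications are sound but not substantively different.
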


\begin{proof}
  Uniqueness is clear because the forms $b_1$ and $b_2$ are
  nonsingular. By linearity, it suffices to prove~\eqref{eq:comp45} in
  the case where $*_3=v_3\otimes v_2\otimes v_1$ for some $v_1\in
  V_1$, $v_2\in V_2$, $v_3\in V_3$. Then $*_1=v_1\otimes v_3\otimes
  v_2$ and $*_2=v_2\otimes v_1\otimes v_3$, and each of the terms
  in~\eqref{eq:comp45} is equal to $b_1(v_1,x_1)b_2(v_2,x_2)b_3(v_3,x_3)$.
\end{proof}

The bilinear maps of interest in this work satisfy the following
multiplicativity condition:

\begin{definition}
  \label{def:compmap}
  A \emph{composition map} $*_3\colon V_1\times V_2\to V_3$ is a
  bilinear map subject to
  \begin{equation}
    \label{eq:comp1}
    q_3(x_1*_3x_2) = q_1(x_1)q_2(x_2) \qquad\text{for $x_1\in V_1$ and
      $x_2\in V_2$.}
  \end{equation}
\end{definition}

Even though this notion makes sense---and has been studied for instance in
\cite[Chap.~14]{Shapiro}---when the dimensions of $V_1$, $V_2$ and
$V_3$ are not the same, we will always assume in the sequel
that $\dim V_1=\dim V_2=\dim V_3$.

\begin{prop}
  \label{prop:comp1}
  Let $*_3\colon V_1\times V_2\to V_3$ be a composition map, with
  $\dim V_1=\dim V_2=\dim V_3$.
  The derived bilinear maps $*_1$ and $*_2$ are composition maps, i.e., for
  all $x_1\in V_1$, $x_2\in V_2$, $x_3\in V_3$, 
  \begin{equation}
    \label{eq:compp0}
    q_1(x_2*_1x_3)=q_2(x_2)q_3(x_3) \qquad\text{and}\qquad
    q_2(x_3*_2x_1)=q_3(x_3)q_1(x_1).
  \end{equation}
  Moreover, the following relations hold for all $x_1$, $y_1\in V_1$,
  $x_2$, $y_2\in V_2$, $x_3$, $y_3\in V_3$:
  \begin{align}
    \label{eq:comp2}
    b_3(x_1*_3x_2, x_1*_3y_2) & = q_1(x_1) b_2(x_2,y_2),\\
    \label{eq:comp3}
    b_3(x_1*_3x_2,y_1*_3x_2) & = b_1(x_1,y_1) q_2(x_2),\\
    \label{eq:compp1}
    b_1(x_2*_1x_3,x_2*_1y_3) & = q_2(x_2)b_3(x_3,y_3),\\
    \label{eq:compp2}
    b_1(x_2*_1x_3,y_2*_1x_3) & = b_2(x_2,y_2)q_3(x_3),\\
    \label{eq:compp3}
    b_2(x_3*_2x_1,x_3*_2y_1) & = q_3(x_3)b_1(x_1,y_1),\\
    \label{eq:compp4}
    b_2(x_3*_2x_1,y_3*_2x_1) & = b_3(x_3,y_3)q_1(x_1),
  \end{align}
  \begin{align}
    \label{eq:compp5}
    (x_1*_3x_2)*_2x_1 =x_2q_1(x_1) \qquad&\text{and}\qquad
                                          x_2*_1(x_1*_3x_2)
                                          =x_1q_2(x_2),\\
    \label{eq:compp6}
    (x_2*_1x_3)*_3x_2 = x_3q_2(x_2) \qquad&\text{and}\qquad
                                            x_3*_2(x_2*_1x_3) =
                                            x_2q_3(x_3),\\
    \label{eq:compp7}
    (x_3*_2x_1)*_1x_3 = x_1q_3(x_3) \qquad&\text{and}\qquad
                                            x_1*_3(x_3*_2x_1) =
                                            x_3q_1(x_1),
  \end{align}
  \begin{align}
    \label{eq:compp5lin}
    (x_1*_3x_2)*_2y_1+(y_1*_3x_2)*_2x_1 & = x_2b_1(x_1,y_1),\\
    \label{eq:compp5linbis}
    x_2*_1(x_1*_3y_2)+y_2*_1(x_1*_3x_2) &= x_1b_2(x_2,y_2),\\
    \label{eq:compp6lin}
    (x_2*_1x_3)*_3y_2+(y_2*_1x_3)*_3x_2 & = x_3b_2(x_2,y_2),\\
    \label{eq:compp6linbis}
           x_3*_2(x_2*_1y_3)+y_3*_2(x_2*_1x_3) & = x_2b_3(x_3,y_3),\\
    \label{eq:compp7lin}
    (x_3*_2x_1)*_1y_3+(y_3*_2x_1)*_1x_3 & = x_1b_3(x_3,y_3),\\
    \label{eq:comp7linbis}
    x_1*_3(x_3*_2y_1)+y_1*_3(x_3*_2x_1) & = x_3b_1(x_1,y_1).
  \end{align}
\end{prop}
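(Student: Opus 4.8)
The plan is to derive every assertion from three facts: the adjunction \eqref{eq:comp45} of Proposition~\ref{prop:defnder} (which requires no hypothesis on dimensions), nonsingularity of the polar forms $b_1$, $b_2$, $b_3$, and the multiplicativity condition \eqref{eq:comp1} for $*_3$. At the outset I would reduce to the case of an infinite base field: every hypothesis is inherited by scalar extensions and each $V_i$ embeds into $V_i\otimes_F F'$, so an identity of (bi)linear maps holding over an overfield $F'\supseteq F$ holds over $F$, and I may therefore replace $F$ by an infinite overfield. The first step proper is to linearize \eqref{eq:comp1} in the second, resp.\ first, variable (replace $x_2$ by $x_2+y_2$, resp.\ $x_1$ by $x_1+y_1$, and cancel the pure terms), which produces the bilinear relations \eqref{eq:comp2} and \eqref{eq:comp3}.

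Next I would prove \eqref{eq:compp5}. For its first identity, specialize $x_3$ to $x_1*_3x_2$ in \eqref{eq:comp45}, so that for all $y_2\in V_2$
\[
  b_2\bigl(y_2,\,(x_1*_3x_2)*_2x_1\bigr)=b_3(x_1*_3x_2,\,x_1*_3y_2)=q_1(x_1)\,b_2(x_2,y_2)
\]
by \eqref{eq:comp2}; nonsingularity of $b_2$ then yields $(x_1*_3x_2)*_2x_1=x_2\,q_1(x_1)$. The second identity of \eqref{eq:compp5} comes out the same way from \eqref{eq:comp45} and \eqref{eq:comp3}. Note that no dimension hypothesis has entered so far.

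The heart of the matter is \eqref{eq:compp0}, and this is the only place where the hypothesis $\dim V_1=\dim V_2=\dim V_3$ is used. I would fix $x_1\in V_1$ with $q_1(x_1)\neq0$ and consider the linear map $\lambda\colon V_2\to V_3$, $x_2\mapsto x_1*_3x_2$: by \eqref{eq:comp1} it satisfies $q_3(\lambda(x_2))=q_1(x_1)q_2(x_2)$, and by \eqref{eq:comp2} it is injective, hence bijective, so it is a similitude $(V_2,q_2)\to(V_3,q_3)$ of multiplier $q_1(x_1)$ and $\lambda^{-1}$ is a similitude of multiplier $q_1(x_1)^{-1}$. The first identity of \eqref{eq:compp5} identifies $x_3\mapsto x_3*_2x_1$ with $q_1(x_1)\,\lambda^{-1}$, a similitude of multiplier $q_1(x_1)$, whence $q_2(x_3*_2x_1)=q_3(x_3)q_1(x_1)$ for all $x_3\in V_3$ as soon as $q_1(x_1)\neq0$. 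Since $b_1$ is nonsingular we have $q_1\neq0$, so $\{x_1\in V_1\mid q_1(x_1)\neq0\}$ is a nonempty Zariski-open subset of $V_1$; as $F$ is infinite, the polynomial identity $q_2(x_3*_2x_1)=q_3(x_3)q_1(x_1)$ then holds unconditionally. The symmetric argument, using the second identity of \eqref{eq:compp5} and the map $x_1\mapsto x_1*_3x_2$ (injective by \eqref{eq:comp3}), gives $q_1(x_2*_1x_3)=q_2(x_2)q_3(x_3)$.

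Once \eqref{eq:compp0} is in hand, $*_1$ and $*_2$ are composition maps, and because $\partial(*_1)=*_2$ and $\partial^2(*_1)=*_3$ the triple $\bigl((q_1,*_1),(q_2,*_2),(q_3,*_3)\bigr)$ is carried to itself by the cyclic permutation of indices together with $\partial$. Every remaining relation is then the image under this symmetry of one already established: linearizing \eqref{eq:compp0} gives \eqref{eq:compp1}--\eqref{eq:compp4} exactly as \eqref{eq:comp2}, \eqref{eq:comp3} followed from \eqref{eq:comp1}; replaying the proof of \eqref{eq:compp5} with $*_1$, resp.\ $*_2$, in the role of $*_3$ gives \eqref{eq:compp6}, resp.\ \eqref{eq:compp7}; and linearizing \eqref{eq:compp5}, \eqref{eq:compp6}, \eqref{eq:compp7} in their repeated variables gives \eqref{eq:compp5lin}--\eqref{eq:comp7linbis}. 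The one genuine obstacle is \eqref{eq:compp0}: upgrading the injective similitude $\lambda$ to a bijective one is precisely where the equal-dimension hypothesis is needed, and once past that point everything is routine manipulation with \eqref{eq:comp45}, nonsingularity of the polar forms, and linearization.
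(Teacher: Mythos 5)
Your argument is correct, and its skeleton coincides with the paper's: linearize \eqref{eq:comp1} to obtain \eqref{eq:comp2}, \eqref{eq:comp3}; derive \eqref{eq:compp5} from \eqref{eq:comp45} and nonsingularity of the polar forms; establish \eqref{eq:compp0} by observing that one-sided multiplication by an anisotropic vector is injective, hence bijective because the three spaces have equal dimension; then propagate everything else by cyclic symmetry. Where you genuinely diverge is in passing from anisotropic to arbitrary vectors in \eqref{eq:compp0}. The paper decomposes an isotropic $x_2$ as a sum $y_2+z_2$ of two anisotropic vectors, proves the auxiliary bilinear identity \eqref{eq:comp6} en route, and expands $q_1\bigl((y_2+z_2)*_1x_3\bigr)$ directly; this is elementary, but the existence of such a decomposition needs care in dimension $2$ over small fields (the paper's parenthetical hint only covers $\dim V_2>2$). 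Your route (extend scalars to an infinite field, prove the identity on the nonempty open set where the fixed vector is anisotropic, conclude by Zariski density) disposes of all corner cases uniformly, at the price of being less elementary. The one thing you assert without justification is that the composition-map hypothesis is stable under scalar extension. This is true, but not a formal triviality when $|F|=2$: a polynomial of bidegree $(2,2)$ is not determined by its values on $\mathbb{F}_2$-points. What closes it is that \eqref{eq:comp1} together with its linearizations \eqref{eq:comp2}, \eqref{eq:comp3} and their doubly-linearized consequence pin down every coefficient of $q_3(x_1*_3x_2)-q_1(x_1)q_2(x_2)$ over a basis and show that it vanishes as a polynomial, not merely as a function. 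Worth a sentence, but not a gap.
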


\begin{proof}
  First, \eqref{eq:comp2} and \eqref{eq:comp3} are obtained by
  linearizing~\eqref{eq:comp1}. By~\eqref{eq:comp45} and
  \eqref{eq:comp2} we have for $x_1\in V_1$ and $x_2$, $y_2\in V_2$
  \[
    b_2\bigl((x_1*_3x_2)*_2x_1,y_2\bigr) = b_3(x_1*_3x_2, x_1*_3y_2) =
    q_1(x_1)b_2(x_2,y_2).
  \]
  Since $b_2$ is nonsingular, it follows that $(x_1*_3x_2)*_2x_1 =
  x_2q_1(x_1)$. Similarly, \eqref{eq:comp45} and \eqref{eq:comp3} yield
  \[
    b_1\bigl(y_1,x_2*_1(x_1*_3x_2)\bigr) = b_3(y_1*_3x_2,x_1*_3x_2) =
    b_1(y_1,x_1)q_2(x_2) \quad\text{for all $y_1\in V_1$,}
  \]
  hence $x_2*_1(x_1*_3x_2)=x_1q_2(x_2)$. We thus
  obtain~\eqref{eq:compp5}; then \eqref{eq:compp5lin},
  \eqref{eq:compp5linbis} follow by linearization.

  The main part of the proof consists in
  proving~\eqref{eq:compp0}. For this, fix an anisotropic vector
  $x_2\in V_2$. The map $r_{x_2}\colon
  V_1\to V_3$ defined by $r_{x_2}(x_1) = x_1*_3x_2$ is injective, for
  $x_1*_3x_2=0$ implies $x_1=0$ by~\eqref{eq:compp5}. Since
  $\dim V_1=\dim V_3$ the map $r_{x_2}$ is also surjective, hence
  every $x_3\in V_3$ can be written as $x_3=x_1*_3x_2$ for some
  $x_1\in V_1$. Then by~\eqref{eq:compp5}
  \[
    x_2*_1x_3=x_2*_1(x_1*_3x_2)=x_1q_2(x_2),
  \]
  hence
  \[
    q_1(x_2*_1x_3)=q_1(x_1)q_2(x_2)^2.
  \]
  But since $x_3=x_1*_3x_2$ it follows from~\eqref{eq:comp1} that
  $q_3(x_3)=q_1(x_1)q_2(x_2)$, hence the right side of the last
  displayed equation can be rewritten as
  $q_2(x_2)q_3(x_3)$. We have thus proven
  $q_1(x_2*_1x_3)=q_2(x_2)q_3(x_3)$ when 
  $x_2$ is anisotropic. Moreover, by~\eqref{eq:compp5} we have for all
  $z_2\in V_2$
  \[
    b_1(x_2*_1x_3, z_2*_1x_3) = b_1\bigl(x_2*_1(x_1*_3x_2),
    z_2*_1(x_1*_3x_2)\bigr)= q_2(x_2)b_1\bigl(x_1,
    z_2*_1(x_1*_3x_2)\bigr).
  \]
  By~\eqref{eq:comp45} and \eqref{eq:comp2},
  \[
    b_1\bigl(x_1,z_2*_1(x_1*_3x_2)\bigr) = b_3(x_1*_3z_2, x_1*_3x_2)
    =q_1(x_1)b_2(z_2,x_2),
  \]
  hence, as $q_1(x_1)q_2(x_2)=q_3(x_3)$,
  \begin{equation}
    \label{eq:comp6}
    b_1(x_2*_1x_3,z_2*_1x_3)=b_2(x_2,z_2)q_3(x_3).
  \end{equation}

  Now, assume $x_2$ is isotropic. Pick anisotropic vectors $y_2$,
  $z_2\in V_2$ such that $x_2=y_2+z_2$. (If $\dim V_2>2$, we may pick
  any anisotropic $y_2$ orthogonal to $x_2$ and let $z_2=x_2-y_2$.) By
  the first part of the proof we have
  \[
    q_1(y_2*_1x_3)=q_2(y_2)q_3(x_3) \qquad\text{and}\qquad
    q_1(z_2*_1x_3)=q_2(z_2)q_3(x_3).
  \]
  Moreover, \eqref{eq:comp6} yields
  \[
    b_1(y_2*_1x_3,z_2*_1x_3)= b_2(y_2,z_2)q_3(x_3).
  \]
  Therefore,
  \begin{multline*}
    q_1(x_2*_1x_3) = q_1(y_2*_1x_3) + b_1(y_2*_1x_3, z_2*_1x_3)
    +q_1(z_2*_1x_3) \\
    = q_2(y_2)q_3(x_3) + b_2(y_2,z_2)q_3(x_3) + q_2(z_2)q_3(x_3) =
    q_2(x_2)q_3(x_3).
  \end{multline*}
  Thus, the equation $q_1(x_2*_1x_3)=q_2(x_2)q_3(x_3)$ is proved for
  all $x_2\in V_2$ and  $x_3\in V_3$. The proof of $q_2(x_3*_2x_1) =
  q_3(x_3)q_1(x_1)$ for all $x_3\in V_3$, $x_1\in V_1$ is similar,
  using bijectivity of the map $\ell_{x_1}\colon V_2\to V_3$ carrying
  $x_2$ to $x_1*_3x_2$ for $x_1$ anisotropic. This completes the proof
  of~\eqref{eq:compp0}, and \eqref{eq:compp1}, \eqref{eq:compp2},
  \eqref{eq:compp3}, \eqref{eq:compp4} follow by linearization.

  The same arguments that gave~\eqref{eq:compp5} from~\eqref{eq:comp2}
  and \eqref{eq:comp3} yield~\eqref{eq:compp6} from~\eqref{eq:compp1}
  and \eqref{eq:compp2}, and also \eqref{eq:compp7}
  from~\eqref{eq:compp3} and \eqref{eq:compp4}. The
  relations~\eqref{eq:compp6lin} and \eqref{eq:compp6linbis} (resp.\
  \eqref{eq:compp7lin} and \eqref{eq:comp7linbis}) are derived by
  linearizing \eqref{eq:compp6} (resp.\ \eqref{eq:compp7}).
\end{proof}

Our main object of study in this section is defined next.

\begin{definition}
  \label{def:compofqs}
  A \emph{composition of quadratic spaces} over $F$ is a $4$-tuple
  \[
    \Comp=\bigl((V_1,q_1),\,(V_2,q_2),\,(V_3,q_3),\,*_3\bigr)
  \]
  where $(V_1,q_1)$, $(V_2,q_2)$, $(V_3,q_3)$ are nonsingular
  quadratic spaces of the same dimension over $F$ and $*_3\colon
  V_1\times V_2\to V_3$ is a composition map. We write $\dim\Comp=n$
  if $\dim V_1=\dim V_2=\dim V_3=n$.

In view of Proposition~\ref{prop:comp1}, each composition of quadratic
spaces $\Comp$ yields
  \emph{derived compositions of quadratic spaces} $\partial\Comp$ and
  $\partial^2\Comp$ defined by
  \[
    \partial\Comp=\bigl((V_2,q_2),\,(V_3,q_3),\,(V_1,q_1),\,*_1\bigr)
  \]
  and
  \[
    \partial^2\Comp=\bigl((V_3,q_3),\,(V_1,q_1),\,(V_2,q_2),\,*_2\bigr).
  \]
  The composition maps $*_1$ and $*_2$ are called the \emph{derived
    composition maps} of $*_3$. Since $\partial$ is a cyclic operation
  of period~$3$, we have
\[
  \partial(\partial\Comp)=\partial^2\Comp,\qquad
  \partial^2(\partial\Comp)=\Comp=\partial(\partial^2\Comp),\qquad
  \partial^2(\partial^2\Comp)=\partial\Comp.
\]
\end{definition}

\begin{examples}
  \label{ex:compos}
  \begin{enumerate}
  \item[(1)]
    Let $A$ be either $F$, a quadratic \'etale $F$-algebra, a
    quaternion $F$-algebra or an octonion $F$-algebra, and let
    $n_A\colon A\to F$ be (respectively) the squaring map, the norm,
    the quaternion (reduced) norm or the octonion norm. Assuming
    $\charac F\neq2$ if $A=F$, we know from the properties of these
    algebras that
    multiplication in $A$ defines a composition of quadratic spaces
    \[
      \Comp=\bigl((A,n_A),\, (A,n_A),\, (A,n_A),\,*_3\bigr).
    \]
    This particular type of composition is discussed in
    \S\ref{subsec:compalg} in 
    relation with composition algebras. Note that if $A\neq F$ the
    derived composition maps $*_1$ and $*_2$ are \emph{not} simply
    given by the multiplication in $A$; see
    Proposition~\ref{prop:dercompunit}. 
  \end{enumerate}
  The following examples are obtained in relation with a Galois
  $F$-algebra $L$ with elementary abelian Galois group
  $\{1,\,\sigma_1,\,\sigma_2,\,\sigma_3\}$, i.e., an \'etale
  biquadratic $F$-algebra.
  \begin{enumerate}
  \item[(2)]
    Assume $\charac F\neq2$, and for $i=1$, $2$, $3$ let $V_i$ denote
    the following $1$-dimensional subspace of $L$:
    \[
      V_i=\{x_i\in L\mid \sigma_j(x_i)=-x_i\text{ for $j\neq i$}\}.
    \]
    Define $q_i\colon V_i\to F$ by $q_i(x_i)=x_i^2$. For $x_1\in V_1$
    and $x_2\in V_2$ we have $x_1x_2\in V_3$ and
    $(x_1x_2)^2=x_1^2x_2^2$, hence multiplication in $L$ defines a
    composition map $*_3\colon V_1\times V_2\to V_3$. The derived
    composition maps $*_1$ and $*_2$ are also given by the
    multiplication in $L$.
  \item[(3)]
    Let $A$ be a central simple $F$-algebra of degree~$4$ containing
    $L$. Assume $\charac F\neq2$ and $F$ contains an element $\zeta$
    such that $\zeta^2=-1$. For $i=1$, $2$, $3$, define
    \[
      V_i=\{x_i\in A\mid x_i\ell=\sigma_i(\ell)x_i\text{ for all
        $\ell\in L$}\}.
    \]
    The $F$-vector space $V_i$ has dimension~$4$ and carries a
    quadratic form $q_i$ given by $q_i(x_i)=\Trd_A(x_i^2)$, where
    $\Trd_A$ is the reduced trace. It is shown in~\cite{RST} that the
    following formula defines a composition map $*_3\colon V_1\times
    V_2\to V_3$:
    \[
      x_1*_3x_2 = (1+\zeta)x_1x_2+(1-\zeta)x_2x_1.
    \]
    The derived maps are given by similar formulas:
    \begin{align*}
      x_2*_1x_3 &= (1+\zeta)x_2x_3+(1-\zeta)x_3x_2\qquad\text{for
                  $x_2\in V_2$ and $x_3\in V_3$},\\
      x_3*_2x_1 &= (1+\zeta)x_3x_1+(1-\zeta)x_1x_3\qquad\text{for
                  $x_3\in V_3$ and $x_1\in V_1$}.
    \end{align*}
    A characteristic~$2$ version of these composition maps is given
    in~\cite{T}.
  \item[(4)]
    Compositions of dimension~$8$ from central simple algebras with
    symplectic involution of degree~$8$ are given in a similar way
    in~\cite{BGBT}. 
  \end{enumerate}
\end{examples}

\subsection{Canonical Clifford maps}
\label{subsec:Climaps}

Our goal in this subsection is to obtain structural information on the
quadratic spaces for which a composition exists. This information will
be derived from algebra homomorphisms defined on Clifford and even
Clifford algebras.

Throughout this subsection, we fix a composition of quadratic spaces
\[
  \Comp=\bigl((V_1,q_1),\,(V_2,q_2),\,(V_3,q_3),\,*_3\bigr)
\]
and we let $*_1$ and $*_2$ denote the derived composition maps of
$*_3$, as per Definition~\ref{def:compofqs}.
For each $x_1\in V_1$ we may consider two linear maps
\[
  \ell_{x_1}\colon V_2\to V_3, \quad x_2\mapsto x_1*_3x_2
  \qquad\text{and}\qquad
  r_{x_1}\colon V_3\to V_2,\quad x_3\mapsto x_3*_2x_1.
\]
By \eqref{eq:compp5} and \eqref{eq:compp7} we have
\[
  \ell_{x_1}\circ r_{x_1} = q_1(x_1)\Id_{V_3} \qquad\text{and}\qquad
  r_{x_1}\circ\ell_{x_1} = q_1(x_1)\Id_{V_2}.
\]
Therefore, the linear map
\[
  \alpha\colon V_1 \to \End(V_2\oplus V_3), \qquad
  x_1\mapsto
  \begin{pmatrix}
    0&r_{x_1}\\ \ell_{x_1}&0
  \end{pmatrix}
\]
extends to an $F$-algebra homomorphism defined on the Clifford algebra
$C(V_1,q_1)$: 
\[
  C(\alpha)\colon C(V_1,q_1) \to \End(V_2\oplus V_3).
\]
The image of the even Clifford algebra $C_0(V_1,q_1)$ lies in the
diagonal subalgebra, hence $C(\alpha)$ restricts to an $F$-algebra
homomorphism
\[
  C_0(\alpha)\colon C_0(V_1,q_1) \to (\End V_2)\times (\End V_3).
\]
We write $\tau_1$ for the involution on $C(V_1,q_1)$ that leaves every
vector in $V_1$ fixed, and $\tau_{01}$ for the restriction of $\tau_1$
to $C_0(V_1,q_1)$. We let $\sigma_{b_2\perp b_3}$
(resp.\ $\sigma_{b_2}$, resp.\ $\sigma_{b_3}$) denote the involution
on $\End(V_2\oplus V_3)$ (resp.\ $\End V_2$, resp.\ $\End V_3$)
adjoint to $b_2\perp b_3$ (resp.\ $b_2$, resp.\ $b_3$).

\begin{thm}
  \label{thm:dimcomp}
  The maps $C(\alpha)$ and $C_0(\alpha)$ are homomorphisms of algebras
  with involution
  \[
  \begin{array}{rl}
    C(\alpha)\colon& (C(V_1,q_1),\tau_1) \to
                     (\End(V_2\oplus V_3),\sigma_{b_2\perp b_3}),\\
    C_0(\alpha)\colon& (C_0(V_1,q_1),\tau_{01}) \to
    (\End V_2,\sigma_{b_2}) \times (\End V_3,\sigma_{b_3}).
  \end{array}
  \]
  Moreover, $\dim\Comp=1$, $2$, $4$ or $8$.
\end{thm}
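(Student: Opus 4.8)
The plan is to prove the two assertions separately; each reduces to a short structural observation once the relevant objects are in view.

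For the statement on involutions, I would argue that it suffices to check compatibility on generators. Since $V_1$ generates $C(V_1,q_1)$ as an $F$-algebra and both $C(\alpha)\circ\tau_1$ and $\sigma_{b_2\perp b_3}\circ C(\alpha)$ are $F$-linear \emph{anti}-homomorphisms $C(V_1,q_1)\to\End(V_2\oplus V_3)$, it is enough to verify $\sigma_{b_2\perp b_3}\bigl(\alpha(x_1)\bigr)=\alpha(x_1)$ for every $x_1\in V_1$. Writing $\alpha(x_1)$ as a block matrix with zero diagonal and off-diagonal blocks $r_{x_1}\colon V_3\to V_2$ and $\ell_{x_1}\colon V_2\to V_3$, a direct computation of its adjoint relative to $b_2\perp b_3$ (using that $V_2$ and $V_3$ are orthogonal in $V_2\perp V_3$) shows that $\sigma_{b_2\perp b_3}\bigl(\alpha(x_1)\bigr)$ is again block-antidiagonal, with off-diagonal blocks the $b$-adjoints of $\ell_{x_1}$ and of $r_{x_1}$; so the identity reduces to $b_3(x_1*_3x_2,x_3)=b_2(x_2,x_3*_2x_1)$, which is exactly~\eqref{eq:comp45} together with the symmetry of $b_3$. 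For $C_0(\alpha)$ I would simply restrict: it already lands in the block-diagonal subalgebra $(\End V_2)\times(\End V_3)$, this subalgebra is stable under $\sigma_{b_2\perp b_3}$, and — again because $V_2\perp V_3$ — the restriction of $\sigma_{b_2\perp b_3}$ to it is $\sigma_{b_2}\times\sigma_{b_3}$; since $\tau_{01}$ is the restriction of $\tau_1$, this gives the second compatibility.

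For the dimension bound, write $n=\dim\Comp$. Nonsingularity of the polar forms forces $n$ to be even when $\charac F=2$, so $n=1$ can occur only in characteristic different from~$2$. The assertion $n\in\{1,2,4,8\}$ is invariant under scalar extension and a composition base-changes to a composition, so I would extend scalars to an algebraic closure $\Falg$. Over $\Falg$ the even Clifford algebra $C_0(V_1,q_1)$ is a product of full matrix algebras all of the same size $M_d(\Falg)$ with $d=2^{\lfloor(n-1)/2\rfloor}$ (one factor if $n$ is odd, two equal factors if $n$ is even); consequently every unital module over $C_0(V_1,q_1)_{\Falg}$ has $\Falg$-dimension divisible by $d$. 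Now $C(\alpha)$, and hence its restriction $C_0(\alpha)$, is a unital algebra homomorphism (it sends $1$ to $\Id$ by the universal property of the Clifford algebra), so composing $C_0(\alpha)$ with the projection $(\End V_2)\times(\End V_3)\to\End V_2$ makes $V_2$ into a unital $C_0(V_1,q_1)_{\Falg}$-module of dimension $n$; therefore $d\mid n$, i.e. $2^{\lfloor(n-1)/2\rfloor}\mid n$. A one-line arithmetic check (the left-hand side exceeds $n$ as soon as $n\geq9$) then leaves exactly $n\in\{1,2,4,8\}$.

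The point requiring care is obtaining the \emph{sharp} divisibility $2^{\lfloor(n-1)/2\rfloor}\mid n$ rather than a weaker bound: this is precisely where one must use the \emph{even} Clifford algebra $C_0(V_1,q_1)$ together with the fact that $C_0(\alpha)$ takes values in the block-\emph{diagonal} subalgebra $(\End V_2)\times(\End V_3)$, so that $V_2$ (of dimension $n$), and not $V_2\oplus V_3$ (of dimension $2n$), is forced to carry a module structure; working only with $C(\alpha)\colon C(V_1,q_1)\to\End(V_2\oplus V_3)$ would give merely $2^{n/2}\mid 2n$, which does not exclude $n=6$. Everything else is routine. (That each of $1$, $2$, $4$, $8$ is actually realized follows from the multiplication maps of $F$, of a quadratic étale algebra, of a quaternion algebra, and of an octonion algebra, as in Examples~\ref{ex:compos}(1); only the exclusions are needed for the theorem as stated.)
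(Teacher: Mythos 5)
Your proof is correct and follows the paper quite closely in spirit, though with one small variation. For the involution assertion, you argue exactly as the paper does: reduce to checking symmetry of $\alpha(x_1)$ under $\sigma_{b_2\perp b_3}$ on the generating subspace $V_1$, and the block computation boils down to equation~\eqref{eq:comp45}, as in the paper. For the dimension bound, the paper splits into two cases: for $n$ even it works with $C(\alpha)$ and the $C(V_1,q_1)$-module $V_2\oplus V_3$, obtaining $2^{n/2}\mid 2n$; for $n$ odd it switches to $C_0(\alpha)$ and the $C_0(V_1,q_1)$-module $V_2$, obtaining $2^{(n-1)/2}\mid n$. You instead treat all parities uniformly by extending scalars to $\Falg$ and using $C_0(\alpha)$ throughout, obtaining $2^{\lfloor(n-1)/2\rfloor}\mid n$. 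This is a mild streamlining and is perfectly valid.

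However, your final aside contains an arithmetic error. You claim that working only with $C(\alpha)$ gives ``merely $2^{n/2}\mid 2n$, which does not exclude $n=6$.'' In fact $2^{6/2}=8$ does \emph{not} divide $2\cdot 6=12$, so $n=6$ \emph{is} excluded by that weaker-looking divisibility. Checking all cases: for $n$ even, $2^{n/2}\mid 2n$ holds exactly for $n\in\{2,4,8\}$, and the paper's argument via $C(\alpha)$ reaches the same conclusion as yours. The reason the paper passes to $C_0(\alpha)$ for $n$ odd is not to sharpen a bound in the even case, but because for $n$ odd the full Clifford algebra $C(V_1,q_1)$ is not central simple over $F$ (its center is a quadratic \'etale algebra), whereas $C_0(V_1,q_1)$ is. Your uniform use of $C_0$ sidesteps this without loss, but the justification you offer for it is mistaken. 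This error does not affect the validity of your proof — only the motivating remark.
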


\begin{proof}
  For the first part,
  it suffices to show that for $x_1\in V_1$,
  \[
    \sigma_{b_2\perp b_3}
    \begin{pmatrix}
      0&r_{x_1}\\ \ell_{x_1}&0
    \end{pmatrix}
    =
    \begin{pmatrix}
      0&r_{x_1}\\ \ell_{x_1}&0
    \end{pmatrix}.
  \]
  This amounts to proving that for $x_1\in V_1$, $x_2$, $y_2\in V_2$
  and $x_3$, $y_3\in V_3$
  \[
    b_2(x_3*_2x_1,y_2) + b_3(x_1*_3x_2,y_3) = b_2(x_2,y_3*_2x_1) +
    b_3(x_3,x_1*_3y_2),
  \]
  which follows from~\eqref{eq:comp45}.

  To determine the various options for $\dim\Comp$, observe that
  the map $C(\alpha)$ endows $V_2\oplus V_3$ with a structure of left 
  $C(V_1,q_1)$-module; similarly, $V_2$ and $V_3$ are left modules over
  $C_0(V_1,q_1)$ through $C_0(\alpha)$. This observation yields
  restrictions on the dimensions of $V_2$ and $V_3$, because the
  dimension of a left module over a central simple algebra $A$ is
  a multiple of $(\deg A)(\ind A)$, where $\deg A$ is the degree of
  $A$ and $\ind A$ is its (Schur) index.

  Let $n=\dim\Comp$. If $n$ is even, then $C(V_1,q_1)$ is a central
  simple $F$-algebra, 
  and $V_2\oplus V_3$ is a left module over $C(V_1,q_1)$ through
  $C(\alpha)$, hence $(\deg C(V_1,q_1))(\ind C(V_1,q_1))$ divides
  $2n$. Since $\deg C(V_1,q_1) =
  2^{n/2}$, it follows that $2^{n/2}$ divides $2n$, hence $n=2$, $4$ or
  $8$.

  If $n$ is odd, the even Clifford algebra
  $C_0(V_1,q_1)$ is central simple over $F$, and $V_2$ is a left
  module over $C_0(V_1,q_1)$ through $C_0(\alpha)$, hence $\dim V_2$
  is a multiple of $(\deg C_0(V_1,q_1))(\ind C_0(V_1,q_1))$. As $\deg
  C_0(V_1,q_1)=2^{(n-1)/2}$, this means that $2^{(n-1)/2} \ind
  C_0(V_1,q_1)$ divides~$n$. As $n$ is assumed to be odd, we must have
  $n=1$. 
\end{proof}

Mimicking the construction above, we attach to the derived
compositions $\partial\Comp$ and $\partial^2\Comp$ linear maps
\[
  \alpha'\colon V_2 \to \End(V_3\oplus V_1), \qquad
  x_2\mapsto
  \begin{pmatrix}
    0&r_{x_2}\\ \ell_{x_2}&0
  \end{pmatrix}
\]
and
\[
  \alpha''\colon V_3 \to \End(V_1\oplus V_2), \qquad
  x_3\mapsto
  \begin{pmatrix}
    0&r_{x_3}\\ \ell_{x_3}&0
  \end{pmatrix}.
\]
These maps yield homomorphisms
\begin{equation}
  \label{eq:alpha'}
  \begin{aligned}
  C(\alpha')& \colon (C(V_2,q_2),\tau_2)\to (\End(V_3\oplus V_1),
  \sigma_{b_3\perp b_1}), \\
  C_0(\alpha')& \colon (C_0(V_2,q_2), \tau_{02}) \to (\End V_3,
  \sigma_{b_3}) \times (\End V_1, \sigma_{b_1})
  \end{aligned}
\end{equation}
and
\begin{equation}
  \label{eq:alpha''}
  \begin{aligned}
  C(\alpha'')& \colon (C(V_3,q_3),\tau_3)\to (\End(V_1\oplus V_2),
  \sigma_{b_1\perp b_2}), \\
  C_0(\alpha'')& \colon (C_0(V_3,q_3), \tau_{03}) \to (\End V_1,
  \sigma_{b_1}) \times (\End V_2, \sigma_{b_2}).
  \end{aligned}
\end{equation}
\medbreak

We next take a closer look at compositions of the various degrees. If
$\dim \Comp=1$, then $\charac F\neq2$ since odd-dimensional quadratic
forms are singular in characteristic~$2$. If $q_1$ represents
$\lambda_1\in F^\times$ and $q_2$ represents $\lambda_2\in F^\times$,
then by multiplicativity $q_3$ represents $\lambda_1\lambda_2\in
F^\times$, hence also $(\lambda_1\lambda_2)^{-1}$. Thus in this case
there exist $\lambda_1$, $\lambda_2$, $\lambda_3\in F^\times$ such
that $\lambda_1\lambda_2\lambda_3=1$ and
\[
  q_1\simeq\qf{\lambda_1},\qquad q_2\simeq\qf{\lambda_2}, \qquad
  q_3\simeq\qf{\lambda_3},
\]
and $\qf{1}\perp q_1\perp q_2\perp q_3$ is a $2$-fold Pfister form. We
will mostly ignore this easy case. (See however
Example~\ref{ex:compos}(2).) 

\begin{prop}
  \label{prop:compdim2}
  Let $\dim\Comp=2$. There exists a $1$-fold Pfister form $n_\Comp$,
  uniquely determined up to isometry, and scalars $\lambda_1$,
  $\lambda_2$, $\lambda_3\in F^\times$ such that
  $\lambda_1\lambda_2\lambda_3=1$ and
  \[
    q_1\simeq\qf{\lambda_1}n_\Comp, \qquad
    q_2\simeq\qf{\lambda_2}n_\Comp, \qquad
    q_3\simeq\qf{\lambda_3}n_\Comp.
  \]
  The form $n_\Comp\perp q_1\perp q_2\perp q_3$ is a $3$-fold Pfister
  form canonically associated to $\Comp$ up to isometry.
\end{prop}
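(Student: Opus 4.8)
The plan is to exploit the algebra homomorphism $C_0(\alpha)\colon C_0(V_1,q_1)\to(\End V_2)\times(\End V_3)$ supplied by Theorem~\ref{thm:dimcomp}. When $\dim\Comp=2$, the even Clifford algebra $C_0(V_1,q_1)$ is a quadratic \'etale $F$-algebra $K_1$, namely $K_1\simeq F[X]/(X^2-X+\ldots)$ with discriminant equal to $\disc q_1$. The homomorphism $C_0(\alpha)$ makes $V_2$ and $V_3$ into $K_1$-modules, so $\dim_FV_i=2$ forces each $V_i$ to be a rank-one free $K_1$-module, whence $K_1\hookrightarrow\End V_i$ and $q_i$ becomes, up to a scalar, the norm form of $K_1$ twisted by the involution $\tau_{01}$; concretely $q_1\simeq\qf{\lambda_1}n_1$ where $n_1$ is the norm form of $K_1$ and $\lambda_1=q_1(e)$ for any anisotropic $e$. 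Running the same argument on the derived compositions $\partial\Comp$ and $\partial^2\Comp$ via $C_0(\alpha')$ and $C_0(\alpha'')$ in~\eqref{eq:alpha'},~\eqref{eq:alpha''} shows that the three quadratic \'etale algebras $C_0(V_i,q_i)$ all embed in a common $\End V_j$, hence are mutually isomorphic; call the common algebra $K$ and set $n_\Comp$ to be its norm form, a $1$-fold Pfister form. Uniqueness of $n_\Comp$ up to isometry is then clear since $K$ is determined by the discriminant (equivalently Arf invariant) of any one $q_i$, which is a similarity invariant of $q_i$, and similar forms of the same dimension have the same discriminant.

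Next I would pin down the scalars. From $q_i\simeq\qf{\lambda_i}n_\Comp$ we get $\disc q_i=\disc n_\Comp$ automatically (a $1$-fold Pfister form absorbs square scalar factors in its discriminant up to the fixed value), so the three $q_i$ really are scalar multiples of the \emph{same} $n_\Comp$. To get $\lambda_1\lambda_2\lambda_3=1$, pick an anisotropic $x_1\in V_1$ with $q_1(x_1)=\lambda_1$ and an anisotropic $x_2\in V_2$ with $q_2(x_2)=\lambda_2$: then $x_1*_3x_2\in V_3$ has $q_3(x_1*_3x_2)=q_1(x_1)q_2(x_2)=\lambda_1\lambda_2$ by~\eqref{eq:comp1}, and the map $x_1\mapsto x_1*_3x_2$ is a similitude $(V_1,q_1)\to(V_3,q_3)$ with multiplier $\lambda_2$ (by~\eqref{eq:comp2}), so $q_3\simeq\qf{\lambda_2}q_1\simeq\qf{\lambda_1\lambda_2}n_\Comp$; thus $\lambda_3$ and $\lambda_1\lambda_2$ differ by a square and we may simply \emph{choose} the representative $\lambda_3=(\lambda_1\lambda_2)^{-1}$ after scaling (any $q_i\simeq\qf{\lambda_i}n_\Comp$ also equals $\qf{\lambda_i c^2}n_\Comp$ for $c\in F^\times$, so the constraint $\lambda_1\lambda_2\lambda_3=1$ can be met by adjusting $\lambda_3$ within its square class). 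This is the step requiring a little care: one must observe that the $\lambda_i$ are only defined up to squares, so the product relation is a genuine normalization that is achievable, not an identity forced on arbitrary representatives.

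Finally I would identify $\phi:=n_\Comp\perp q_1\perp q_2\perp q_3$ as a $3$-fold Pfister form. Substituting the normalized values gives $\phi\simeq n_\Comp\otimes(\qf{1}\perp\qf{\lambda_1}\perp\qf{\lambda_2}\perp\qf{\lambda_3})$ wait---more precisely $\phi\simeq\qf{1,\lambda_1,\lambda_2,\lambda_3}\otimes n_\Comp$ is $n_\Comp$ tensored with a $4$-dimensional form; since $\lambda_1\lambda_2\lambda_3=1$ the form $\qf{1,\lambda_1,\lambda_2,\lambda_3}$ has trivial discriminant, and being $4$-dimensional of trivial discriminant it is a $2$-fold Pfister form if and only if its Clifford invariant vanishes, which one checks is forced here because $\qf{1,\lambda_1,\lambda_2,\lambda_1\lambda_2}\simeq\qf{1,\lambda_1}\otimes\qf{1,\lambda_2}=\langle\!\langle-\lambda_1,-\lambda_2\rangle\!\rangle$ is visibly a $2$-fold Pfister form (up to the sign conventions of the excerpt, using $\lambda_3=\lambda_1^{-1}\lambda_2^{-1}$ and the square-class identity $\lambda_1^{-1}\equiv\lambda_1$). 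Hence $\phi\simeq n_\Comp\otimes\langle\!\langle-\lambda_1,-\lambda_2\rangle\!\rangle$ is a product of Pfister forms, so a $3$-fold Pfister form. It is canonically associated up to isometry because each summand $q_i$ is determined up to isometry by $\Comp$ and $n_\Comp$ was shown to be determined up to isometry; the sum is therefore an isometry invariant of $\Comp$. The main obstacle in the whole argument is the bookkeeping of square classes in the second paragraph---making precise that "$q_i\simeq\qf{\lambda_i}n_\Comp$ with $\lambda_1\lambda_2\lambda_3=1$" is a consistent normalization rather than over-determined---together with verifying the characteristic-$2$ case, where $n_\Comp$ is the norm form of a quadratic \'etale algebra (of Arf invariant type) and the Pfister-form statements must be read in the sense of bilinear/quadratic Pfister forms appropriate to that setting; but all the needed multiplicativity is already encoded in~\eqref{eq:comp1}--\eqref{eq:comp3} and their derived analogues.
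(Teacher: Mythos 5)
The flaw is in your first paragraph: the inference that the three quadratic \'etale algebras $C_0(V_i,q_i)$ ``all embed in a common $\End V_j$, hence are mutually isomorphic'' is unjustified, since \emph{every} two-dimensional commutative $F$-algebra embeds in $M_2(F)$ via its regular representation, so mere coexistence inside a matrix algebra carries no information. The step from ``$V_i$ is rank-one free over $K_1$'' to ``$q_i$ is a scalar multiple of the norm form of $K_1$'' also needs the fact that $C_0(\alpha)$ respects the involutions (so the $K_1$-action on $V_2$ is hermitian for $b_2$ with respect to the nontrivial involution of $K_1$), which you gesture at but do not spell out; nor is it automatic that $V_2$ is rank-one \emph{free} rather than a non-faithful module when $K_1\simeq F\times F$.

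Fortunately the Clifford-algebra detour of your first paragraph is unnecessary, and your second paragraph already reproduces the paper's argument in full. One starts from the elementary fact that a two-dimensional nonsingular quadratic form is a scalar multiple of a unique $1$-fold Pfister form, so $q_1\simeq\qf{\lambda_1}n_\Comp$; the similitudes $r_{y_2}$ and $\ell_{y_1}$ (with multipliers $q_2(y_2)$ and $q_1(y_1)$) give $q_3\simeq\qf{\lambda_1q_2(y_2)}n_\Comp$ and $q_2\simeq\qf{\lambda_1q_1(y_1)q_2(y_2)}n_\Comp$, and roundness of $n_\Comp$ (so that $\lambda_1 q_1(y_1)$, being a value of $\qf{\lambda_1^2}n_\Comp\simeq n_\Comp$, is a similarity factor) yields $q_2\simeq\qf{q_2(y_2)}n_\Comp$; one then sets $\lambda_2=q_2(y_2)$ and $\lambda_3=(\lambda_1\lambda_2)^{-1}$, and the uniqueness of $n_\Comp$ follows since it is determined up to isometry by $q_1$ alone. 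Your third paragraph's identification of $n_\Comp\perp q_1\perp q_2\perp q_3\simeq\qf{1,\lambda_1}\otimes\qf{1,\lambda_2}\otimes n_\Comp$ as a $3$-fold Pfister form is a correct verification of a point the paper leaves implicit as routine.
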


\begin{proof}
  Since $\dim V_1=2$, we have $q_1\simeq \qf{\lambda_1}n_\Comp$ for some
  $\lambda_1\in F^\times$ and 
  some uniquely determined $1$-fold Pfister form $n_\Comp$. For any
  anisotropic $y_2\in V_2$, 
  the map $r_{y_2}\colon V_1\to V_3$ carrying $x_1$ to $x_1*_3y_2$ is
  a similitude with multiplier $q_2(y_2)$ by \eqref{eq:comp1}, hence
  $q_3\simeq\qf{q_2(y_2)}q_1$. Similarly, for any anisotropic $y_1\in
  V_1$ the map $\ell_{y_1}\colon V_2\to V_3$ is a similitude with
  multiplier $q_1(y_1)$, hence $q_3\simeq\qf{q_1(y_1)}q_2$. Therefore,
  \[
    q_3\simeq\qf{\lambda_1q_2(y_2)}n_\Comp
    \qquad\text{and}\qquad
    q_2\simeq\qf{q_1(y_1)}q_3\simeq \qf{\lambda_1
      q_1(y_1)q_2(y_2)}n_\Comp .
  \]
  Now, $\lambda_1 q_1(y_1)$ is represented by $n_\Comp$ since
  $q_1\simeq\qf{\lambda_1}n_\Comp$, hence
  $\qf{\lambda_1q_1(y_1)}n_\Comp\simeq 
  n_\Comp$. Letting $\lambda_2=q_2(y_2)$ and
  $\lambda_3=(\lambda_1q_2(y_2))^{-1}$, we then have
  $\lambda_1\lambda_2\lambda_3=1$ and
  \[
    q_1\simeq\qf{\lambda_1}n_\Comp, \qquad q_2\simeq\qf{\lambda_2}n_\Comp,
    \qquad q_3\simeq\qf{\lambda_3}n_\Comp.
    \qedhere
  \]
\end{proof}

\begin{prop}
  \label{prop:compdim4}
  Let $\dim\Comp=4$. There exists a $2$-fold quadratic Pfister form
  $n_\Comp$, 
  uniquely determined up to isometry, and scalars $\lambda_1$,
  $\lambda_2$, $\lambda_3\in F^\times$ such that
  $\lambda_1\lambda_2\lambda_3=1$ and
  \[
  q_1\simeq\qf{\lambda_1}n_\Comp, \qquad q_2\simeq\qf{\lambda_2}n_\Comp,
  \qquad q_3\simeq\qf{\lambda_3}n_\Comp.
  \]
  The form $n_\Comp\perp q_1\perp q_2\perp q_3$ is a $4$-fold Pfister
  form canonically associated to $\Comp$ up to isometry.
\end{prop}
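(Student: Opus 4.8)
The plan is to follow the proof of Proposition~\ref{prop:compdim2} as closely as possible; the only genuinely new ingredient needed is that each $q_i$ is similar to a $2$-fold quadratic Pfister form. For a nonsingular $4$-dimensional quadratic form this amounts to saying that its discriminant (if $\charac F\neq2$) or its Arf invariant (if $\charac F=2$) is trivial, equivalently that the center $Z$ of $C_0(V_i,q_i)$ is isomorphic to $F\times F$. Once this is granted, I would repeat the dimension~$2$ argument verbatim: write $q_1\simeq\qf{\lambda_1}n_\Comp$ where $n_\Comp$ is the (unique up to isometry) $2$-fold Pfister form in the similarity class of $q_1$; for anisotropic $y_2\in V_2$ the map $x_1\mapsto x_1*_3y_2$ is a similitude $(V_1,q_1)\to(V_3,q_3)$ with multiplier $q_2(y_2)$, and for anisotropic $y_1\in V_1$ the map $x_2\mapsto y_1*_3x_2$ is a similitude $(V_2,q_2)\to(V_3,q_3)$ with multiplier $q_1(y_1)$; combining these relations with the roundness of $n_\Comp$ (which holds in every characteristic) produces $\lambda_2$, $\lambda_3\in F^\times$ with $\lambda_1\lambda_2\lambda_3=1$ and $q_2\simeq\qf{\lambda_2}n_\Comp$, $q_3\simeq\qf{\lambda_3}n_\Comp$. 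Two similar $2$-fold Pfister forms being isometric, $n_\Comp$ depends only on $\Comp$ up to isometry, and $n_\Comp\perp q_1\perp q_2\perp q_3\simeq\qf{1,\lambda_1,\lambda_2,\lambda_3}\otimes n_\Comp$; since $\lambda_1\lambda_2\lambda_3=1$ the bilinear form $\qf{1,\lambda_1,\lambda_2,\lambda_3}$ is a $2$-fold bilinear Pfister form, so this tensor product is a $4$-fold quadratic Pfister form, canonically attached to $\Comp$ up to isometry.

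To establish $Z\simeq F\times F$ I would exploit the homomorphism $C_0(\alpha)$ of Theorem~\ref{thm:dimcomp}. Since $\dim V_1$ is even, $C(V_1,q_1)$ is central simple; as $\alpha(x_1)$ is invertible for anisotropic $x_1$, the map $C(\alpha)$ is nonzero, hence injective, and therefore $C_0(\alpha)$ is an injective homomorphism of algebras with involution $(C_0(V_1,q_1),\tau_{01})\hookrightarrow(\End V_2,\sigma_{b_2})\times(\End V_3,\sigma_{b_3})$. Suppose $Z$ is a field, so that $C_0(V_1,q_1)$ is a simple $F$-algebra. Its image under $C_0(\alpha)$ is then simple, so by projecting to the two factors we see that $C_0(\alpha)$ embeds $(C_0(V_1,q_1),\tau_{01})$ into one of them, say $(\End V_2,\sigma_{b_2})$. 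Because $C_0(V_1,q_1)$ is a quaternion algebra over $Z$, the $F$-dimension of a simple $C_0(V_1,q_1)$-module is $4$ or $8$; as it must divide $\dim_FV_2=4$, this forces $C_0(V_1,q_1)\simeq M_2(Z)$ with $V_2$ its simple module. Since $\dim V_1\equiv0\bmod4$, the involution $\tau_{01}$ fixes $Z$ pointwise, and by the double centraliser theorem $Z$ is the centraliser of $C_0(V_1,q_1)$ in $\End V_2$; hence $\sigma_{b_2}$ stabilises $Z$ and acts trivially there, so $b_2$ is the transfer along $\Tr_{Z/F}$ of a nondegenerate symmetric $Z$-bilinear form $h$ on the rank-$2$ $Z$-module $V_2$, and $\tau_{01}$ coincides with the adjoint involution of $h$ on $\End_Z(V_2)=C_0(V_1,q_1)$. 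When $\charac F\neq2$, $h$ is non-alternating, so this adjoint involution is orthogonal, contradicting that $\tau_{01}$ is symplectic; therefore $Z\simeq F\times F$.

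The hard part will be precisely the triviality of the discriminant/Arf invariant: once it is in hand the rest is a transcription of the $2$-dimensional case. In characteristic~$2$ the argument of the previous paragraph breaks down at the last step, since $\sigma_{b_2}$ is then itself symplectic and $h$ is forced to be alternating, so "symplectic versus orthogonal'' yields no contradiction; there one must instead use finer data than the polar forms, for instance by extending scalars to the separable quadratic algebra $Z$ (over which $\Comp_Z$ has split center, so that the two derived even Clifford homomorphisms can be played off against each other) or by invoking the canonical quadratic pair of Dolphin--Qu\'eguiner-Mathieu. I anticipate this characteristic-$2$ refinement to be the main obstacle; everything else should go through as above.
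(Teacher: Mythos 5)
Your strategy mirrors the paper's closely: reduce to showing the Arf invariant (discriminant) of $q_1$ is trivial by examining the image of $C_0(V_1,q_1)$ in $\End V_2$ under $\varphi_2$, then transcribe the dimension-$2$ argument. Your treatment of $\charac F\neq2$ is correct, and your dimension analysis (forcing $C_0(V_1,q_1)\simeq M_2(Z)$ with $V_2\simeq Z^2$, then invoking the double centralizer theorem) is a somewhat more explicit version of the paper's identification of $\varphi_2\bigl(C_0(V_1,q_1)\bigr)$ as the centralizer of a $\sigma_{b_2}$-fixed separable quadratic subfield.

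The gap is the characteristic-$2$ case, and the reason you give for the breakdown is a misconception. You claim that ``$h$ is forced to be alternating''; but the constraint $b_2(x,x)=\Tr_{Z/F}\bigl(h(x,x)\bigr)=0$ in characteristic~$2$ only places $h(x,x)$ in $\ker\Tr_{Z/F}=F\cdot1\subset Z$, and this does \emph{not} force $h(x,x)=0$. What the paper uses at precisely this point is \cite[(4.12)]{BoI}, which gives, without restriction on the characteristic, that the restriction of $\sigma_{b_2}$ to the centralizer of a $\sigma_{b_2}$-fixed separable quadratic subfield is an \emph{orthogonal} involution --- equivalently, $h$ is non-alternating --- contradicting the fact that $\tau_{01}$ is symplectic. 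Your suggested workarounds (scalar extension to $Z$, or invoking the Dolphin--Qu\'eguiner-Mathieu quadratic pair) are not carried out, so as written the characteristic-$2$ case is missing; a single citation closes it. The remaining steps (roundness of Pfister forms, the similitudes $\ell_{y_1}$ and $r_{y_2}$, and the $4$-fold Pfister identification) are correct and match the paper's proof.
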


\begin{proof}
  Consider the homomorphisms of algebras with involution induced by
  $C_0(\alpha)$:
  \[
    \varphi_2\colon (C_0(V_1,q_1),\tau_{01}) \to
    (\End V_2,\sigma_{b_2}) \qquad\text{and}\qquad
    \varphi_3\colon (C_0(V_1,q_1),\tau_{01})\to
    (\End V_3,\sigma_{b_3}).
  \]
  If $Z$ is a
  field, then $C_0(V_1,q_1)$ is simple and its image under $\varphi_2$
  is the centralizer in $\End V_2$ of a separable quadratic subfield 
  fixed under $\sigma_{b_2}$. But the
  restriction of $\sigma_{b_2}$ to such a centralizer is an orthogonal
  involution (see \cite[(4.12)]{BoI}), whereas $\tau_{01}$ is
  symplectic, so this case is impossible. Therefore, $Z$ is not a
  field, which
  means that the discriminant (or Arf invariant) of $q_1$ is
  trivial. It follows that $q_1$ is a 
  multiple of some uniquely determined $2$-fold Pfister form $n_\Comp$.
  The same arguments as in the proof of
  Proposition~\ref{prop:compdim2} show 
  that there exist $\lambda_1$, $\lambda_2$, $\lambda_3\in F^\times$
  such that $q_i\simeq\qf{\lambda_i}n_\Comp$ for $i=1$, $2$, $3$.
\end{proof}

Finally, we consider the case where $\dim\Comp=8$. Recall
from~\S\ref{subsec:ClAlg} that in this case the Clifford algebra
$C(V_1,q_1)$ and the even Clifford algebra $C_0(V_1,q_1)$ carry
canonical quadratic pairs. We use for these quadratic pairs the
notation $(\tau_1,\str{g}_1)$ and $(\tau_{01}, \str{g}_{01})$
respectively. 

\begin{prop}
  \label{prop:compdim8}
  Let $\dim\Comp=8$. The canonical maps $C(\alpha)$ and $C_0(\alpha)$
  are isomorphisms of algebras with quadratic pair
  \[
  \begin{array}{rl}
    C(\alpha)\colon& (C(V_1,q_1),\tau_1,\str g_1) \xrightarrow{\sim}
                     (\End(V_2\oplus V_3),\sigma_{b_2\perp
                     b_3},\strf_{q_2\perp q_3}),\\
    C_0(\alpha)\colon& (C_0(V_1,q_1),\tau_{01},\str g_{01})
                       \xrightarrow{\sim} 
    (\End V_2,\sigma_{b_2},\strf_{q_2}) \times
                       (\End V_3,\sigma_{b_3},\strf_{q_3}). 
  \end{array}
  \]
  Moreover, there exists a $3$-fold quadratic Pfister form $n_\Comp$,
  uniquely determined up to isometry, and scalars $\lambda_1$,
  $\lambda_2$, $\lambda_3\in F^\times$ such that
  $\lambda_1\lambda_2\lambda_3=1$ and
  \[
  q_1\simeq\qf{\lambda_1}n_\Comp, \qquad q_2\simeq\qf{\lambda_2}n_\Comp,
  \qquad q_3\simeq\qf{\lambda_3}n_\Comp.
  \]
  The form $n_\Comp\perp q_1\perp q_2\perp q_3$ is a $5$-fold Pfister
  form canonically associated to $\Comp$ up to isometry.  
\end{prop}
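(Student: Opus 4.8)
The plan is to prove the two isomorphism statements first and then to read off the statements about Pfister forms from the isomorphism $C_0(\alpha)$, in the same spirit as Propositions~\ref{prop:compdim2} and~\ref{prop:compdim4}.

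By Theorem~\ref{thm:dimcomp} the maps $C(\alpha)$ and $C_0(\alpha)$ are already homomorphisms of algebras with involution, so it remains to check that they are bijective and respect the canonical semitraces. Since $\dim V_1=8$, the algebra $C(V_1,q_1)$ is central simple of dimension $2^8=(\dim V_2+\dim V_3)^2=\dim\End(V_2\oplus V_3)$, and $C(\alpha)$ is nonzero (if $\alpha$ vanished identically we would have $x_1*_3x_2=0$ for all $x_1$, $x_2$, contradicting \eqref{eq:comp1} and the nonsingularity of $q_1$, $q_2$); hence $C(\alpha)$ is an isomorphism of $F$-algebras, and restriction to even parts shows $C_0(\alpha)$ is one too, so in particular the centre of $C_0(V_1,q_1)$ is $F\times F$ and $\disc q_1$ is trivial. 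For the semitrace, fix $e$, $e'\in V_1$ with $b_1(e,e')=1$; multiplying the matrices defining $\alpha$ gives
\[
  C(\alpha)(ee')=\begin{pmatrix}r_e\ell_{e'}&0\\0&\ell_e r_{e'}\end{pmatrix}\in\End(V_2\oplus V_3),
\]
and a short computation with \eqref{eq:comp45}, \eqref{eq:comp3} and \eqref{eq:compp7} yields $b_2\bigl(r_e\ell_{e'}(x_2),x_2\bigr)=q_2(x_2)$ and $b_3\bigl(\ell_e r_{e'}(x_3),x_3\bigr)=q_3(x_3)$ for all $x_2\in V_2$, $x_3\in V_3$. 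By Proposition~\ref{prop:Lie1bis} this means $C(\alpha)(ee')\in\go(q_2\perp q_3)$ with $\dot\mu\bigl(C(\alpha)(ee')\bigr)=1$, so by the second characterization of $\go$ in~\eqref{eq:Lie1} we get $\Trd\bigl(C(\alpha)(ee')\,t\bigr)=\strf_{q_2\perp q_3}(t)$ for all $t\in\Sym(\sigma_{b_2\perp b_3})$. Since algebra isomorphisms preserve reduced traces, for $s\in\Sym(\tau_1)$ this gives
\[
  \strf_{q_2\perp q_3}\bigl(C(\alpha)(s)\bigr)=\Trd\bigl(C(\alpha)(ee')\,C(\alpha)(s)\bigr)=\Trd_{C(V_1,q_1)}(ee's)=\str g_1(s),
\]
so $C(\alpha)$ preserves the semitrace; the same computation restricted to the even part shows that $C_0(\alpha)$ sends $ee'$ to an element representing $(\strf_{q_2},\strf_{q_3})$ and hence preserves $\str g_{01}$. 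This proves the first two assertions.

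For the Pfister form statements, recall from $C_0(\alpha)$ that $\disc q_1$ is trivial and both simple components of $C_0(V_1,q_1)$ are split. By the standard structure theory of $8$-dimensional quadratic forms (valid in arbitrary characteristic; see~\cite{BoI}), a form of dimension~$8$ with trivial discriminant and split even Clifford algebra is similar to a $3$-fold quadratic Pfister form, so $q_1\simeq\qf{\lambda_1}n_\Comp$ for some $\lambda_1\in F^\times$ and some $3$-fold Pfister form $n_\Comp$, uniquely determined up to isometry because similar quadratic Pfister forms are isometric. Exactly as in the proof of Proposition~\ref{prop:compdim2}, for anisotropic $y_2\in V_2$ and $y_1\in V_1$ the maps $r_{y_2}\colon V_1\to V_3$ and $\ell_{y_1}\colon V_2\to V_3$ are similitudes with multipliers $q_2(y_2)$ and $q_1(y_1)$, and the roundness of $n_\Comp$ then produces $\lambda_2$, $\lambda_3\in F^\times$ with $\lambda_1\lambda_2\lambda_3=1$ and $q_i\simeq\qf{\lambda_i}n_\Comp$ for $i=1$, $2$, $3$. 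Finally
\[
  n_\Comp\perp q_1\perp q_2\perp q_3\simeq\qf{1,\lambda_1,\lambda_2,\lambda_3}\otimes n_\Comp,
\]
and $\qf{1,\lambda_1,\lambda_2,\lambda_3}\simeq\qf{1,\lambda_1}\otimes\qf{1,\lambda_2}$ is a $2$-fold bilinear Pfister form because $\lambda_1\lambda_2\lambda_3=1$; hence the right-hand side above is a $5$-fold quadratic Pfister form, depending on $\Comp$ only up to isometry.

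The one step that is not pure bookkeeping or a direct manipulation of the composition identities of Proposition~\ref{prop:comp1} is the structural input just invoked: that an $8$-dimensional quadratic form with trivial discriminant and split even Clifford algebra is similar to a $3$-fold Pfister form. In characteristic $\neq2$ this follows from the Arason--Pfister Hauptsatz and the injectivity of the Clifford invariant on $I^2/I^3$; characteristic~$2$ requires the corresponding argument in that setting.
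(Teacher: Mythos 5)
Your proof is correct, and the semitrace verification takes a genuinely different route from the paper's. The paper proves $\strf_{q_2\perp q_3}(s')=\Trd\bigl(C(\alpha)(e_1e'_1)\circ s'\bigr)$ for all $s'\in\Sym(\sigma_{b_2\perp b_3})$ by a direct computation: it evaluates both sides on the spanning family of rank-one symmetric operators $(x_2+x_3)\otimes(x_2+x_3)$ under the standard identification $\End(V_2\oplus V_3)=(V_2\oplus V_3)\otimes(V_2\oplus V_3)$, and reduces to the composition identities of Proposition~\ref{prop:comp1}. You instead use those same identities, together with Proposition~\ref{prop:Lie1bis}, to show that $C(\alpha)(ee')$ lies in $\go(q_2\perp q_3)$ with $\dot\mu\bigl(C(\alpha)(ee')\bigr)=1$, and then appeal to the second characterization of $\go$ in~\eqref{eq:Lie1}, which says exactly that such an element represents the semitrace via the trace form. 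This is more conceptual: it identifies $C(\alpha)(ee')$ as playing the role of the element $\ell$ in Proposition~\ref{prop:Lie1}, rather than checking the semitrace identity entrywise. The remaining steps---bijectivity by dimension count and simplicity of $C(V_1,q_1)$, the deduction that $q_1$ is similar to a $3$-fold Pfister form from the triviality of the discriminant and the splitness of the Clifford algebras, the production of $\lambda_1,\lambda_2,\lambda_3$ as in Proposition~\ref{prop:compdim2}---follow the paper's plan. Your phrasing of the Pfister-form step is, if anything, a bit more careful than the paper's about what structural input is actually being invoked (trivial Arf invariant \emph{and} split even Clifford algebra, with the Hauptsatz in arbitrary characteristic), and you also spell out why $n_\Comp\perp q_1\perp q_2\perp q_3$ is a $5$-fold Pfister form, a point the paper leaves to the reader.
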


\begin{proof}
  In this case we have $\dim C(V_1,q_1) = \dim \End(V_2\oplus V_3)$. Since
  the algebra $C(V_1,q_1)$ is simple, it follows that $C(\alpha)$ is
  an isomorphism, hence $C(V_1,q_1)$ is split. Moreover,
  $C_0(\alpha)$ also is an isomorphism, hence the center of
  $C_0(V_1,q_1)$ is isomorphic to $F\times F$, and therefore the
  discriminant (or Arf invariant) of $q_1$ is trivial. It follows that
  $q_1$ is a multiple of some uniquely determined $3$-fold Pfister
  form $n_\Comp$, and the existence of $\lambda_1$, $\lambda_2$,
  $\lambda_3\in F^\times$ such that $q_i\simeq\qf{\lambda_i}n_\Comp$ for
  $i=1$, $2$, $3$ is proved as in the case where $\dim\Comp=2$ (see
  Proposition~\ref{prop:compdim2}).

  Since we already know from Theorem~\ref{thm:dimcomp} that
  $C(\alpha)$ and $C_0(\alpha)$ are homomorphisms of algebras with
  involution, it only remains to see that these maps also preserve the
  semitraces.

  The arguments in each case are similar. For $C(\alpha)$ we have to
  show that
  \[
    \strf_{q_2\perp q_3}\bigl(C(\alpha)(s)\bigr) = \str g_1(s)
    \qquad\text{for all $s\in\Sym(\tau_1)$.}
  \]
  Fix $e_1$, $e'_1\in V_1$ such that $b_1(e_1,e'_1)=1$. By definition,
  $\str g_1(s)=\Trd_{C(V_1,q_1)}(e_1e'_1s)$. Since isomorphisms of
  central 
  simple algebras preserve reduced traces, we have for all
  $s\in\Sym(\tau_1)$
  \[
    \str g_1(s)=\Trd_{\End(V_2\oplus
      V_3)}\bigl(C(\alpha)(e_1e'_1s)\bigr) = 
    \Trd_{\End(V_2\oplus
      V_3)}\bigl(C(\alpha)(e_1e'_1)\circ C(\alpha)(s)\bigr).
  \]
  Now, $C(\alpha)\bigl(\Sym(\tau_1)\bigr) =
  \Sym(\sigma_{b_2\perp b_3})$ because $C(\alpha)$ is an isomorphism
  of algebras with involution. Therefore, we may rewrite the equation we
  have to prove as
  \[
    \strf_{q_2\perp q_3}(s') =\Trd_{\End(V_2\oplus
      V_3)}(C(\alpha)(e_1e'_1)\circ s') \qquad\text{for all
      $s'\in\Sym(\sigma_{b_2\perp b_3})$}.
  \]
  Using the standard identification $\End(V_2\oplus V_3)=(V_2\oplus
  V_3)\otimes (V_2\oplus V_3)$ set up in \S\ref{subsec:quadpair}, we
  see that $\Sym(\sigma_{b_2\perp b_3})$ is spanned by elements of the form
  $(x_2+x_3)\otimes(x_2+x_3)$ with $x_2\in V_2$ and $x_3\in
  V_3$, and that for $s'=(x_2+x_3)\otimes(x_2+x_3)$
  \[
    C(\alpha)(e_1e'_1)\circ s' =
    \bigl(C(\alpha)(e_1e'_1)(x_2+x_3)\bigr)\otimes (x_2+x_3) =
    \bigl(r_{e_1}\ell_{e'_1}(x_2)+\ell_{e_1}r_{e'_1}(x_3)\bigr)
    \otimes (x_2+x_3).
  \]
  Therefore, it suffices to show that for all $x_2\in V_2$ and $x_3\in
  V_3$
  \begin{equation}
    \label{eq:semitracedim8}
    \strf_{q_2\perp q_3}\bigl((x_2+x_3)\otimes(x_2+x_3)\bigr) =
    \Trd_{\End(V_2\oplus
      V_3)}\bigl((r_{e_1}\ell_{e'_1}(x_2)+\ell_{e_1}r_{e'_1}(x_3))\otimes
    (x_2+x_3)\bigr).
  \end{equation}
  The right side is
  \begin{multline*}
    (b_2\perp b_3)\bigl((e'_1*_3x_2)*_2e_1 + e_1*_3(x_3*_2e'_1),
    x_2+x_3) =
    \\
    b_2\bigl((e'_1*_3x_2)*_2e_1,x_2\bigr) +
    b_3(e_1*_3(x_3*_2e'_1),x_3).
  \end{multline*}
  Now, by~\eqref{eq:comp45} and \eqref{eq:comp2} we have
  \[
    b_2\bigl((e'_1*_3x_2)*_2e_1,x_2\bigr) = b_3(e'_1*_3x_2,e_1*_3x_2) =
    b_1(e'_1,e_1)q_2(x_2)
  \]
  and, similarly,
  \[
    b_3(e_1*_3(x_3*_2e'_1),x_3) = b_2(x_3*_2e'_1,x_3*_2e_1)
    = q_3(x_3)b_1(e'_1,e_1).
  \]
  As $b_1(e_1,e'_1)=1$, it follows that
  \[
     \Trd_{\End(V_2\oplus
      V_3)}\bigl((r_{e_1}\ell_{e'_1}(x_2)+\ell_{e_1}r_{e'_1}(x_3))\otimes
    (x_2+x_3)\bigr) =q_2(x_2)+q_3(x_3).
  \]
  On the other hand, by definition of $\strf_{q_2\perp q_3}$ we have
  \[
    \strf_{q_2\perp q_3}\bigl((x_2+x_3)\otimes(x_2+x_3)\bigr) = (q_2\perp
    q_3)(x_2+x_3) = q_2(x_2)+q_3(x_3).
  \]
  We have thus checked~\eqref{eq:semitracedim8}.

  The proof that $C_0(\alpha)$ also preserves the semitraces is
  obtained by a slight variation of the preceding arguments. We have
  to show that
  \[
    (\strf_{q_2},\strf_{q_3})\bigl(C_0(\alpha)(s)\bigr) =
    C_0(\alpha)\bigl(\str g_{01}(s)\bigr) \qquad
    \text{for all $s\in\Sym(\tau_{01})$.}
  \]
  Since $C_0(\alpha)$ is an isomorphism of algebras with involution,
  this amounts to showing
  \begin{equation}
    \label{eq:semitracedim8bis}
    \bigl(\strf_{q_2}(s'_2),\strf_{q_3}(s'_3)\bigr) =
    \bigl(\Trd_{\End V_2}(r_{e_1}\ell_{e'_1}s'_2),
    \Trd_{\End V_3}(\ell_{e_1}r_{e'_1}s'_3)\bigr)
  \end{equation}
  for all $s'_2\in\Sym(\sigma_{b_2})$, $s'_3\in\Sym(\sigma_{b_3})$.
  It suffices to consider $s'_2$, $s'_3$ of the form $x_2\otimes x_2$,
  $x_3\otimes x_3$ for $x_2\in V_2$, $x_3\in V_3$ under the standard
  identifications $\End V_2=V_2\otimes V_2$, $\End V_3=V_3\otimes
  V_3$. For $s'_2=x_2\otimes x_2$ we have
  \[
    r_{e_1}\ell_{e'_1}s'_2 = \bigl((e'_1*_3x_2)*_2e_1\bigr)\otimes
    x_2,
  \]
  hence
  \[
    \Trd_{\End V_2}(r_{e_1}\ell_{e'_1}s'_2)=
    b_2\bigl((e'_1*_3x_2)*_2e_1, x_2) =
    b_3(e'_1*_3x_2,e_1*_3x_2)=q_2(x_2).
  \]
  On the other hand $\strf_2(x_2\otimes x_2)=q_2(x_2)$ by
  definition. Likewise, for $s'_3=x_3\otimes x_3$
  \[
    \Trd_{\End V_3}(\ell_{e_1}r_{e'_1}s'_3) = q_3(x_3) =
    \strf_{q_3}(s'_3),
  \]
  hence \eqref{eq:semitracedim8bis} is proved.
\end{proof}

\begin{remark}
  \label{rem:polarcomp}
  The map $C_0(\alpha)$ in Proposition~\ref{prop:compdim8} yields an
  isomorphism between the center of $C_0(V_1,q_1)$ and $F\times F$,
  hence also a polarization of $(V_1,q_1)$ (see
  Definition~\ref{defn:orientationqs}): the primitive central
  idempotents $z_+$ and $z_-$ of $C_0(V_1,q_1)$ are such that
  $C_0(\alpha)(z_+)=(1,0)$ and $C_0(\alpha)(z_-)=(0,1)$, so that
  $C_0(\alpha)$ induces homomorphisms
  \[
    C_+(\alpha)\colon C_0(V_1,q_1) \to \End V_2
    \quad\text{and}\quad
    C_-(\alpha)\colon C_0(V_1,q_1) \to \End V_3.
  \]
  Similarly, the maps $C_0(\alpha')$ and $C_0(\alpha'')$
  of~\eqref{eq:alpha'} and \eqref{eq:alpha''} attached to
  $\partial\Comp$ and $\partial^2\Comp$ yield polarizations of
  $(V_2,q_2)$ and $(V_3,q_3)$, so that
  \[
    C_+(\alpha')\colon C_0(V_2,q_2) \to \End V_3
    \quad\text{and}\quad
    C_-(\alpha')\colon C_0(V_2,q_2) \to \End V_1,
  \]
  and
  \[
    C_+(\alpha'')\colon C_0(V_3,q_3) \to \End V_1
    \quad\text{and}\quad
    C_-(\alpha'')\colon C_0(V_3,q_3) \to \End V_2.
  \]
\end{remark}

\begin{corol}
  \label{corol:comp1ter}
  For any composition of quadratic spaces $\Comp$, the following are
  equivalent: 
  \begin{enumerate}
  \item[(i)]
    $q_1\simeq q_2\simeq q_3$;
  \item[(ii)]
    $q_1$, $q_2$ and $q_3$ are Pfister forms;
  \item[(iii)]
    $q_1$, $q_2$ and $q_3$ represent~$1$.
  \end{enumerate}
\end{corol}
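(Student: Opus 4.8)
The plan is to derive everything from the structural results already established: by Theorem~\ref{thm:dimcomp} one has $\dim\Comp\in\{1,2,4,8\}$, and in each of these cases the discussion following Theorem~\ref{thm:dimcomp} (for $\dim\Comp=1$) together with Propositions~\ref{prop:compdim2}, \ref{prop:compdim4} and~\ref{prop:compdim8} provides a quadratic Pfister form $n_\Comp$ --- with the convention $n_\Comp=\qf{1}$ when $\dim\Comp=1$ --- and scalars $\lambda_1,\lambda_2,\lambda_3\in F^\times$ with $\lambda_1\lambda_2\lambda_3=1$ such that $q_i\simeq\qf{\lambda_i}n_\Comp$ for $i=1$, $2$, $3$. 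Apart from this, the only input I need is that quadratic Pfister forms are \emph{round}: writing $D(n_\Comp)$ for the set of nonzero values represented by $n_\Comp$, the set $D(n_\Comp)$ is a subgroup of $F^\times$, it contains $(F^\times)^2$ (indeed $\qf{c^2}n_\Comp\simeq n_\Comp$ via $v\mapsto c^{-1}v$), and $a\in D(n_\Comp)$ implies $\qf{a}n_\Comp\simeq n_\Comp$.

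I would then prove the implications in the cycle (ii)$\Rightarrow$(iii)$\Rightarrow$(i)$\Rightarrow$(ii). The step (ii)$\Rightarrow$(iii) is immediate, since a Pfister form represents~$1$ by definition. For (iii)$\Rightarrow$(i): if $q_i$ represents~$1$, then so does the isometric form $\qf{\lambda_i}n_\Comp$, which means that $n_\Comp$ represents $\lambda_i^{-1}$; as $D(n_\Comp)$ is a group we get $\lambda_i\in D(n_\Comp)$, whence $q_i\simeq\qf{\lambda_i}n_\Comp\simeq n_\Comp$ by roundness, for each $i$, and in particular $q_1\simeq q_2\simeq q_3$. For (i)$\Rightarrow$(ii): the isometries $q_1\simeq q_2$ and $q_1\simeq q_3$ yield, via roundness, $\lambda_1\lambda_2^{-1}\in D(n_\Comp)$ and $\lambda_1\lambda_3^{-1}\in D(n_\Comp)$; since $D(n_\Comp)$ is a group and $\lambda_2\lambda_3=\lambda_1^{-1}$, multiplying these gives $\lambda_1^2\lambda_2^{-1}\lambda_3^{-1}=\lambda_1^3\in D(n_\Comp)$, and then $\lambda_1=\lambda_1^3\cdot\lambda_1^{-2}\in D(n_\Comp)$ because $\lambda_1^{-2}\in(F^\times)^2\subseteq D(n_\Comp)$; hence $q_1\simeq\qf{\lambda_1}n_\Comp\simeq n_\Comp$ is a Pfister form, and so are $q_2\simeq q_1$ and $q_3\simeq q_1$.

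I do not expect a genuine obstacle. The points deserving attention are that the argument be carried out uniformly over all characteristics and all four admissible values of $\dim\Comp$ --- which is why I phrase everything in terms of $n_\Comp$ and its value group $D(n_\Comp)$, roundness and the inclusion $(F^\times)^2\subseteq D(n_\Comp)$ being available for quadratic Pfister forms over any field, and the degenerate case $n_\Comp=\qf{1}$, possible only when $\charac F\neq2$, requiring no separate treatment --- and the small manipulation in (i)$\Rightarrow$(ii) of passing from $\lambda_1^3\in D(n_\Comp)$ to $\lambda_1\in D(n_\Comp)$ by absorbing a square.
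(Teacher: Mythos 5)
Your proof is correct and follows essentially the same route as the paper: both reduce everything to the canonical Pfister form $n_\Comp$ with $q_i\simeq\qf{\lambda_i}n_\Comp$ and $\lambda_1\lambda_2\lambda_3=1$, and both exploit roundness of Pfister forms. The only minor difference is in (i)$\Rightarrow$(ii), where the paper argues more directly — from $q_1\simeq q_2$ alone one gets $\lambda_1\lambda_2\in D(n_\Comp)$, hence $\lambda_3=(\lambda_1\lambda_2)^{-1}\in D(n_\Comp)$ and $q_3\simeq n_\Comp$ — avoiding your $\lambda_1^3$ and square-absorption manipulation.
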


\begin{proof}
  According to Propositions~\ref{prop:compdim2}, \ref{prop:compdim4},
  \ref{prop:compdim8}, there exist a quadratic 
  Pfister form $n_\Comp$ and scalars $\lambda_1$, $\lambda_2$, $\lambda_3$
  such that $\lambda_1\lambda_2\lambda_3=1$ and
  $q_1\simeq\qf{\lambda_1}n_\Comp$, $q_2\simeq\qf{\lambda_2}n_\Comp$ 
  and $q_3\simeq\qf{\lambda_3}n_\Comp$. This also holds when
  $\dim\Comp=1$, with $n_\Comp=\qf1$.

  (i)~$\Rightarrow$~(ii) If $q_1\simeq q_2$, then
  $\qf{\lambda_1\lambda_2}n_\Comp\simeq n_\Comp$, hence
  $\qf{\lambda_3}n_\Comp\simeq n_\Comp$. Therefore, $q_3\simeq n_\Comp$.

  (ii)~$\Rightarrow$~(iii) This is clear since Pfister quadratic forms
  represent~$1$.

  (iii)~$\Rightarrow$~(i) For $i=1$, $2$, $3$, if $q_i$
  represents~$1$, then $n_\Comp$ represents $\lambda_i$, hence
  $\qf{\lambda_i}n_\Comp\simeq n_\Comp$.
\end{proof}

\subsection{Similitudes and isomorphisms}
\label{subsec:simiso}

Consider two compositions
$\Comp=\bigl((V_1,q_1),\,(V_2,q_2),\,(V_3,q_3),\,*_3\bigr)$ and 
$\tilde\Comp=\bigl((\tilde V_1,\tilde q_1),\,(\tilde V_2,\tilde
q_2),\,(\tilde V_3,\tilde q_3),\,\tilde *_3\bigr)$
over an  
arbitrary field $F$. As in Definition~\ref{def:compofqs}, we
write $*_1$ and $*_2$ (resp.\ $\tilde *_1$, $\tilde*_2$) for the
derived composition maps of $*_3$ (resp.\ $\tilde *_3$).

\begin{definition}
  \label{defn:simcomp2}
  For $i=1$, $2$, $3$, let $g_i\colon(V_i,q_i)\to(\tilde V_i,\tilde
  q_i)$ be a similitude with multiplier $\mu(g_i)\in F^\times$. 
  The triple $(g_1,g_2,g_3)$ is a \emph{similitude of compositions}
  $\Comp\to\tilde\Comp$ if there exists $\lambda_3\in F^\times$ such
  that 
  \begin{equation}
    \label{eq:simcompdef}
    \lambda_3\,g_3(x_1*_3x_2) = g_1(x_1)\,\tilde*_3\,g_2(x_2)
    \qquad\text{for all $x_1\in V_1$, $x_2\in V_2$.}
  \end{equation}
\end{definition}

\begin{prop}
  \label{prop:simdef2}
  If $g=(g_1,g_2,g_3)$ is a similitude $\Comp\to\tilde\Comp$, then
  $\partial g:=(g_2,g_3,g_1)$ is a similitude
  $\partial\Comp\to\partial\tilde\Comp$ and $\partial^2g:=(g_3,g_1,g_2)$ is
  a similitude $\partial^2\Comp\to\partial^2\tilde\Comp$. Moreover, the
  scalars $\lambda_1$, $\lambda_2$, $\lambda_3\in F^\times$ such that
  for all $x_1\in V_1$, $x_2\in V_2$, $x_3\in V_3$
  \begin{align*}
    \lambda_1\,g_1(x_2*_1x_3) & = g_2(x_2)\,\tilde*_1\,g_3(x_3),\\
    \lambda_2\,g_2(x_3*_2x_1) & = g_3(x_3)\,\tilde*_2\,g_1(x_1),\\
    \lambda_3\,g_3(x_1*_3x_2) & = g_1(x_1)\,\tilde*_3\,g_2(x_2)
  \end{align*}
  are related to the multipliers of $g_1$, $g_2$, $g_3$ by
  \begin{equation}
    \label{eq:lambdamu}
    \mu(g_1)=\lambda_2\lambda_3,\qquad
    \mu(g_2)=\lambda_3\lambda_1,\qquad
    \mu(g_3)=\lambda_1\lambda_2.
  \end{equation}
\end{prop}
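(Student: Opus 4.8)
The plan is to push the defining identity \eqref{eq:simcompdef} around the cyclic derivative by repeated use of the polar-form characterization \eqref{eq:comp45} of the derived composition maps, together with the fact that each $g_i$, being a similitude with multiplier $\mu(g_i)$, satisfies $\tilde q_i\circ g_i=\mu(g_i)\,q_i$ and hence (linearizing, valid in every characteristic) $\tilde b_i\bigl(g_i(x),g_i(y)\bigr)=\mu(g_i)\,b_i(x,y)$ for all $x$, $y\in V_i$.

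First I would show that $\partial g=(g_2,g_3,g_1)$ is a similitude $\partial\Comp\to\partial\tilde\Comp$. Fix $x_2\in V_2$ and $x_3\in V_3$. For every $y_1\in V_1$, successive applications of \eqref{eq:comp45} in $\tilde\Comp$, of \eqref{eq:simcompdef}, of the similitude property of $g_3$, and of \eqref{eq:comp45} in $\Comp$ give
\[
  \tilde b_1\bigl(g_1(y_1),\,g_2(x_2)\,\tilde*_1\,g_3(x_3)\bigr)
  =\tilde b_3\bigl(g_3(x_3),\,g_1(y_1)\,\tilde*_3\,g_2(x_2)\bigr)
  =\lambda_3\,\tilde b_3\bigl(g_3(x_3),\,g_3(y_1*_3x_2)\bigr)
\]
\[
  =\lambda_3\mu(g_3)\,b_3(x_3,\,y_1*_3x_2)=\lambda_3\mu(g_3)\,b_1(y_1,\,x_2*_1x_3).
\]
Setting $\lambda_1:=\lambda_3\mu(g_3)\mu(g_1)^{-1}\in F^\times$, the similitude property of $g_1$ rewrites the right-hand side as $\tilde b_1\bigl(g_1(y_1),\,\lambda_1g_1(x_2*_1x_3)\bigr)$; since $g_1$ is bijective and $\tilde b_1$ is nonsingular, this forces $g_2(x_2)\,\tilde*_1\,g_3(x_3)=\lambda_1g_1(x_2*_1x_3)$ for all $x_2$, $x_3$. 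So $\partial g$ is a similitude, with scalar $\lambda_1$ (the one occurring in the first identity of the proposition) satisfying $\lambda_1\mu(g_1)=\lambda_3\mu(g_3)$. Running the same argument on the similitude $\partial g$ of $\partial\Comp$ then shows that $(g_3,g_1,g_2)=\partial^2g$ is a similitude $\partial^2\Comp\to\partial^2\tilde\Comp$, whose scalar — which is exactly the $\lambda_2$ of the proposition — satisfies $\lambda_2\mu(g_2)=\lambda_1\mu(g_1)$; hence the three products $\lambda_i\mu(g_i)$ coincide, and all three identities displayed in the proposition hold.

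For the multiplier relations \eqref{eq:lambdamu} I would evaluate $\tilde q_3$ on both sides of \eqref{eq:simcompdef}: using that $*_3$ and $\tilde*_3$ are composition maps and that $g_1$, $g_2$, $g_3$ are similitudes,
\[
  \lambda_3^2\,\mu(g_3)\,q_1(x_1)q_2(x_2)
  =\lambda_3^2\,\tilde q_3\bigl(g_3(x_1*_3x_2)\bigr)
  =\tilde q_1\bigl(g_1(x_1)\bigr)\,\tilde q_2\bigl(g_2(x_2)\bigr)
  =\mu(g_1)\mu(g_2)\,q_1(x_1)q_2(x_2).
\]
Choosing $x_1$, $x_2$ with $q_1(x_1)q_2(x_2)\neq0$ — possible because nonsingularity of $b_1$ and $b_2$ prevents $q_1$ and $q_2$ from vanishing identically — yields $\lambda_3^2\mu(g_3)=\mu(g_1)\mu(g_2)$; combining this with $\lambda_3\mu(g_3)=\lambda_2\mu(g_2)$ from the previous step gives $\mu(g_1)\mu(g_2)=\lambda_3\lambda_2\mu(g_2)$, hence $\mu(g_1)=\lambda_2\lambda_3$. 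Applying the same identity to $\partial\Comp$ and to $\partial^2\Comp$ (with the data $\partial g$, $\partial^2g$ and the cyclically relabeled scalars) produces $\mu(g_2)=\lambda_3\lambda_1$ and $\mu(g_3)=\lambda_1\lambda_2$, completing the proof.

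The routine polar-form and composition-map manipulations are unproblematic; the only point requiring attention is the cyclic bookkeeping — keeping track of which scalar is produced at each stage in order to identify it with $\lambda_1$, $\lambda_2$, $\lambda_3$ of the statement, and checking that all three cases of \eqref{eq:lambdamu} genuinely follow from the single computation above by permuting the roles of $(V_1,q_1)$, $(V_2,q_2)$, $(V_3,q_3)$. The invertibility of $\lambda_1$ and the non-triviality of $q_1$, $q_2$ used along the way are both immediate from the standing assumption that the polar forms are nonsingular.
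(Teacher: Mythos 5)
Your proof is correct, and it takes a genuinely different route from the paper's. The paper applies $g_2(x_2)\,\tilde*_1\,(\cdot)$ to both sides of the defining identity~\eqref{eq:simcompdef}, invokes the algebraic relation $\tilde x_2\,\tilde*_1\,(\tilde x_1\,\tilde*_3\,\tilde x_2)=\tilde x_1\,\tilde q_2(\tilde x_2)$ from~\eqref{eq:compp5} to cancel $g_2(x_2)$, and then substitutes $x_3 = x_1*_3x_2$ (possible when $x_2$ is anisotropic, extended to all $x_2$ by a spanning argument). That single computation yields $\lambda_1 = \mu(g_2)\lambda_3^{-1}$ directly, i.e.\ one of the three identities of~\eqref{eq:lambdamu}, and cycling through $\partial g$, $\partial^2g$ produces the other two for free. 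Your argument instead transfers~\eqref{eq:simcompdef} through the polar-form adjunction~\eqref{eq:comp45} and divides by the similitude multiplier $\mu(g_1)$, giving $\lambda_1\mu(g_1) = \lambda_3\mu(g_3)$ rather than a relation in the form of~\eqref{eq:lambdamu}. This is cleaner in one respect --- the nondegeneracy of $\tilde b_1$ replaces the anisotropic-spanning step --- but it costs you an extra computation (applying $\tilde q_3$ to both sides of~\eqref{eq:simcompdef} to get $\lambda_3^2\mu(g_3)=\mu(g_1)\mu(g_2)$) that the paper does not need, since the paper's first step already decouples the multipliers. Both are valid; the paper's is a bit shorter, yours avoids the isotropic/anisotropic split.
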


\begin{proof}
  For the first part, we have to prove the existence of $\lambda_1\in
  F^\times$ such 
  that
  \[
    \lambda_1\,g_1(x_2*_1x_3)  = g_2(x_2)\,\tilde*_1\,g_3(x_3) \qquad\text{for
      all $x_2\in V_2$ and $x_3\in V_3$.}
  \]
  Multiplying each side of~\eqref{eq:simcompdef} on the left by
  $g_2(x_2)$, we
  obtain for all $x_1\in V_1$ and $x_2\in V_2$
  \[
    \lambda_3\,g_2(x_2)\,\tilde*_1\,g_3(x_1*_3x_2) = \tilde
    q_2\bigl(g_2(x_2)\bigr) 
    g_1(x_1) = \mu(g_2)q_2(x_2)g_1(x_1).
  \]
  If $x_2$ is anisotropic, then $r_{x_2}\colon V_1\to V_3$ is
  bijective with inverse $q_2(x_2)^{-1}\ell_{x_2}$, hence every
  $x_3\in V_3$ can be written as $x_3=x_1*_3x_2$ with
  $x_1=q_2(x_2)^{-1}x_2*_1x_3$. Substituting in the last displayed
  equation, we obtain for 
  $x_2\in V_2$ anisotropic and $x_3\in V_3$
  \[
    g_2(x_2)\,\tilde*_1\,g_3(x_3) = \mu(g_2)\lambda_3^{-1}g_1(x_2*_1x_3).
  \]
  Since anisotropic vectors span $V_2$, this relation holds for all
  $x_2\in V_2$ and $x_3\in V_3$. Therefore, $(g_2,g_3,g_1)$ is a
  similitude of compositions, with scalar
  $\lambda_1=\mu(g_2)\lambda_3^{-1}$.

  Applying the same arguments to $\partial g$ instead of $g$, we see
  that $\partial(\partial g)=\partial^2g$ is a similitude
  $\partial^2\Comp\to\partial^2\tilde\Comp$, with scalar
  $\lambda_2=\mu(g_3)\lambda_1^{-1}$. Applying the arguments one more
  time, we obtain that $g$ is a similitude $\Comp\to\tilde\Comp$ with
  scalar $\mu(g_1)\lambda_2^{-1}$, hence
  $\lambda_3=\mu(g_1)\lambda_2^{-1}$ and the proof is complete.
\end{proof}

\begin{definition}
  \label{defn:compmult}
  In the situation of Proposition~\ref{prop:simdef2}, the triple
  $(\lambda_1, \lambda_2,\lambda_3)\in F^\times\times F^\times\times
  F^\times$ is said to be the 
  \emph{composition multiplier} of the similitude of compositions
  $g\colon\Comp\to\tilde\Comp$, and we write
  \[
    \lambda(g) = (\lambda_1,\lambda_2,\lambda_3),
  \]
  hence $\lambda(\partial g) = (\lambda_2,\lambda_3,\lambda_1)$ and
  $\lambda(\partial^2g) = (\lambda_3,\lambda_1,\lambda_2)$.
  Writing $\rho(g)=\lambda_1\lambda_2\lambda_3$, we thus have
  by~\eqref{eq:lambdamu}
  \[
    \lambda(g) = (\rho(g)\mu(g_1)^{-1},\rho(g)\mu(g_2)^{-1},
    \rho(g)\mu(g_3)^{-1}) \quad\text{and}\quad
    \mu(g_1)\mu(g_2)\mu(g_3)=\rho(g)^2.
  \]

  Similitudes with
  composition multiplier $(1,1,1)$ are called \emph{isomorphisms} of
  compositions. 
\end{definition}

\begin{prop}
  \label{prop:simtoPfistercomp}
  Every composition of quadratic spaces is similar to a composition of
  isometric quadratic spaces.
\end{prop}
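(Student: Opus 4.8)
The plan is to invoke the classification of the forms $q_1$, $q_2$, $q_3$ already obtained for each possible value of $\dim\Comp$: by Propositions~\ref{prop:compdim2}, \ref{prop:compdim4} and~\ref{prop:compdim8} (and by the elementary discussion of the case $\dim\Comp=1$, with $n_\Comp=\qf1$), there exist a quadratic Pfister form $n_\Comp$ and scalars $\lambda_1$, $\lambda_2$, $\lambda_3\in F^\times$ with $\lambda_1\lambda_2\lambda_3=1$ and $q_i\simeq\qf{\lambda_i}n_\Comp$ for $i=1$, $2$, $3$. The idea is then simply to rescale each $q_i$ by $\lambda_i^{-1}$, so that all three forms become isometric to $n_\Comp$, while rescaling the composition map $*_3$ by a suitable constant so that the multiplicativity condition~\eqref{eq:comp1} is preserved.

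Concretely, first I would put $q_i'=\lambda_i^{-1}q_i$ on $V_i$; its polar form is $\lambda_i^{-1}b_i$, which is still nonsingular, and $q_i'\simeq\qf{\lambda_i^{-1}\lambda_i}n_\Comp=n_\Comp$, so the spaces $(V_1,q_1')$, $(V_2,q_2')$, $(V_3,q_3')$ are pairwise isometric. Next I would check that $\lambda_3*_3\colon V_1\times V_2\to V_3$ is a composition map for these forms: using~\eqref{eq:comp1} and $\lambda_1\lambda_2\lambda_3=1$ one finds $q_3'\bigl(\lambda_3(x_1*_3x_2)\bigr)=\lambda_3\,q_1(x_1)q_2(x_2)=(\lambda_1\lambda_2)^{-1}q_1(x_1)q_2(x_2)=q_1'(x_1)q_2'(x_2)$. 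Hence $\Comp'=\bigl((V_1,q_1'),(V_2,q_2'),(V_3,q_3'),\lambda_3*_3\bigr)$ is a composition of \emph{isometric} quadratic spaces.

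Finally I would exhibit the similitude: for each $i$ the identity $\Id_{V_i}\colon(V_i,q_i)\to(V_i,q_i')$ is a similitude of multiplier $\lambda_i^{-1}$, and $\lambda_3\,\Id_{V_3}(x_1*_3x_2)=\lambda_3(x_1*_3x_2)=\Id_{V_1}(x_1)\,(\lambda_3*_3)\,\Id_{V_2}(x_2)$ for all $x_1\in V_1$, $x_2\in V_2$. Thus $(\Id_{V_1},\Id_{V_2},\Id_{V_3})$ is a similitude of compositions $\Comp\to\Comp'$ in the sense of Definition~\ref{defn:simcomp2} (with constant $\lambda_3$), which proves the proposition.

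There is no real obstacle here once the classification of the $q_i$ is in hand; the only point requiring a moment's care is the choice of rescaling constant for $*_3$, which is forced by the requirement that $\lambda_3*_3$ again satisfy~\eqref{eq:comp1} — it works precisely because $\lambda_1\lambda_2\lambda_3=1$. (Any constant whose square equals $\lambda_1^{-1}\lambda_2^{-1}\lambda_3=\lambda_3^2$ would do equally well.)
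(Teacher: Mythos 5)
Your proof is correct and follows essentially the same route as the paper's: rescale each $q_i$ by $\lambda_i^{-1}$ and $*_3$ by $\lambda_3$, with the identity maps giving the required similitude. The only cosmetic difference is that you take the $\lambda_i$ with $\lambda_1\lambda_2\lambda_3=1$ directly from Propositions~\ref{prop:compdim2}, \ref{prop:compdim4}, \ref{prop:compdim8}, whereas the paper chooses $\lambda_1$, $\lambda_2$ as arbitrary values represented by $q_1$, $q_2$ and sets $\lambda_3=(\lambda_1\lambda_2)^{-1}$ before invoking those propositions to conclude the rescaled forms are isometric Pfister forms.
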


\begin{proof}
  Let $\Comp=\bigl((V_1,q_1),\,(V_2,q_2),\,(V_3,q_3),\,*_3\bigr)$ be
  an arbitrary composition of quadratic spaces. Let $\lambda_1\in
  F^\times$ (resp.\ $\lambda_2\in F^\times$) be a value represented by
  $q_1$ (resp.\ $q_2$) and let
  $\lambda_3=\lambda_1^{-1}\lambda_2^{-1}\in F^\times$. Then
  $\lambda_3$ is represented by $q_3$; define quadratic forms $\tilde
  q_1$, $\tilde q_2$, $\tilde q_3$ on $V_1$, $V_2$, $V_3$ by
  \[
    \tilde q_1(x_1)=\lambda_1^{-1}q_1(x_1),\qquad
    \tilde q_2(x_2)=\lambda_2^{-1}q_2(x_2),\qquad
    \tilde q_3(x_3) = \lambda_3^{-1}q_3(x_3)
  \]
  for $x_1\in V_1$, $x_2\in V_2$ and $x_3\in V_3$. Depending on the
  dimension of $\Comp$, Proposition~\ref{prop:compdim2},
  \ref{prop:compdim4} or \ref{prop:compdim8} shows that the forms
  $\tilde q_1$, $\tilde q_2$ and $\tilde q_3$ are isometric Pfister
  forms. Define a map $\tilde*_3\colon V_1\times V_2\to V_3$ by
  \[
    x_1\,\tilde*_3\, x_2=\lambda_3\,x_1*_3x_2 \qquad\text{for $x_1\in
      V_1$ and $x_2\in V_2$.}
  \]
  Straightforward computations show that $\tilde\Comp=\bigl((\tilde
  V_1,\tilde q_1),\,(\tilde V_2,\tilde q_2),\,(\tilde V_3,\tilde
  q_3),\,\tilde *_3\bigr)$ is a composition, and that
  $(\Id_{V_1},\,\Id_{V_2},\,\Id_{V_3})\colon \Comp\to\tilde\Comp$ is a
  similitude of compositions, with composition multiplier
  $(\lambda_1,\,\lambda_2,\,\lambda_3)$. 
\end{proof}

Auto-similitudes of compositions of quadratic
spaces define algebraic groups which we discuss next.

For every composition $
\Comp=\bigl((V_1,q_1),\,(V_2,q_2),\,(V_3,q_3),\,*_3\bigr)
$, we associate to each similitude
\[
  (g_1,g_2,g_3)\colon\Comp\to\Comp
\]
with multiplier $(\lambda_1,\lambda_2,\lambda_3)$ the $4$-tuple
$(g_1,g_2,g_3,\lambda_3)$, from which $\lambda_1$ and $\lambda_2$ can
be determined by the relations~\eqref{eq:lambdamu}. We may thus
consider the group of similitudes of $\Comp$ as the subgroup of
$\GO(q_1)\times\GO(q_2)\times\GO(q_3)\times F^\times$ defined by the equations
\[
    \lambda_3\,g_3(x_1*_3x_2)=g_1(x_1)*_3g_2(x_2) \qquad\text{for all
      $x_1\in V_1$, $x_2\in V_2$.}
\]
These equations define a closed subgroup of $\BGO(q_1)\times
\BGO(q_2)\times\BGO(q_3)\times\BGm$, hence an algebraic group scheme, 
for which we use the notation $\BGO(\Comp)$. From
Proposition~\ref{prop:simdef2} it follows that $\partial$ and
$\partial^2$ yield isomorphisms
\[
  \partial\colon \BGO(\Comp)\to\BGO(\partial\Comp)
  \qquad\text{and}\qquad
  \partial^2\colon\BGO(\Comp)\to \BGO(\partial^2\Comp)
\]
defined as follows: for every commutative $F$-algebra $R$ and
$(g_1,g_2,g_3,\lambda_3)\in\BGO(\Comp)(R)$,
\[
  \partial(g_1,g_2,g_3,\lambda_3) = (g_2,g_3,g_1,\lambda_1)
  \qquad\text{and}\qquad
  \partial(g_1,g_2,g_3,\lambda_3) = (g_3,g_1,g_2,\lambda_2),
\]
with
\[
  \lambda_1=\mu(g_2)\lambda_3^{-1} \qquad\text{and}\qquad
  \lambda_2=\mu(g_1)\lambda_3^{-1}.
\]

The Lie algebra $\go(\Comp)$ of $\BGO(\Comp)$ consists of $4$-tuples
$(g_1,g_2,g_3,\lambda_3)\in\go(q_1)\times\go(q_2)\times \go(q_3)\times
F$ satisfying the following condition:
\begin{equation}
    \label{eq:defnlocsim}
    g_3(x_1*_3x_2) = g_1(x_1)*_3x_2+x_1*_3g_2(x_2) - \lambda_3\,
    x_1*_3x_2 \qquad\text{for all $x_1\in V_1$, $x_2\in V_2$.}
\end{equation}

The following is the Lie algebra version of
Proposition~\ref{prop:simdef2}: 

\begin{prop}
  \label{prop:Liesimdef2}
  For $g=(g_1,g_2,g_3,\lambda_3)\in\go(\Comp)$, there are scalars
  $\lambda_1$, $\lambda_2\in F$ such that
  \[
    \dot\mu(g_1) = \lambda_2+\lambda_3,\qquad
    \dot\mu(g_2) = \lambda_3+\lambda_1,\qquad
    \dot\mu(g_3) = \lambda_1+\lambda_2
  \]
  and for all
  $x_1\in V_1$, $x_2\in V_2$, $x_3\in V_3$
  \begin{align*}
    g_1(x_2*_1x_3) & = g_2(x_2)*_1x_3+x_2*_1g_3(x_3) - \lambda_1\,
                     x_2*_1x_3,\\
    g_2(x_3*_2x_1) & = g_3(x_3)*_2x_1 + x_3*_2g_1(x_1) - \lambda_2\,
                     x_3*_2x_1,\\
    g_3(x_1*_3x_2) & = g_1(x_1)*_3x_2 + x_1*_3g_2(x_2) - \lambda_3\,
                     x_1*_3x_2.
  \end{align*}
  Thus, $\partial g :=(g_2,g_3,g_1,\lambda_1)$ lies in $\go(\partial\Comp)$
  and $\partial^2g:=(g_3,g_1,g_2,\lambda_2)$ in $\go(\partial^2\Comp)$. 
\end{prop}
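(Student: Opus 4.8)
The plan is to deduce the statement by differentiating Proposition~\ref{prop:simdef2}, using that $\partial$ has already been identified as an isomorphism of algebraic group schemes $\BGO(\Comp)\xrightarrow{\sim}\BGO(\partial\Comp)$. Recall that $\go(\Comp)$ is the Lie algebra of $\BGO(\Comp)$; explicitly, a $4$-tuple $(g_1,g_2,g_3,\lambda_3)$ lies in $\go(\Comp)$ precisely when $(\Id_{V_1}+\varepsilon g_1,\,\Id_{V_2}+\varepsilon g_2,\,\Id_{V_3}+\varepsilon g_3,\,1+\varepsilon\lambda_3)$ lies in $\BGO(\Comp)(F[\varepsilon])$, where $F[\varepsilon]$ is the algebra of dual numbers: substituting this tuple into the defining equation $\lambda_3\,g_3(x_1*_3x_2)=g_1(x_1)*_3g_2(x_2)$ of $\BGO(\Comp)$ and comparing the coefficients of $\varepsilon$ reproduces exactly condition~\eqref{eq:defnlocsim}.

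First I would apply the morphism $\partial$, evaluated over $F[\varepsilon]$, to this tuple. By the explicit formula for $\partial$ on rational points recalled just before the proposition, its image is $(\Id_{V_2}+\varepsilon g_2,\,\Id_{V_3}+\varepsilon g_3,\,\Id_{V_1}+\varepsilon g_1,\,\lambda_1')$ with $\lambda_1'=\mu(\Id_{V_2}+\varepsilon g_2)\cdot(1+\varepsilon\lambda_3)^{-1}$. Since $\dot\mu$ is by definition the differential of the multiplier morphism $\mu$, we have $\mu(\Id_{V_2}+\varepsilon g_2)=1+\varepsilon\,\dot\mu(g_2)$, while $(1+\varepsilon\lambda_3)^{-1}=1-\varepsilon\lambda_3$; hence $\lambda_1'=1+\varepsilon\bigl(\dot\mu(g_2)-\lambda_3\bigr)$. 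As $\partial$ restricts to an isomorphism from the Lie algebra of $\BGO(\Comp)$ onto that of $\BGO(\partial\Comp)$, this shows that $\partial g:=(g_2,g_3,g_1,\lambda_1)$ lies in $\go(\partial\Comp)$ with $\lambda_1:=\dot\mu(g_2)-\lambda_3$; reading off condition~\eqref{eq:defnlocsim} for $\partial\Comp$ then yields both $\dot\mu(g_2)=\lambda_1+\lambda_3$ and the first of the three displayed relations.

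Next I would iterate. The same argument applied to $\partial g\in\go(\partial\Comp)$ gives $\partial^2g=\partial(\partial g)=(g_3,g_1,g_2,\lambda_2)\in\go(\partial^2\Comp)$ with $\lambda_2:=\dot\mu(g_3)-\lambda_1$ — here one uses that the multiplier of $g_3$ as an auto-similitude of $(V_3,q_3)$ does not depend on the composition under consideration — whence $\dot\mu(g_3)=\lambda_1+\lambda_2$ and the second displayed relation. Applying the argument once more produces an element of $\go(\partial^3\Comp)=\go(\Comp)$ whose scalar component is $\dot\mu(g_1)-\lambda_2$; but $\partial^3$ is the identity on $\BGO(\Comp)$ — equivalently, the cyclic relation $\lambda_3=\mu(g_1)\lambda_2^{-1}$, which is part of Proposition~\ref{prop:simdef2} — so this element is $g$ itself, forcing $\dot\mu(g_1)=\lambda_2+\lambda_3$. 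The third displayed relation is nothing but the hypothesis~\eqref{eq:defnlocsim} on $g$, and the three equalities for the $\dot\mu(g_i)$ are precisely the differentiated form of the product relations~\eqref{eq:lambdamu}; this completes the argument.

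I expect no real obstacle here: once the dual-number framework is in place the computations are automatic, and the only point to state with care is that $\partial^3=\Id$ on $\BGO(\Comp)$, which I would either quote from the proof of Proposition~\ref{prop:simdef2} or re-derive from $\lambda_1=\mu(g_2)\lambda_3^{-1}$, $\lambda_2=\mu(g_3)\lambda_1^{-1}$, $\lambda_3=\mu(g_1)\lambda_2^{-1}$. For readers who prefer to avoid dual numbers, an alternative is to transcribe the proof of Proposition~\ref{prop:simdef2} directly: apply $x_2*_1(-)$ to the defining equation~\eqref{eq:defnlocsim} of $\go(\Comp)$ and simplify using~\eqref{eq:compp5}, \eqref{eq:compp5linbis} together with $b_2(x_2,g_2(x_2))=\dot\mu(g_2)\,q_2(x_2)$ from Proposition~\ref{prop:Lie1bis}; the bookkeeping is then the only (minor) nuisance.
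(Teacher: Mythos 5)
Your dual-number argument is correct and is precisely the proof the paper leaves implicit: Proposition~\ref{prop:Liesimdef2} is introduced only as ``the Lie algebra version of Proposition~\ref{prop:simdef2}'' with no separate demonstration, so differentiating that proposition over $F[\varepsilon]$ is exactly what is intended. Your treatment of the scalar components, including the observation that $\partial^3 = \Id$ on $\BGO(\Comp)$ (implicit in the chain $\lambda_1=\mu(g_2)\lambda_3^{-1}$, $\lambda_2=\mu(g_3)\lambda_1^{-1}$, $\lambda_3=\mu(g_1)\lambda_2^{-1}$ from the proof of Proposition~\ref{prop:simdef2}), and the remark that $\dot\mu(g_i)$ is intrinsic to $(V_i,q_i)$, matches what the paper's framework requires.
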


The composition multiplier map $\lambda_\Comp$ yields a morphism of
algebraic group schemes
\[
    \lambda_\Comp\colon\BGO(\Comp)\to \BGm^3
\]
defined as follows: for every commutative $F$-algebra $R$ and
$(g_1,g_2,g_3,\lambda_3)\in\BGO(\Comp)(R)$,
\begin{equation}
  \label{eq:lambdaCompdef}
    \lambda_\Comp(g_1,g_2,g_3,\lambda_3) = (\mu(g_2)\lambda_3^{-1},
    \mu(g_1)\lambda_3^{-1}, \lambda_3) \in R^\times\times
    R^\times\times R^\times.
\end{equation}
Its differential $\dot\lambda_\Comp\colon\go(\Comp)\to F\times F\times F$ is
given by
\[
  \dot\lambda_\Comp(g_1,g_2,g_3,\lambda_3) = (\dot\mu(g_2)-\lambda_3,\,
  \dot\mu(g_1)-\lambda_3,\, \lambda_3).
\]
We let $\BOrth(\Comp)=\ker\lambda_\Comp$ and
$\orth(\Comp)=\ker\dot\lambda_\Comp$, 
so $\BOrth(\Comp)$ is the algebraic group scheme of automorphisms of
$\Comp$ and $\orth(\Comp)$ is its Lie algebra.

\begin{remark}
  \label{rem:RT}
  For every commutative $F$-algebra $R$ and
  $(g_1,g_2,g_3,\lambda_3)\in\BOrth(\Comp)(R)$ we have $\lambda_3=1$
  and $\mu(g_1)=\mu(g_2)=1$, hence also $\mu(g_1)=1$
  by~\eqref{eq:lambdamu}. Thus, $(g_1,g_2,g_3)$ is a \emph{related
    triple} of isometries according to the definition given by
  Springer--Veldkamp \cite[\S3.6]{SpV}, Elduque \cite[\S1]{Eld} or
  Alsaody--Gille 
  \cite[\S3.1]{AlsGille} for some specific compositions of quadratic
  spaces arising from composition algebras. In~\S\ref{subsec:triso}
  below we establish isomorphisms $\BOrth(\Comp)\simeq \BSpin(q_1)
  \simeq \BSpin(q_2)\simeq \BSpin(q_3)$, which are the analogues of
  the isomorphisms given in~\cite[Prop.~3.6.3]{SpV},
  \cite[Th.~1.1]{Eld} and 
  \cite[Th.~3.12]{AlsGille} in terms of related triples.
\end{remark}
\begin{prop}
  \label{prop:exseqOGOogo}
  The algebraic group schemes $\BOrth(\Comp)$ and $\BGO(\Comp)$ are
  smooth, and the following sequences are exact:
  \begin{equation}
    \label{eq:exseqOGO}
    1\to\BOrth(\Comp)\to\BGO(\Comp)\xrightarrow{\lambda_\Comp}\BGm^3\to 1
  \end{equation}
  and
  \begin{equation}
    \label{eq:exseqogo}
    0\to\orth(\Comp)\to\go(\Comp)\xrightarrow{\dot\lambda_\Comp} F^3\to 0.
  \end{equation}
\end{prop}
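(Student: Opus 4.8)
The plan is to prove surjectivity and smoothness together by exhibiting, for the exact sequence of group schemes, explicit splittings or by a dimension count plus a rational-point surjectivity argument, and then to get the Lie algebra sequence essentially for free. First I would address exactness of \eqref{eq:exseqOGO}. Left-exactness is immediate: by definition $\BOrth(\Comp)=\ker\lambda_\Comp$, and the composition multiplier map $\lambda_\Comp$ is a genuine morphism of group schemes by Proposition~\ref{prop:simdef2} and formula \eqref{eq:lambdaCompdef}. For surjectivity of $\lambda_\Comp$, the key observation is that scaling similitudes give a section over the generic point, or more precisely: given a commutative $F$-algebra $R$ and a triple $(\lambda_1,\lambda_2,\lambda_3)\in(R^\times)^3$, I want to produce an auto-similitude of $\Comp_R$ with that composition multiplier. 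The natural candidate is a triple of scalar multiplications $g_i=\nu_i\,\Id_{V_i}$ for suitable $\nu_i\in R^\times$; plugging into \eqref{eq:simcompdef} one finds $\lambda_3\nu_3=\nu_1\nu_2$, and the multiplier of $\nu_i\Id$ is $\nu_i^2$, so \eqref{eq:lambdamu} forces $\nu_1^2=\lambda_2\lambda_3$, etc. These equations are not solvable over $R$ in general (they require square roots), so scalar multiplications alone do not split $\lambda_\Comp$; instead one gets a section only after passing to a suitable faithfully flat extension, which is exactly what is needed to conclude surjectivity as a morphism of \emph{sheaves}. Concretely: over the $F$-algebra $R'=F[t_1^{\pm},t_2^{\pm},t_3^{\pm}]$ adjoin square roots of $t_1t_2,t_2t_3,t_3t_1$ compatibly; this is finite étale (char $\neq 2$) or one argues directly in char $2$ using that $\lambda_\Comp$ factors through maps whose surjectivity is already known.

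A cleaner route, which I would actually follow, is to reduce to the split case and use the isomorphisms with Spin groups promised in Remark~\ref{rem:RT}. By Proposition~\ref{prop:simtoPfistercomp} every composition is similar to one of isometric (Pfister) quadratic spaces, and similitudes of compositions transport $\BGO(\Comp)$ and $\lambda_\Comp$ isomorphically, so it suffices to treat the Pfister case; and smoothness and exactness can be checked after faithfully flat base change, so we may assume $F$ is separably closed and all $q_i$ split. Then $\BGO(q_i)$ is the split orthogonal similitude group, which is smooth, and one can write down enough auto-similitudes of $\Comp$ explicitly—e.g.\ for a composition coming from the octonion multiplication (dimension $8$), the ``triality'' auto-similitudes—to see that $\lambda_\Comp$ is surjective on rational points over a separably closed field. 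Combined with smoothness of the kernel $\BOrth(\Comp)$ (which one gets because $\BGO(\Comp)$ is smooth and $\lambda_\Comp$ is smooth, being a surjective homomorphism onto a smooth group with smooth kernel computed on the Lie algebra level—see below), \cite[(22.3)]{BoI} upgrades rational-point surjectivity to surjectivity of the morphism.

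For smoothness of $\BGO(\Comp)$ and $\BOrth(\Comp)$: these are defined as closed subgroup schemes of smooth groups by the bilinear equations \eqref{eq:simcompdef}, so the standard criterion is that $\dim\BGO(\Comp)$ (as a variety, i.e.\ the dimension at the identity of the reduced group) equals $\dim\go(\Comp)$, where $\go(\Comp)$ is the Lie algebra computed via dual numbers in \eqref{eq:defnlocsim}. Here the hard input is the computation of $\dim\go(\Comp)$, but in the dimension-$8$ case (the case of real interest) this follows from the forthcoming identification $\orth(\Comp)\cong\spin(q_1)$ together with \eqref{eq:dimspin}: $\dim\orth(\Comp)=m(2m-1)=28$ and $\dim\go(\Comp)=28+3=31$ by \eqref{eq:exseqogo}, which matches $\dim\BGO(\Comp)=\dim\BSpin(q_1)+3$. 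In the lower-dimensional cases ($\dim\Comp=1,2,4$) the same bookkeeping works with the relevant $\spin$ dimension, or one checks smoothness by hand since the groups are small. Exactness of the Lie algebra sequence \eqref{eq:exseqogo} is then immediate: $\orth(\Comp)=\ker\dot\lambda_\Comp$ by definition, and $\dot\lambda_\Comp$ is surjective because its formula $\dot\lambda_\Comp(g_1,g_2,g_3,\lambda_3)=(\dot\mu(g_2)-\lambda_3,\dot\mu(g_1)-\lambda_3,\lambda_3)$ already surjects onto $F^3$ via the scalar elements $(\nu_1\Id,\nu_2\Id,\nu_3\Id,\lambda_3)\in\go(\Comp)$ (for which \eqref{eq:defnlocsim} reads $\nu_1+\nu_2-\lambda_3=\nu_3$, easily solved, with $\dot\mu(\nu_i\Id)=2\nu_i$ if $\charac F\neq2$ and $=0$ if $\charac F=2$—in characteristic $2$ one instead uses that $\dot\chi_0$-type elements realize arbitrary $\dot\mu$, or simply notes that $\dot\lambda_\Comp$ is surjective as it is the differential of the already-established surjection $\lambda_\Comp$ of smooth groups, \cite[(22.5)]{BoI}).

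**Main obstacle.** The genuinely delicate point is surjectivity of $\lambda_\Comp$ (hence smoothness of $\BOrth(\Comp)=\ker\lambda_\Comp$), since over a field that is not quadratically closed the scalar-multiplication similitudes do not realize all composition multipliers and one must invoke either faithfully flat descent from a quadratic-closure-type cover or the explicit triality similitudes available in the split case. The cleanest resolution is the reduction-to-split-and-separably-closed argument above, using \cite[(22.3)]{BoI} to pass from rational points back to the morphism; but it relies on the dimension count for $\go(\Comp)$, which in turn genuinely uses the Spin-group identification from the next subsection—so in the final write-up one should either forward-reference \S\ref{subsec:triso} or give the dimension count directly by solving \eqref{eq:defnlocsim} explicitly in the Pfister case.
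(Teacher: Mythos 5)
Your overall blueprint (prove surjectivity of $\lambda_\Comp$ over an algebraic closure, handle the Lie algebra differential, deduce smoothness) matches the structure of the paper's argument, but the two workarounds you propose for the hard step---surjectivity of $\dot\lambda_\Comp$ in characteristic~$2$ and hence smoothness---do not in fact close the gap.

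First, the forward-reference route is circular. You suggest using $\orth(\Comp)\cong\spin(q_1)$ together with $\dim\BSpin(q_1)=28$ to compute $\dim\go(\Comp)$ and thereby check smoothness. But that Spin identification is Theorem~\ref{thm:SpinOTT}, whose proof invokes Corollary~\ref{cor:OTTsmooth}, and Corollary~\ref{cor:OTTsmooth} in turn cites \emph{this very Proposition} to get smoothness of $\BOrth(\Comp)$ over $\bar F$. So the dependency chain returns to Proposition~\ref{prop:exseqOGOogo}; you cannot borrow that dimension count here. Second, your char~$2$ fallback for surjectivity of $\dot\lambda_\Comp$ has the same problem: differentiating ``the already-established surjection $\lambda_\Comp$ of smooth groups'' presupposes the smoothness you are trying to prove, and ``$\dot\chi_0$-type elements realize arbitrary $\dot\mu$'' is not actually carried out. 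Your scalar elements $(\nu_i\Id_{V_i})$ only hit the diagonal of $F^3$ when $\charac F=2$, exactly as you observe, so something more is needed.

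The paper resolves both issues at once by \emph{directly} constructing elements of $\go(\Comp)$ whose image under $\dot\lambda_\Comp$ generates $F^3$, valid in every characteristic. For $u_1,v_1\in V_1$ set $g_1(x_1)=u_1\,b_1(v_1,x_1)-v_1\,b_1(u_1,x_1)$, $g_2(x_2)=(v_1*_3x_2)*_2u_1$, $g_3(x_3)=u_1*_3(x_3*_2v_1)$. Proposition~\ref{prop:Lie1bis} and the identities \eqref{eq:comp45}, \eqref{eq:comp3}, \eqref{eq:compp3} give $g_i\in\go(q_i)$ with $\dot\mu(g_1)=0$, $\dot\mu(g_2)=\dot\mu(g_3)=b_1(v_1,u_1)$; the local similitude equation \eqref{eq:defnlocsim} is verified with $\lambda_3=0$ using \eqref{eq:comp7linbis} twice, so $(g_1,g_2,g_3,0)\in\go(\Comp)$ with $\dot\lambda_\Comp(g_1,g_2,g_3,0)=(b_1(v_1,u_1),0,0)$. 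Choosing $b_1(v_1,u_1)=1$ and then applying the shift to $\partial\Comp$, $\partial^2\Comp$ produces $(0,1,0)$ and $(0,0,1)$. With this, $\lambda_\Comp$ is a surjective \emph{separable} morphism onto the smooth group $\BGm^3$ (surjectivity on $\bar F$-points being exactly your homothety computation with square roots), and \cite[(22.13)]{BoI} gives smoothness of $\BOrth(\Comp)=\ker\lambda_\Comp$, then \cite[(22.12)]{BoI} gives smoothness of $\BGO(\Comp)$ --- no dimension count and no forward reference. This explicit Lie-algebra construction is the missing piece in your proposal.
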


\begin{proof}
  \emph{Step~1:} We show that $\lambda_\Comp$ is surjective. Since $\BGm^3$
  is smooth, it suffices by~\cite[(22.3)]{BoI} to show that $\lambda_\Comp$
  is surjective on points over an algebraic closure $\Falg$ of
  $F$. For this, we consider the homotheties: if $\nu_1$, $\nu_2$,
  $\nu_3\in\Falg^\times$, then $\nu_i\Id_{(V_i)_{\Falg}}\colon
  (V_i,q_i)_{{\Falg}} \to (V_i,q_i)_{{\Falg}}$ is a similitude with
  multiplier $\nu_i^2$, and
  \[
    \bigl(\nu_1\Id_{(V_1)_{\Falg}},\,\nu_2\Id_{(V_2)_{\Falg}},\,
    \nu_3\Id_{(V_3)_{\Falg}}\bigr)\colon \Comp_{\Falg} \to \Comp_{\Falg}
  \]
  is a similitude with multiplier $(\nu_2\nu_3\nu_1^{-1},\,
  \nu_3\nu_1\nu_2^{-1},\, \nu_1\nu_2\nu_3^{-1})$. Therefore, the image
  of the map $\lambda_\Comp$ in $(\Falg^\times)^3$ contains $(\nu_2\nu_3\nu_1^{-1},\,
  \nu_3\nu_1\nu_2^{-1},\, \nu_1\nu_2\nu_3^{-1})$ for all $\nu_1$,
  $\nu_2$, $\nu_3\in \Falg^\times$. Given $\lambda_1$, $\lambda_2$,
  $\lambda_3\in\Falg^\times$, we may find $\nu_1$, $\nu_2$,
  $\nu_3\in\Falg^\times$ such that $\nu_2^2=\lambda_1\lambda_3$,
  $\nu_3^2=\lambda_1\lambda_2$ and
  $\nu_1=\lambda_1^{-1}\nu_2\nu_3$. Then
  \[
    (\nu_2\nu_3\nu_1^{-1},\,
  \nu_3\nu_1\nu_2^{-1},\, \nu_1\nu_2\nu_3^{-1}) =
  (\lambda_1,\,\lambda_2,\,\lambda_3),
  \]
  proving surjectivity of $\lambda_\Comp$.

  \emph{Step~2:} We show that $\dot\lambda_\Comp$ is surjective. For $u_1$,
  $v_1\in V_1$, consider the maps
  \begin{align*}
    g_1\colon V_1\to V_1,&\qquad x_1\mapsto
                            u_1b_1(v_1,x_1)-v_1b_1(u_1,x_1),\\
    g_2\colon V_2\to V_2,&\qquad x_2\mapsto(v_1*_3x_2)*_2u_1,\\
    g_3\colon V_3\to V_3,&\qquad x_3\mapsto u_1*_3(x_3*_2v_1).
  \end{align*}
  For $x_1\in V_1$,
  \[
    b_1(g_1(x_1),\,x_1) = b_1(u_1,\,x_1)b_1(v_1,\,x_1) -
    b_1(v_1,\,x_1)b_1(u_1,\,x_1)=0,
  \]
  hence $g_1\in\go(q_1)$ with $\dot\mu(g_1)=0$ by
  Proposition~\ref{prop:Lie1bis}. Moreover, \eqref{eq:comp45},
  \eqref{eq:comp3} and \eqref{eq:compp3} yield for $x_2\in V_2$ and
  $x_3\in V_3$
  \[
    b_2(g_2(x_2),\,x_2)=b_3(v_1*x_2,\,u_1*_2x_2) =
    b_1(v_1,\,u_1)q_2(x_2),
  \]
  \[
    b_3(g_3(x_3),\,x_3)=b_2(x_3*_2v_1,\,x_3*_2u_1) =
    q_3(x_3)b_1(v_1,u_1).
  \]
  Therefore, $g_2\in\go(q_2)$ and $g_3\in\go(q_3)$ with
  $\dot\mu(g_2)=\dot\mu(g_3)=b_1(v_1,\,u_1)$.

  Now, for $x_1\in V_1$ and $x_2\in V_2$ we compute
  $g_3(x_1*_3x_2)=u_1*_3\bigl((x_1*_3x_2)*_2v_1\bigr)$ by
  using~\eqref{eq:comp7linbis} twice in succession to interchange
  first $x_1$ and $v_1$, and then $x_1$ and $u_1$:
  \begin{align*}
    g_3(x_1*_3x_2) &= (u_1*_3x_2)b_1(v_1,\,x_1)-
                     u_1*_3\bigl((v_1*_3x_2)*_2x_1\bigr)\\
    &= (u_1*_3x_2)b_1(v_1,\,x_1)-(v_1*_3x_2)b_1(u_1,\,x_1) +
      x_1*_3\bigl((v_1*_3x_2)*_2u_1\bigr)\\
    &=g_1(x_1)*_3x_2+x_1*_3g_2(x_2).
  \end{align*}
  It follows that $(g_1,g_2,g_3,0)$ lies in $\go(\Comp)$, and the
  computation of $\dot\mu(g_2)$ and $\dot\mu(g_1)$ above yields
  \[
    \dot\lambda_\Comp(g_1,g_2,g_3,0) = (b_1(v_1,\,u_1),\,0,\,0).
  \]
  Thus, taking $u_1$, $v_1$ such that $b_1(v_1,\,u_1)=1$, we see that
  $(1,0,0)$ lies in the image of $\dot\lambda_\Comp$. Similarly, we may find
  $g'\in\go(\partial\Comp)$ and $g''\in\go(\partial^2\Comp)$ such that
  $\dot\lambda_{\partial\Comp}(g')=\dot\lambda_{\partial^2\Comp}(g'')=(1,0,0)$.
  Then
  $\partial^2(g')$, 
  $\partial(g'')\in\go(\Comp)$ satisfy
  $\dot\lambda_{\partial^2\Comp}\bigl(\partial^2(g')\bigr) = (0,0,1)$ and
  $\dot\lambda_{\partial\Comp}\bigl(\partial(g'')\bigr)=(0,1,0)$,
  hence $\dot\lambda_\Comp$ is surjective.
  \medbreak

  Steps~1 and 2 establish the exactness of the
  sequences~\eqref{eq:exseqOGO} and \eqref{eq:exseqogo}. Step~2 shows
  that the surjective map $\lambda_\Comp$ is separable, hence
  $\BOrth(\Comp)$ is smooth by~\cite[(22.13)]{BoI}. Since $\BGm^3$ is
  also smooth, it follows that $\BGO(\Comp)$ is smooth by~\cite[(22.12)]{BoI}.
\end{proof}

Step~1 of the proof of Proposition~\ref{prop:exseqOGOogo} introduces
the subgroup of homotheties of $\BGO(\Comp)$: this subgroup
$\BHomot(\Comp)$ is the image of the closed embedding
$\BGm^3\to\BGO(\Comp)$ given by
\[
  (\nu_1,\nu_2,\nu_3) \mapsto (\nu_1\Id_{V_1}, \nu_2\Id_{V_2},
  \nu_3\Id_{V_3}, \nu_1\nu_2\nu_3^{-1}).
\]
The algebraic group $\BHomot(\Comp)$ lies in the center of
$\BGO(\Comp)$, hence we may consider the quotient algebraic group
\[
  \BPGO(\Comp)=\BGO(\Comp)/\BHomot(\Comp).
\]
This is a smooth algebraic group since $\BGO(\Comp)$ is smooth. Let
also
\[
  \BZO(\Comp)=\BHomot(\Comp)\cap \BOrth(\Comp).
\]
This group is the kernel of the canonical map
$\BOrth(\Comp)\to\BPGO(\Comp)$. For every commutative $F$-algebra $R$,
\[
    \BZO(\Comp)(R)=\{(\nu_1,\nu_2,\nu_3,1)\mid
    \nu_1^2=\nu_2^2=\nu_3^2=\nu_1\nu_2\nu_3=1\} 
    \subset R^\times\times R^\times\times R^\times\times R^\times,
\]
hence $\BZO(\Comp)$ is isomorphic to the kernel of the multiplication
map $m\colon \Bmu_2\times\Bmu_2\times\Bmu_2\to\Bmu_2$ carrying
$(\nu_1,\nu_2,\nu_3)$ to $\nu_1\nu_2\nu_3$. It is thus also isomorphic to
$\Bmu_2\times\Bmu_2$, hence it is a smooth algebraic group if and only
if $\charac F\neq2$.

\begin{prop}
  \label{prop:exdiagGOComp}
  The following diagram is commutative with exact rows and columns:
  \[
    \xymatrix{&1\ar[d]&1\ar[d]&&\\
      1\ar[r]&\BZO(\Comp)\ar[r]\ar[d]&\BOrth(\Comp)\ar[r]\ar[d]&
      \BPGO(\Comp)\ar[r]\ar@{=}[d]&1\\
      1\ar[r]&\BHomot(\Comp)\ar[r]\ar[d]_{\lambda_\Comp}&
      \BGO(\Comp)\ar[r]\ar[d]^{\lambda_\Comp}&\BPGO(\Comp)\ar[r]&1\\
      &\BGm^3\ar@{=}[r]\ar[d]&\BGm^3\ar[d]&&\\
      &1&1&&}
  \]
\end{prop}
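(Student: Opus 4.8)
The plan is to derive the whole diagram from Proposition~\ref{prop:exseqOGOogo} together with the explicit descriptions of $\BHomot(\Comp)$ and $\BPGO(\Comp)$ recalled above; what remains is then a formal diagram chase whose only substantive input is the surjectivity of $\lambda_\Comp$ restricted to $\BHomot(\Comp)$, and that is in essence the homothety computation already carried out in Step~1 of the proof of Proposition~\ref{prop:exseqOGOogo}. First I would record the two parts of the diagram that come for free: the middle column $1\to\BOrth(\Comp)\to\BGO(\Comp)\xrightarrow{\lambda_\Comp}\BGm^3\to1$ is precisely the exact sequence~\eqref{eq:exseqOGO}, and the middle row $1\to\BHomot(\Comp)\to\BGO(\Comp)\to\BPGO(\Comp)\to1$ is exact by the very definition $\BPGO(\Comp)=\BGO(\Comp)/\BHomot(\Comp)$, $\BHomot(\Comp)$ being a closed central subgroup scheme of $\BGO(\Comp)$. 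Commutativity of each square is then immediate: the two vertical arrows labelled $\lambda_\Comp$ are restrictions of a single morphism, the two horizontal arrows to $\BPGO(\Comp)$ are restrictions of the quotient map $q\colon\BGO(\Comp)\to\BPGO(\Comp)$, and $\BZO(\Comp)$ lies inside each of $\BOrth(\Comp)$ and $\BHomot(\Comp)$ with the induced maps being the inclusions.

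Next I would treat the left column. Under the isomorphism $\BGm^3\xrightarrow{\sim}\BHomot(\Comp)$ sending $(\nu_1,\nu_2,\nu_3)$ to $(\nu_1\Id_{V_1},\nu_2\Id_{V_2},\nu_3\Id_{V_3},\nu_1\nu_2\nu_3^{-1})$, formula~\eqref{eq:lambdaCompdef} identifies $\lambda_\Comp\rvert_{\BHomot(\Comp)}$ with the endomorphism
\[
  (\nu_1,\nu_2,\nu_3)\mapsto(\nu_2\nu_3\nu_1^{-1},\,\nu_3\nu_1\nu_2^{-1},\,\nu_1\nu_2\nu_3^{-1})
\]
of $\BGm^3$, whose exponent matrix has determinant~$4$; in particular it is a finite faithfully flat isogeny, so it is surjective (equivalently, this is what Step~1 of the proof of Proposition~\ref{prop:exseqOGOogo} verifies on points over $\Falg$, which suffices since $\BGm^3$ is smooth). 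Its kernel is $\BHomot(\Comp)\cap\ker\lambda_\Comp=\BHomot(\Comp)\cap\BOrth(\Comp)=\BZO(\Comp)$, using $\BOrth(\Comp)=\ker\lambda_\Comp$ from the middle column. Hence the left column is exact.

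Finally I would deduce the top row. Since $\BHomot(\Comp)$ is central in $\BGO(\Comp)$, the product $\BOrth(\Comp)\cdot\BHomot(\Comp)$ is a subgroup scheme of $\BGO(\Comp)$; it contains $\BOrth(\Comp)=\ker\lambda_\Comp$ and, by the surjectivity just established, has image $\BGm^3$ under $\lambda_\Comp$, so it surjects onto $\BGO(\Comp)/\BOrth(\Comp)$ and therefore equals $\BGO(\Comp)$. Applying $q$ and using $\BHomot(\Comp)=\ker q$ gives $q\bigl(\BOrth(\Comp)\bigr)=q\bigl(\BOrth(\Comp)\cdot\BHomot(\Comp)\bigr)=q\bigl(\BGO(\Comp)\bigr)=\BPGO(\Comp)$, so the canonical map $\BOrth(\Comp)\to\BPGO(\Comp)$ is surjective; its kernel is $\BOrth(\Comp)\cap\BHomot(\Comp)=\BZO(\Comp)$, which was noted before the statement. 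Thus the top row is exact. Alternatively, once all three columns are known exact one may simply invoke the nine lemma for affine group schemes, the remaining bottom row and right column being the trivial sequences $1\to\BGm^3\to\BGm^3\to1$ and $1\to\BPGO(\Comp)\to\BPGO(\Comp)\to1$ with identity middle map. The only non-formal step in the whole argument is the surjectivity of $\lambda_\Comp\rvert_{\BHomot(\Comp)}$, and since that is already contained in Proposition~\ref{prop:exseqOGOogo}, I do not expect any real obstacle.
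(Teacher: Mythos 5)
Your proof is correct and follows the same structure as the paper's: middle column from Proposition~\ref{prop:exseqOGOogo}, middle row from the definition of $\BPGO(\Comp)$, left column from the surjectivity of $\lambda_\Comp\rvert_{\BHomot(\Comp)}$ (Step~1 of the proof of Proposition~\ref{prop:exseqOGOogo}), and the top row deduced last. The one place where you diverge from the paper is in proving that $\BOrth(\Comp)\to\BPGO(\Comp)$ is surjective: the paper reduces to $\Falg$-points (using smoothness of $\BPGO(\Comp)$), writes any $g\in\BGO(\Comp)(\Falg)$ as $g=(gh^{-1})\cdot h$ with $h\in\BHomot(\Comp)(\Falg)$ chosen so that $\lambda_\Comp(h)=\lambda_\Comp(g)$, and notes that $gh^{-1}\in\BOrth(\Comp)(\Falg)$, whereas you prove the scheme-theoretic identity $\BGO(\Comp)=\BOrth(\Comp)\cdot\BHomot(\Comp)$ via the lattice correspondence over $\ker\lambda_\Comp$ (or appeal to the nine lemma). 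These are the same argument in different packaging. Your observation that $\lambda_\Comp\rvert_{\BHomot(\Comp)}$ has exponent matrix of determinant $4$, hence is a finite flat isogeny of tori and thus automatically surjective, is a clean alternative to the explicit choice of $\nu_i$ in the paper's Step~1, though it yields the same conclusion.
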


\begin{proof}
  Commutativity of the diagram is clear, and the lower row is exact by
  definition of $\BPGO(\Comp)$. Step~1 of the proof of
  Proposition~\ref{prop:exseqOGOogo} shows that $\lambda_\Comp\colon
  \BHomot(\Comp)\to\BGm^3$ is surjective, hence the left column is
  exact. Moreover, the right column is exact by
  Proposition~\ref{prop:exseqOGOogo}; therefore it only remains to
  prove that the canonical map $\BOrth(\Comp)\to\BPGO(\Comp)$ is
  surjective. Since $\BPGO(\Comp)$ is smooth, it suffices to consider
  the group of rational points over an algebraic closure $\Falg$ of
  $F$. We know $\lambda_\Comp\colon \BHomot(\Comp)\to\BGm^3$ is surjective,
  hence for every $g\in \BGO(\Comp)(\Falg)$ there exists
  $h\in\BHomot(\Comp)(\Falg)$ such that
  $\lambda_\Comp(g)=\lambda_\Comp(h)$. Then 
  $gh^{-1}$ lies in $\BOrth(\Comp)(\Falg)$ and has the same image in
  $\BPGO(\Comp)(\Falg)$ as $g$, hence the canonical map
  $\BOrth(\Comp)(\Falg)\to \BPGO(\Comp)(\Falg)$ is surjective.
\end{proof}

Let $\homot(\Comp)$ and $\pgo(\Comp)$ be the Lie algebras of
$\BHomot(\Comp)$ and $\BPGO(\Comp)$ respectively. By definition,
\[
\homot(\Comp)=\{(\nu_1\Id_{V_1},\nu_2\Id_{V_2},
\nu_3\Id_{V_3}, \nu_1+\nu_2-\nu_3)\mid \nu_1, \nu_2, \nu_3\in F\}\simeq
F\times F\times F.
\]
On the other hand, since $\BHomot(\Comp)$ is smooth, the canonical map
$\BGO(\Comp)\to\BPGO(\Comp)$ is separable by~\cite[(22.13)]{BoI},
hence its differential is surjective. Therefore,
\[
  \pgo(\Comp)=\go(\Comp)/\homot(\Comp).
\]

The following result yields an explicit description of $\pgo(\Comp)$
for use in \S\ref{subsec:dertriple}:

\begin{prop}
  \label{prop:pgoComp}
  Mapping $(g_1,g_2,g_3,\lambda_3)+\homot(\Comp)\in\pgo(\Comp)$ to
  $(g_1+F,\,g_2+F,\,g_3+F)\in \pgo(q_1)\times\pgo(q_2)\times
  \pgo(q_3)$ identifies $\pgo(\Comp)$ with the subgroup of
  $\pgo(q_1)\times\pgo(q_2)\times\pgo(q_3)$ consisting of triples
  $(g_1+F,\,g_2+F,\,g_3+F)$ where $g_1\in\go(q_1)$, $g_2\in\go(q_2)$ and
  $g_3\in\go(q_3)$ satisfy~\eqref{eq:defnlocsim} for some
  $\lambda_3\in F$.
\end{prop}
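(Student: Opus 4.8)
The plan is to study the $F$-linear map
\[
\pi\colon \go(\Comp)\longrightarrow \pgo(q_1)\times\pgo(q_2)\times\pgo(q_3),\qquad
(g_1,g_2,g_3,\lambda_3)\mapsto(g_1+F,\,g_2+F,\,g_3+F),
\]
which is a homomorphism of Lie algebras, being the composite of the projection $\go(\Comp)\to\go(q_1)\times\go(q_2)\times\go(q_3)$ with the product of the canonical maps $\go(q_i)\to\pgo(q_i)$. By the description of $\go(\Comp)$ through \eqref{eq:defnlocsim}, the image of $\pi$ is exactly the set of triples $(g_1+F,g_2+F,g_3+F)$ for which some representatives $g_i\in\go(q_i)$ and some $\lambda_3\in F$ satisfy \eqref{eq:defnlocsim}, i.e.\ precisely the subgroup described in the statement. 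Since $\pgo(\Comp)=\go(\Comp)/\homot(\Comp)$, it therefore suffices to show that $\ker\pi=\homot(\Comp)$; the induced map $\pgo(\Comp)\to\pgo(q_1)\times\pgo(q_2)\times\pgo(q_3)$ will then be an injective Lie algebra homomorphism with the required image, which is the assertion.

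The inclusion $\homot(\Comp)\subseteq\ker\pi$ is immediate from the explicit form $\homot(\Comp)=\{(\nu_1\Id_{V_1},\nu_2\Id_{V_2},\nu_3\Id_{V_3},\nu_1+\nu_2-\nu_3)\mid\nu_i\in F\}$ recalled just above the proposition, since each $\nu_i\Id_{V_i}$ lies in $F\subset\End V_i$. For the reverse inclusion, take $(g_1,g_2,g_3,\lambda_3)\in\ker\pi$; then each $g_i$ is a scalar operator, say $g_i=\nu_i\Id_{V_i}$ with $\nu_i\in F$. Substituting into \eqref{eq:defnlocsim} yields
\[
\nu_3\,(x_1*_3x_2)=(\nu_1+\nu_2-\lambda_3)\,(x_1*_3x_2)\qquad\text{for all }x_1\in V_1,\ x_2\in V_2.
\]
Choosing anisotropic vectors $x_1\in V_1$, $x_2\in V_2$ (these exist since $q_1$ and $q_2$, being nonsingular of positive dimension, are not identically zero), we get $q_3(x_1*_3x_2)=q_1(x_1)q_2(x_2)\neq0$ by \eqref{eq:comp1}, so $x_1*_3x_2\neq0$ and hence $\lambda_3=\nu_1+\nu_2-\nu_3$. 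Thus $(g_1,g_2,g_3,\lambda_3)\in\homot(\Comp)$, proving $\ker\pi=\homot(\Comp)$.

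There is essentially no obstacle here: the only point requiring a moment's care is the passage from the pointwise identity to $\lambda_3=\nu_1+\nu_2-\nu_3$, which hinges on the elementary fact that $*_3$ takes some nonzero value, supplied by \eqref{eq:comp1}. The same computation shows as a by-product that $\lambda_3$ is determined by $(g_1,g_2,g_3)$ for every element of $\go(\Comp)$, so the clause ``for some $\lambda_3\in F$'' in the statement could equally be read ``for a unique $\lambda_3\in F$''.
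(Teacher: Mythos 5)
Your proof is correct and follows essentially the same route as the paper: show that the kernel of $\go(\Comp)\to\pgo(q_1)\times\pgo(q_2)\times\pgo(q_3)$ equals $\homot(\Comp)$, using that a kernel element has scalar components $\nu_i\Id_{V_i}$ and that substituting these into \eqref{eq:defnlocsim} forces $\lambda_3=\nu_1+\nu_2-\nu_3$. Your added justification that $x_1*_3x_2\neq0$ for anisotropic $x_1$, $x_2$ (to pass from the pointwise identity to the scalar equation) simply fills in a step the paper leaves implicit.
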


\begin{proof}
  It suffices to show that $\homot(\Comp)$ is the kernel of the map
  $\go(\Comp)\to \pgo(q_1)\times\pgo(q_2)\times\pgo(q_3)$ carrying
  $(g_1,g_2,g_3,\lambda_3)$ to $(g_1+F,\,g_2+F,\,g_3+F)$. Clearly,
  $\homot(\Comp)$ lies in the kernel of this map. Conversely, if
  $(g_1,g_2,g_3,\lambda_3)$ lies in the kernel, then there are scalars
  $\nu_1$, $\nu_2$, $\nu_3\in F$ such that $g_i=\nu_i\Id_{V_i}$ for
  $i=1$, $2$, $3$. Then~\eqref{eq:defnlocsim} yields
  $\lambda_3=\nu_1+\nu_2-\nu_3$, hence $(g_1,g_2,g_3,\lambda_3)$ lies
  in $\homot(\Comp)$.
\end{proof}

\begin{remark}
If $\charac F\neq2$, the upper row of the diagram in
Proposition~\ref{prop:exdiagGOComp} shows that the canonical map
$\orth(\Comp)\to\pgo(\Comp)$ is an isomorphism, for then $\BZO(\Comp)$
is smooth and its Lie algebra is~$0$. This canonical map is \emph{not}
bijective if $\charac F=2$, even though $\orth(\Comp)$ and
$\pgo(\Comp)$ have the same dimension.
\end{remark}

\subsection{Compositions of pointed quadratic spaces}
\label{subsec:pointedcomp}

Fixing a representation of $1$ in a quadratic space yields a new
structure: 

\begin{defs}
  \label{defs:pcomp}
  A \emph{pointed quadratic space} over an arbitrary field $F$ is a
  triple $(V,q,e)$ where $(V,q)$ is a quadratic space with nonsingular
  polar form over $F$ and $e\in V$ is a vector such that
  $q(e)=1$. Each pointed quadratic space is endowed with a canonical
  isometry $\invo$ of order~$2$, defined by
  \[
    \overline x = e\,b(e,x)-x \qquad\text{for $x\in V$,}
  \]
  where $b$ is the polar form of $q$. Isometries of
  pointed quadratic 
  spaces are required to preserve the distinguished vector
  representing~$1$. 

  A \emph{composition of pointed quadratic spaces} over $F$ is a
  $4$-tuple 
  \begin{equation}
    \label{eq:pointedcomp}
    \Comp^\bullet = \bigl((V_1,q_1,e_1),\,(V_2,q_2,e_2),\, (V_3,q_3,e_3),\,
    *_3\bigr)
  \end{equation}
  where $(V_1,q_1,e_1)$, $(V_2,q_2,e_2)$, $(V_3,q_3,e_3)$ are pointed
  quadratic spaces over $F$ and the $4$-tuple
  $\Comp:=\bigl((V_1,q_1)\,(V_2,q_2),\,(V_3,q_3),\,*_3\bigr)$ obtained
  by forgetting the distinguished vectors is a
  composition of quadratic spaces such that
  \[
    e_1*_3e_2=e_3.
  \]
\end{defs}

It follows from the definition that $q_1$, $q_2$ and $q_3$
represent~$1$, hence these forms are isometric Pfister forms, by
Proposition~\ref{prop:compdim2}, \ref{prop:compdim4} or
\ref{prop:compdim8} (depending on the dimension). Note that
\eqref{eq:compp5} readily yields
\[
  e_2*_1e_3=e_1 \qquad\text{and}\qquad e_3*_2e_1=e_2.
\]
Therefore, the following are compositions
of pointed quadratic spaces:
\begin{align*}
    \partial\Comp^\bullet & = \bigl((V_2,q_2,e_2), (V_3,q_3,e_3),
    (V_1,q_1,e_1), *_1),\\
    \partial^2\Comp^\bullet & = \bigl((V_3,q_3,e_3), (V_1,q_1,e_1),
    (V_2,q_2,e_2), *_2).
\end{align*}

Let $\Comp^\bullet$ and $\tilde\Comp^\bullet$ be compositions of
pointed quadratic spaces, and let $\Comp$ and $\tilde\Comp$ be the
compositions of quadratic spaces obtained from $\Comp^\bullet$ and
$\tilde\Comp^\bullet$ by forgetting the distinguished vectors. Every
similitude $f\colon\Comp\to\tilde\Comp$ that preserves the
distinguished vectors must be an isometry, because the equations
\begin{align*}
  \lambda_1\,f_1(x_2*_1x_3) & = f_2(x_2)\,\tilde*_1\,f_3(x_3),\\
  \lambda_2\,f_2(x_3*_2x_1) & = f_3(x_3)\,\tilde*_2\,f_1(x_1),\\
  \lambda_3\,f_3(x_1*_3x_2) & =f_1(x_1)\,\tilde*_3\,f_2(x_2)
\end{align*}
for $x_1\in V_1$, $x_2\in V_2$, $x_3\in V_3$  imply
$\lambda_1=\lambda_2=\lambda_3=1$ if $f(e_i)=\tilde e_i$ for $i=1$,
$2$, $3$. Therefore, between compositions of pointed quadratic spaces
the only type of maps we consider are isomorphisms.

\begin{definition}
  \label{defn:isocompointed}
  An \emph{isomorphism} $f\colon\Comp^\bullet\to \tilde\Comp^\bullet$ of
  compositions of pointed quadratic spaces is an isomorphism
  $f\colon\Comp\to\tilde\Comp$ of compositions of quadratic spaces
  that maps the distinguished vectors of $\Comp^\bullet$ to the
  distinguished vectors of $\tilde\Comp^\bullet$. The automorphisms of
  $\Comp^\bullet$ define an algebraic group scheme
  $\BOrth(\Comp^\bullet)$, which is a closed subgroup of
  $\BOrth(\Comp)$. 
\end{definition}

Our goal in this subsection is to show that every composition of pointed
quadratic spaces $\Comp^\bullet$ carries a canonical isomorphism
$\Delta\colon\Comp^\bullet\to\partial\Comp^\bullet$ and is isomorphic
to a composition $S(\Comp^\bullet)$ such that $\partial
S(\Comp^\bullet)=S(\Comp^\bullet)$. For this, we will use the following
identities relating the canonical isometry $\invo$ and multiplication
by the distinguished vectors:

\begin{lemma}
  \label{lem:pcomp}
  Let $\Comp^\bullet$ be a composition of pointed quadratic spaces as
  in~\eqref{eq:pointedcomp}.
  \begin{enumerate}
  \item[(a)]
    For every $x_1\in V_1$, $x_2\in V_2$, $x_3\in V_3$,
    \begin{align*}
      \overline{e_1*_3x_2}
      & = e_1*_3\overline{x_2},
      & \overline{x_1*_3e_2}
      & = \overline{x_1}*_3e_2,\\
      \overline{e_2*_1x_3}
      & = e_2*_1\overline{x_3},
      & \overline{x_2*_1e_3}
      & = \overline{x_2}*_1e_3,\\
      \overline{e_3*_2x_1}
      & = e_3*_2\overline{x_1},
      & \overline{x_3*_2e_1}
      & = \overline{x_3}*_2e_1.
    \end{align*}
  \item[(b)]
    For every $x_1\in V_1$, $x_2\in V_2$, $x_3\in V_3$,
    \begin{align*}
      e_1*_3(x_3*_2x_1)
      & = \overline{x_1}*_3(x_3*_2e_1),
      & (x_1*_3x_2)*_2e_1
      & = (e_1*_3x_2)*_2\overline{x_1},\\
      e_2*_1(x_1*_3x_2)
      & = \overline{x_2}*_1(x_1*_3e_2),
      & (x_2*_1x_3)*_3e_2
      & = (e_2*_1x_3)*_3\overline{x_2},\\
      e_3*_2(x_2*_1x_3)
      & = \overline{x_3}*_2(x_2*_1e_3),
      & (x_3*_2x_1)*_1e_3
      & = (e_3*_2x_1)*_1\overline{x_3}.
    \end{align*}
  \item[(c)]
    For every $x_1\in V_1$, $x_2\in V_2$, $x_3\in V_3$,
    \begin{align*}
      e_2*_1\bigl(e_1*_3(e_3*_2x_1)\bigr)
      & = \overline{x_1} = \bigl((x_1*_3e_2)*_2e_1\bigr)*_1e_3,\\
      e_3*_2\bigl(e_2*_1(e_1*_3x_2)\bigr)
      & = \overline{x_2} = \bigl((x_2*_1e_3)*_3e_2)*_2e_1,\\
      e_1*_3\bigl(e_3*_2(e_2*_1x_3)\bigr)
      & = \overline{x_3} = \bigl((x_3*_2e_1)*_1e_3\bigr)*_3e_2.
    \end{align*}
  \item[(d)]
    For every $x_1\in V_1$, $x_2\in V_2$, $x_3\in V_3$,
    \begin{align*}
      \overline{x_1*_3x_2} = (x_2*_1e_3)*_3(e_3*_2x_1)
      & = \bigl((e_3*_2\overline{x_1})*_1(e_1*_3\overline{x_2}\bigr)
        *_3e_2,\\
      \overline{x_2*_1x_3} = (x_3*_2e_1)*_1(e_1*_3x_2)
      & =
        \bigl((e_1*_3\overline{x_2})*_2(e_2*_1\overline{x_3})\bigr)
        *_1e_3,\\ 
      \overline{x_3*_2x_1} = (x_1*_3e_2)*_2(e_2*_1x_3)
      & = \bigl((e_2*_1\overline{x_3})*_3(e_3*_2\overline{x_1})\bigr)
        *_2e_1.
    \end{align*}
  \end{enumerate}
\end{lemma}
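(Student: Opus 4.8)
The plan is to prove all twelve identities by reducing them to the defining relations of compositions of quadratic spaces established in Proposition~\ref{prop:comp1}, together with the definition $\overline x = e\,b(e,x)-x$ of the canonical isometry and the hypothesis $e_1*_3e_2=e_3$ (hence also $e_2*_1e_3=e_1$ and $e_3*_2e_1=e_2$). Because the three rows in each of (a)--(d) are obtained from one another by applying the cyclic shift $\partial$, which permutes $(V_1,*_3)\to(V_2,*_1)\to(V_3,*_2)$ and preserves all the structural relations, it suffices to prove one representative identity in each part; the remaining eight then follow by the symmetry $\partial$. I would state this reduction explicitly at the outset.

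For part~(a), the key observation is that $q_i(e_i)=1$, so by~\eqref{eq:compp5} one has $(e_1*_3x_2)*_2e_1 = x_2 q_1(e_1)=x_2$, and similarly $e_1*_3(e_3*_2x)=x$ and so on—multiplication by $e_i$ on one side is invertible with inverse multiplication by $e_i$ on the appropriate side. Then $\overline{e_1*_3x_2} = e_1*_3\overline{x_2}$ is equivalent, after multiplying on the right by $e_1$ via $*_2$ and using~\eqref{eq:compp5}, to the statement that conjugation by $e_1$ intertwines $\invo$ on $V_2$ and $V_3$; one checks this directly from $\overline y = e\,b(e,y)-y$ using the multiplicativity relations~\eqref{eq:comp2}--\eqref{eq:compp4} to see that $b_3(e_3, e_1*_3x_2) = b_3(e_1*_3e_2, e_1*_3x_2) = q_1(e_1)b_2(e_2,x_2)=b_2(e_2,x_2)$, so the "linear part" matches. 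Part~(b) then follows by combining part~(a) with the associativity-type relations~\eqref{eq:compp5}--\eqref{eq:comp7linbis}: for instance $e_1*_3(x_3*_2x_1)$ and $\overline{x_1}*_3(x_3*_2e_1)$ can both be expanded using~\eqref{eq:comp7linbis} (which relates $x_1*_3(x_3*_2y_1)+y_1*_3(x_3*_2x_1)$ to $x_3b_1(x_1,y_1)$), taking $y_1=e_1$ and invoking $\overline{x_1}=e_1b_1(e_1,x_1)-x_1$.

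Part~(c) is essentially the iterate of part~(a): applying the three maps $x\mapsto e_3*_2 x$, $x\mapsto e_1*_3 x$, $x\mapsto e_2*_1 x$ in succession is, by~\eqref{eq:compp5}--\eqref{eq:compp7} and $q_i(e_i)=1$, the "triality rotation", and one shows its cube computes $\invo$; concretely $e_2*_1(e_1*_3(e_3*_2x_1))$ is simplified step by step using~\eqref{eq:compp5linbis} or~\eqref{eq:comp7linbis}, and the two sides of each equation in~(c) are seen to agree. Part~(d) is the most involved: here $\overline{x_1*_3x_2}$ must be matched with a product of two "derived" terms, and I would expand both sides using the linearized relations~\eqref{eq:compp5lin}--\eqref{eq:comp7linbis} together with parts~(a)--(c) already proved. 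The main obstacle is bookkeeping: each identity in~(d) involves three composition maps and two distinguished vectors, so naive expansion produces many terms, and one must choose the order of substitutions carefully (e.g.\ first use part~(c) to replace one factor by a $\invo$, then a single linearized relation to collapse the rest). I expect that with the $\partial$-symmetry reduction in place, each of~(a)--(d) comes down to one moderate computation, and the real work is organizing~(d) so that it reads cleanly rather than as a brute-force verification.
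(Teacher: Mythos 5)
Your proposal takes essentially the same approach as the paper: reduce via the cyclic shift $\partial$ to the first row of each part, then verify those formulas by combining the relations of Proposition~\ref{prop:comp1} with the definition of $\invo$ and $e_1*_3e_2=e_3$. The paper's treatment of (d) is a bit tighter than your sketch suggests --- a single application of \eqref{eq:comp7linbis} to exchange $x_2*_1e_3$ and $x_1$ gives $(x_2*_1e_3)*_3(e_3*_2x_1)=\overline{x_1*_3x_2}$ directly, and the second form then follows by applying (b) twice and (a) once, not (c) --- but the underlying plan is the same.
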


\begin{proof}
  To avoid repetitions, we just prove the first formulas in each case.
  
  (a)
  By definition,
  \[
    \overline{e_1*_3x_2} = e_3\,b_3(e_3,e_1*_3x_2) - e_1*_3x_2.
  \]
  Substituting $e_1*_3e_2$ for $e_3$ and using $b_3(e_3,e_1*_3x_2)=
  b_2(e_3*_2e_1,x_2)=b_2(e_2,x_2)$ yields
  \[
    \overline{e_1*_3x_2} = e_1*_3(e_2\,b_2(e_2,x_2) - x_2) =
    e_1*_3\overline{x_2}.
  \]

  (b)
  By \eqref{eq:comp7linbis} and \eqref{eq:compp7},
  \[
    e_1*_3(x_3*_2x_1) = x_3\,b_1(e_1,x_1) - x_1*_3(x_3*_2e_1)
    \quad\text{and}\quad
    x_3=e_1*_3(x_3*_2e_1),
  \]
  hence
  \[
    e_1*_3(x_3*_2x_1)= (e_1\,b_1(e_1,x_1)-x_1)*_3(x_3*_2e_1) =
    \overline{x_1}*_3(x_3*_2e_1).
  \]

  (c)
  Using (b) and \eqref{eq:compp5}, we have
  \[
    e_2*_1\bigl(e_1*_3(e_3*_2x_1)\bigr) =
    e_2*_1\bigl(\overline{x_1}*_3(e_3*_2e_1)\bigr) =
    e_2*_1(\overline{x_1}*_3e_2)= \overline{x_1}.
  \]

  (d)
  We compute $(x_2*_1e_3)*_3(e_3*_2x_1)$ by using
  \eqref{eq:comp7linbis} to exchange the factors $x_2*_1e_3$ and
  $x_1$:
  \[
    (x_2*_1e_3)*_3(e_3*_2x_1) = e_3\,b_1(x_2*_1e_3,x_1) -
    x_1*_3\bigl(e_3*_2(x_2*_1e_3)\bigr).
  \]
  Since $e_3*_2(x_2*_1e_3)=x_2$ by~\eqref{eq:compp6} and
  $b_1(x_2*_1e_3,x_1)=b_3(e_3,x_1*_3x_2)$ by~\eqref{eq:comp45}, it
  follows that
  \[
    (x_2*_1e_3)*_3(e_3*_2x_1)= e_3\,b_3(e_3,x_1*_3x_2) - x_1*_3x_2 =
    \overline{x_1*_3x_2}.
  \]
  On the other hand, (b) yields
  \[
    x_2*_1e_3=x_2*_1(e_1*_3e_2)=e_2*_1(e_1*_3\overline{x_2}),
  \]
  hence, using~(b) again together with~(a),
  \begin{multline*}
    (x_2*_1e_3)*_3(e_3*_2x_1) =
    \bigl(e_2*_1(e_1*_3\overline{x_2})\bigr)*_3(e_3*_2x_1) \\
    = \bigl(\overline{e_3*_2x_1}*_1(e_1*_3\overline{x_2})\bigr)*_3e_2
    = \bigl((e_3*_2\overline{x_1})*_1(e_1*_3\overline{x_2})\bigr)
    *_3e_2.
    \qedhere
  \end{multline*}
\end{proof}

For a composition of pointed quadratic spaces $\Comp^\bullet$ as
in~\eqref{eq:pointedcomp}, we define a composition of pointed 
quadratic spaces $S(\Comp^\bullet)$ as follows:
\begin{equation}
  \label{eq:functS}
  S(\Comp^\bullet) = \bigl((V_3,q_3,e_3), (V_3,q_3,e_3),
  (V_3,q_3,e_3), \circledast_3\bigr)
\end{equation}
where
\begin{equation}
  \label{eq:circledast}
  x\circledast_3y = (e_2*_1\overline x)*_3(\overline y *_2e_1)
  \qquad\text{for $x$, $y\in V_3$.}
\end{equation}
We also define linear maps $\Delta_1\colon V_1\to V_2$,
$\Delta_2\colon V_2\to V_3$, $\Delta_3\colon V_3\to V_1$ as follows:
for $x_1\in V_1$, $x_2\in V_2$ and $x_3\in V_3$,
\[
  \Delta_1(x_1)= e_3*_2\overline{x_1},\qquad
  \Delta_2(x_2) = e_1*_3\overline{x_2},\qquad
  \Delta_3(x_3) = e_2*_1\overline{x_3}.
\]

\begin{thm}
  \label{thm:pcomp}
  With the notation above,
  \begin{enumerate}
  \item[(a)]
    the triple $\Delta=(\Delta_1,\Delta_2,\Delta_3)$ is an isomorphism
    $\Delta\colon \Comp^\bullet\to\partial\Comp^\bullet$;
  \item[(b)]
    the triple $(\Delta_3,\Delta_2^{-1},\Id_{V_3})$ is an isomorphism
    $S(\Comp^\bullet)\to\Comp^\bullet$;
  \item[(c)]
    $\partial S(\Comp^\bullet) = S(\Comp^\bullet)$.
  \end{enumerate}
\end{thm}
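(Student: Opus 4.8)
The plan is to prove (a), (b), (c) in that order, relying only on the identities of Lemma~\ref{lem:pcomp}, the composition relations of Proposition~\ref{prop:comp1}, and the functoriality of $\partial$ recorded in Proposition~\ref{prop:simdef2}.

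For part~(a), I would first check that each $\Delta_i$ is an isometry of the underlying quadratic spaces. Since $q_3(e_3)=1$ and $\invo$ is an isometry, \eqref{eq:compp0} gives $q_2\bigl(\Delta_1(x_1)\bigr)=q_2(e_3*_2\overline{x_1})=q_3(e_3)\,q_1(\overline{x_1})=q_1(x_1)$, and cyclically $q_3\bigl(\Delta_2(x_2)\bigr)=q_2(x_2)$ and $q_1\bigl(\Delta_3(x_3)\bigr)=q_3(x_3)$; linearising shows each $\Delta_i$ preserves the nonsingular polar form, hence is bijective. Because $q_i(e_i)=1$ forces $\overline{e_i}=e_i$, we get $\Delta_1(e_1)=e_3*_2e_1=e_2$, $\Delta_2(e_2)=e_1*_3e_2=e_3$ and $\Delta_3(e_3)=e_2*_1e_3=e_1$, so $\Delta$ carries distinguished vectors to distinguished vectors. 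The crucial step is multiplicativity: $\Delta_3(x_1*_3x_2)=\Delta_1(x_1)*_1\Delta_2(x_2)$, where $*_1$ is the composition map of $\partial\Comp^\bullet$. Indeed, the first identity of Lemma~\ref{lem:pcomp}(d) reads $\overline{x_1*_3x_2}=\bigl((e_3*_2\overline{x_1})*_1(e_1*_3\overline{x_2})\bigr)*_3e_2=w*_3e_2$ with $w:=\Delta_1(x_1)*_1\Delta_2(x_2)$, and applying \eqref{eq:compp5} with distinguished vector $e_2$ gives $\Delta_3(x_1*_3x_2)=e_2*_1(w*_3e_2)=w\,q_2(e_2)=w$. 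Hence $\Delta$ is a similitude of compositions with $\lambda_3=1$, and since $\mu(\Delta_1)=\mu(\Delta_2)=1$ the relations~\eqref{eq:lambdamu} force $\lambda_1=\lambda_2=1$; as $\Delta$ preserves distinguished vectors, it is an isomorphism $\Comp^\bullet\to\partial\Comp^\bullet$.

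For part~(b), I would first record that $S(\Comp^\bullet)$ really is a composition of pointed quadratic spaces: $e_3\circledast_3e_3=(e_2*_1e_3)*_3(e_3*_2e_1)=e_1*_3e_2=e_3$, while \eqref{eq:comp1} and \eqref{eq:compp0} give $q_3(x\circledast_3y)=q_1(e_2*_1\overline{x})\,q_2(\overline{y}*_2e_1)=q_3(x)\,q_3(y)$. Next I would compute $\Delta_2^{-1}$: by \eqref{eq:compp5} with distinguished vector $e_1$ one has $(e_1*_3\overline{x_2})*_2e_1=\overline{x_2}$, so $\Delta_2^{-1}(y)=\overline{y*_2e_1}=\overline{y}*_2e_1$, the last equality by Lemma~\ref{lem:pcomp}(a). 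Therefore $\Delta_3(x)*_3\Delta_2^{-1}(y)=(e_2*_1\overline{x})*_3(\overline{y}*_2e_1)=x\circledast_3y$, which is exactly the multiplicativity of the triple $(\Delta_3,\Delta_2^{-1},\Id_{V_3})$ with $\lambda_3=1$; combined with $\Delta_3(e_3)=e_1$, $\Delta_2^{-1}(e_3)=e_2$ and the fact that inverses of isometries are isometries, the argument of part~(a) shows that $(\Delta_3,\Delta_2^{-1},\Id_{V_3})$ is an isomorphism $S(\Comp^\bullet)\to\Comp^\bullet$.

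For part~(c), rather than computing $\circledast_1=\partial(\circledast_3)$ from its defining property, I would chain the isomorphisms already obtained. Set $\Delta=(\Delta_1,\Delta_2,\Delta_3)$ and $g=(\Delta_3,\Delta_2^{-1},\Id_{V_3})$; by Proposition~\ref{prop:simdef2}, $\partial g=(\Delta_2^{-1},\Id_{V_3},\Delta_3)$ is an isomorphism $\partial S(\Comp^\bullet)\to\partial\Comp^\bullet$, and composing with $\Delta^{-1}\colon\partial\Comp^\bullet\to\Comp^\bullet$ and $g^{-1}\colon\Comp^\bullet\to S(\Comp^\bullet)$ produces an isomorphism $g^{-1}\circ\Delta^{-1}\circ\partial g\colon\partial S(\Comp^\bullet)\to S(\Comp^\bullet)$ whose three components are $\Delta_3^{-1}\Delta_1^{-1}\Delta_2^{-1}$, $\Delta_2\Delta_2^{-1}=\Id_{V_3}$ and $\Delta_3^{-1}\Delta_3=\Id_{V_3}$. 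It thus remains to verify $\Delta_2\Delta_1\Delta_3=\Id_{V_3}$: using Lemma~\ref{lem:pcomp}(a) to move $\invo$ through the products, one reduces $\Delta_2\Delta_1\Delta_3(x_3)$ to $e_1*_3\bigl(e_3*_2(e_2*_1\overline{x_3})\bigr)$, which equals $\overline{\overline{x_3}}=x_3$ by the third identity of Lemma~\ref{lem:pcomp}(c). Then the isomorphism above is the identity triple, and since $\partial S(\Comp^\bullet)$ and $S(\Comp^\bullet)$ have the same underlying pointed quadratic spaces, this forces $\partial S(\Comp^\bullet)=S(\Comp^\bullet)$. The computations are otherwise mechanical; the only real care needed is the bookkeeping of the cyclically shifted index patterns — in the repeated applications of Lemma~\ref{lem:pcomp}, whose three rows are cyclic permutations of one another, and, in part~(c), in tracking which of $\Delta_1,\Delta_2,\Delta_3$ serves as the first, second or third component of each composition — and this, rather than any genuine obstruction, is the main difficulty.
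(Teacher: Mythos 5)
Your parts~(a) and~(b) follow essentially the same route as the paper: (a) uses the first identity of Lemma~\ref{lem:pcomp}(d) together with \eqref{eq:compp5} to get $\Delta_3(x_1*_3x_2)=\Delta_1(x_1)*_1\Delta_2(x_2)$, and (b) computes $\Delta_2^{-1}(y)=\overline y*_2e_1$ and observes that $x\circledast_3 y=\Delta_3(x)*_3\Delta_2^{-1}(y)$ by the very definition of $\circledast_3$. The only difference is that you spell out the isometry and distinguished-vector checks that the paper leaves to the reader.

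Your part~(c) is a genuinely different argument, and it is correct. The paper proves symmetry of $\circledast_3$ head-on: it computes $\overline{y\circledast_3z}$ via Lemma~\ref{lem:pcomp}(d), reduces $b_3(x,y\circledast_3z)$ to $b_1(e_2*_1\overline x,(\overline y*_2e_1)*_1z)$ using that $\invo$ and $e_2*_1(\cdot)$ are isometries, and then identifies this with $b_3(x\circledast_3y,z)$ by \eqref{eq:comp45}; the equality $b_3(x\circledast_3y,z)=b_3(x,y\circledast_3z)$ then says $(V_3,q_3,\circledast_3)$ is a symmetric composition algebra, i.e.\ $\partial S(\Comp^\bullet)=S(\Comp^\bullet)$. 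You instead exploit the functoriality of $\partial$ on similitudes (Proposition~\ref{prop:simdef2}) to transport the isomorphism $g=(\Delta_3,\Delta_2^{-1},\Id_{V_3})$ of~(b) to $\partial g\colon\partial S(\Comp^\bullet)\to\partial\Comp^\bullet$, and then conjugate back through $\Delta^{-1}$ and $g^{-1}$. The second and third components of $g^{-1}\circ\Delta^{-1}\circ\partial g$ collapse to $\Id_{V_3}$ formally, and the first reduces to $(\Delta_2\Delta_1\Delta_3)^{-1}$; the extra identity $\Delta_2\Delta_1\Delta_3=\Id_{V_3}$ then follows from Lemma~\ref{lem:pcomp}(a) and the third identity of Lemma~\ref{lem:pcomp}(c). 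Once all three components are the identity, the defining multiplicativity relation forces $\circledast_1=\circledast_3$. The trade-off: the paper's proof is lower-level and self-contained, verifying the bilinear-form characterization of symmetry directly; your proof is more structural, reusing~(a) and~(b) and isolating the pleasant identity $\Delta_2\Delta_1\Delta_3=\Id_{V_3}$, which is not stated explicitly in the paper but is a natural consequence of Lemma~\ref{lem:pcomp}(c).
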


\begin{proof}
  (a)
  It is clear that each $\Delta_i$ is an isometry of pointed quadratic
  spaces, so it suffices to prove $\Delta_3(x_1*_3x_2) =
  \Delta_1(x_1)*_1\Delta_2(x_2)$ for $x_1\in V_1$ and $x_2\in V_2$,
  which amounts to
  \[
    e_2*_1\overline{x_1*_3x_2} = (e_3*_2\overline{x_1})*_1
    (e_1*_3\overline{x_2}).
  \]
  This readily follows from~(d) of Lemma~\ref{lem:pcomp}.

  (b)
  For $y\in V_3$ we have $\Delta_2(\overline y*_2e_1) = e_1*_3
  \overline{\overline y*_2e_1} = y$, hence by definition
  $x\circledast_3y=\Delta_3(x)*_3\Delta_2^{-1}(y)$, which proves~(b).

  (c)
  It suffices to prove
  \[
    b_3(x\circledast_3y,z) = b_3(x, y\circledast_3z) \qquad\text{for
      $x$, $y$, $z\in V_3$.}
  \]
  For this, we first compute using Lemma~\ref{lem:pcomp}
  \begin{align*}
    \overline{y\circledast_3z}
    & = \overline{(e_2*_1\overline y)*_3(\overline z *_2e_1)}\\
    & = \bigl((e_3*_2\overline{e_2*_1\overline y})*_1
      (e_1*_3\overline{\overline z*_2e_1})\bigr)*_3e_2\\
      & = \bigl(\bigl(e_3*_2(e_2*_1y)\bigr) *_1
        \bigl(e_1*_3(z*_2e_1)\bigr)\bigr)*_3e_2\\
    & = \bigl((\overline y*_2e_1)*_1z\bigr)*_3e_2.
  \end{align*}
  Since $\invo$ and multiplication on the left by $e_2$ are
  isometries, it follows that
  \[
    b_3(x,y\circledast_3z) = b_1(e_2*_1\overline x, (\overline
    y*_2e_1)*_1z).
  \]
  By definition of $*_1$, the right side is
  \[
    b_3\bigl((e_2*_1\overline x)*_3(\overline y*_2e_1), z\bigr) =
    b_3(x\circledast_3y,z).
    \qedhere
  \]
\end{proof}

For compositions of (unpointed) isometric spaces, a result similar to
Theorem~\ref{thm:pcomp} easily follows:

\begin{corol}
  \label{corol:pcomp}
   For every composition of isometric quadratic spaces $\Comp$, there
   is an isomorphism $\Comp\simeq\partial\Comp$. Moreover, $\Comp$ is
   isomorphic to a composition $S(\Comp)$ such that
   $\partial S(\Comp)=S(\Comp)$. 
\end{corol}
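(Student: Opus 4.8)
The plan is to reduce the statement to Theorem~\ref{thm:pcomp} by choosing base points, so that $\Comp$ becomes the composition of quadratic spaces underlying a composition of \emph{pointed} quadratic spaces. First I would invoke Corollary~\ref{corol:comp1ter}: since the three quadratic spaces $q_1$, $q_2$, $q_3$ in $\Comp$ are isometric, they all represent~$1$. Hence I can pick $e_1\in V_1$ with $q_1(e_1)=1$ and $e_2\in V_2$ with $q_2(e_2)=1$, and set $e_3=e_1*_3e_2$; by the composition property~\eqref{eq:comp1} we get $q_3(e_3)=q_1(e_1)q_2(e_2)=1$. This produces a composition of pointed quadratic spaces
\[
  \Comp^\bullet=\bigl((V_1,q_1,e_1),\,(V_2,q_2,e_2),\,(V_3,q_3,e_3),\,*_3\bigr)
\]
whose underlying composition of quadratic spaces, obtained by forgetting the distinguished vectors, is exactly $\Comp$.

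For the first assertion I would then apply Theorem~\ref{thm:pcomp}(a): the triple $\Delta=(\Delta_1,\Delta_2,\Delta_3)$ is an isomorphism $\Comp^\bullet\to\partial\Comp^\bullet$ of compositions of pointed quadratic spaces. Forgetting the distinguished vectors, $\Delta$ becomes an isomorphism $\Comp\xrightarrow{\sim}\partial\Comp$ of compositions of quadratic spaces, as claimed. For the second assertion I would let $S(\Comp)$ denote the composition of quadratic spaces underlying the pointed composition $S(\Comp^\bullet)$ of~\eqref{eq:functS}--\eqref{eq:circledast}. Theorem~\ref{thm:pcomp}(b) gives an isomorphism $S(\Comp^\bullet)\xrightarrow{\sim}\Comp^\bullet$, hence, after forgetting base points, an isomorphism $S(\Comp)\simeq\Comp$. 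Finally Theorem~\ref{thm:pcomp}(c) yields $\partial S(\Comp^\bullet)=S(\Comp^\bullet)$, and since forming $\partial$ commutes with forgetting the distinguished vectors, this gives $\partial S(\Comp)=S(\Comp)$.

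The only subtlety to flag is that the composition $S(\Comp)$ constructed this way depends a priori on the choices of $e_1$ and $e_2$, so it is not canonically attached to $\Comp$; but the corollary asserts only the existence of such a composition, so this is harmless, and I would simply remark on it. I do not expect a genuine obstacle here: the entire content of the argument has already been carried out at the level of pointed compositions in Theorem~\ref{thm:pcomp}, and the corollary amounts to the observation that a composition of isometric (equivalently, by Corollary~\ref{corol:comp1ter}, Pfister) quadratic spaces can always be endowed with compatible base points.
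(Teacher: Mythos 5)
Your proposal is correct and follows the paper's proof essentially verbatim: choose base points via Corollary~\ref{corol:comp1ter} to upgrade $\Comp$ to a composition of pointed quadratic spaces, then apply Theorem~\ref{thm:pcomp} and forget the base points. Your closing remark on the non-canonical nature of the construction matches a remark the paper makes immediately after the corollary.
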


\begin{proof}
  Let $\Comp=\bigl((V_1,q_1),(V_2,q_2),(V_3,q_3),*_3\bigr)$ be a
  composition of isometric quadratic spaces. By
  Corollary~\ref{corol:comp1ter}, we know that $q_1$, $q_2$ and $q_3$
  represent~$1$. We may therefore use the same constructions as in
  Theorem~\ref{thm:pcomp}, after choosing $e_1\in V_1$ and $e_2\in
  V_2$ such that 
  $q_1(e_1)=q_2(e_2)=1$ and letting $e_3=e_1*_3e_2$, for then
  $\bigl((V_1,q_1,e_1), (V_2,q_2,e_2), (V_3,q_3,e_3),*_3\bigr)$ is a
  composition of pointed quadratic spaces. Define the maps
  $\Delta_1\colon V_1\to V_2$, $\Delta_2\colon V_2\to V_3$,
  $\Delta_3\colon V_3\to V_1$ as in Theorem~\ref{thm:pcomp}. The proof
  of that theorem shows that $(\Delta_1,\Delta_2,\Delta_3)$ is an
  isomorphism $\Comp\to\partial\Comp$. Moreover, letting
  \[
    S(\Comp)=\bigl((V_3,q_3), (V_3,q_3), (V_3,q_3),
    \circledast_3\bigr)
  \]
  with $\circledast_3$ as in~\eqref{eq:circledast}, we see from the
  proof of Theorem~\ref{thm:pcomp} that $\partial S(\Comp) = S(\Comp)$
  and that $(\Delta_3,\Delta_2^{-1},\Id_{V_3})$ is an isomorphism
  $S(\Comp)\xrightarrow{\sim} \Comp$.
\end{proof}

Note that, in contrast with Theorem~\ref{thm:pcomp}, the constructions
in Corollary~\ref{corol:pcomp} are not canonical, since they depend on 
the choice of distinguished vectors.

\subsection{Composition algebras}
\label{subsec:compalg}

The purpose of this subsection is to briefly review the classical notion
of composition algebra, in order to underline its connections with compositions
of quadratic spaces.

\begin{defs}
  A \emph{composition algebra} over $F$ is a triple
  $\Calg=(A,q,\diamond)$ 
  where 
  $(A,q)$ is a (finite-dimensional) quadratic space over $F$ with
  nonsingular polar bilinear form and $\diamond\colon A\times A\to A$
  is a bilinear map such that
  \[
    q(x\diamond y)=q(x)q(y)\qquad\text{for all $x$, $y\in A$.}
  \]
  The definition can be rephrased as follows: the $4$-tuple
  \begin{equation}
    \label{eq:CompCalg}
    \Comp(\Calg) = \bigl((A,q),\,(A,q),\,(A,q),\,\diamond\bigr)
  \end{equation}
  is a composition of quadratic spaces.
  Theorem~\ref{thm:dimcomp} shows that the dimension of a composition
  algebra is $1$, $2$, $4$ or $8$, with dimension~$1$ occurring only
  when $\charac F\neq2$, and Corollary~\ref{corol:comp1ter} shows that
  $q$ is a Pfister form.

  A \emph{unital composition algebra}\footnote{Unital composition
    algebras are called 
    \emph{Hurwitz algebras} in \cite{BoI}, see \cite[(33.17)]{BoI}.}
  is a $4$-tuple  
  $\Calg^\bullet=(A,q,e,\diamond)$ where 
  $(A,q,e)$ is a pointed quadratic space and $\diamond\colon A\times
  A\to A$ is a bilinear map such that
  \[
    q(x\diamond y)=q(x)q(y) \qquad\text{and}\qquad
    e\diamond x=x\diamond e=x \qquad\text{for all $x$, $y\in A$.}
  \]
  In any unital composition algebra we have $e\diamond e=e$, hence
  \begin{equation}
    \label{eq:pCompCalg}
    \Comp^\bullet(\Calg^\bullet) =
    \bigl((A,q,e),\,(A,q,e),\,(A,q,e),\,\diamond\bigr)
  \end{equation}
  is a composition of pointed quadratic spaces. 
\end{defs}

As with more general compositions of quadratic spaces, the
multiplication law 
$\diamond$ of a composition algebra $\Calg$ induces derived
composition maps~$\diamond_1$ and $\diamond_2$ of $(A,q)$, $(A,q)$,
$(A,q)$, defined by the conditions
\begin{equation}
  \label{eq:compalg1}
  b(x,y\diamond_1z)=b(x\diamond y,z) \qquad\text{and}\qquad
  b(x\diamond_2y,z) = b(x,y\diamond z) \qquad\text{for $x$, $y$,
    $z\in A$.}
\end{equation}
We may therefore define derived composition algebras $\partial\Calg$
and $\partial^2\Calg$ by
\[
  \partial\Calg=(A,q,\diamond_1) \qquad\text{and}\qquad
  \partial^2\Calg=(A,q,\diamond_2).
\]
Composition algebras $\Calg$ such that
$\partial\Calg=\Calg$ are called \emph{symmetric
  composition algebras}. They are characterized by the condition that
\[
  b(x\diamond y,z) = b(x,y\diamond z) \qquad\text{for all $x$, $y$,
    $z\in A$.}
\]
\medbreak\par
By contrast with compositions of pointed quadratic spaces, the
derivation procedure does \emph{not} preserve unitality of composition
algebras. To make this point clear, we determine below the derived
composition maps of a unital composition algebra, using results
from~\cite[Ch.~1]{SpV}. Note that unital composition algebras carry a
canonical involutory isometry $\invo$ 
derived from their pointed quadratic space structure as
in~Definitions~\ref{defs:pcomp}. 

\begin{prop}
  \label{prop:dercompunit}
  Let $\Calg^\bullet=(A,q,e,\diamond)$ be a unital composition algebra.
  \begin{enumerate}
  \item[(a)]
    The derived composition maps $\diamond_1$ and $\diamond_2$ defined
    in~\eqref{eq:compalg1} are given by
    \[
      x\diamond_1y=y\diamond\overline x \qquad\text{and}\qquad
      x\diamond_2y=\overline y \diamond x \qquad\text{for $x$, $y\in
        A$.}
    \]
  \item[(b)]
    For the bilinear map $*\colon A\times A\to A$ defined by
  \[
    x*y=\overline x \diamond \overline y \qquad\text{for $x$, $y\in
      A$,}
  \]
  the triple $S(\Calg^\bullet)=(A,q,*)$ is a symmetric composition
  algebra. 
  \end{enumerate}
\end{prop}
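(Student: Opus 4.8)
The plan is to deduce both parts from the two standard ``adjoint'' identities valid in any unital composition algebra,
\[
  b(x\diamond y,z) = b(y,\overline x\diamond z) = b(x,z\diamond\overline y)
  \qquad\text{for all $x$, $y$, $z\in A$,}
\]
together with $x+\overline x = b(e,x)\,e$ and $\overline e=e$. These identities are recorded in~\cite[Ch.~1]{SpV}; alternatively they follow at once by linearizing $q(x\diamond y)=q(x)q(y)$ in each slot and then specializing one of the linearized variables to $e$, using $e\diamond x=x\diamond e=x$. No restriction on $\charac F$ is needed, since $\invo$ is an isometric involution on every pointed quadratic space (Definitions~\ref{defs:pcomp}), and the derivation just described is characteristic-free.

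For part~(a): by~\eqref{eq:compalg1}, $\diamond_1$ is the unique bilinear map with $b(x\diamond y,z)=b(x,y\diamond_1 z)$ for all $x$, $y$, $z$. Substituting the identity $b(x\diamond y,z)=b(x,z\diamond\overline y)$ and using nonsingularity of $b$ gives $y\diamond_1 z = z\diamond\overline y$, i.e.\ $x\diamond_1 y = y\diamond\overline x$. Likewise $\diamond_2$ is characterized by $b(x\diamond_2 y,z)=b(x,y\diamond z)$; applying the left adjoint identity to $b(\overline y\diamond x,z)$ yields $b(\overline y\diamond x,z)=b(x,\overline{\overline y}\diamond z)=b(x,y\diamond z)$, whence $x\diamond_2 y=\overline y\diamond x$.

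For part~(b): since $\invo$ is an isometry, $q(x*y)=q(\overline x\diamond\overline y)=q(\overline x)q(\overline y)=q(x)q(y)$, so $S(\Calg^\bullet)=(A,q,*)$ is a composition algebra. To prove symmetry I intend to identify $*$ with the shift construction of \S\ref{subsec:pointedcomp}: in~\eqref{eq:circledast} with $e_1=e_2=e_3=e$, part~(a) and $\overline e=e$ give $e_2*_1\overline x=\overline x\diamond\overline e=\overline x$ and $\overline y*_2 e_1=\overline e\diamond\overline y=\overline y$, so $x\circledast_3 y=\overline x\diamond\overline y=x*y$. Thus the composition of quadratic spaces underlying $S\bigl(\Comp^\bullet(\Calg^\bullet)\bigr)$ is exactly $\Comp\bigl(S(\Calg^\bullet)\bigr)$. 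Now Theorem~\ref{thm:pcomp}(c) gives $\partial S\bigl(\Comp^\bullet(\Calg^\bullet)\bigr)=S\bigl(\Comp^\bullet(\Calg^\bullet)\bigr)$; since $\partial$ ignores the distinguished vectors, this forces $\partial\Comp\bigl(S(\Calg^\bullet)\bigr)=\Comp\bigl(S(\Calg^\bullet)\bigr)$, and because the derived composition map of a composition algebra coincides with the cyclic derivative of the associated composition of quadratic spaces (compare~\eqref{eq:compalg1} with Proposition~\ref{prop:defnder}), this says precisely $\partial S(\Calg^\bullet)=S(\Calg^\bullet)$, i.e.\ $(A,q,*)$ is a symmetric composition algebra. (One can also verify symmetry directly: both $b(\overline x\diamond\overline y,z)$ and $b(x,\overline y\diamond\overline z)$ equal $b(\overline y,x\diamond z)$ by the adjoint identities.) The computations are all routine; the only point demanding care is the bookkeeping — keeping straight which argument is which in $\diamond_1$, $\diamond_2$, the identification of these with the cyclic derivatives $*_1$, $*_2$ of $\Comp(\Calg)$, and the double use of the fact that $\invo$ is both an isometry and an involution — so I anticipate no genuine obstacle.
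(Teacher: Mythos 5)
Your proof of~(a) follows essentially the paper's route: both rest on the standard adjoint identities for unital composition algebras from~\cite[Lemma~1.3.2]{SpV}. Your treatment of~(b), however, genuinely differs. The paper verifies the symmetry condition $b(\overline x\diamond\overline y,z)=b(x,\overline y\diamond\overline z)$ by a direct computation combining the formula for $\diamond_1$ from~(a) with the conjugation rule $\overline{z\diamond y}=\overline y\diamond\overline z$ and the fact that $\invo$ is an isometry; your parenthetical direct check is a close variant of this. Your primary route instead invokes the abstract shift construction of~\S\ref{subsec:pointedcomp}: using~(a) and $\overline e=e$ you compute that the bilinear map $\circledast_3$ of~\eqref{eq:circledast}, formed from $\Comp^\bullet(\Calg^\bullet)$, is exactly $*$, so that $\Comp\bigl(S(\Calg^\bullet)\bigr)$ is the unpointed composition underlying $S\bigl(\Comp^\bullet(\Calg^\bullet)\bigr)$, and then Theorem~\ref{thm:pcomp}(c) gives the symmetry at once. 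This is a conceptually appealing gain: it exhibits the algebraic statement as a specialization of the geometric one rather than re-proving it by hand, at the modest cost of verifying that the derivations $\diamond_1,\diamond_2$ of~\eqref{eq:compalg1} agree with the cyclic derivatives of Proposition~\ref{prop:defnder}, which you flag correctly. One small inaccuracy: the adjoint identities are not obtained from a single linearization specialized to $e$; one must linearize twice and then use $\overline x=e\,b(e,x)-x$. But they are standard and in~\cite{SpV}, so nothing is at stake.
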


\begin{proof}
  (a)
  Lemma~1.3.2 in \cite{SpV} yields $b(x,y\diamond z) = b(z, \overline
  y\diamond x)$, hence $b(x\diamond_2y,z)= b(\overline y\diamond x,z)$
  for all $x$, $y$, $z\in A$. Since $b$ is nonsingular, it follows
  that $x\diamond_2 y=\overline y\diamond x$.

  Since $\invo$ is an isometry, \cite[Lemma~1.3.2]{SpV} also yields
  $b(\overline{x\diamond y},\overline z) = b(\overline y,
  \overline{\overline x \diamond z})$. Now, \cite[Lemma1.3.1]{SpV}
  shows that $\overline{x\diamond y} = \overline y\diamond \overline x$, 
  hence the definition of $\diamond_1$ yields
  \[
    b(\overline y\diamond\overline x,\overline z) = b(\overline y,
    \overline z\diamond x) = b(\overline z,x\diamond_1\overline y)
    \qquad\text{for all $x$, $y$, $z\in A$.}
  \]
  Since $b$ is nonsingular, it follows that $x\diamond_1\overline y =
  \overline y\diamond\overline x$.

  (b)
  Since $q(\overline x)=q(x)$ for all $x\in A$, it is clear that
  $S(\Calg^\bullet)$ is a composition algebra. To prove
  that the derived maps 
  $*_1$, $*_2$ associated to $*$ are identical to $*$, it suffices to
  prove that $b(x*y,z)=b(x,y*z)$ for all $x$, $y$, $z\in A$, which
  amounts to
  \begin{equation}
    \label{eq:comp13}
    b(\overline x\diamond \overline y,z) = b(x, \overline y\diamond
    \overline z) \qquad\text{for all $x$, $y$, $z\in A$.}
  \end{equation}
  From the definition of $\diamond_1$ in~\eqref{eq:compalg1} and its
  determination in~(a), it follows that
  \[
    b(\overline x\diamond\overline y,z) = b(\overline x, \overline
    y\diamond_1z) = b(\overline x,z\diamond y)
    \qquad\text{for all $x$, $y$, $z\in A$.}
  \]
  Now, $z\diamond y=\overline{\overline y\diamond\overline z}$
  by~\cite[Lemma~1.3.1]{SpV}, and $\invo$ is an isometry, hence the
  rightmost side in the last displayed equation is equal to
  $b(x,\overline y\diamond\overline z)$, which
  proves~\eqref{eq:comp13}. 
\end{proof}

Note that in the context of Proposition~\ref{prop:dercompunit},
$x\diamond_1e=\overline x= e\diamond_2x$ for all $x\in A$. 
Hence $(A,q,e,\diamond_1)$ and $(A,q,e,\diamond_2)$ are \emph{not}
unital composition algebras, unless $\invo=\Id_V$, which occurs only
if $\dim A=1$. 

Symmetric composition algebras $S(\Calg^\bullet)$ derived from unital
composition algebras $\Calg^\bullet$ as in
Proposition~\ref{prop:dercompunit} are called \emph{para-unital}
composition algebras (\emph{para-Hurwitz algebras} in the terminology of
\cite{BoI}). They are characterized by the property that they contain
a \emph{para-unit}, see \cite[(34.8)]{BoI}.
\medbreak
\par

Between algebras, maps that are more general than homomorphisms are
considered, following Albert~\cite{Alb}.

\begin{definition}
  \label{defn:isotopy}
  Let $(A,\diamond)$ and $(\tilde A,\tilde\diamond)$ be $F$-algebras
  (i.e., $F$-vector spaces with a bilinear multiplication). An
  \emph{isotopy} $f\colon(A,\diamond)\to (\tilde A,\tilde\diamond)$ is
  a triple $f=(f_1,f_2,f_3)$ of linear bijections $f_i\colon
  A\to\tilde A$ such that
  \[
    f_3(x\diamond y) =f_1(x)\,\tilde\diamond\, f_2(y)
    \qquad\text{for all $x$, $y\in A$.}
  \]
  An \emph{autotopy} is an isotopy of an algebra to itself. Under the
  composition of maps, autotopies of an algebra form a group
  $\Str(A,\diamond)$ known as the \emph{structure group} of
  $(A,\diamond)$. This group is the set of $F$-rational points of an
  algebraic group scheme $\BStr(A,\diamond)$, which is a closed
  subgroup of $\BGL(A)\times\BGL(A)\times\BGL(A)$.
\end{definition}

For example, in the construction $S$ of
Proposition~\ref{prop:dercompunit}, which yields the symmetric
composition algebra $S(\Calg^\bullet)$ from the unital composition
algebra $\Calg^\bullet$, the algebra $(A,*)$ is isotopic to
$(A,\diamond)$. The following construction, due to Kaplansky~\cite{Kap},
shows that the algebra of every composition algebra is isotopic to the
algebra of a unital composition algebra.

\begin{prop}
  \label{prop:Kaplan}
  Let $\Calg=(A,q,\diamond)$ be a composition algebra. There exists a
  bilinear map $*\colon A\times A\to A$ and a vector $e\in A$ for
  which
  \begin{enumerate}
  \item[(a)]
    $(A,q,e,*)$ is a unital composition algebra, and
  \item[(b)]
    there exists
    an isotopy $f=(f_1,f_2,f_3)\colon (A,\diamond)\to(A,*)$ which is
    also an isomorphism $f\colon\Comp(\Calg)\to\Comp(A,q,*)$ of the
    associated 
    compositions of quadratic spaces as in~\eqref{eq:CompCalg}.
  \end{enumerate}
\end{prop}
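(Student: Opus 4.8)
The plan is to exploit Kaplansky's classical trick: fix an anisotropic vector $a \in A$ (which exists because $q$ is a Pfister form, hence represents values outside any finite union of proper subspaces if $F$ is infinite — and if $F$ is finite one argues slightly differently, but anisotropic vectors certainly exist since $q$ is nonzero) and normalize so that $q(a) = 1$ after rescaling; actually it is cleanest to take $a$ with $q(a) = \lambda \in F^\times$ arbitrary and carry the scalar along. The key observation is that left and right multiplication by an anisotropic vector in a composition algebra are bijective (by the same argument as in the proof of Proposition~\ref{prop:comp1}, using the derived maps: $(a \diamond x) \diamond_2 a = x\, q(a)$ and $x \diamond_1 (a \diamond x)$-type identities force injectivity, hence bijectivity since $\dim A$ is finite and the spaces have equal dimension). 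So define $L_a, R_a \colon A \to A$ by $L_a(x) = a \diamond x$ and $R_a(x) = x \diamond a$; these are similitudes with multiplier $q(a)$.

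First I would set $e = a \diamond a$ and define the new multiplication by $x * y = R_a^{-1}(x) \diamond L_a^{-1}(y)$, or more symmetrically by a formula of the shape $x * y = (x \diamond a') \diamond (a'' \diamond y)$ up to scalars, where $a', a''$ are suitable normalizations of $a$; the precise normalization is dictated by the requirement $e * x = x * e = x$. One checks unitality directly: $e * x = R_a^{-1}(e) \diamond L_a^{-1}(x)$, and since $R_a(a) = a \diamond a = e$ we get $R_a^{-1}(e) = a$, so $e * x = a \diamond L_a^{-1}(x) = L_a(L_a^{-1}(x)) = x$ after absorbing the multiplier $q(a)$ into the definition — i.e. the correct formula is $x * y = q(a)^{-1}\,\bigl(R_a^{-1}(x) \diamond L_a^{-1}(y)\bigr)$ suitably interpreted, and a short computation with the multiplicativity identity $q(x \diamond y) = q(x)q(y)$ confirms $q(x * y) = q(x) q(y)$. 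This gives (a): $(A, q, e, *)$ is a unital composition algebra.

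For (b), the triple $f = (f_1, f_2, f_3)$ falls out of the very definition: taking $f_1 = R_a$, $f_2 = L_a$, and $f_3 = q(a) \cdot \Id_A$ (or the appropriate scalar multiple), the identity $f_3(x \diamond y) = f_1(x) * f_2(y)$ is exactly the definition of $*$ rearranged. Each $f_i$ is a similitude of $(A, q)$ — $R_a$ and $L_a$ with multiplier $q(a)$, and $f_3$ with multiplier $q(a)^2$ — so $f$ is simultaneously an isotopy $(A, \diamond) \to (A, *)$ and, reading the multipliers, a similitude of the associated compositions of quadratic spaces $\Comp(\Calg) \to \Comp(A, q, *)$; checking that its composition multiplier is $(1,1,1)$, i.e. that it is an \emph{isomorphism} of compositions and not merely a similitude, is a matter of tracking $\lambda_1, \lambda_2, \lambda_3$ through the relations~\eqref{eq:lambdamu}: with $\mu(f_1) = \mu(f_2) = q(a)$ and $\mu(f_3) = q(a)^2$ one solves $\lambda_2 \lambda_3 = q(a)$, $\lambda_3 \lambda_1 = q(a)$, $\lambda_1 \lambda_2 = q(a)^2$, and the correct choice of scalar in the definition of $f_3$ (rather than the naive one) is precisely what makes $(\lambda_1, \lambda_2, \lambda_3) = (1,1,1)$.

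The main obstacle I anticipate is purely bookkeeping: getting the scalar normalizations consistent so that \emph{simultaneously} $e$ is a genuine two-sided unit for $*$, the form $q$ is strictly multiplicative for $*$ (not just multiplicative up to a scalar), and $f$ is an isomorphism rather than a mere similitude of compositions. There is no conceptual difficulty — the existence of the anisotropic vector and the bijectivity of $L_a, R_a$ are already available from Theorem~\ref{thm:dimcomp} and Proposition~\ref{prop:comp1} — but one must resist the temptation to drop the $q(a)$ factors prematurely. I would organize the proof by first doing the case $q(a) = 1$ (where all the scalars are trivial and the formulas are clean, $x * y = R_a^{-1}(x) \diamond L_a^{-1}(y)$, $e = a \diamond a$, $f = (R_a, L_a, \Id_A)$), verify (a) and (b) there, and then remark that the general case reduces to it by the observation that one may always rescale $\diamond$ (replacing it by $q(a)^{-1}\diamond$, which is again a composition multiplication after rescaling $q$) — or simply invoke Proposition~\ref{prop:simtoPfistercomp} to reduce at the outset to a composition of \emph{isometric} Pfister spaces, where an anisotropic $a$ with $q(a) = 1$ exists automatically.
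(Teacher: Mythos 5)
Your ``clean case'' with $q(a)=1$ is exactly the paper's proof: the paper takes $u\in A$ with $q(u)=1$, defines $x*y=r_u^{-1}(x)\diamond\ell_u^{-1}(y)$, sets $e=u\diamond u$, and verifies that $(r_u,\ell_u,\Id_A)$ is the required isotopy with composition multiplier $(1,1,1)$ via~\eqref{eq:lambdamu}. The general-case scaffolding you build (anisotropic $a$ with arbitrary $q(a)$, carrying $q(a)$ factors along, and the possible reduction via Proposition~\ref{prop:simtoPfistercomp}) is unnecessary: Corollary~\ref{corol:comp1ter} already guarantees $q$ represents~$1$, so one can start with $q(u)=1$ directly, which is what the paper does.
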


\begin{proof}
  Since $\Calg$ is a composition algebra,
  Corollary~\ref{corol:comp1ter} shows
  that there exists $u\in A$ such that $q(u)=1$. The maps
  $\ell_u\colon A\to A$ and $r_u\colon A\to A$ defined by
  $\ell_u(x)=u\diamond x$ and $r_u(x)=x\diamond u$ are isometries of
  $(A,q)$, hence they are invertible. Define
  $*\colon A\times A\to A$ by
  \[
    x*y=r_u^{-1}(x)\diamond \ell_u^{-1}(y) \qquad\text{for $x$, $y\in
      A$},
  \]
  hence $x\diamond y= r_u(x)*\ell_u(y)$ for $x$, $y\in A$.
  It is clear from the definitions that $(\Id_A,r_u,\ell_u)$
  is an isotopy $(A,\diamond)\to(A,*)$ and also a similitude
  $\Comp(\Calg)\to\Comp(A,q,*)$ with multiplier of the form
  $(\lambda_1,\lambda_2,1)$ for some $\lambda_1$, $\lambda_2\in
  F^\times$. Since $\mu(\ell_u)=\mu(r_u)=q(u)=1$, it follows
  from~\eqref{eq:lambdamu} that $\lambda_1=\lambda_2=1$, hence $f$ is
  an isomorphism of compositions of quadratic spaces. Moreover for
  $e=u\diamond u$ we have $r_u^{-1}(e)=u=\ell_u^{-1}(e)$, hence
  \[
    e*x=u\diamond\ell_u^{-1}(x)=
    \ell_u\bigl(\ell_u^{-1}(x)\bigr) 
    = x = r_u\bigl(r_u^{-1}(x)\bigr)=r_u^{-1}(x)\diamond u =
    x*e,
  \]
  hence $(A,q,e,*)$ is a unital composition algebra.
\end{proof}

\begin{corol}
  \label{corol:2.30}
  Every composition of isometric quadratic spaces is isomorphic to a
  composition $\Comp(\Calg)$ as in~\eqref{eq:CompCalg} for some unital
  composition algebra $\Calg$, and also to a composition
  $\Comp(\mathcal{S})$ for some symmetric composition algebra
  $\mathcal S$. Up to isomorphism, there is a unique composition of
  hyperbolic quadratic spaces of dimension~$n$, for $n=2$, $4$ and $8$.
\end{corol}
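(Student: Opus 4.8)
The plan is to deduce the three assertions from results already in hand. First I would take an arbitrary composition of isometric quadratic spaces $\Comp$. By Corollary~\ref{corol:2.30}'s hypothesis together with Corollary~\ref{corol:comp1ter}, the forms $q_1$, $q_2$, $q_3$ all represent~$1$. Applying Proposition~\ref{prop:Kaplan} to a composition algebra built from $\Comp$ is not quite the right starting point; instead I would first use Corollary~\ref{corol:2.30}'s companion Corollary~\ref{corol:pcomp}, which says $\Comp$ is isomorphic to a composition $S(\Comp)$ with $\partial S(\Comp)=S(\Comp)$, i.e.\ a composition all three of whose spaces are literally the same pointed quadratic space $(V_3,q_3,e_3)$ and whose composition map $\circledast_3$ satisfies the symmetry $b_3(x\circledast_3 y,z)=b_3(x,y\circledast_3 z)$. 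Reading off the definition, $(V_3,q_3,\circledast_3)$ is then a \emph{symmetric composition algebra} $\mathcal S$ in the sense of \S\ref{subsec:compalg}, and by construction $\Comp(\mathcal S)=S(\Comp)\simeq\Comp$. This handles the symmetric-composition-algebra assertion.

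For the unital-composition-algebra assertion, I would apply Proposition~\ref{prop:Kaplan} to the composition algebra $\mathcal S=(V_3,q_3,\circledast_3)$ just produced (or directly to any composition algebra realizing $\Comp$): it yields a unital composition algebra $\Calg=(V_3,q_3,e,*)$ together with an isomorphism of compositions of quadratic spaces $\Comp(\mathcal S)\xrightarrow{\sim}\Comp(\Calg)$. Composing with the isomorphism $\Comp\simeq\Comp(\mathcal S)$ from the previous paragraph gives $\Comp\simeq\Comp(\Calg)$, as required. (Since composition algebras of a given dimension exist in all characteristics—octonions, quaternions, quadratic étale algebras—there is no obstruction here.)

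For the uniqueness statement, fix $n\in\{2,4,8\}$ and let $\Comp$ be a composition of hyperbolic quadratic spaces of dimension~$n$. Hyperbolic forms represent~$1$, so by what precedes $\Comp\simeq\Comp(\Calg)$ for a unital composition algebra $\Calg=(V,q,e,\diamond)$ of dimension~$n$; here $q$ is a hyperbolic Pfister form of the given dimension, hence uniquely determined up to isometry. The key point is that a unital composition algebra is determined up to isomorphism by its norm form: this is the classical structure theory of Hurwitz algebras (the Cayley--Dickson process reconstructs $\Calg$ from $(V,q,e)$; in $\charac F=2$ one uses the version valid there, e.g.\ as in the references cited in \S\ref{subsec:compalg}). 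Thus any two unital composition algebras with hyperbolic norm form of dimension~$n$ are isomorphic as algebras, hence their associated compositions $\Comp(\Calg)$ are isomorphic as compositions of quadratic spaces; by transitivity any two compositions of hyperbolic quadratic spaces of dimension~$n$ are isomorphic.

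The main obstacle is the uniqueness clause: one must invoke the classification of Hurwitz algebras by their norm forms in arbitrary characteristic, and check that an algebra isomorphism $\Calg\to\Calg'$ automatically gives an isomorphism of the associated compositions of quadratic spaces $\Comp(\Calg)\to\Comp(\Calg')$ (the triple $(f,f,f)$ for $f$ the algebra isomorphism, with composition multiplier $(1,1,1)$ because $f$ is an isometry of the norm forms fixing the identity). Everything else is a direct assembly of Corollary~\ref{corol:pcomp}, Proposition~\ref{prop:Kaplan}, and Corollary~\ref{corol:comp1ter}.
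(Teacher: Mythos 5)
Your proposal is correct and follows essentially the same route as the paper: pass to $S(\Comp)$ via Corollary~\ref{corol:pcomp} to get a symmetric composition algebra, then apply Proposition~\ref{prop:Kaplan} to obtain a unital one, and for the uniqueness clause invoke the classification of unital composition algebras by their norm form. The paper simply cites \cite[(33.19)]{BoI} for that last classification (valid in all characteristics) rather than gesturing at Cayley--Dickson, and your explicit remark that an algebra isomorphism $f$ transports to the composition isomorphism $(f,f,f)$ is a detail the paper leaves implicit but which is indeed the step that closes the argument.
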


\begin{proof}
  Let $\Comp=\bigl((V_1,q_1),(V_2,q_2), (V_3,q_3),*_3\bigr)$ be a
  composition of isometric quadratic spaces, and let
  $S(\Comp)=\bigl((V_3,q_3),(V_3,q_3),(V_3,q_3),\circledast_3\bigr)$
  be the composition of quadratic spaces constructed in
  Corollary~\ref{corol:pcomp}, which is isomorphic to $\Comp$. Clearly,
  $S(\Comp)=\Comp(\mathcal{S})$ for $\mathcal{S}$ the composition
  algebra $\mathcal{S}=(V_3,q_3,\circledast_3)$. This composition
  algebra is symmetric since $\partial S(\Comp)=S(\Comp)$. Now,
  Proposition~\ref{prop:Kaplan} 
  yields a unital composition algebra $\Calg=(V_3,q_3,e,*)$ and an
  isomorphism $\Comp(\mathcal{S})\simeq\Comp(\Calg)$. Thus, $\Comp$ is
  isomorphic to $\Comp(\mathcal{S})$ and to $\Comp(\Calg)$.

  For $n=2$, $4$ and $8$, there is up to isomorphism a unique unital
  composition algebra of dimension~$n$ with hyperbolic quadratic form,
  namely the split quadratic \'etale algebra, the split quaternion
  algebra and the split octonion algebra,
  see~\cite[(33.19)]{BoI}. Therefore, there is a unique composition of
  hyperbolic quadratic spaces up to isomorphism.
\end{proof}

By contrast with the last argument, note that there exist more than one
symmetric composition algebra of dimension~$8$ with hyperbolic
quadratic form, see~\cite[(34.37)]{BoI}; but their associated
compositions of quadratic spaces are isomorphic.
\medbreak

We next relate isotopies of composition algebras with similitudes of
the associated compositions of quadratic spaces.

\begin{thm}
  \label{thm:isot}
  Let $\Calg=(A,q,\diamond)$ and $\tilde\Calg=(\tilde A, \tilde q,
  \tilde\diamond)$ be composition algebras and let $\Comp(\Calg)$ and
  $\Comp(\tilde\Calg)$ be the associated compositions of quadratic
  spaces as in~\eqref{eq:CompCalg}. A triple $f=(f_1,f_2,f_3)$ of
  bijective linear maps $f_i\colon A\to\tilde A$ is an isotopy
  $(A,\diamond) \to(\tilde A,\tilde\diamond)$ if and only if it is a
  similitude $f\colon\Comp(\Calg)\to\Comp(\tilde\Calg)$ with
  composition multiplier $(\mu(f_2),\mu(f_1),1)$.
\end{thm}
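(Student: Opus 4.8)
The plan is to treat the two implications separately, the forward one being immediate and the reverse one carrying all the content. Suppose first that $f=(f_1,f_2,f_3)$ is a similitude $\Comp(\Calg)\to\Comp(\tilde\Calg)$ with composition multiplier $(\mu(f_2),\mu(f_1),1)$. Then the scalar $\lambda_3$ attached to $f$ in Definition~\ref{defn:simcomp2} is the third component of $\lambda(f)$, so $\lambda_3=1$ and the defining relation $\lambda_3\,f_3(x\diamond y)=f_1(x)\,\tilde\diamond\,f_2(y)$ reduces to the isotopy relation; as each $f_i$ is bijective, $f$ is an isotopy. For the converse it suffices to show that every isotopy $f\colon(A,\diamond)\to(\tilde A,\tilde\diamond)$ has each component a similitude $(A,q)\to(\tilde A,\tilde q)$: granting this, the isotopy relation is exactly the relation of Definition~\ref{defn:simcomp2} with $\lambda_3=1$ (and $\lambda_3=1$ is forced, since $f_3$ is injective and $\diamond$ is not the zero map), so $f$ is a similitude of compositions, and~\eqref{eq:lambdamu} then yields $\lambda(f)=(\lambda_1,\lambda_2,1)=(\mu(f_2),\mu(f_1),1)$.

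I would prove this remaining claim first when $\Calg=(A,q,e,\diamond)$ is unital. Put $p_i=\tilde q\circ f_i$; since $f_i$ is bijective and $\tilde q$ has nonsingular polar form, each $p_i$ is a nonsingular quadratic form on $A$, and applying $\tilde q$ to the isotopy relation together with $\tilde q(u\,\tilde\diamond\,v)=\tilde q(u)\tilde q(v)$ gives $p_3(x\diamond y)=p_1(x)p_2(y)$ for all $x$, $y$. Setting $y=e$ and then $x=e$ in the isotopy relation yields $f_3(x)=f_1(x)\,\tilde\diamond\,f_2(e)=f_1(e)\,\tilde\diamond\,f_2(x)$, hence $p_3=\tilde q(f_2(e))\,p_1=\tilde q(f_1(e))\,p_2$; as $p_3$ is nonsingular it is not identically $0$, so $\tilde q(f_1(e))$ and $\tilde q(f_2(e))$ are nonzero, $p_3(e)=\tilde q(f_1(e))\tilde q(f_2(e))\neq0$, and $p_1$, $p_2$ are nonzero scalar multiples of $p_3$. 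Substituting back into $p_3(x\diamond y)=p_1(x)p_2(y)$ shows that $p:=p_3(e)^{-1}p_3$ satisfies $p(x\diamond y)=p(x)p(y)$ and $p(e)=1$, so $(A,p,e,\diamond)$ is a unital composition algebra with the same multiplication and unit as $\Calg$. By the uniqueness of the norm form of a unital composition algebra, $p=q$; therefore $p_3=p_3(e)\,q$ and $p_1$, $p_2$ are scalar multiples of $q$ as well, i.e.\ each $f_i$ is a similitude $(A,q)\to(\tilde A,\tilde q)$.

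For an arbitrary composition algebra $\Calg=(A,q,\diamond)$, I would reduce to the unital case via Proposition~\ref{prop:Kaplan}: it provides a unital composition algebra $(A,q,e,*)$ and an isotopy $h=(h_1,h_2,h_3)\colon(A,\diamond)\to(A,*)$ that is also an isomorphism of compositions $\Comp(\Calg)\to\Comp(A,q,*)$, so each $h_i$ is an isometry of $(A,q)$ by~\eqref{eq:lambdamu}. Since the inverse of an isotopy is an isotopy and isotopies compose, $f\circ h^{-1}=(f_1h_1^{-1},f_2h_2^{-1},f_3h_3^{-1})$ is an isotopy $(A,*)\to(\tilde A,\tilde\diamond)$; the unital case shows each $f_ih_i^{-1}$ is a similitude $(A,q)\to(\tilde A,\tilde q)$, whence $f_i=(f_ih_i^{-1})\circ h_i$ is a similitude with $\mu(f_i)=\mu(f_ih_i^{-1})$. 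This finishes the reduction, and with it the theorem.

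The main obstacle is the step in the unital case that identifies $p_3(e)^{-1}p_3$ with $q$: it rests on the classical fact, valid in every characteristic, that a unital composition algebra determines its norm form, which must be cited from the theory of Hurwitz algebras (see, e.g., \cite[\S33]{BoI}). The rest is routine bookkeeping with the multiplicativity identities of Proposition~\ref{prop:comp1} and with the relations~\eqref{eq:lambdamu}, together with the elementary facts that isotopies compose and that inverses of isotopies are isotopies.
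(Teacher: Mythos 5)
Your proof is correct and takes essentially the same approach as the paper: both reduce to the unital case via Proposition~\ref{prop:Kaplan} and in that case use the uniqueness of the norm form of a unital composition algebra to force $\tilde q\circ f_3$ (up to scaling) to equal $q$. The only organizational difference is that you isolate the unital case as a standalone lemma before performing the Kaplansky reduction, whereas the paper performs the reduction first and then carries out the unital argument on the composite isotopy $h=f\circ g^{-1}$; the mathematical content is identical.
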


\begin{proof}
  It is clear by comparing Definitions~\ref{defn:simcomp2} and
  \ref{defn:isotopy} that 
  similitudes $f\colon\Comp(\Calg)\to\Comp(\tilde\Calg)$ with
  composition multiplier of the form $(\lambda_1,\lambda_2,1)$ for
  some $\lambda_1$, $\lambda_2\in F^\times$ are isotopies
  $(A,\diamond)\to(\tilde A,\tilde\diamond)$. To prove the converse, it
  suffices to show that for every isotopy $f=(f_1,f_2,f_3)\colon
  (A,\diamond) \to (\tilde A,\tilde\diamond)$ the maps $f_i$ are
  similitudes. The isotopy condition then shows that $f$ is a
  similitude of compositions $\Comp(\Calg)\to\Comp(\tilde\Calg)$ with
  multiplier of the form $(\lambda_1,\lambda_2,1)$ for some
  $\lambda_1$, $\lambda_2\in F^\times$. Then~\eqref{eq:lambdamu} shows
  that $\lambda_1=\mu(f_2)$ and $\lambda_2=\mu(f_1)$.

  By Proposition~\ref{prop:Kaplan} we can find a bilinear map $*\colon
  A\times A\to A$ and a vector $e\in A$ such that $(A,q,e,*)$ is a
  unital composition algebra and there exists an isotopy
  $g\colon(A,\diamond) \to (A,*)$ whose components are isometries of
  $(A,q)$. The map $f\circ g^{-1}\colon(A,*)\to(\tilde
  A,\tilde\diamond)$ is then an isotopy, which means that for $f\circ
  g^{-1}=(h_1,h_2,h_3)$
  \begin{equation}
    \label{eq:hisot}
    h_3(x*y)=h_1(x)\,\tilde\diamond\, h_2(y)
    \qquad\text{for all $x$, $y\in A$.}
  \end{equation}
  In particular, it follows that for all $x\in A$
  \[
    h_1(e)\,\tilde\diamond\, h_2(x) = h_3(e*x) = h_3(x) = h_3(x*e) =
    h_1(x)\,\tilde\diamond\, h_2(e).
  \]
  Since $\tilde\Calg$ is a composition algebra, these equations imply
  \begin{equation}
    \label{eq:hisot2}
    \tilde q\bigl(h_3(x)\bigr) = \tilde q\bigl(h_1(e)\bigr) \tilde
    q\bigl(h_2(x)\bigr) = \tilde q\bigl(h_1(x)\bigr) \tilde
    q\bigl(h_2(e)\bigr)
    \qquad\text{for all $x\in A$.}
  \end{equation}
  As $h_3$ is bijective, there exist vectors $x\in A$ such that
  $\tilde q\bigl(h(x)\bigr)\neq0$, hence $\tilde
  q\bigl(h_1(e)\bigr)\neq0$ and $\tilde
  q\bigl(h_2(e)\bigr)\neq0$. Equation~\eqref{eq:hisot2} then yields
  $\tilde q\bigl(h_3(e)\bigr)=\tilde q\bigl(h_1(e)\bigr) \tilde
  q\bigl(h_2(e)\bigr)\neq0$. Define $q'\colon A\to F$ by
  \[
    q'(x)=\tilde q\bigl(h_3(e)\bigr)^{-1}\tilde q\bigl(h(x)\bigr)
    \qquad\text{for $x\in A$,}
  \]
  so $q'(e)=1$. Since $\tilde\Calg$ is a composition algebra, we
  obtain from~\eqref{eq:hisot}
  \[
    \tilde q\bigl(h_3(x*y)\bigr) = \tilde q\bigl(h_1(x)\bigr) \tilde
    q\bigl(h_2(y)\bigr) \qquad\text{for all $x$, $y\in A$,}
  \]
  hence by~\eqref{eq:hisot2}
  \[
    \tilde q\bigl(h_3(x*y)\bigr) = \tilde q\bigl(h_1(e)\bigr)^{-1}
    \tilde q\bigl(h_2(e)\bigr)^{-1} \tilde q\bigl(h_3(x)\bigr) \tilde
    q\bigl(h_3(y)\bigr)
    =
    \tilde q\bigl(h_3(e)\bigr)^{-1} \tilde q\bigl(h_3(x)\bigr) \tilde
    q\bigl(h_3(y)\bigr).
  \]
  Therefore,
  \[
    q'(x*y)=q'(x)q'(y) \qquad\text{for all $x$, $y\in A$.}
  \]
  Thus, $(A,q',e,*)$ is a unital composition algebra, just like
  $(A,q,e,*)$. But the quadratic form in a unital composition algebra
  is uniquely determined as the ``generic norm'' of the algebra
  (see~\cite[(33.9)]{BoI} or \cite[Cor.~1.2.4]{SpV}), hence $q'=q$,
  which means that
  \[
    \tilde q\bigl(h_3(x)\bigr) = \tilde q\bigl(h_3(e)\bigr) q(x)
    \qquad\text{for all $x\in A$.}
  \]
  Thus, $h_3\colon (A,q)\to(\tilde A,\tilde q)$ is a similitude with
  multiplier $\tilde q\bigl(h_3(e)\bigr)$. Equation~\eqref{eq:hisot2}
  then yields
  \[
    \tilde q\bigl(h_1(x)\bigr) = \tilde q\bigl(h_2(e)\bigr)^{-1}
    \tilde q\bigl(h_3(e)\bigr) q(x)
    \quad\text{and}\quad
    \tilde q\bigl(h_2(x)\bigr) = \tilde q\bigl(h_1(e)\bigr)^{-1}
    \tilde q\bigl(h_3(e)\bigr) q(x),
  \]
  hence $h_1$ and $h_2$ also are similitudes. Now, $f=h\circ g$ and
  all the components of $g$ are isometries, hence all the components
  of $f$ are similitudes.
\end{proof}

In the case where $\tilde\Calg=\Calg$ Theorem~\ref{thm:isot} has the
following immediate consequence:

\begin{corol}
  \label{corol:isot}
  For any composition algebra $\Calg=(A,q,\diamond)$, let
  $\lambda'\colon \BGO\bigl(\Comp(\Calg)\bigr)\to\BGm$ be the third
  component of the composition multiplier map
  $\lambda_{\Comp(\Calg)}\colon 
  \BGO\bigl(\Comp(\Calg)\bigr)\to\BGm^3$, and let $\mu'\colon
  \BStr(A,\diamond)\to \BGm^2$ be the map defined on rational points
  by mapping every autotopy $(f_1,f_2,f_3)$ to the pair of multipliers
  $\bigl(\mu(f_1),\mu(f_2)\bigr)$. The algebraic group scheme
  $\BStr(A,\diamond)$ is smooth and fits in the following exact
  sequences: 
  \[
    1\to\BStr(A,\diamond) \to \BGO\bigl(\Comp(\Calg)\bigr)
    \xrightarrow{\lambda'} \BGm\to 1
  \]
  and
  \[
    1\to \BOrth\bigl(\Comp(\Calg)\bigr) \to \BStr(A,\diamond)
    \xrightarrow{\mu'} \BGm^2\to 1.
  \]
\end{corol}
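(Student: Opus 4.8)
The plan is to deduce both exact sequences, together with the smoothness of $\BStr(A,\diamond)$, from Theorem~\ref{thm:isot} and the exact sequences of Proposition~\ref{prop:exseqOGOogo}. By Corollary~\ref{corol:comp1ter} the quadratic form $q$ of $\Calg$ is a Pfister form, hence represents~$1$ over every commutative $F$-algebra~$R$; this makes the construction of Proposition~\ref{prop:Kaplan} available over an arbitrary base (after localizing $R$), so that the argument of Theorem~\ref{thm:isot} yields, for every~$R$, that the isotopies of $(A,\diamond)_R$ are precisely the similitudes of $\Comp(\Calg)_R$ whose composition multiplier is of the form $(\mu(f_2),\mu(f_1),1)$. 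Since $\lambda'$ is by definition the third component of the composition multiplier map $\lambda_{\Comp(\Calg)}$, this reads scheme-theoretically as
\[
  \BStr(A,\diamond)=\ker\bigl(\lambda'\colon\BGO(\Comp(\Calg))\to\BGm\bigr);
\]
moreover, by~\eqref{eq:lambdamu}, $\lambda_{\Comp(\Calg)}$ equals $(\mu(g_2),\mu(g_1),1)$ on $\BStr(A,\diamond)$, so $\mu'$ is well defined and agrees, up to transposing the first two factors, with $\lambda_{\Comp(\Calg)}$ followed by the projection $\BGm^3\to\BGm^2$ onto the first two coordinates.

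Granting this identification, the first sequence is immediate. The map $\lambda'$ is surjective: for $\nu\in R^\times$ the homothety $(\nu\Id_A,\Id_A,\Id_A,\nu)$ lies in $\BGO(\Comp(\Calg))(R)$ with $\lambda'=\nu$. Its differential sends $(g_1,g_2,g_3,\lambda_3)\in\go(\Comp(\Calg))$ to $\lambda_3$, which is already surjective onto $F$ on $\homot(\Comp(\Calg))$; hence $\lambda'$ is separable and, $\BGm$ being smooth, $\BStr(A,\diamond)=\ker\lambda'$ is smooth by~\cite[(22.13)]{BoI}. The sequence $1\to\BStr(A,\diamond)\to\BGO(\Comp(\Calg))\xrightarrow{\lambda'}\BGm\to1$ is therefore exact.

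For the second sequence, note that $\BOrth(\Comp(\Calg))=\ker\lambda_{\Comp(\Calg)}$ has composition multiplier $(1,1,1)$, so $\lambda'=1$ on it and $\BOrth(\Comp(\Calg))$ is a closed subgroup of $\BStr(A,\diamond)$; from the description of $\mu'$ above and Proposition~\ref{prop:exseqOGOogo} one gets $\ker\mu'=\ker\lambda_{\Comp(\Calg)}=\BOrth(\Comp(\Calg))$. It remains to check that $\mu'$ is surjective; since $\BGm^2$ is smooth it suffices by~\cite[(22.3)]{BoI} to produce points over an algebraic closure $\Falg$, and for $\alpha$, $\beta\in\Falg^\times$ one takes square roots $\nu_1$, $\nu_2$ and the homothety $(\nu_1\Id_A,\nu_2\Id_A,\nu_1\nu_2\Id_A,1)\in\BStr(A,\diamond)(\Falg)$, whose image under $\mu'$ is $(\nu_1^2,\nu_2^2)=(\alpha,\beta)$. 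Together with the smoothness of $\BOrth(\Comp(\Calg))$ from Proposition~\ref{prop:exseqOGOogo}, this yields exactness of $1\to\BOrth(\Comp(\Calg))\to\BStr(A,\diamond)\xrightarrow{\mu'}\BGm^2\to1$.

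The step deserving real care---and which I would settle first---is the arbitrary-base version of Theorem~\ref{thm:isot}: that an isotopy over any $F$-algebra $R$ has automatically similitude components with the stated multipliers, not merely over fields. The Pfister hypothesis on $q$ (Corollary~\ref{corol:comp1ter}) is what makes this work: a norm-one vector exists over every $R$, so Proposition~\ref{prop:Kaplan} reduces the question, locally on $R$, to a unital composition algebra, and there the quadratic form is uniquely determined by the multiplication as the generic norm. Once $\BStr(A,\diamond)=\ker\lambda'$ is secured, everything else is the formal bookkeeping with the exact sequences of \S\ref{subsec:simiso} indicated above.
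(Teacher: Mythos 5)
Your proof follows the paper's own strategy exactly: identify $\BStr(A,\diamond)$ with $\ker\lambda'$ via Theorem~\ref{thm:isot}, deduce smoothness of $\BStr(A,\diamond)$ from separability of $\lambda'$ and~\cite[(22.13)]{BoI}, identify $\ker\mu'$ with $\ker(\lambda_{\Comp(\Calg)}\rvert_{\BStr(A,\diamond)})=\BOrth\bigl(\Comp(\Calg)\bigr)$, and get surjectivity of $\mu'$ from surjectivity of $\lambda_{\Comp(\Calg)}$. The scheme-theoretic caveat you raise about promoting Theorem~\ref{thm:isot} from field-points to $R$-points is a fair one that the paper's proof also leaves implicit; your resolution sketch (a Pfister form over $F$ represents~$1$ already over $F$, hence after any base change, so the Kaplansky reduction of Proposition~\ref{prop:Kaplan} applies) is along the right lines, though not more rigorous than the paper's own silence on the point.
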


\begin{proof}
  Theorem~\ref{thm:isot} identifies $\BStr(A,\diamond)$ as the kernel
  of $\lambda'$. Proposition~\ref{prop:exseqOGOogo} shows that
  $\lambda_{\Comp(\Calg)}$ is a separable morphism, hence $\lambda'$
  also is separable, and it follows by~\cite[(22.13)]{BoI} that
  $\BStr(A,\diamond)$ is smooth. Theorem~\ref{thm:isot} also shows
  that the kernel of $\mu'$ is the kernel of
  the restriction of $\lambda_{\Comp(\Calg)}$ to
  $\BStr(A,\diamond)$, which is $\BOrth\bigl(\Comp(\Calg)\bigr)$ by
  definition. To complete the proof, observe that $\mu'$ is surjective
  because $\lambda_{\Comp(\Calg)}$ is surjective.
\end{proof}

We next turn to automorphisms of an algebra $(A,\diamond)$, which are
linear bijections $f\colon A\to A$ such that $f(x\diamond
y)=f(x)\diamond f(y)$ for all $x$, $y\in A$. They form an algebraic
group scheme $\BAut(A,\diamond)$, which can be viewed as a closed
subgroup of $\BStr(A,\diamond)$ since every automorphism $f$ yields an
autotopy $(f,f,f)$ of $(A,\diamond)$. To relate the condition that
$f_1=f_2=f_3$ in an autotopy $(f_1,f_2,f_3)$ with the shift
isomorphism $\partial\colon\BGO\bigl(\Comp(\Calg)\bigr) \to
\BGO\bigl(\partial\Comp(\Calg)\bigr)$, we view
$\BOrth\bigl(\Comp(\Calg)\bigr)$,
$\BOrth\bigl(\partial\Comp(\Calg)\bigr)$ and
$\BOrth\bigl(\partial^2\Comp(\Calg)\bigr))$ as subgroups of
$\BGL(A)\times \BGL(A)\times\BGL(A)$ and define
\[
  \overline\BOrth\bigl(\Comp(\Calg)\bigr) =
  \BOrth\bigl(\Comp(\Calg)\bigr) \cap
  \BOrth\bigl(\partial\Comp(\Calg)\bigr) \cap
  \BOrth\bigl(\partial^2\Comp(\Calg)\bigr).
\]
The shift isomorphism $\partial$ clearly restricts to an automorphism
of $\overline\BOrth\bigl(\Comp(\Calg)\bigr)$. 

\begin{prop}
  \label{prop:automorph}
  For every composition algebra $\Calg=(A,q,\diamond)$, the group
  $\BAut(A,\diamond)$ is the subgroup of
  $\overline\BOrth\bigl(\Comp(\Calg)\bigr)$ fixed under $\partial$.
\end{prop}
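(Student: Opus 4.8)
The plan is to prove the equality of algebraic group schemes by comparing points over an arbitrary commutative $F$-algebra $R$, identifying $\BAut(A,\diamond)(R)$ throughout with the set of triples $(f,f,f)$ where $f$ is an $R$-algebra automorphism of $(A,\diamond)_R$.

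First I would describe the $\partial$-fixed subgroup concretely. On $\BOrth(\Comp(\Calg))$ every composition multiplier equals $(1,1,1)$, so by the formula defining $\partial$ in \S\ref{subsec:simiso} together with~\eqref{eq:lambdamu} the shift operator acts there simply by the cyclic permutation $(g_1,g_2,g_3)\mapsto(g_2,g_3,g_1)$; the same holds on $\BOrth(\partial\Comp(\Calg))$ and $\BOrth(\partial^2\Comp(\Calg))$, and hence on the intersection $\overline\BOrth(\Comp(\Calg))$, on which $\partial$ restricts to an automorphism by the remark preceding the proposition. Therefore an element of $\overline\BOrth(\Comp(\Calg))(R)$ is fixed by $\partial$ precisely when it has the shape $(f,f,f)$, and membership of such a triple in $\BOrth(\Comp(\Calg))(R)$ already forces $f$ to be an isometry of $(A,q)_R$ with $f(x\diamond y)=f(x)\diamond f(y)$, i.e.\ $f\in\BAut(A,\diamond)(R)$. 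This settles the inclusion of the $\partial$-fixed subgroup into $\BAut(A,\diamond)$, using nothing about $\partial\Comp(\Calg)$ or $\partial^2\Comp(\Calg)$.

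For the reverse inclusion I would start from $f\in\BAut(A,\diamond)(R)$ and pass to the autotopy $(f,f,f)$ of $(A,\diamond)_R$. By Theorem~\ref{thm:isot} this triple is a similitude $\Comp(\Calg)_R\to\Comp(\Calg)_R$ with composition multiplier $(\mu(f),\mu(f),1)$; comparing with the third relation in~\eqref{eq:lambdamu} gives $\mu(f)=\mu(f)^2$, whence $\mu(f)=1$ since $\mu(f)\in R^\times$. Thus the composition multiplier is $(1,1,1)$, so $(f,f,f)\in\BOrth(\Comp(\Calg))(R)$. Applying the shift isomorphisms $\partial$ and $\partial^2$ (Proposition~\ref{prop:simdef2}) to this element, which they fix, shows that $(f,f,f)$ also lies in $\BOrth(\partial\Comp(\Calg))(R)$ and in $\BOrth(\partial^2\Comp(\Calg))(R)$, hence in $\overline\BOrth(\Comp(\Calg))(R)$; and $(f,f,f)$ is visibly $\partial$-fixed. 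Since $R$ is arbitrary, this yields the claimed equality of group schemes.

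The only genuinely non-formal ingredient is the implication that an algebra automorphism of a composition algebra preserves the norm form, which is exactly what the passage through Theorem~\ref{thm:isot} supplies: the norm of $(A,\diamond)$ is determined up to the scalars governed by~\eqref{eq:lambdamu}, and the self-map condition then forces the multiplier to be trivial. Everything else — the action of $\partial$ on $\overline\BOrth(\Comp(\Calg))$ and the translation of the conditions defining $\BOrth(\Comp(\Calg))$, $\BOrth(\partial\Comp(\Calg))$ and $\BOrth(\partial^2\Comp(\Calg))$ — is bookkeeping with the definitions carried out functorially in $R$, so I expect the main difficulty to be presenting these identifications cleanly rather than any real obstruction.
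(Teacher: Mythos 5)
Your proposal is correct and follows essentially the same route as the paper's own proof: pass from $f\in\BAut(A,\diamond)(R)$ to the autotopy $(f,f,f)$, invoke Theorem~\ref{thm:isot} to get composition multiplier $(\mu(f),\mu(f),1)$, extract $\mu(f)=1$ from~\eqref{eq:lambdamu}, then use Proposition~\ref{prop:simdef2} to place $(f,f,f)$ in all three $\BOrth$ groups; conversely read off $f_1=f_2=f_3$ from $\partial$-invariance and membership in $\BOrth(\Comp(\Calg))(R)$. Your opening paragraph merely makes explicit what the paper leaves to the remark preceding the proposition (that $\partial$ acts on $\overline\BOrth$ by cyclic permutation), which is a harmless and clarifying elaboration.
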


\begin{proof}
  Let $R$ be a commutative $F$-algebra. For every automorphism $f$ of
  $(A,\diamond)_R$ the triple $(f,f,f)$ is an autotopy of
  $(A,\diamond)_R$, hence by Theorem~\ref{thm:isot} a similitude of
  $\Comp(\Calg)_R$ with composition multiplier
  $(\mu(f),\mu(f),1)$. The relations~\eqref{eq:lambdamu} between
  composition multipliers and the multipliers of the components of
  similitudes yield $\mu(f)=\mu(f)^2$, hence $\mu(f)=1$ and $(f,f,f)$
  is an automorphism of $\Comp(\Calg)_R$. By Proposition~\ref{prop:simdef2} we
  then see that $(f,f,f)$ is also an automorphism of
  $\partial\Comp(\Calg)$ and of $\partial^2\Comp(\Calg)$, hence
  $(f,f,f)\in\overline\BOrth\bigl(\Comp(\Calg)\bigr)(R)$, and this
  triple is fixed under $\partial$. On the other hand, if
  $(f_1,f_2,f_3)\in\overline\BOrth\bigl(\Comp(\Calg)\bigr)(R)$ is
  fixed under $\partial$, then $f_1=f_2=f_3$, hence $f_1$ is an
  automorphism of $(A,\diamond)_R$ because $(f_1,f_2,f_3)\in
  \BOrth\bigl(\Comp(\Calg)\bigr)(R)$. 
\end{proof}

When $\Calg=(A,q,\diamond)$ is a symmetric composition algebra, then
$\partial\Comp(\Calg) = \Comp(\Calg)$ by definition, hence
$\overline\BOrth\bigl(\Comp(\Calg)\bigr) =
\BOrth\bigl(\Comp(\Calg)\bigr)$ and Proposition~\ref{prop:automorph}
shows that $\BAut(A,\diamond)$ is the subgroup of
$\BOrth\bigl(\Comp(\Calg)\bigr)$ fixed under $\partial$. When $\dim
A=8$, an alternative description is given
in~\cite[Th.~6.6]{CKT}: $\BAut(A,\diamond)$ is shown to be isomorphic
to the subgroup of $\BPGO^+(q)$ fixed under an outer automorphism of
order~$3$, which is the analogue of $\partial$.
\medbreak

For a unital composition algebra
$\Calg^\bullet=(A,q,e,\diamond)$ with associated para-unital symmetric
composition algebra $S(\Calg^\bullet)=(A,q,*)$ as in
Proposition~\ref{prop:dercompunit}, it follows from functoriality of
the $S$ construction that $\BAut(A,\diamond)\subset \BAut(A,*)$. The
reverse inclusion holds when $\dim A\geq4$
by~\cite[(34.4)]{BoI}. However, the group $\BAut(A,\diamond)$ can also
be described as follows:

\begin{prop}
  Let $\Calg^\bullet=(A,q,e,\diamond)$ be a unital composition
  algebra, and let $\Comp^\bullet(\Calg^\bullet)$ be the associated
  composition of pointed quadratic spaces as
  in~\eqref{eq:pCompCalg}. There is a canonical identification
  $\BAut(A,\diamond)=\BOrth\bigl(\Comp^\bullet(\Calg^\bullet)\bigr)$. 
\end{prop}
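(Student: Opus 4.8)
The plan is to realize both sides as closed subgroup schemes of $\BGL(A)^3$ and to verify that they have the same $R$-points for every commutative $F$-algebra $R$. The group $\BAut(A,\diamond)$ sits inside $\BGL(A)^3$ via the embedding $f\mapsto(f,f,f)$ used to view it inside the structure group. On the other side, unwinding Definitions~\ref{defn:compmult} and~\ref{defn:isocompointed}, an element of $\BOrth(\Comp^\bullet(\Calg^\bullet))(R)$ is a triple $(g_1,g_2,g_3)$ of isometries of $(A,q)_R$ with $g_i(e)=e$ for $i=1$, $2$, $3$ and $g_3(x\diamond y)=g_1(x)\diamond g_2(y)$ for all $x$, $y\in A_R$; here the composition multiplier is $(1,1,1)$ by definition of an isomorphism of compositions, so by~\eqref{eq:lambdamu} the $g_i$ are automatically isometries.

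First I would prove the inclusion $\BOrth(\Comp^\bullet(\Calg^\bullet))\subset\BAut(A,\diamond)$. Given such a triple $(g_1,g_2,g_3)$, substituting $y=e$ in $g_3(x\diamond y)=g_1(x)\diamond g_2(y)$ and using $g_2(e)=e$ together with $x\diamond e=x$ gives $g_3=g_1$; substituting $x=e$ and using $g_1(e)=e$ together with $e\diamond y=y$ gives $g_3=g_2$. Hence $g_1=g_2=g_3=:f$, the multiplicativity relation becomes $f(x\diamond y)=f(x)\diamond f(y)$, so $f\in\BAut(A,\diamond)(R)$ and $(g_1,g_2,g_3)=(f,f,f)$.

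For the reverse inclusion, let $f\in\BAut(A,\diamond)(R)$, so that $(f,f,f)$ is an autotopy of $(A,\diamond)_R$. By Theorem~\ref{thm:isot} it is a similitude of $\Comp(\Calg)_R$ with composition multiplier $(\mu(f),\mu(f),1)$; the third identity in~\eqref{eq:lambdamu} then reads $\mu(f)=\mu(f)^2$, whence $\mu(f)=1$ as $\mu(f)\in R^\times$. Thus each component is an isometry and the composition multiplier is $(1,1,1)$, i.e.\ $(f,f,f)\in\BOrth(\Comp(\Calg))(R)$. It remains to see $f(e)=e$: from $f(e)\diamond f(x)=f(e\diamond x)=f(x)$ for all $x\in A_R$ and surjectivity of $f$, the element $f(e)$ is a left identity of $(A,\diamond)_R$, hence $f(e)=f(e)\diamond e=e$ using that $e$ is a right identity. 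So $(f,f,f)$ preserves the distinguished vectors and lies in $\BOrth(\Comp^\bullet(\Calg^\bullet))(R)$.

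Since the two closed subgroup schemes of $\BGL(A)^3$ have the same $R$-points for all $R$, they coincide, and this common embedding furnishes the asserted canonical identification. I do not anticipate a real obstacle here: the argument is a bookkeeping of the definitions, the only points needing a little care being the use of~\eqref{eq:lambdamu} (through Theorem~\ref{thm:isot}) to force the multiplier of a $\diamond$-automorphism to be~$1$ and the deduction $f(e)=e$ from uniqueness of a left identity; neither depends on $\charac F$ or on $\dim A$.
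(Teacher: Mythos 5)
Your proof is correct and follows essentially the same approach as the paper: both prove the two inclusions by substituting $e$ into the autotopy identity, and both appeal to Theorem~\ref{thm:isot} and~\eqref{eq:lambdamu} (directly in your case, via the just-proved Proposition~\ref{prop:automorph} in the paper's case) to see that a $\diamond$-automorphism has multiplier $1$. The only difference is that you spell out the verification $f(e)=e$ via uniqueness of the left identity, which the paper states without comment; this is a reasonable and correct elaboration.
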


\begin{proof}
  Let $R$ be a commutative $F$-algebra. Every automorphism
  $f\in\BAut(A,\diamond)(R)$ leaves $e$ fixed, hence the triple
  $(f,f,f)$ is an automorphism of $\Comp^\bullet(\Calg^\bullet)_R$.
  Therefore, mapping $f$ to $(f,f,f)$ defines an embedding
  $\BAut(A,\diamond)\subset\BOrth\bigl(\Comp^\bullet(\Calg^\bullet)\bigr)$.

  For the reverse inclusion, let
  $(f_1,f_2,f_3)\in\BOrth\bigl(\Comp^\bullet(\Calg^\bullet)\bigr)(R)$.
  Substituting $e$ for $x$ or for $y$ in the
  equation
  \[
    f_3(x\diamond y)=f_1(x)\diamond f_2(y)\qquad\text{for all $x$,
      $y\in A$}
  \]
  yields $f_3(y)=f_2(y)$ and $f_3(x)=f_1(x)$ for all $x$, $y\in A$,
  hence $f_3\in\BAut(A,\diamond)(R)$.
\end{proof}

\section{Trialitarian triples}
\label{chap:tritri}

The focus in this section is on central simple algebras with quadratic
pair of degree~$8$ over an arbitrary field $F$. Altering slightly the
definition in \cite[\S42.A]{BoI} (and extending it to
characteristic~$2$), we define a \emph{trialitarian 
  triple} over $F$ to be a $4$-tuple (!)
\[
  \TT=\bigl((A_1,\sigma_1,\strf_1),\,(A_2,\sigma_2,\strf_2),\,
  (A_3,\sigma_3,\strf_3),\,\varphi_0\bigr)
\]
where $(A_i,\sigma_i,\strf_i)$ is a central simple $F$-algebra with
quadratic pair of degree~$8$ for $i=1$, $2$, $3$, and $\varphi_0$ is
an isomorphism of algebras with quadratic pair
\[
  \varphi_0\colon(C(A_1,\sigma_1,\strf_1),\underline\sigma_1, \underline
  \strf_1) \stackrel\sim\to (A_2,\sigma_2,\strf_2) \times
  (A_3,\sigma_3,\strf_3). 
\]
To simplify notation, we denote by a single letter algebras with
quadratic pair, as in Section~\ref{chap:Cligrps}, and write
$\aqp_i=(A_i,\sigma_i,\strf_i)$ and 
$\Clqp(\aqp_1)=(C(\aqp_1),\underline\sigma_1,\underline\strf_1)$.
\medbreak

If $\widetilde{\TT}=(\tilde\aqp_1,\, \tilde\aqp_2,\,
\tilde\aqp_3,\,\tilde\varphi_0)$ is also a trialitarian triple, 
an \emph{isomorphism of trialitarian triples} $\gamma\colon 
\TT \to \widetilde{\TT}$ is a triple $\gamma=(\gamma_1, \gamma_2,
\gamma_3)$ of isomorphisms of algebras with quadratic pair
\[
  \gamma_i\colon \aqp_i\to\tilde\aqp_i \qquad i=1, 2, 3,
\]
such that the following diagram commutes:
\[
  \xymatrix{\Clqp(\aqp_1)\ar[r]^-{\varphi_0}\ar[d]_{C(\gamma_1)}
    & \aqp_2\times \aqp_3\ar[d]^{\gamma_2\times\gamma_3}\\
    \Clqp(\tilde\aqp_1)
    \ar[r]^-{\widetilde\varphi_0}
    &
    \tilde \aqp_2\times\tilde \aqp_3}
\]

We show in \S\ref{subsec:triplecomp} that every composition $\Comp$ of
quadratic spaces of dimension~$8$ yields a trialitarian triple
$\End(\Comp)$, and that every trialitarian triple of split algebras
has the form $\End(\Comp)$ for some composition $\Comp$ of
dimension~$8$. In \S\ref{subsec:simTT} we discuss the group scheme of
automorphisms
of a trialitarian triple $\TT$: we
show that $\BAut(\TT)$ is smooth (hence an algebraic group) and
introduce algebraic groups $\BOrth(\TT)$, $\BGO(\TT)$, $\BPGO(\TT)$,
extending to the context of trialitarian triples the group schemes
$\BOrth(\Comp)$, $\BGO(\Comp)$, $\BPGO(\Comp)$ defined
in~\S\ref{subsec:simiso} for a composition $\Comp$ of quadratic
spaces. A main result of the section is the construction of derived
trialitarian triples in \S\ref{subsec:dertriple}: to each trialitarian
triple $\TT$ we canonically associate trialitarian triples
$\partial\TT$ and $\partial^2\TT$, in such a way that for split
trialitarian triples $\End(\Comp)$
\[
  \partial\End(\Comp)=\End(\partial\Comp) \qquad\text{and}\qquad
  \partial^2\End(\Comp)=\End(\partial^2\Comp).
\]
This construction is used in \S\ref{subsec:triso} to define for each
trialitarian triple $\TT=(\aqp_1,\aqp_2,\aqp_3,\varphi_0)$
isomorphisms 
\[
  \BOrth(\TT)\simeq\BSpin(\aqp_1) \simeq\BSpin(\aqp_2)
  \simeq\BSpin(\aqp_3)
\]
and
\[
  \BPGO(\TT)\simeq \BPGO^+(\aqp_1)\simeq \BPGO^+(\aqp_2) \simeq
  \BPGO^+(\aqp_3),
\]
which we call the \emph{trialitarian isomorphisms} canonically
attached to the trialitarian triple $\TT$.

We return in \S\ref{subsec:comp8} to the study of compositions of
quadratic spaces, building on the theory of trialitarian triples
developed in the previous subsections to obtain a few more results
about the 
$8$-dimensional case. Specifically, we establish criteria for the
similarity or the isomorphism of compositions of quadratic spaces,
which yield an analogue of the classical Principle of Triality, and we give an explicit description of the cohomological invariants of
$\BSpin_8$.

In the final \S\ref{subsec:strgrp8} we show that the constructions of
\S\ref{subsec:simTT} readily yield a canonical isomorphism between the
structure group of a composition algebra of dimension~$8$ and the
extended Clifford group of its quadratic form.

\subsection{The trialitarian triple of a composition of quadratic
  spaces}
\label{subsec:triplecomp}

Let
\[
  \Comp=\bigl((V_1,q_1),\,(V_2,q_2),\,(V_3,q_3),\,*_3\bigr)
\]
be a composition of quadratic spaces of dimension~$8$ over $F$. Recall
from Proposition~\ref{prop:compdim8} the isomorphism of algebras with
quadratic pair
\[
  C(\alpha)\colon(C(V_1,q_1),\tau_1,\str{g}_1) \xrightarrow\sim
  (\End(V_2\oplus 
  V_3), \sigma_{b_2\perp b_3},\strf_{q_2\perp q_3})
\]
induced by the map
\[
  \alpha\colon x_1\in V_1 \mapsto
  \begin{pmatrix}
    0&r_{x_1}\\ \ell_{x_1}&0
  \end{pmatrix} \in \End(V_2\oplus V_3)
\]
where $\ell_{x_1}\colon V_2\to V_3$ carries $x_2\in V_2$ to
$x_1*_3x_2\in V_3$ and $r_{x_1}\colon V_3\to V_2$ carries $x_3\in V_3$
to $x_3*_2x_1\in V_2$. Its restriction to the even Clifford algebra
also is an isomorphism of algebras with quadratic pair
\[
  C_0(\alpha)\colon (C_0(V_1,q_1),\tau_{01},\str{g}_{01})
  \xrightarrow{\sim} 
  (\End(V_2),\sigma_{b_2},\strf_{q_2})\times
  (\End(V_3),\sigma_{b_3},\strf_{q_3}),
\]
see Proposition~\ref{prop:compdim8}. Therefore, the following is a
trialitarian triple:
\[
  \End(\Comp) = \bigl((\End(V_1),\sigma_{b_1}, \strf_{q_1}),\,
  (\End(V_2),\sigma_{b_2}, \strf_{q_2}),\,
  (\End(V_3), \sigma_{b_3}, \strf_{q_3}),\,
  C_0(\alpha)\bigr).
\]

We next show that the construction of trialitarian
triples from compositions of quadratic spaces is functorial. 

Let $\Comp= \bigl((V_1,q_1),\,(V_2,q_2),\,(V_3,q_3),\,*_3\bigr)$ and
$\tilde\Comp=\bigl((\tilde V_1,\tilde q_1),\,(\tilde V_2,\tilde
q_2),\,(\tilde V_3,\tilde q_3),\,\tilde*_3\bigr)$ be
compositions of quadratic spaces of dimension~$8$. Recall that for
every linear isomorphism $g_i\colon V_i\to \tilde V_i$, we define
\[
  \Int(g_i)\colon\End(V_i)\to\End(\tilde V_i)\qquad\text{by $f\mapsto
    g_i\circ f\circ g_i^{-1}$.}
\]

\begin{prop}
  \label{prop:isoTT}
  For every similitude $(g_1,g_2,g_3)\colon\Comp\to\tilde\Comp$, the triple
  \[
    \Int(g_1,g_2,g_3):=\bigl(\Int(g_1),\Int(g_2),\Int(g_3)\bigr)
    \colon\End(\Comp)\to\End(\tilde\Comp) 
  \]
  is an isomorphism of trialitarian triples.
  Moreover, for every isomorphism of trialitarian triples
  $(\gamma_1,\gamma_2,\gamma_3)\colon\End(\Comp)\to\End(\tilde\Comp)$,
  there exists a similitude $(g_1,g_2,g_3)\colon\Comp\to\tilde\Comp$ such
  that
  \[
    (\gamma_1,\gamma_2,\gamma_3) = \Int(g_1,g_2,g_3).
  \]
\end{prop}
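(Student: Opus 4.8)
The plan is to prove the two assertions separately, using the universal property of the Clifford algebra for the first and the Skolem--Noether-type rigidity of Proposition~\ref{prop:simiso} for the second.

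First I would show that $\Int(g_1,g_2,g_3)$ is an isomorphism of trialitarian triples. By Proposition~\ref{prop:simiso}, each $\Int(g_i)$ is an isomorphism of algebras with quadratic pair $(\End V_i,\sigma_{b_i},\strf_{q_i})\xrightarrow{\sim}(\End\tilde V_i,\sigma_{\tilde b_i},\strf_{\tilde q_i})$. It therefore remains to check that the square
\[
  \xymatrix{\Clqp(\End V_1,\sigma_{b_1},\strf_{q_1})
  \ar[r]^-{C_0(\alpha)}\ar[d]_{C(\Int(g_1))}
  & (\End V_2,\dots)\times(\End V_3,\dots)
  \ar[d]^{\Int(g_2)\times\Int(g_3)}\\
  \Clqp(\End\tilde V_1,\sigma_{\tilde b_1},\strf_{\tilde q_1})
  \ar[r]^-{C_0(\tilde\alpha)}
  & (\End\tilde V_2,\dots)\times(\End\tilde V_3,\dots)}
\]
commutes. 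Using the standard identification $\End V_1=V_1\otimes V_1$, the Clifford algebra $C(\End V_1,\sigma_{b_1},\strf_{q_1})$ is identified with $C_0(V_1,q_1)$ and $C(\Int(g_1))$ with $C_0(g_1)$ (after rescaling $q_1$ to have multiplier $1$; since $\Int(g_1)$ only depends on $g_1$ up to scalar, we may normalize, or more simply invoke the functoriality $C_0(u)$ recorded in \S\ref{subsec:ClAlg}). Since both $C_0(\alpha)$ and $C_0(\tilde\alpha)$ are algebra homomorphisms out of $C_0(V_1,q_1)$ and $C_0(\tilde V_1,\tilde q_1)$, and the even Clifford algebra is generated by products $x_1\cdot y_1$ with $x_1,y_1\in V_1$, it suffices to verify commutativity on such a product. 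Now $C_0(\alpha)(x_1\cdot y_1)=\alpha(x_1)\alpha(y_1)=\bigl(\begin{smallmatrix}r_{x_1}\ell_{y_1}&0\\0&\ell_{x_1}r_{y_1}\end{smallmatrix}\bigr)$, and the defining similitude relation \eqref{eq:simcompdef} for $(g_1,g_2,g_3)$ (together with its cyclic consequences in Proposition~\ref{prop:simdef2}) translates precisely into $g_3\circ\ell_{y_1}=\lambda_3^{-1}\,\tilde\ell_{g_1(y_1)}\circ g_2$ and the analogous identity $g_2\circ r_{x_1}=\lambda_2^{-1}\,\tilde r_{g_1(x_1)}\circ g_3$; chasing these through shows $(\Int(g_2)\times\Int(g_3))\bigl(C_0(\alpha)(x_1\cdot y_1)\bigr)=C_0(\tilde\alpha)\bigl(C_0(g_1)(x_1\cdot y_1)\bigr)$, the scalars $\lambda_i$ cancelling because $C_0(g_1)$ already incorporates the factor $\mu(g_1)^{-1}$. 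This is the computational heart of the first part, but it is routine once the similitude relations are in hand.

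For the converse, let $(\gamma_1,\gamma_2,\gamma_3)\colon\End(\Comp)\to\End(\tilde\Comp)$ be an isomorphism of trialitarian triples. By Proposition~\ref{prop:simiso}, for each $i$ there is a similitude $h_i\colon(V_i,q_i)\to(\tilde V_i,\tilde q_i)$, unique up to a scalar, with $\gamma_i=\Int(h_i)$. The content to be extracted is that the $h_i$ can be rescaled to a \emph{similitude of compositions} $(g_1,g_2,g_3)\colon\Comp\to\tilde\Comp$, i.e.\ that after adjusting scalars the relation $\lambda_3\,g_3(x_1*_3x_2)=g_1(x_1)\,\tilde*_3\,g_2(x_2)$ holds for a suitable $\lambda_3\in F^\times$. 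The commutativity of the defining square for the isomorphism of trialitarian triples says $C_0(\tilde\alpha)\circ C(\Int(h_1))=(\Int(h_2)\times\Int(h_3))\circ C_0(\alpha)$. Evaluating on $x_1\cdot y_1$ and comparing the two diagonal blocks gives $\Int(h_3)(\ell_{x_1}r_{y_1})=\tilde\ell_{?}\tilde r_{?}$ and $\Int(h_2)(r_{x_1}\ell_{y_1})=\tilde r_{?}\tilde\ell_{?}$, where the arguments are determined by $C(\Int(h_1))$ acting on $x_1\cdot y_1$, which is $\mu^{-1}h_1(x_1)\cdot h_1(y_1)$ for the multiplier $\mu$ of $h_1$ acting via $C_0$. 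Specializing $y_1$ to a fixed anisotropic vector (over a scalar extension if necessary, since the statement descends), one deduces $h_3\circ\ell_{x_1}=c\,\tilde\ell_{h_1(x_1)}\circ h_2$ for a scalar $c\in F^\times$ independent of $x_1$; absorbing $c$ into a rescaling of $h_3$ (which does not change $\gamma_3=\Int(h_3)$) yields exactly $\lambda_3^{-1}h_3(x_1*_3x_2)=h_1(x_1)\,\tilde*_3\,h_2(x_2)$ with $\lambda_3=c^{-1}$. Setting $g_i=h_i$ then gives the required similitude of compositions with $\Int(g_i)=\gamma_i$.

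The main obstacle I anticipate is bookkeeping the scalar factors: the Clifford functor $C_0(u)$ for a similitude $u$ of multiplier $\mu$ carries the rescaling $x\cdot y\mapsto\mu^{-1}u(x)\cdot u(y)$, the maps $C(\Int(h_1))$ are only defined up to the ambiguity in $h_1$, and the similitude relation for compositions carries its own multiplier $\lambda_3$; one must check these all conspire so that the final $(g_1,g_2,g_3)$ genuinely satisfies \eqref{eq:simcompdef} for some single $\lambda_3$, rather than three incompatible constants. The cleanest way to control this is to reduce to the case where $q_1$, $q_2$, $q_3$ are Pfister forms normalized to represent $1$ (possible by Proposition~\ref{prop:simtoPfistercomp} up to a harmless similitude) so that the idempotents $z_\pm$ and the decomposition of $C_0(V_1,q_1)$ are tracked via the explicit $C_0(\alpha)$, and then to track a single anisotropic vector through the diagram. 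A secondary point to verify is that an isomorphism of trialitarian triples respects the \emph{labeling} $C_+$ versus $C_-$ induced by $C_0(\alpha)$ and $C_0(\tilde\alpha)$ (Remark~\ref{rem:polarcomp}); since $\gamma_2$ lands in $\End\tilde V_2$ and $\gamma_3$ in $\End\tilde V_3$, the square forces $C(\Int(h_1))$ to send $z_+\mapsto\tilde z_+$, $z_-\mapsto\tilde z_-$, so $h_1$ is automatically a proper similitude and no sign issue arises.
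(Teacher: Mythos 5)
Your forward direction is correct and follows essentially the same route as the paper: use Proposition~\ref{prop:simiso} for each $\Int(g_i)$, identify $C(\Int(g_1))$ with $C_0(g_1)$, and verify commutativity on generators $x_1\cdot y_1$ using the cyclic similitude relations of Proposition~\ref{prop:simdef2} together with $\lambda_2\lambda_3=\mu(g_1)$. The small parenthetical about ``rescaling $q_1$ to have multiplier $1$'' is unnecessary (the factor $\mu_1^{-1}$ is already built into $C_0(g_1)$), but your fallback ``invoke the functoriality of $C_0(u)$'' is exactly what the paper does.

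The converse direction has a genuine gap. After writing $\gamma_i=\Int(h_i)$ and evaluating the commutative square on $x_1\cdot y_1$, what you actually obtain from the $\End\tilde V_3$ block (for a fixed anisotropic $y_1$) is
\[
  h_3\circ\ell_{x_1} \;=\; \tilde\ell_{h_1(x_1)}\circ \bar g_2
  \qquad\text{where}\qquad
  \bar g_2:=\mu(h_1)^{-1}\,\tilde r_{h_1(y_1)}\circ h_3\circ r_{y_1}^{-1}\colon V_2\to\tilde V_2.
\]
Your proposal then asserts $h_3\circ\ell_{x_1}=c\,\tilde\ell_{h_1(x_1)}\circ h_2$ ``for a scalar $c$ independent of $x_1$,'' which amounts to claiming $\bar g_2=c\,h_2$. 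Nothing in the steps you have laid out produces that proportionality: $\bar g_2$ is built out of $h_3$, $h_1$, and the composition maps, and the relation $\Int(\bar g_2)=\Int(h_2)$ is precisely what has to be proved, not assumed. You acknowledge the concern (``one must check these all conspire''), but offer no mechanism that makes the three constants compatible. The paper closes exactly this gap by a different device: it extends $C_0(\alpha)$ and $C_0(\tilde\alpha)$ to the full Clifford algebras via a map $\beta\colon V_1\to\End(\tilde V_2\oplus\tilde V_3)$, observes that $\Int\bigl(\begin{smallmatrix}g_2&0\\0&g_3\end{smallmatrix}\bigr)\circ C(\alpha)\circ C(\beta)^{-1}$ is an automorphism of $\End(\tilde V_2\oplus\tilde V_3)$ restricting to the identity on the diagonal subalgebra, and then Skolem--Noether forces it to be conjugation by $\bigl(\begin{smallmatrix}\nu_2&0\\0&\nu_3\end{smallmatrix}\bigr)$ for scalars $\nu_2,\nu_3\in F^\times$; comparing off-diagonal blocks yields the similitude relation with the original $g_i$ and an explicit constant $\nu_2^{-1}\nu_3\mu_1^{-1}$. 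One alternative way to salvage your approach without the full Clifford algebra: show directly that $(h_1,\bar g_2,h_3)$ is a similitude of compositions (it is, since $\bar g_2$ is a composite of similitudes and the defining relation holds for all $x_1$), apply the already-proved forward direction, and then invoke the elementary observation that in the commutative square defining an isomorphism of trialitarian triples, $\gamma_2\times\gamma_3$ is determined by $\gamma_1$ because $\varphi_0,\tilde\varphi_0$ are isomorphisms; this forces $\Int(\bar g_2)=\gamma_2$, hence $\bar g_2=ch_2$. Either way, that step cannot be skipped.
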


\begin{proof}
  Suppose that $(g_1,g_2,g_3)\colon\Comp\to\tilde\Comp$ is a
  similitude. For $i=1$, $2$, $3$, $\Int(g_i)$
  is an isomorphism of algebras with quadratic pairs
  \[
    \Int(g_i)\colon(\End(V_i),\sigma_{b_i},\strf_{q_i})
    \xrightarrow{\sim} 
    (\End(\tilde V_i),\sigma_{\tilde b_i},\strf_{\tilde q_i}).
  \]
  Note that under the identification
  $C(\End(V_1),\sigma_{b_1},\strf_{q_1}) = C_0(V_1,q_1)$ the
  isomorphism 
  $C\bigl(\Int(g_1)\bigr)$ induced by $\Int(g_1)$ is the isomorphism
  $C_0(g_1)\colon C_0(V_1,q_1)\to C_0(\tilde V_1,\tilde q_1)$ that
  maps $x_1y_1$ 
  to $\mu_1^{-1}g_1(x_1)g_1(y_1)$ for $x_1$, $y_1\in V_1$, where
  $\mu_1$ is the multiplier of $g_1$. Therefore, 
  in order to show that $\bigl(\Int(g_1),\Int(g_2),\Int(g_3)\bigr)$ is
  an 
  isomorphism of trialitarian triples $\End(\Comp)\to\End(\tilde\Comp)$, we
  have to show that the following diagram commutes:
  \begin{equation}
    \label{eq:diag1}
    \begin{aligned}
    \xymatrix{C_0(V_1,q_1)\ar[d]_{C_0(g_1)} \ar[r]^-{C_0(\alpha)}
      & \End(V_2)\times\End(V_3)\ar[d]^{\Int(g_2)\times\Int(g_3)}\\
      C_0(\tilde V_1,\tilde q_1)\ar[r]^-{C_0(\tilde\alpha)}&
      \End(\tilde V_2)\times\End(\tilde V_3)}
    \end{aligned}
  \end{equation}
  Let $(\lambda_1,\lambda_2,\lambda_3)$ be the
  composition 
  multiplier of $(g_1,g_2,g_3)$. For $x_1$, $y_1\in V_1$ we have 
  \[
    C_0(\tilde\alpha)\circ C_0(g_1)(x_1\cdot y_1) = \mu_1^{-1}
    \begin{pmatrix}
      r_{g_1(x_1)}\ell_{g_1(y_1)}&0\\
      0& \ell_{g_1(x_1)}r_{g_1(y_1)}
    \end{pmatrix}
  \]
  and
  \[
    \bigl(\Int(g_2)\times\Int(g_3)\bigr)\circ C_0(\alpha)
    (x_1\cdot y_1) =
    \begin{pmatrix}
      g_2r_{x_1}\ell_{y_1}g_2^{-1}&0\\0&
      g_3\ell_{x_1}r_{y_1}g_3^{-1}
    \end{pmatrix}.
  \]
  By Proposition~\ref{prop:simdef2}, $(g_3,g_1,g_2)$ is a similitude
  of $\partial^2\Comp$, hence for $x_1\in V_1$ and $x_2\in
  V_2$
  \begin{multline*}
    g_2r_{x_1}\ell_{y_1}(x_2) = g_2\bigl((y_1*_3x_2)*_2x_1\bigr) =\\
    \lambda_2^{-1}g_3(y_1*_3x_2)*_2g_1(x_1) =
    \lambda_2^{-1}\lambda_3^{-1}
    \bigl(g_1(y_1)*_3g_2(x_2)\bigr)*_2g_1(x_1).  
  \end{multline*}
  Since $\lambda_2\lambda_3=\mu_1$ by~\eqref{eq:lambdamu}, it follows
  that 
  $g_2r_{x_1}\ell_{y_1} =
  \mu_1^{-1}r_{g_1(x_1)}\ell_{g_1(y_1)}g_2$. Similarly,
  $g_3\ell_{x_1}r_{y_1} = \ell_{g_1(x_1)}r_{g_1(y_1)}g_3$, hence
  diagram~\eqref{eq:diag1} commutes. The first part of the
  proposition is thus proved.

  Now, assume
  $(\gamma_1,\gamma_2,\gamma_3)\colon
  \End(\Comp)\to\End(\tilde\Comp)$ is an
  isomorphism of trialitarian triples. Each $\gamma_i$ is an
  isomorphism
  \[
    \gamma_i\colon(\End(V_i),\sigma_{b_i},\strf_{q_i}) \stackrel\sim\to
    (\End(\tilde V_i),\sigma_{\tilde b_i},\strf_{\tilde q_i});
  \]
  Proposition~\ref{prop:simiso} shows that
  $\gamma_i=\Int(g_i)$ for some similitude $g_i\colon(V_i,q_i) \to
  (\tilde V_i,\tilde q_i)$. We 
  may then also consider the isomorphism
  \[
    \Int\bigl(
    \begin{smallmatrix}
      g_2&0\\0&g_3
    \end{smallmatrix}
    \bigr)
    \colon\End(V_2\oplus V_3)\to \End(\tilde V_2\oplus \tilde V_3), 
  \]
  which makes the following diagram, where the vertical maps are the
  diagonal embeddings, commute:
  \[
    \xymatrix{\End(V_2)\times\End(V_3)\ar[rr]^{\Int(g_2)\times\Int(g_3)}
      \ar@{^{(}->}[d]&
      &\End(\tilde V_2)\times\End(\tilde V_3)\ar@{^{(}->}[d]\\
      \End(V_2\oplus V_3) \ar[rr]^{\Int\bigl(
    \begin{smallmatrix}
      g_2&0\\0&g_3
    \end{smallmatrix}
    \bigr)}&&\End(\tilde V_2\oplus \tilde V_3)}
  \]
  From the hypothesis that $(\gamma_1,\gamma_2,\gamma_3)$ is an
  isomorphism of trialitarian triples, it follows that the
  diagram~\eqref{eq:diag1} commutes. Write $\mu_1$ for the multiplier
  of $g_1$ and consider the linear map
  \[
    \beta\colon V_1\to\End(\tilde V_2\oplus \tilde V_3),\qquad
    x_1\mapsto
    \begin{pmatrix}
      0&r_{g_1(x_1)}\\
      \mu_1^{-1}\ell_{g_1(x_1)}&0
    \end{pmatrix}.
  \]
  For $\tilde x_2\in \tilde V_2$ and $\tilde x_3\in \tilde V_3$, we
  have by~\eqref{eq:compp5} and \eqref{eq:compp7}
  \[
    \mu_1^{-1}(g_1(x_1)\,\tilde*_3\,\tilde x_2)\,\tilde*_2\,g_1(x_1) =
    \mu_1^{-1}\tilde q_1\bigl(g_1(x_1)\bigr)\tilde x_2
  \]
  and
  \[
    \mu_1^{-1}g_1(x_1)\,\tilde*_3\,\bigl(\tilde x_3\,\tilde*_2\,g_1(x_1)\bigr) =
    \mu_1^{-1}\tilde q_1\bigl(g_1(x_1)\bigr)\tilde x_3.
  \]
  Since $\mu_1^{-1}\bigl(\tilde q_1(g_1(x_1))\bigr)=q_1(x_1)$, it
  follows that 
  $\beta(x_1)^2=q_1(x_1)\Id_{\tilde V_2\oplus \tilde V_3}$ for $x_1\in
  V_1$. Therefore, $\beta$ induces an $F$-algebra homomorphism
  \[
    C(\beta)\colon C(V_1,q_1)\to\End(\tilde V_2\oplus \tilde V_3).
  \]
  Since $C(V_1,q_1)$ is a simple algebra, dimension count shows that
  $C(\beta)$ is an isomorphism. For $x_1$, $y_1\in V_1$,
  \[
    C(\beta)(x_1y_1) =
    \begin{pmatrix}
      \mu_1^{-1}r_{g_1(x_1)}\ell_{g_1(y_1)}&0\\
      0&\mu_1^{-1}\ell_{g_1(x_1)}r_{g_1(y_1)}
    \end{pmatrix}
    =C_0(\tilde\alpha)\bigl(C_0(g_1)(x_1y_1)\bigr),
  \]
  hence $C(\beta)\rvert_{C_0(V_1,q_1)} = C_0(\tilde\alpha)\circ C_0(g_1)$.
  Since the diagram~\eqref{eq:diag1} commutes, it follows that
  \[
    C(\beta)\rvert_{C_0(V_1,q_1)} = \Int\bigl(
    \begin{smallmatrix}
      g_2&0\\0&g_3
    \end{smallmatrix}
    \bigr)\circ
    C(\alpha)\rvert_{C_0(V_1,q_1)}.
  \]
  Therefore, $\Int\bigl(
    \begin{smallmatrix}
      g_2&0\\0&g_3
    \end{smallmatrix}
    \bigr)\circ C(\alpha)\circ C(\beta)^{-1}$ is an
  automorphism of $\End(\tilde V_2\oplus \tilde V_3)$ whose restriction to
  $C(\beta)\bigl(C_0(V_1,q_1)\bigr)$ is the identity. This
  automorphism is inner by the Skolem--Noether theorem. Since
  $C(\beta)\bigl(C_0(V_1,q_1)\bigr)=\End(\tilde V_2)\times\End(\tilde V_3)$ we
  must have
  \[
    \Int\bigl(
    \begin{smallmatrix}
      g_2&0\\0&g_3
    \end{smallmatrix}
    \bigr)\circ C(\alpha)\circ C(\beta)^{-1} = \Int\bigl(
    \begin{smallmatrix}
      \nu_2&0\\0&\nu_3
    \end{smallmatrix}\bigr)
    \qquad\text{for some $\nu_2$, $\nu_3\in F^\times$,}
  \]
  hence
  \[
    \Int\bigl(
    \begin{smallmatrix}
      g_2&0\\0&g_3
    \end{smallmatrix}
    \bigr)\circ C(\alpha)(x_1) =
    \bigl(
    \begin{smallmatrix}
      \nu_2&0\\0&\nu_3
    \end{smallmatrix}\bigr) C(\beta)(x_1) \bigl(
    \begin{smallmatrix}
      \nu_2^{-1}&0\\0&\nu_3^{-1}
    \end{smallmatrix}\bigr)
    \qquad\text{for $x_1\in V_1$,}
  \]
  which means that
  \[
    \begin{pmatrix}
      0& g_2r_{x_1}g_3^{-1}\\ g_3\ell_{x_1}g_2^{-1}&0
    \end{pmatrix}
    =
    \begin{pmatrix}
      0&\nu_2\nu_3^{-1}r_{g_1(x_1)}\\
      \nu_2^{-1}\nu_3\mu_1^{-1}\ell_{g_1(x_1)}&0
    \end{pmatrix}.
  \]
  The equation
  $g_3\ell_{x_1}g_2^{-1}=\nu_2^{-1}\nu_3\mu_1^{-1}\ell_{g_1(x_1)}$
  implies that for $x_2\in V_2$
  \[
    g_3(x_1*_3x_2) = \nu_2^{-1}\nu_3\mu_1^{-1} g_1(x_1)\,\tilde*_3\,g_2(x_2).
  \]
  Therefore, $(g_1,g_2,g_3)$ is a similitude
  $\Comp\to\tilde\Comp$.
\end{proof}

We next show that every trialitarian triple of split algebras has the
form $\End(\Comp)$ for some composition $\Comp$ of quadratic spaces of
dimension~$8$. 

\begin{thm}
  \label{thm:splitrial}
  Let $\TT=(\aqp_1,\,\aqp_2,\,\aqp_3,\,\varphi_0)$ be a trialitarian
  triple over an arbitrary field $F$, where
  $\aqp_i=(A_i,\sigma_i,\strf_i)$ for $i=1$, $2$, $3$. If $A_1$, $A_2$
  and $A_3$ are split, then 
  there is a composition $\Comp$ of quadratic spaces of dimension~$8$
  over $F$ such that $\TT\simeq\End(\Comp)$. The composition $\Comp$
  is uniquely determined up to similitude.
\end{thm}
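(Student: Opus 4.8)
The plan is to realize $\Comp$ explicitly: extend the isomorphism $\varphi_0$ to the \emph{full} Clifford algebra of $(V_1,q_1)$, and read the composition map off the images of the vectors.

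\textbf{Reduction and extension.} Since $A_1$, $A_2$, $A_3$ are split, I would first replace each $\aqp_i$ by an isomorphic algebra with quadratic pair $(\End V_i,\sigma_{b_i},\strf_{q_i})$ attached to an $8$-dimensional quadratic space $(V_i,q_i)$ (the split quadratic pairs of \S\ref{subsec:quadpair}); this does not affect the statement. Then $C(\aqp_1)=C_0(V_1,q_1)$ and, by the identifications at the end of \S\ref{subsec:ClAlg}, $\Clqp(\aqp_1)=(C_0(V_1,q_1),\tau_{01},\str g_{01})$, so $\varphi_0$ is an isomorphism of algebras with quadratic pair $(C_0(V_1,q_1),\tau_{01},\str g_{01})\xrightarrow{\sim}(\End V_2,\sigma_{b_2},\strf_{q_2})\times(\End V_3,\sigma_{b_3},\strf_{q_3})$. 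I would use $\varphi_0$ to fix the polarization of $(V_1,q_1)$, declaring $z_+$, $z_-$ to be the primitive central idempotents with $\varphi_0(z_+)=(1,0)$ and $\varphi_0(z_-)=(0,1)$; thus $V_2$, $V_3$ become the two half-spinor modules $S_+$, $S_-$. Since $C(V_1,q_1)$ is central simple of degree $16$ with even part $C_0(V_1,q_1)\cong\End V_2\times\End V_3$, its simple module is faithful of dimension $16=\dim S_++\dim S_-$ over $C_0(V_1,q_1)$, hence isomorphic over $C_0(V_1,q_1)$ to $V_2\oplus V_3$ with its $\varphi_0$-structure; this gives an $F$-algebra isomorphism $\beta\colon C(V_1,q_1)\xrightarrow{\sim}\End(V_2\oplus V_3)$ whose restriction to $C_0(V_1,q_1)$ is $\varphi_0$ followed by the block-diagonal embedding $\End V_2\times\End V_3\hookrightarrow\End(V_2\oplus V_3)$.

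\textbf{$\beta$ respects the quadratic pair.} I would then show $\beta$ can be chosen to be an isomorphism of algebras with quadratic pair $(C(V_1,q_1),\tau_1,\str g_1)\xrightarrow{\sim}(\End(V_2\oplus V_3),\sigma_{b_2\perp b_3},\strf_{q_2\perp q_3})$. The involution $\tau':=\beta^{-1}\circ\sigma_{b_2\perp b_3}\circ\beta$ agrees with $\tau_1$ on $C_0(V_1,q_1)$ (because $\varphi_0$ intertwines $\tau_{01}$ with $\sigma_{b_2}\times\sigma_{b_3}$ and $\sigma_{b_2\perp b_3}$ restricts to $\sigma_{b_2}\times\sigma_{b_3}$ on the block-diagonal subalgebra), so $\tau'\circ\tau_1$ is an automorphism fixing $C_0(V_1,q_1)$ pointwise, hence equals $\Int(u)$ with $u$ in the centralizer $Z$ of $C_0(V_1,q_1)$; writing $u=az_++bz_-$ and using $\tau_1|_Z=\Id$, the relation $(\tau')^2=\Id$ forces $a^2=b^2$. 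Thus either $\tau'=\tau_1$, or $\charac F\neq2$ and $\tau'=\Int(z_+-z_-)\circ\tau_1$, which acts as $-\Id$ on $V_1$; in the second case I replace $q_3$ by $-q_3$, which changes neither $\sigma_{b_3}$ nor $\strf_{q_3}$ (the sign of $-q_3$ is compensated by the sign in the standard identification relative to $-b_3$), so after this harmless replacement $\beta$ intertwines $\tau_1$ with $\sigma_{b_2\perp b_3}$. For the semitraces, $\beta$ preserves reduced traces, so the semitrace transported from $\str g_1$ is $s'\mapsto\Trd(\varphi_0(e_1e_1')s')$ for $e_1,e_1'\in V_1$ with $b_1(e_1,e_1')=1$, which is a semitrace for $\sigma_{b_2\perp b_3}$ since $\varphi_0(e_1e_1'+e_1'e_1)=\varphi_0(1)=1$. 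Using the standard identification $\End(V_2\oplus V_3)=(V_2\oplus V_3)^{\otimes2}$, the space $\Sym(\sigma_{b_2\perp b_3})$ is spanned by the elements $v\otimes v$ ($v\in V_2$) and $w\otimes w$ ($w\in V_3$), which lie in the block-diagonal subalgebra $\beta\bigl(C_0(V_1,q_1)\bigr)$, together with elements $a+\sigma_{b_2\perp b_3}(a)$; on the latter both the transported semitrace and $\strf_{q_2\perp q_3}$ equal $\Trd$, and on the diagonal part they agree because $\str g_1(s)=\Tr_{Z/F}(\str g_{01}(s))$ for $s\in\Sym(\tau_{01})$ while $\varphi_0$ preserves $\str g_{01}$. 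Hence $\beta$ is an isomorphism of algebras with quadratic pair.

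\textbf{Extracting the composition and identifying $\TT$.} For $x_1\in V_1\subset C_1(V_1,q_1)$, the relation $x_1z_+=z_-x_1$ gives $\beta(z_-)\beta(x_1)=\beta(x_1)\beta(z_+)$, forcing $\beta(x_1)$ to be block off-diagonal, $\beta(x_1)=\bigl(\begin{smallmatrix}0&r_{x_1}\\\ell_{x_1}&0\end{smallmatrix}\bigr)$ with $\ell_{x_1}\colon V_2\to V_3$, $r_{x_1}\colon V_3\to V_2$ linear in $x_1$; from $\beta(x_1)^2=q_1(x_1)\Id$ we get $r_{x_1}\ell_{x_1}=q_1(x_1)\Id_{V_2}$, $\ell_{x_1}r_{x_1}=q_1(x_1)\Id_{V_3}$, and symmetry of $\beta(x_1)$ for $\sigma_{b_2\perp b_3}$ gives that $r_{x_1}$ is the adjoint $\ell_{x_1}^{*}$ of $\ell_{x_1}$ relative to $b_2$, $b_3$. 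Set $x_1*_3x_2:=\ell_{x_1}(x_2)$. Applying the quadratic-pair compatibility of $\beta$ to the symmetric element mapping to $(x_2+x_3)\otimes(x_2+x_3)$ yields $q_2(x_2)+q_3(x_3)=(b_2\perp b_3)\bigl(r_{e_1}\ell_{e_1'}(x_2)+\ell_{e_1}r_{e_1'}(x_3),\,x_2+x_3\bigr)$ whenever $b_1(e_1,e_1')=1$; specializing $x_3=0$ and $x_2=0$ gives $q_2(x_2)=b_2(r_{e_1}\ell_{e_1'}(x_2),x_2)$ and $q_3(x_3)=b_3(\ell_{e_1}r_{e_1'}(x_3),x_3)$. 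Taking $e_1=x_1$ (with $e_1'$ any vector with $b_1(x_1,e_1')=1$) and combining these with the polarized identity $\ell_{x_1}r_{e_1'}+\ell_{e_1'}r_{x_1}=\Id_{V_3}$, with $r_{x_1}\ell_{x_1}=q_1(x_1)\Id$, and with $r_{x_1}=\ell_{x_1}^{*}$, one computes $q_3(x_1*_3x_2)=q_1(x_1)q_2(x_2)$ (in the case $\tau'=\Int(z_+-z_-)\tau_1$ one gets $-q_1(x_1)q_2(x_2)$, matched by the replacement of $q_3$ by $-q_3$). Hence $\Comp:=\bigl((V_1,q_1),(V_2,q_2),(V_3,q_3),*_3\bigr)$ is a composition of quadratic spaces of dimension $8$. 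By the universal property of the Clifford algebra $\beta=C(\alpha)$ for $\alpha=\beta|_{V_1}$; moreover the derived map of $\Comp$ satisfies $x_3*_2x_1=r_{x_1}(x_3)$, so $\alpha$ is exactly the map used to form $\End(\Comp)$. Since scaling a form does not change the attached split quadratic pair, $C_0(\alpha)=\beta|_{C_0(V_1,q_1)}=\varphi_0$ gives $\End(\Comp)=\TT$ after our identifications, i.e.\ $\TT\simeq\End(\Comp)$. Finally, if $\End(\Comp)\simeq\TT\simeq\End(\Comp')$, then by Proposition~\ref{prop:isoTT} the resulting isomorphism $\End(\Comp)\simeq\End(\Comp')$ has the form $\Int(g_1,g_2,g_3)$ for a similitude $(g_1,g_2,g_3)\colon\Comp\to\Comp'$, proving uniqueness up to similitude.

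The delicate point is the second step: showing that the (a priori merely algebra-theoretic) extension $\beta$ of $\varphi_0$ to $C(V_1,q_1)$ can be made to respect the canonical quadratic pairs — disposing of the $\tau_1$ versus $\Int(z_+-z_-)\tau_1$ alternative in characteristic $\neq2$, and, since in characteristic $2$ the composition identity $q_3(x_1*_3x_2)=q_1(x_1)q_2(x_2)$ is not a consequence of the polar forms alone, checking compatibility of the semitraces on a spanning set of $\Sym(\sigma_{b_2\perp b_3})$.
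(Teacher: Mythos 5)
Your overall strategy is close to the paper's: extend $\varphi_0$ to the full Clifford algebra, align involutions and semitraces, read off $\ell_{x_1},r_{x_1}$ and deduce multiplicativity. However, there is a genuine error in the step that aligns the involutions. You write $\tau'\circ\tau_1=\Int(u)$ with $u=az_++bz_-\in Z^\times$ and claim that $(\tau')^2=\Id$ forces $a^2=b^2$. This is not true: since $\dim V_1\equiv0\bmod 4$ the canonical involution fixes $Z$ elementwise, so $\tau_1(u)=u$ and one computes
\[
(\tau')^2(x)=u\,\tau_1\bigl(u\,\tau_1(x)\,u^{-1}\bigr)\,u^{-1}
= u\,\tau_1(u)^{-1}x\,\tau_1(u)\,u^{-1}=x
\]
for \emph{every} $u\in Z^\times$, so $(\tau')^2=\Id$ is automatic and imposes no constraint on $a,b$. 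Consequently your one-parameter repair (replacing $q_3$ by $-q_3$) only covers the cases $a=\pm b$ and leaves the general case untreated, and everything downstream (the semitrace verification and the derivation of $q_3(x_1*_3x_2)=q_1(x_1)q_2(x_2)$) depends on this alignment.

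The correct observation, which is what the paper uses, is weaker and cleaner: $\tau'$ restricts on the block-diagonal $\End V_2\times\End V_3$ to $\sigma_{b_2}\times\sigma_{b_3}$, hence $\tau'$ is the adjoint involution of a form $\nu_2 b_2\perp\nu_3 b_3$ for some $\nu_2,\nu_3\in F^\times$; replacing $q_2$ by $\nu_2 q_2$ and $q_3$ by $\nu_3 q_3$ (a two-parameter rescaling, which changes neither $\sigma_{b_i}$ nor $\strf_{q_i}$) then achieves $\tau'=\sigma_{b_2\perp b_3}$. With this fix your semitrace verification and the final computation do go through, but note that the paper avoids verifying the semitrace compatibility of the full-Clifford extension altogether: instead it exploits only the quadratic-pair compatibility of $\varphi_\pm$ at the even Clifford level, via the Lie-algebra inclusion $\varphi_\pm(\xclie(q_1))\subset\go(q_{2/3})$ and Proposition~\ref{prop:Lie1bis}, to obtain the relation $b_2(r_{x_1}\ell_{y_1}(x_2),x_2)=b_1(x_1,y_1)q_2(x_2)$ and then the multiplicativity of $*_3$. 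This sidesteps the delicate check of whether the extension $\beta$ preserves the canonical semitraces on the full Clifford algebra, which in characteristic~$2$ is genuinely extra information beyond the involution.
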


\begin{proof}
  For $i=1$, $2$, $3$, let $A_i=\End(V_i)$ for some $F$-vector space
  $V_i$ of dimension~$8$. Let also $q_i$ be a quadratic form on $V_i$
  to which $(\sigma_i,\strf_i)$ is adjoint. Since $q_1$ is determined
  only 
  up to a scalar factor, we may assume $q_1$ represents~$1$ and pick
  $e_1\in V_1$ such that $q_1(e_1)=1$. The inner automorphism
  $\Int(e_1)$ of the full Clifford algebra $C(V_1,q_1)$ preserves
  $C_0(V_1,q_1)$ and is of order~$2$. It transfers under the
  isomorphism
  \[
    C_0(V_1,q_1)=C(\aqp_1)\xrightarrow{\varphi_0}
    A_2\times 
    A_3= \End(V_2)\times\End(V_3)
  \]
  to an automorphism of $\End(V_2)\times\End(V_3)$ that interchanges
  the two factors. Viewing $\End(V_2)\times\End(V_3)$ as a subalgebra
  diagonally embedded in $\End(V_2\oplus V_3)$, we may find an inner
  automorphism of $\End(V_2\oplus V_3)$ which restricts to
  $\varphi_0\circ\Int(e_1)\circ\varphi_0^{-1}$ by (a slight
  generalization of) the Skolem--Noether Theorem, see \cite[Th.~2,
  p.~A~VIII.252]{Bou}. This inner automorphism is conjugation by an
  operator of the form $\bigl(
  \begin{smallmatrix}
    0&u'\\ u&0
  \end{smallmatrix}\bigr)$ since it interchanges $\bigl(
  \begin{smallmatrix}
    \Id_{V_2}&0\\0&0
  \end{smallmatrix}\bigr)$ and $\bigl(
  \begin{smallmatrix}
    0&0\\0&\Id_{V_3}
  \end{smallmatrix}\bigr)$. Since
  $\varphi_0\circ\Int(e_1)\circ\varphi_0^{-1}$ has order~$2$, it
  follows that $uu'=u'u\in F^\times$, hence $\Int\bigl(
  \begin{smallmatrix}
    0&u^{-1}\\ u&0
  \end{smallmatrix}\bigr)$ has the same restriction to
  $\End(V_2)\times\End(V_3)$ as $\Int\bigl(
  \begin{smallmatrix}
    0&u'\\ u&0
  \end{smallmatrix}\bigr)$. Representing $C(V_1,q_1)$ and
  $\End(V_2\oplus V_3)$ as (generalized) crossed products
  \begin{align*}
    C(V_1,q_1) & = C_0(V_1,q_1)\oplus e_1 C_0(V_1,q_1),
    \\
    \End(V_2\oplus V_3) & = \bigl(\End(V_2)\times\End(V_3)\bigr)
    \oplus\bigl(
    \begin{smallmatrix}
      0&u^{-1}\\ u&0
    \end{smallmatrix}\bigr)
    \bigl(\End(V_2)\times\End(V_3)\bigr),
  \end{align*}
  we may extend $\varphi_0$ to an isomorphism of $F$-algebras
  \[
    \varphi\colon C(V_1,q_1)\to \End(V_2\oplus V_3)
  \]
  by mapping $e_1$ to $\bigl(
  \begin{smallmatrix}
    0&u^{-1}\\ u&0
  \end{smallmatrix}\bigr)$. Let $\tau_1$
  be the involution on $C(V_1,q_1)$ that fixes every vector in $V_1$
  and let 
  $\tau'=\varphi\circ\tau_1\circ\varphi^{-1}$ be the corresponding
  involution on $\End(V_2\oplus V_3)$. The restriction of $\tau_1$ to
  $C_0(V_1,q_1)$ is the canonical involution $\tau_{01}$, and
  $\varphi_0\circ\tau_{01}=
  (\sigma_2\times\sigma_3)\circ\varphi_0$, hence $\tau'$ restricts to
  $\sigma_2$ and $\sigma_3$ on $\End(V_2)$ and $\End(V_3)$. This means
  that
  \[
    (\End(V_2\oplus V_3),\tau')\in (\End(V_2),\sigma_2) \boxplus
    (\End(V_3),\sigma_3),
  \]
  i.e., that $\tau'$ is adjoint to a symmetric bilinear form
  that is the orthogonal sum of a multiple of $b_2$ and a multiple
  of $b_3$. Scaling $q_2$ or $q_3$, we may assume
  $\tau'=\sigma_{b_2\perp b_3}$ is the adjoint involution of $b_2\perp b_3$.

  Under the isomorphism $\varphi$, the odd part
  $C_1(V_1,q_1)=e_1C_0(V_1,q_1)$ is mapped to the odd part of
  $\End(V_2\oplus V_3)$ for the checkerboard grading, hence for each
  $x_1\in V_1$ there exist $\ell_{x_1}\in \Hom(V_2,V_3)$ and
  $r_{x_1}\in\Hom(V_3,V_2)$ such that
  \[
    \varphi(x_1)=
    \begin{pmatrix}
      0&r_{x_1}\\ \ell_{x_1}&0
    \end{pmatrix}\in\End(V_2\oplus V_3).
  \]
  Since $\tau_1(x_1)=x_1$, it follows that $\varphi(x_1)$ is
  $\sigma_{b_2\perp b_3}$-symmetric, hence for all $x_2$, $y_2\in V_2$
  and $x_3$, $y_3\in V_3$
  \[
    (b_2\perp b_3)\left(
      \begin{pmatrix}
        0&r_{x_1}\\ \ell_{x_1}&0
      \end{pmatrix}
      \begin{pmatrix}
        x_2\\ x_3
      \end{pmatrix},\,
      \begin{pmatrix}
        y_2\\ y_3
      \end{pmatrix}
    \right) =
    (b_2\perp b_3)\left(
      \begin{pmatrix}
        x_2\\ x_3
      \end{pmatrix},\,
      \begin{pmatrix}
        0&r_{x_1}\\ \ell_{x_1}&0
      \end{pmatrix}
      \begin{pmatrix}
        y_2\\ y_3
      \end{pmatrix}
    \right).
  \]
  This means that for all $x_2$, $y_2\in V_2$ and $x_3$, $y_3\in V_3$,
  \begin{equation}
    \label{eq:trial19}
    b_2(r_{x_1}(x_3),y_2)=b_3\bigl(x_3,\ell_{x_1}(y_2)\bigr)
    \quad\text{and}\quad
    b_3(\ell_{x_1}(x_2),y_2) = b_2\bigl(x_2, r_{x_1}(y_3)\bigr).
  \end{equation}
  Moreover, the relations $x_1^2=q_1(x_1)$ and
  $x_1y_1+y_1x_1=b_1(x_1,y_1)$ yield for all $x_1$, $y_1\in V_1$
  \begin{equation}
    \label{eq:trial20}
    \ell_{x_1}r_{x_1}=r_{x_1}\ell_{x_1}=q_1(x_1) \quad\text{and}\quad
    \ell_{x_1}r_{y_1}+\ell_{y_1}r_{x_1} = r_{x_1}\ell_{y_1}+
    r_{y_1}\ell_{x_1} = b_1(x_1,y_1).
  \end{equation}

  Recall that the two components $\varphi_\pm$ of $\varphi_0$ are
  homomorphisms of algebras with quadratic pair
  \[
    \varphi_+\colon(C_0(V_1,q_1),\tau_{01},\underline \strf_1)
    \to \aqp_2,\qquad
    \varphi_-\colon(C_0(V_1,q_1),\tau_{01}, \underline \strf_1)
    \to \aqp_3.
  \]
  As observed in Definition~\ref{defn:lift},
  $\varphi_+\bigl(\xclie(q_1)\bigr)\subset\go(q_2)$ and
  $\varphi_-\bigl(\xclie(q_1)\bigr)\subset\go(q_3)$, hence
  \[
    \varphi_+(x_1y_1)\in\go(q_2) \qquad\text{and}\qquad
    \varphi_-(x_1y_1)\in\go(q_3)\qquad\text{for all $x_1$, $y_1\in
      V_1$.}
  \]
  The definition of $\varphi$ yields
  $\varphi_+(x_1y_1)=r_{x_1}\ell_{y_1}$ and $\varphi_-(x_1y_1) =
  \ell_{x_1}r_{y_1}$, hence by~\eqref{eq:trial20}
  \begin{align*}
    \dot\mu\bigl(\varphi_+(x_1y_1)\bigr)
    & = \varphi_+(x_1y_1) + \varphi_+(y_1x_1) = b_1(x_1,y_1)\\
    \intertext{and}
    \dot\mu\bigl(\varphi_-(x_1y_1)\bigr) & = \varphi_-(x_1y_1) +
    \varphi_-(y_1x_1) = b_1(x_1,y_1).
  \end{align*}
  Since $r_{x_1}\ell_{y_1}\in\go(q_2)$ and
  $\ell_{x_1}r_{y_1}\in\go(q_3)$, it follows from
  Proposition~\ref{prop:Lie1bis} 
  that for $x_1$, 
  $y_1\in  V_1$, $x_2\in V_2$ and $x_3\in V_3$
  \begin{equation}
    \label{eq:trial21}
    b_2(r_{x_1}\ell_{y_1}(x_2),x_2) = b_1(x_1,y_1)q_2(x_2)
    \quad\text{and}\quad
    b_3(\ell_{x_1}r_{y_1}(x_3),x_3) = b_1(x_1,y_1)q_3(x_3).
  \end{equation}
  If $x_1\in V_1$ is nonzero, there exists $y_1\in V_1$ such that
  $b_1(x_1,y_1)=1$. From~\eqref{eq:trial19} and \eqref{eq:trial20}
  we derive for all $x_2\in V_2$
  \[
    b_3\bigl(\ell_{x_1}(x_2),
    \ell_{x_1}r_{y_1}\ell_{x_1}(x_2)\bigr) =
    b_2\bigl(r_{x_1}\ell_{x_1}(x_2),r_{y_1}\ell_{x_1}(x_2)\bigr) =
    q_1(x_1) b_2\bigl(x_2,r_{y_1}\ell_{x_1}(x_2)\bigr).
  \]
  But \eqref{eq:trial21} yields
  \[
    b_3\bigl(\ell_{x_1}(x_2),
    \ell_{x_1}r_{y_1}\ell_{x_1}(x_2)\bigr)
    =q_3\bigl(\ell_{x_1}(x_2)\bigr) \quad\text{and}\quad
    b_2\bigl(x_2,r_{y_1}\ell_{x_1}(x_2)\bigr) = q_2(x_2),
  \]
  hence
  \[
    q_3\bigl(\ell_{x_1}(x_2)\bigr) = q_1(x_1)q_2(x_2)
    \qquad\text{for all $x_1\in V_1$, $x_2\in V_2$ with $x_1\neq0$.}
  \]
  This equation obviously also holds for $x_1=0$. Therefore, defining
  \[
    *_3\colon V_1\times V_2\to V_3 \qquad\text{by}\quad
    x_1*_3x_2=\ell_{x_1}(x_2)\qquad\text{for $x_1\in V_1$ and $x_2\in
      V_2$,}
  \]
  we see that $*_3$ is a composition of $(V_1,q_1)$, $(V_2,q_2)$ and
  $(V_3,q_3)$. Let also
  \[
    x_3*_2x_1=r_{x_1}(x_3)\qquad\text{for $x_3\in V_3$ and $x_1\in
      V_1$.}
  \]
  From~\eqref{eq:trial19} it follows that $b_2(x_3*_2x_1,x_2) =
  b_3(x_3,x_1*_3x_2)$ for $x_1\in V_1$, $x_2\in V_2$ and $x_3\in
  V_3$, hence Proposition~\ref{prop:defnder} shows that $*_2$ is the
  derived composition of $(V_3,q_3)$, 
  $(V_1,q_1)$ and $(V_2,q_2)$. Therefore, $\varphi_0=C_0(\alpha)$ for
  $\alpha\colon V_1\to\End(V_2\oplus V_3)$ mapping $x_1\in V_1$ to
  $\bigl(
  \begin{smallmatrix}
    0&r_{x_1}\\ \ell_{x_1}&0
  \end{smallmatrix}
  \bigr)$. We thus see that $\TT=\End(\Comp)$
  for $\Comp=\bigl((V_1,q_1),\,(V_2,q_2),\,(V_3,q_3),\,*_3\bigr)$.
  Proposition~\ref{prop:isoTT} shows that the composition $\Comp$ is
  uniquely determined up to similitude.
\end{proof}

\subsection{Similitudes of trialitarian triples}
\label{subsec:simTT}

Throughout this subsection, we fix a trialitarian triple
\[
  \TT=(\aqp_1,\,\aqp_2,\,\aqp_3,\,\varphi_0)
\]
with $\aqp_i=(A_1,\sigma_i,\strf_i)$ a central simple algebra with
quadratic pair of degree~$8$ over an arbitrary field $F$ for $i=1$,
$2$, $3$. 
The algebraic group scheme $\BAut(\TT)$ of
automorphisms of $\TT$ is defined as
follows: for any
commutative $F$-algebra $R$, the group $\BAut(\TT)(R)$ consists of the
triples $(\gamma_1,\gamma_2,\gamma_3)\in
\BAut(\aqp_1)(R)\times \BAut(\aqp_2)(R) \times \BAut(\aqp_3)(R)$ that
make the following square commute:
\begin{equation}
  \label{eq:diagdefauTT}
  \begin{aligned}
  \xymatrix{C(\aqp_1)_R\ar[r]^-{\varphi_0}\ar[d]_{C(\gamma_1)}&
    A_{2R}\times A_{3R}\ar[d]^{\gamma_2\times\gamma_3}\\
    C(\aqp_1)_R\ar[r]^-{\varphi_0}&A_{2R}\times A_{3R}
  }
  \end{aligned}
\end{equation}
Thus, 
\[
  \BAut(\TT)\subset \BAut(\aqp_1)\times \BAut(\aqp_2) \times
  \BAut(\aqp_3).
\]
Now, recall from \cite[\S23.B]{BoI} that the map $\Int\colon
\BGO(\aqp_i)\to \BAut(\aqp_i)$ defines an isomorphism
$\BPGO(\aqp_i)\xrightarrow{\sim} \BAut(\aqp_i)$. Therefore, we may
consider the inverse image of $\BAut(\TT)$ under the surjective
morphism
\[
  \Int\colon \BGO(\aqp_1) \times \BGO(\aqp_2) \times
  \BGO(\aqp_3) \to \BAut(\aqp_1)\times \BAut(\aqp_2) \times
  \BAut(\aqp_3).
\]

\begin{definition}
  \label{defn:GOTT}
  The algebraic group scheme of \emph{similitudes} of the trialitarian
  triple $\TT$ is
  \[
    \BGO(\TT)=\Int^{-1}\bigl(\BAut(\TT)\bigr) \subset \BGO(\aqp_1)
    \times \BGO(\aqp_2) \times  \BGO(\aqp_3).
  \]
  From this definition, it follows that the map $\Int$ restricts to a
  surjective morphism (see~\cite[(22.4)]{BoI})
  \[
    \Int\colon \BGO(\TT)\to \BAut(\TT).
  \]
  Its kernel is the algebraic group of \emph{homotheties}
  $\BHomot(\TT)=\BGm^3$, which lies in the center of $\BGO(\TT)$.
  We may therefore consider the quotient
  \[
    \BPGO(\TT)=\BGO(\TT)/\BHomot(\TT)\subset \BPGO(\aqp_1)\times
    \BPGO(\aqp_2)\times \BPGO(\aqp_3),
  \]
  and the map $\Int$ yields an
  isomorphism  
  \[
    \overline\Int\colon \BPGO(\TT)\xrightarrow{\sim} \BAut(\TT).
  \]
\end{definition}

Our goal in this subsection is to define a subgroup
$\BOrth(\TT)\subset\BGO(\TT)$ on the same model as the subgroup
$\BOrth(\Comp)$ of the group $\BGO(\Comp)$ of similitudes of a
composition of quadratic spaces, so that when $\TT=\End(\Comp)$ for
some 
composition $\Comp$ of quadratic spaces of dimension~$8$
we may identify
\[
  \BOrth(\TT)=\BOrth(\Comp), \qquad \BGO(\TT)=\BGO(\Comp)
  \quad\text{and}\quad \BPGO(\TT)=\BPGO(\Comp);
\]
see Proposition~\ref{prop:simEndComp}. Moreover, for an arbitrary
trialitarian triple $\TT$, we 
relate $\BGO(\TT)$ to the extended Clifford group $\BOmega(\aqp_1)$ to
obtain canonical isomorphisms
\[
  \BSpin(\aqp_1)\xrightarrow{\sim} \BOrth(\TT) \qquad\text{and}\qquad
  \BPGO(\TT)\xrightarrow{\sim}\BPGO^+(\aqp_1),
\]
see Theorems~\ref{thm:psiTTrev} and \ref{thm:SpinOTT}.
\medbreak

A key tool is the following construction: let
$\varphi_+\colon\Clqp(\aqp_1)\to \aqp_2$ and 
$\varphi_-\colon\Clqp(\aqp_1)\to \aqp_3$ be the two components of the
isomorphism $\varphi_0\colon\Clqp(\aqp_1)\to\aqp_2\times\aqp_3$, which
is part of the structure of $\TT$. Recall
from~\eqref{eq:commdiaghomoClif2} that $\varphi_+$ and $\varphi_-$
restrict to morphisms
\[
  \varphi_+\colon\BOmega(\aqp_1)\to \BGO^+(\aqp_2)
  \qquad\text{and}\qquad
  \varphi_-\colon\BOmega(\aqp_1)\to \BGO^+(\aqp_3).
\]
Combine $\varphi_+$ and $\varphi_-$ with the morphism
$\chi_0\colon\BOmega(\aqp_1)\to\BGO^+(\aqp_1)$ of~\S\ref{subsec:ClGrp}
to obtain a morphism
\begin{equation}
  \label{eq:defnpsiTT}
  \psi_\TT\colon \BOmega(\aqp_1)\to\BGO(\TT)
\end{equation}
as follows: for every commutative $F$-algebra $R$ and
$\xi\in\BOmega(\aqp_1)(R)$, let
\[
  \psi_\TT(\xi) = \bigl(\chi_0(\xi),\, \varphi_+(\xi),\,
  \varphi_-(\xi)\bigr) \in \BGO^+(\aqp_1)(R) \times \BGO^+(\aqp_2)(R)
  \times \BGO^+(\aqp_3)(R).
\]
Proposition~\ref{prop:ClifinXClifbis} shows that
$C\bigl(\Int(\chi_0(\xi))\bigr) = \Int(\xi)$, hence
\[
  \varphi_0\circ C\bigl(\Int(\chi_0(\xi))\bigr) \circ \varphi_0^{-1} =
  \Int\bigl(\varphi_0(\xi)\bigr) = \Int\bigl(\varphi_+(\xi)\bigr)
  \times \Int\bigl(\varphi_-(\xi)\bigr),
\]
which means that $\bigl(\Int(\chi_0(\xi)),\, \Int(\varphi_+(\xi)), \,
\Int(\varphi_-(\xi))\bigr)$ lies in $\BAut(\TT)(R)$, and therefore
$\psi_\TT(\xi)\in \BGO(\TT)(R)$. Note that $\psi_\TT$ is injective,
since $\bigl(\varphi_+(\xi),\,\varphi_-(\xi)\bigr)=\varphi_0(\xi)$ and
$\varphi_0$ is an isomorphism.

We first use the map $\psi_\TT$ to prove:

\begin{thm}
  \label{thm:psiTTrev}
  Projection on the first component $\pi_\TT\colon\BPGO(\TT) \to
  \BPGO(\aqp_1)$ defines an isomorphism
  \[
    \BPGO(\TT)\xrightarrow{\sim} \BPGO^+(\aqp_1).
  \]
\end{thm}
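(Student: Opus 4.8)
The plan is to realize $\pi_\TT$ as a factor of a map we already control. Recall the morphism $\psi_\TT\colon\BOmega(\aqp_1)\to\BGO(\TT)$ of~\eqref{eq:defnpsiTT} and let $p\colon\BGO(\TT)\to\BPGO(\TT)$ be the canonical projection. The first component of $\psi_\TT(\xi)$ is $\chi_0(\xi)$, whose class in $\BPGO^+(\aqp_1)$ is $\chi'(\xi)$ by the very definition of $\chi'$; hence
\[
  \pi_\TT\circ p\circ\psi_\TT=\chi'\colon\BOmega(\aqp_1)\to\BPGO^+(\aqp_1).
\]
I would then prove three things: $\pi_\TT$ takes values in $\BPGO^+(\aqp_1)$, it is a monomorphism, and it is surjective; the conclusion follows because a surjective homomorphism of affine algebraic group schemes over a field is faithfully flat, and a faithfully flat monomorphism is an isomorphism.

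\emph{Step 1 ($\pi_\TT$ lands in $\BPGO^+(\aqp_1)$).} Let $R$ be a commutative $F$-algebra and $(g_1,g_2,g_3)\in\BGO(\TT)(R)$. By definition of $\BGO(\TT)$ the square~\eqref{eq:diagdefauTT} commutes with $\gamma_i=\Int(g_i)$, so $C\bigl(\Int(g_1)\bigr)=\varphi_0^{-1}\circ\bigl(\Int(g_2)\times\Int(g_3)\bigr)\circ\varphi_0$ as an automorphism of $C(\aqp_1)_R$. The automorphism $\Int(g_2)\times\Int(g_3)$ of $A_{2R}\times A_{3R}$ preserves each factor, hence restricts to the identity on the center $R\times R$; transporting through $\varphi_0$, which carries $Z_R$ isomorphically onto $R\times R$, shows that $C\bigl(\Int(g_1)\bigr)$ is the identity on $Z_R$. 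By the definition of proper similitude (see the ``Twisted forms'' paragraph of~\S\ref{subsec:ClAlg}) this means $g_1\in\BGO^+(\aqp_1)(R)$, so $\pi_\TT$ factors through $\BPGO^+(\aqp_1)$.

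\emph{Step 2 ($\pi_\TT$ is a monomorphism).} It suffices to check that the kernel of $\pi_\TT\circ p\colon\BGO(\TT)\to\BPGO(\aqp_1)$ equals $\BHomot(\TT)=\ker p$. If $(g_1,g_2,g_3)\in\BGO(\TT)(R)$ lies in this kernel then $g_1$ is a scalar, so $C\bigl(\Int(g_1)\bigr)=\Id$, and the commutativity of~\eqref{eq:diagdefauTT} forces $\Int(g_2)\times\Int(g_3)=\Id$; hence $g_2$ and $g_3$ are scalars as well and $(g_1,g_2,g_3)\in\BHomot(\TT)(R)$. The reverse inclusion is clear. \emph{Step 3 (surjectivity and conclusion).} The lower row of the commutative diagram~\eqref{eq:commdiagClif} is exact, so $\chi'\colon\BOmega(\aqp_1)\to\BPGO^+(\aqp_1)$ is surjective. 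Since $\pi_\TT\circ(p\circ\psi_\TT)=\chi'$, the image of $\pi_\TT$ is a subgroup of $\BPGO^+(\aqp_1)$ containing the image of $\chi'$, hence all of $\BPGO^+(\aqp_1)$. Thus $\pi_\TT\colon\BPGO(\TT)\to\BPGO^+(\aqp_1)$ is a faithfully flat monomorphism, therefore an isomorphism.

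I do not expect an essential obstacle; the only point requiring care is to keep surjectivity at the level of algebraic group schemes (equivalently, fppf sheaves) rather than of $\Falg$-points, which is handled for free by the reference to~\eqref{eq:commdiagClif} (or, if preferred, by \cite[(22.3)]{BoI} together with the smoothness of $\BPGO^+(\aqp_1)$). As an alternative to invoking $\psi_\TT$ in Step~3 one can argue surjectivity of $\pi_\TT$ directly: for a proper similitude $g_1\in\BGO^+(\aqp_1)(R)$ the automorphism $C\bigl(\Int(g_1)\bigr)$ fixes $Z_R$, hence corresponds through $\varphi_0$ to an automorphism of $A_{2R}\times A_{3R}$ preserving each factor, i.e.\ to a pair $(\gamma_2,\gamma_3)\in\BAut(\aqp_2)(R)\times\BAut(\aqp_3)(R)$ with $\bigl(\Int(g_1),\gamma_2,\gamma_3\bigr)\in\BAut(\TT)(R)$; lifting $\gamma_2,\gamma_3$ fppf-locally to similitudes yields a preimage of the class of $g_1$ in $\BGO(\TT)$.
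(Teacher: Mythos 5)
Your proof is correct and follows the same approach as the paper's: the image lands in $\BPGO^+(\aqp_1)$ because commutativity of~\eqref{eq:diagdefauTT} forces $C\bigl(\Int(g_1)\bigr)$ to fix the center of $C(\aqp_1)$; injectivity holds because $\gamma_2,\gamma_3$ are determined by $\gamma_1$ through $\varphi_0$; and surjectivity comes from $\psi_\TT$ together with the surjectivity of $\chi_0$ (Proposition~\ref{prop:chi0ontobis}), equivalently of $\chi'$. The only cosmetic difference is that you argue surjectivity directly at the scheme level via exactness of the lower row of diagram~\eqref{eq:commdiagClif}, whereas the paper checks it on $\Falg$-points and invokes smoothness of $\BPGO^+(\aqp_1)$.
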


\begin{proof}
  Let $R$ be a commutative $F$-algebra and
  $(\gamma_1,\gamma_2,\gamma_3)\in\BAut(\TT)(R)$. Since $\varphi_0$ is
  an isomorphism, $\gamma_2$ and $\gamma_3$ are uniquely determined by
  $\gamma_1$ and commutativity of the
  diagram~\eqref{eq:diagdefauTT}. Therefore, $\pi_\TT$ is
  injective. Moreover, commutativity of the
  diagram~\eqref{eq:diagdefauTT} shows that $C(\gamma_1)$ leaves the
  center of $C(\aqp_1)$ fixed, which means that $\gamma_1$ lies in the
  connected component of the identity $\BAut^+(\aqp_1)(R)$. Therefore,
  the image of $\pi_\TT$ lies in~$\BPGO^+(\aqp_1)$.

  To complete the proof, we show that $\pi_\TT$ is surjective on
  $\BPGO^+(\aqp_1)$. Since $\BPGO^+(\aqp_1)$ is smooth, it suffices to
  consider rational points over an algebraic closure $\Falg$ of $F$,
  by~\cite[(22.3)]{BoI}. Recall from
  Proposition~\ref{prop:chi0ontobis} that $\chi_0$ is surjective. For
  every 
  $g_1\in\BGO^+(\aqp_1)(\Falg)$, we may therefore find
  $\xi\in\BOmega(\aqp_1)(\Falg)$ such that $\chi_0(\xi)=g_1$. Then
  $\psi_\TT(\xi)\in \BGO(\TT)(\Falg)$, and its image
  $\overline\psi_\TT(\xi)$ in $\BPGO(\TT)(\Falg)$ satisfies
  $\pi_\TT\bigl(\overline\psi_\TT(\xi)\bigr) = g_1\Falg^\times$.
  We have thus found an element in
  $\BPGO(\TT)(\Falg)$ that maps under $\pi_\TT$ to any given
  $g_1\Falg^\times\in\BPGO^+(\aqp_1)(\Falg)$, hence $\pi_\TT$ is
  surjective. 
\end{proof}

\begin{corol}
  \label{cor:smoothBPGO}
  The algebraic group schemes $\BGO(\TT)$ and $\BPGO(\TT)$ are smooth
  and connected.
\end{corol}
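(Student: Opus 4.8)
The plan is to deduce smoothness and connectedness of $\BGO(\TT)$ and $\BPGO(\TT)$ from Theorem~\ref{thm:psiTTrev} together with the structural facts already established about $\BOmega(\aqp_1)$, $\BGO^+(\aqp_1)$ and $\BPGO^+(\aqp_1)$. The key point is the diagram of algebraic group schemes
\[
  1 \to \BHomot(\TT) \to \BGO(\TT) \xrightarrow{\overline\Int} \BAut(\TT) \to 1
\]
from Definition~\ref{defn:GOTT}, where $\BHomot(\TT)=\BGm^3$, together with the isomorphisms $\overline\Int\colon\BPGO(\TT)\xrightarrow{\sim}\BAut(\TT)$ and $\pi_\TT\colon\BPGO(\TT)\xrightarrow{\sim}\BPGO^+(\aqp_1)$.

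First I would observe that $\BPGO(\TT)$ is smooth and connected: by Theorem~\ref{thm:psiTTrev} it is isomorphic to $\BPGO^+(\aqp_1)$, which is the connected component of the identity in the smooth group $\BPGO(\aqp_1)$ (recalled in \S\ref{subsec:ClAlg} and \cite[\S23.B]{BoI}), hence smooth and connected. Then I would use the extension above: $\BGO(\TT)$ sits in a short exact sequence with kernel $\BGm^3$ and quotient $\BPGO(\TT)$. Since $\BGm^3$ is smooth and $\BPGO(\TT)$ is smooth, $\BGO(\TT)$ is smooth by \cite[(22.12)]{BoI} (the same citation used in the proof of Proposition~\ref{prop:exseqOGOogo} for the analogous statement about $\BGO(\Comp)$). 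For connectedness, both $\BGm^3$ and $\BPGO(\TT)$ are connected, and an extension of a connected group scheme by a connected group scheme is connected, so $\BGO(\TT)$ is connected.

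Alternatively, and perhaps more directly, I would invoke the morphism $\psi_\TT\colon\BOmega(\aqp_1)\to\BGO(\TT)$ of~\eqref{eq:defnpsiTT}. By the proof of Theorem~\ref{thm:psiTTrev}, $\psi_\TT$ followed by projection to $\BPGO(\TT)$ is surjective on $\Falg$-points (using surjectivity of $\chi_0$ from Proposition~\ref{prop:chi0ontobis}), and together with the homotheties $\BGm^3\subset\BGO(\TT)$ this shows that the image of $\BOmega(\aqp_1)\times\BGm^3$ generates $\BGO(\TT)$ up to the image of $\BHomot(\TT)$; since $\BOmega(\aqp_1)$ is connected (shown after diagram~\eqref{eq:commdiagClif}) and $\BGm^3$ is connected, their images lie in the identity component, forcing $\BGO(\TT)$ to be connected. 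Either route then gives connectedness of $\BPGO(\TT)$ again as a quotient, consistent with the first paragraph.

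I do not expect a serious obstacle here; the statement is essentially a formal consequence of results already in hand. The only point requiring a little care is making sure the short exact sequence $1\to\BGm^3\to\BGO(\TT)\to\BAut(\TT)\to1$ is genuinely exact as algebraic group schemes (not just on rational points), which is exactly what Definition~\ref{defn:GOTT} asserts via $\BGO(\TT)=\Int^{-1}(\BAut(\TT))$ and the identification of the kernel of $\Int$ with $\BHomot(\TT)=\BGm^3$; granting that, the smoothness propagates from the two outer terms by \cite[(22.12)]{BoI} and connectedness propagates because the base and fiber are connected.
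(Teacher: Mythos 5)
Your proposal is correct and follows essentially the same route as the paper: first deduce smoothness and connectedness of $\BPGO(\TT)$ from the isomorphism $\pi_\TT\colon\BPGO(\TT)\xrightarrow{\sim}\BPGO^+(\aqp_1)$ of Theorem~\ref{thm:psiTTrev}, then transfer these properties to $\BGO(\TT)$ via the short exact sequence $1\to\BHomot(\TT)\to\BGO(\TT)\to\BPGO(\TT)\to1$ with $\BHomot(\TT)=\BGm^3$ smooth and connected, citing \cite[(22.12)]{BoI}. The alternative argument using $\psi_\TT$ in your third paragraph is a valid supplement but not needed.
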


\begin{proof}
  That $\BPGO(\TT)$ is smooth and connected readily follows from the
  theorem, since $\BPGO^+(\aqp_1)$ is smooth and connected
  by~\cite[\S23.B]{BoI}. Then $\BGO(\TT)$ is also smooth and connected
  because $\BPGO(\TT)=\BGO(\TT)/\BHomot(\TT)$ with $\BHomot(\TT)$
  smooth and connected, see~\cite[(22.12)]{BoI}.
\end{proof}

We next use $\psi_\TT$ to determine the structure of $\BGO(\TT)$. Let $Z_1\simeq F\times F$ denote the center of $C(\aqp_1)$, and recall
that $R_{Z_1/F}(\BGm)$ lies in the center of $\BOmega(\aqp_1)$
(see~\eqref{eq:commdiagClif}). For every commutative $F$-algebra $R$
and $z\in(Z_1)_R^\times$, Proposition~\ref{prop:ClifinXClifbis} yields
$\chi_0(z)=N_{Z_1/F}(z)$, while $\varphi_+(z)$, $\varphi_-(z)\in
R^\times$. Therefore,
\begin{equation}
  \label{eq:psiTTonZ}
    \psi_\TT(z) = (N_{Z_1/F}(z),\,\varphi_+(z),\,\varphi_-(z)\bigr)
    \in R^\times\times R^\times\times R^\times =\BHomot(\TT)(R).
\end{equation}

\begin{prop}
  \label{prop:OmegaH}
  The morphism $\psi_\TT$ and the inclusion $i\colon \BHomot(\TT)\to
  \BGO(\TT)$ define an isomorphism
  \[
    \overline{\psi_\TT\times i}\colon \bigl(\BOmega(\aqp_1)\times
    \BHomot(\TT)\bigr)/ R_{Z_1/F}(\BGm) \xrightarrow{\sim} \BGO(\TT),
  \]
  where $R_{Z_1/F}(\BGm)$ is embedded into the product canonically in
  the first factor and by the inversion followed by $\psi_\TT$ in the
  second (so the copies of $R_{Z_1/F}(\BGm)$ in $\BOmega(\aqp_1)$ and
  in $\BHomot(\TT)$ are identified in the quotient).
\end{prop}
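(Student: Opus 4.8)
The plan is to realize $\overline{\psi_\TT\times i}$ as a monomorphism of algebraic group schemes that is surjective on points over an algebraic closure, and then to conclude with the smoothness of $\BGO(\TT)$ established in Corollary~\ref{cor:smoothBPGO}. Since $\BHomot(\TT)$ lies in the center of $\BGO(\TT)$, the rule $(\xi,h)\mapsto\psi_\TT(\xi)\cdot i(h)$ is a homomorphism $\psi_\TT\times i\colon\BOmega(\aqp_1)\times\BHomot(\TT)\to\BGO(\TT)$; and by~\eqref{eq:psiTTonZ} the composite of $\psi_\TT\times i$ with the prescribed embedding $z\mapsto\bigl(z,\psi_\TT(z)^{-1}\bigr)$ of $R_{Z_1/F}(\BGm)$ is trivial, so $\psi_\TT\times i$ factors through the quotient to give $\overline{\psi_\TT\times i}$.

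First I would compute the kernel of $\psi_\TT\times i$. If $(\xi,h)$ lies in the kernel over a commutative $F$-algebra $R$, then $\psi_\TT(\xi)=i(h)^{-1}\in\BHomot(\TT)(R)$; in particular its first component $\chi_0(\xi)$ is a homothety of $A_{1R}$, i.e.\ $\chi'(\xi)=1$ in $\BPGO^+(\aqp_1)(R)$, where $\chi'$ denotes $\chi_0$ followed by $\BGO^+(\aqp_1)\to\BPGO^+(\aqp_1)$. Exactness of the lower row of~\eqref{eq:commdiagClif} then gives $\xi\in R_{Z_1/F}(\BGm)(R)$, and~\eqref{eq:psiTTonZ} together with the injectivity of $i$ forces $h=\psi_\TT(\xi)^{-1}$. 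Thus $\ker(\psi_\TT\times i)$ is precisely the copy of $R_{Z_1/F}(\BGm)$ embedded as in the statement, so $\overline{\psi_\TT\times i}$ is a monomorphism, hence a closed immersion of algebraic group schemes.

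Next I would check surjectivity of $\psi_\TT\times i$ on points over an algebraic closure $\Falg$. Given $(g_1,g_2,g_3)\in\BGO(\TT)(\Falg)$, commutativity of the defining square~\eqref{eq:diagdefauTT} shows, exactly as in the proof of Theorem~\ref{thm:psiTTrev}, that $\Int(g_1)$ fixes the center of $C(\aqp_1)$, so $g_1\in\BGO^+(\aqp_1)(\Falg)$; since $\chi_0$ is surjective by Proposition~\ref{prop:chi0ontobis}, there is $\xi\in\BOmega(\aqp_1)(\Falg)$ with $\chi_0(\xi)=g_1$. Then $(g_1,g_2,g_3)\cdot\psi_\TT(\xi)^{-1}$ is an element of $\BGO(\TT)(\Falg)$ with first component $\Id_{A_1}$; but any $(\Id_{A_1},g_2',g_3')\in\BGO(\TT)(R)$ satisfies $C(\Int(\Id_{A_1}))=\Id_{C(\aqp_1)}$, so~\eqref{eq:diagdefauTT} forces $\Int(g_2')\times\Int(g_3')$ to be the identity, whence $g_2'$ and $g_3'$ are scalars and the element lies in $\BHomot(\TT)(R)$. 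Hence $(g_1,g_2,g_3)$ lies in the image of $\psi_\TT\times i$.

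Finally, by~\cite[(22.3)]{BoI} and the smoothness of $\BGO(\TT)$, surjectivity on $\Falg$-points implies that $\overline{\psi_\TT\times i}$ is surjective; a surjective monomorphism of algebraic group schemes is an isomorphism (equivalently, the closed immersion $\overline{\psi_\TT\times i}$ has image all of the reduced scheme $\BGO(\TT)$, and a dimension count — both sides have dimension $m(2m-1)+3$ with $2m=8$ — confirms this via the induced isomorphism on Lie algebras). None of the steps presents a serious obstacle; they are bookkeeping with the exact sequences~\eqref{eq:commdiagClif}, \eqref{eq:psiTTonZ} and the commutative square~\eqref{eq:diagdefauTT} already at hand. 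The only point needing a little care is this last deduction from ``monomorphism, surjective on geometric points'' to ``isomorphism'', which becomes routine once $\BGO(\TT)$ is known to be smooth.
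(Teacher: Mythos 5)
Your proof is correct and follows essentially the same outline as the paper's: check well-definedness via~\eqref{eq:psiTTonZ}, prove triviality of the kernel, and establish surjectivity on $\Falg$-points, invoking smoothness of $\BGO(\TT)$ from Corollary~\ref{cor:smoothBPGO}. The only genuine (and minor) divergence is in the kernel computation: you extract $\xi\in R_{Z_1/F}(\BGm)$ from $\chi'(\xi)=1$ via the exact lower row of~\eqref{eq:commdiagClif}, whereas the paper reads off from $\varphi_0(\xi)=(\nu_2^{-1},\nu_3^{-1})$ being scalar and the bijectivity of $\varphi_0$; both yield the same conclusion, so this is purely a matter of which exact sequence one prefers to quote. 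Your surjectivity argument normalizes by $\psi_\TT(\xi)^{-1}$ to reduce to a triple with identity first component, which is a clean reformulation of the paper's comparison $\Int(g_j)=\Int(g'_j)$. The closing deduction—monomorphism of affine algebraic group schemes over a field is a closed immersion, and a closed immersion onto a reduced finite-type target that is surjective on geometric points is an isomorphism—is sound and makes explicit what the paper's ``injective $+$ surjective'' conclusion leaves implicit; the dimension count (both sides have dimension $m(2m-1)+3=31$ for $2m=8$) is a correct sanity check, though not strictly needed once smoothness is in hand.
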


\begin{proof}
  It is clear from the definition of the quotient that the morphism
  $\overline{\psi_\TT\times i}$ is defined.
  To prove that it is injective, consider an arbitrary commutative
  $F$-algebra $R$ and pick $\xi\in\BOmega(\aqp_1)(R)$ and
  $\nu=(\nu_1,\nu_2,\nu_3)\in \BHomot(\TT)(R)$ such that
  $\psi_\TT(\xi)\cdot\nu=1$ in $\BGO(\TT)(R)$, i.e.,
  \[
    \chi_0(\xi)=\nu_1^{-1},\qquad \varphi_+(\xi) = \nu_2^{-1}
    \quad\text{and}\quad \varphi_-(\xi)=\nu_3^{-1}.
  \]
  The last two equations show that $\varphi_0(\xi)=(\nu_2^{-1},
  \nu_3^{-1})$ in $(A_2)_R\times (A_3)_R$. Since $\varphi_0$ is an
  isomorphism, it follows that $\xi$ lies in
  $(Z_1)_R^\times$, hence $(\xi,\nu)$ is trivial in the quotient,
  for $\nu=\psi_\TT(\xi)^{-1}$.

  To complete the proof, it remains to show that
  $\overline{\psi_\TT\times i}$ is 
  surjective. Since $\BGO(\TT)$ is smooth by
  Corollary~\ref{cor:smoothBPGO}, it suffices to consider the groups
  of rational points over an algebraic closure $\Falg$ of $F$. Let
  $g=(g_1,g_2,g_3)\in\BGO(\TT)(\Falg)$. Note that $g_1$, $g_2$ and
  $g_3$ are proper similitudes, because $\BGO(\TT)$ is connected by
  Corollary~\ref{cor:smoothBPGO}. We know from
  Proposition~\ref{prop:chi0ontobis} that $\chi_0\colon
  \BOmega(\aqp_1)\to \BGO^+(\aqp_1)$ is surjective, hence we may find
  $\xi\in\BOmega(\aqp_1)(\Falg)$ such that $\chi_0(\xi)=g_1$. Then
  $\psi_\TT(\xi)=(g_1,g'_2,g'_3)$ for some
  $g'_2\in\BGO^+(\aqp_2)(\Falg)$ and
  $g'_3\in\BGO^+(\aqp_3)(\Falg)$. As $g$ and $\psi_\TT(\xi)$ belong to
  $\BGO(\TT)(\Falg)$, the following diagrams commute:
  \[
  \xymatrix{C(\aqp_1)_{\Falg}\ar[r]^-{\varphi_0}\ar[d]_{C(\Int(g_1))}
    & (A_2)_{\Falg}\times (A_3)_{\Falg}\ar[d]^{\Int(g_2)\times\Int(g_3)}\\
    C(\aqp_1)_{\Falg}
    \ar[r]^-{\varphi_0}
    &
    (A_2)_{\Falg}\times (A_3)_{\Falg}}
  \qquad
  \xymatrix{C(\aqp_1)_{\Falg}\ar[r]^-{\varphi_0}\ar[d]_{C(\Int(g_1))}
    & (A_2)_{\Falg}\times (A_3)_{\Falg}\ar[d]^{\Int(g'_2)\times\Int(g'_3)}\\
    C(\aqp_1)_{\Falg}
    \ar[r]^-{\varphi_0}
    &
    (A_2)_{\Falg}\times (A_3)_{\Falg}}
\]
Therefore, $\Int(g_2)=\Int(g'_2)$ and $\Int(g_3)=\Int(g'_3)$, which
implies that $g_2=g'_2\nu_2$ and $g_3=g'_3\nu_3$ for some $\nu_2$,
$\nu_3\in\Falg^\times$. With
$\nu=(1,\nu_2,\nu_3)\in\BHomot(\TT)(\Falg)$ we then have
\[
  \psi_\TT(\xi)\cdot\nu=(g_1,g'_2\nu_2,g'_3\nu_3) = (g_1,g_2,g_3) = g.
\]
Surjectivity of $\overline{\psi_\TT\times i}$ follows.
\end{proof}

\begin{corol}
  \label{cor:rhoTT}
  Let $m\colon \BHomot(\TT)\to\BGm$ denote the multiplication map
  carrying $(\nu_1,\nu_2,\nu_3)$ to $\nu_1\nu_2\nu_3$. There is a
  morphism $\rho_\TT\colon \BGO(\TT)\to\BGm$ uniquely determined by
  the condition that the following diagram commutes:
  \begin{equation}
    \label{eq:defrho}
    \begin{aligned}
    \xymatrix{
      \BOmega(\aqp_1)\times\BHomot(\TT)
      \ar[rr]^-{\psi_\TT\times i}
      \ar[dr]_{(\mu\circ\chi_0)\times m}
      &&
      \BGO(\TT)\ar[dl]^{\rho_\TT}\\
      &\BGm&}
  \end{aligned}
  \end{equation}
\end{corol}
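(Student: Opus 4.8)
The statement asserts the existence and uniqueness of a morphism $\rho_\TT\colon\BGO(\TT)\to\BGm$ making the triangle \eqref{eq:defrho} commute. Since Proposition~\ref{prop:OmegaH} identifies $\BGO(\TT)$ with the quotient $\bigl(\BOmega(\aqp_1)\times\BHomot(\TT)\bigr)/R_{Z_1/F}(\BGm)$, the content is that the morphism $(\mu\circ\chi_0)\times m\colon\BOmega(\aqp_1)\times\BHomot(\TT)\to\BGm$ factors through this quotient, i.e.\ that it kills the embedded copy of $R_{Z_1/F}(\BGm)$. So the plan is: first recall precisely how $R_{Z_1/F}(\BGm)$ sits inside the product (canonically in the first factor, and via $z\mapsto\psi_\TT(z)^{-1}$ in the second, by Proposition~\ref{prop:OmegaH}); then check that $(\mu\circ\chi_0)\times m$ is trivial on its image; then invoke the universal property of the quotient to get $\rho_\TT$, with uniqueness coming from surjectivity of $\psi_\TT\times i$ (equivalently, of $\overline{\psi_\TT\times i}$, which is an isomorphism).

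\textbf{Key steps.} First I would fix a commutative $F$-algebra $R$ and an element $z\in R_{Z_1/F}(\BGm)(R)=(Z_1)_R^\times$. Its image in $\BOmega(\aqp_1)(R)\times\BHomot(\TT)(R)$ is the pair $\bigl(z,\,\psi_\TT(z)^{-1}\bigr)$. By \eqref{eq:psiTTonZ} we have $\psi_\TT(z)=\bigl(N_{Z_1/F}(z),\,\varphi_+(z),\,\varphi_-(z)\bigr)$, so the $\BHomot(\TT)$-component of our pair is $\bigl(N_{Z_1/F}(z)^{-1},\,\varphi_+(z)^{-1},\,\varphi_-(z)^{-1}\bigr)$. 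Now evaluate $(\mu\circ\chi_0)\times m$ on this pair. On the first factor: $\mu\bigl(\chi_0(z)\bigr)=N_{Z_1/F}\bigl(\underline\mu(z)\bigr)$ by Proposition~\ref{prop:ClifinXClifbis} (the identity $N_{Z/F}\circ\underline\mu=\mu\circ\chi_0$), and since $\underline\sigma(z)=z$ (the canonical involution is trivial on the center) we have $\underline\mu(z)=\underline\sigma(z)z=z^2$, so $\mu\bigl(\chi_0(z)\bigr)=N_{Z_1/F}(z)^2$. Actually one must be slightly careful here: write $z=(z_+,z_-)$ under $Z_1\simeq F\times F$; then $\varphi_+(z)=z_+$ and $\varphi_-(z)=z_-$ (since $\varphi_\pm$ factor through the respective projections $\pi_\pm$), and $N_{Z_1/F}(z)=z_+z_-$. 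On the second factor: $m$ applied to $\bigl(N_{Z_1/F}(z)^{-1},z_+^{-1},z_-^{-1}\bigr)$ gives $N_{Z_1/F}(z)^{-1}z_+^{-1}z_-^{-1}=N_{Z_1/F}(z)^{-2}$. Multiplying the two contributions: $N_{Z_1/F}(z)^2\cdot N_{Z_1/F}(z)^{-2}=1$. Hence $(\mu\circ\chi_0)\times m$ is trivial on the embedded $R_{Z_1/F}(\BGm)$.

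\textbf{Conclusion and the delicate point.} Having shown the composite kills the subgroup being quotiented out, the universal property of the fppf (indeed here faithfully flat) quotient in Proposition~\ref{prop:OmegaH} produces a unique morphism from $\BGO(\TT)$ to $\BGm$ through which $(\mu\circ\chi_0)\times m$ factors; this is $\rho_\TT$, and \eqref{eq:defrho} commutes by construction. Uniqueness of $\rho_\TT$ is then immediate: any two morphisms $\BGO(\TT)\to\BGm$ agreeing after precomposition with the surjection $\psi_\TT\times i$ must coincide. The step I expect to require the most care is the bookkeeping with signs and the center $Z_1$: one needs the precise description of the two embeddings of $R_{Z_1/F}(\BGm)$ from Proposition~\ref{prop:OmegaH} together with \eqref{eq:psiTTonZ} and the identity $N_{Z_1/F}\circ\underline\mu=\mu\circ\chi_0$, and to verify that the exponents cancel exactly (rather than, say, leaving a factor of $N_{Z_1/F}(z)^{\pm1}$), which is what makes the factorization possible in the first place. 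Everything else is a formal application of the quotient's universal property.
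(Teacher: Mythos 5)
Your proposal is correct and follows essentially the same route as the paper: both reduce the problem via Proposition~\ref{prop:OmegaH} to checking that $(\mu\circ\chi_0)\times m$ is trivial on the embedded copy of $R_{Z_1/F}(\BGm)$, and both verify this by computing that $(\mu\circ\chi_0)(z)=N_{Z_1/F}(z)^2=(m\circ\psi_\TT)(z)$. The only cosmetic difference is that you route the first computation through $N_{Z_1/F}\circ\underline\mu=\mu\circ\chi_0$ and $\underline\mu(z)=z^2$, whereas the paper uses $\chi_0(z)=N_{Z_1/F}(z)$ and squares; these are two faces of the same identity from Proposition~\ref{prop:ClifinXClifbis}.
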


\begin{proof}
  Proposition~\ref{prop:OmegaH} identifies $\BGO(\TT)$ with a quotient
  of $\BOmega(\aqp_1)\times\BHomot(\TT)$ by $R_{Z_1/F}(\BGm)$, hence
  to prove the existence and uniqueness of $\rho_\TT$ it suffices to
  show that $(\mu\circ\chi_0)$ and $m$ coincide on the images of
  $R_{Z_1/F}(\BGm)$ in $\BOmega(\TT)$ by inclusion and in
  $\BHomot(\TT)$ by $\psi_\TT$.
  For every commutative $F$-algebra $R$ and
  $z\in(Z_1)_R^\times$ 
  we have $\chi_0(z)=N_{Z_1/F}(z)$ by
  Proposition~\ref{prop:ClifinXClifbis}, hence
  $(\mu\circ\chi_0)(z)=N_{Z_1/F}(z)^2$. On the other hand
  $N_{Z_1/F}(z)=\varphi_+(z)\varphi_-(z)$, hence
  \[
    (m\circ\psi_\TT)(z)=
    m\bigl(N_{Z_1/F}(z),\,\varphi_+(z),\,\varphi_-(z)\bigr) =
    N_{Z_1/F}(z)^2.
  \]
  Thus, $(\mu\circ\chi_0)$ and $m$ coincide on the images of
  $R_{Z_1/F}(\BGm)$. 
\end{proof}

\begin{definition}
  \label{defn:lambdaOTT}
  A morphism
  $\lambda_\TT\colon\BGO(\TT)\to\BGm^3$ is defined as follows:
  for every commutative $F$-algebra $R$ and
  $g=(g_1,g_2,g_3)\in\BGO(\TT)(R)$, set
  \[
    \lambda_\TT(g) = \bigl(\rho_\TT(g)\mu(g_1)^{-1},\,
    \rho_\TT(g)\mu(g_2)^{-1},\,
    \rho_\TT(g)\mu(g_3)^{-1}\bigr) \in R^\times \times R^\times \times
    R^\times.
  \]
  From the definition of $\rho_\TT$, it follows that for
  $\nu=(\nu_1,\nu_2,\nu_3)\in\BHomot(\TT)(R)$
  \[
    \rho_\TT(\nu) = m(\nu) = \nu_1\nu_2\nu_3,
  \]
  hence
  \begin{equation}
    \label{eq:lambdaTTH}
    \lambda_\TT(\nu) = (\nu_2\nu_3\nu_1^{-1},\,
    \nu_3\nu_1\nu_2^{-1},\, \nu_1\nu_2\nu_3^{-1}).
  \end{equation}
  The definition of $\rho_\TT$ also yields
  $\rho_\TT\bigl(\psi_\TT(\xi)\bigr) = \mu\bigl(\chi_0(\xi)\bigr)$ for
  $\xi\in\BOmega(\aqp_1)(R)$. Letting
  $\underline\mu\colon\BOmega(\aqp_1)\to R_{Z_1/F}(\BGm)$ denote the
  multiplier map, we have by Proposition~\ref{prop:ClifinXClifbis}
  \[
    \mu\bigl(\chi_0(\xi)\bigr) =
    N_{Z_1/F}\bigl(\underline\mu(\xi)\bigr) =
    \varphi_+\bigl(\underline\mu(\xi)\bigr) \cdot
    \varphi_-\bigl(\underline\mu(\xi)\bigr).
  \]
  As $\varphi_0$ is an isomorphism of algebras with quadratic pair, we
  also have
  \begin{equation}
    \label{eq:lambdaTTpsi0}
    \bigl(\varphi_+(\underline\mu(\xi)),\,
    \varphi_-(\underline\mu(\xi))\bigr) =
    \varphi_0\bigl(\underline\mu(\xi)\bigr) =
    \bigl(\mu(\varphi_+(\xi)),\, \mu(\varphi_-(\xi))\bigr).
  \end{equation}
  Therefore, the definition of $\lambda_\TT$ yields
  \begin{equation}
        \label{eq:lambdaTTpsi}
  \begin{aligned}
    \lambda_\TT\bigl(\psi_\TT(\xi)\bigr)
    &= \bigl(
    1,\,\mu\bigl(\chi_0(\xi)\bigr)\mu\bigl(\varphi_+(\xi)\bigr)^{-1},
    \,
    \mu\bigl(\chi_0(\xi)\bigr)\mu\bigl(\varphi_-(\xi)\bigr)^{-1}\bigr)
    \\
    &= \bigl(1,\, \mu\bigl(\varphi_-(\xi)\bigr),\,
    \mu\bigl(\varphi_+(\xi)\bigr)\bigr). 
  \end{aligned}
  \end{equation}
\end{definition}

\begin{definition}
  Let
  \[
    \BOrth(\TT)=\ker(\lambda_\TT\colon \BGO(\TT)\to\BGm^3).
  \]
  As in the proof of Proposition~\ref{prop:exseqOGOogo}, it follows
  from~\eqref{eq:lambdaTTH}
  that the map $\lambda_\TT\colon\BHomot(\TT)\to\BGm^3$, hence also
  $\lambda_\TT\colon\BGO(\TT)\to\BGm^3$, is surjective. Therefore, the
  following sequence is exact: 
  \[
    1\to\BOrth(\TT)\to \BGO(\TT) \xrightarrow{\lambda_\TT} \BGm^3\to 1
  \]

  Now, let $\BZO(\TT)$ be the kernel of the canonical map
  $\BOrth(\TT)\to \BPGO(\TT)$, which is the composition of the
  inclusion $\BOrth(\TT)\subset\BGO(\TT)$ and the canonical
  epimorphism $\BGO(\TT)\to\BPGO(\TT)$. Thus, letting $m$ be the
  multiplication map $(\nu_1,\nu_2,\nu_3)\mapsto \nu_1\nu_2\nu_3$,
  \[
  \BZO(\TT)=\BHomot(\TT)\cap\BOrth(\TT) = \ker(m\colon
  \Bmu_2\times\Bmu_2\times\Bmu_2\to\Bmu_2)\simeq \Bmu_2\times\Bmu_2.
\]
The same arguments as in Proposition~\ref{prop:exdiagGOComp} yield the
following commutative diagram with exact rows and columns:
\begin{equation}
  \label{eq:diagZOPGO}
  \begin{aligned}
    \xymatrix{&1\ar[d]&1\ar[d]&&\\
      1\ar[r]&\BZO(\TT)\ar[r]\ar[d]&\BOrth(\TT)\ar[r]\ar[d]&
      \BPGO(\TT)\ar[r]\ar@{=}[d]&1\\
      1\ar[r]&\BHomot(\TT)\ar[r]\ar[d]_{\lambda_\TT}&
      \BGO(\TT)\ar[r]\ar[d]^{\lambda_\TT}&\BPGO(\TT)\ar[r]&1\\
      &\BGm^3\ar@{=}[r]\ar[d]&\BGm^3\ar[d]&&\\
      &1&1&&}
  \end{aligned}
\end{equation}
\end{definition}

Now, we show that the definitions above are compatible with the
corresponding definitions for compositions of quadratic spaces
in~\S\ref{subsec:simiso}. 

\begin{prop}
  \label{prop:simEndComp}
  For $\Comp$ any composition of quadratic spaces of dimension~$8$ and
  $\TT=\End(\Comp)$,
  canonical isomorphisms yield identifications
  \[
    \BHomot(\Comp) = \BHomot(\TT), \quad
    \BOrth(\Comp) = \BOrth(\TT),\quad
    \BGO(\Comp) = \BGO(\TT), \quad
    \BPGO(\Comp) = \BPGO(\TT).
  \]
  Moreover, the following diagram commutes:
  \begin{equation}
    \label{eq:diaglambdalambda}
    \begin{aligned}
    \xymatrix{
      \BGO(\Comp)\ar@{=}[r]\ar[d]_{\lambda_\Comp}&
      \BGO(\TT)\ar[d]^{\lambda_\TT}\\ 
      \BGm^3\ar@{=}[r]&\BGm^3}
    \end{aligned}
  \end{equation}
\end{prop}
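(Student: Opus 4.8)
The plan is to derive $\BGO(\Comp)=\BGO(\TT)$ from Proposition~\ref{prop:isoTT} (applied with $\tilde\Comp=\Comp$), to deduce the identifications of $\BHomot$ and $\BPGO$ formally, and then to prove the commutativity of~\eqref{eq:diaglambdalambda} by comparing the two multiplier morphisms through which $\lambda_\Comp$ and $\lambda_\TT$ factor; the latter also yields $\BOrth(\Comp)=\BOrth(\TT)$, since these are the kernels of $\lambda_\Comp$ and $\lambda_\TT$.

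First I would identify $\BGO(\Comp)$ and $\BGO(\TT)$ as closed subgroup schemes of $\BGO(q_1)\times\BGO(q_2)\times\BGO(q_3)$ (for $\BGO(\Comp)$ one forgets the fourth coordinate $\lambda_3$, which is determined by $(g_1,g_2,g_3)$ as $*_3$ is nonzero; for $\BGO(\TT)$ one uses $\BGO(\aqp_i)=\BGO(q_i)$). The forward implication in Proposition~\ref{prop:isoTT} shows that every similitude $(g_1,g_2,g_3)$ of $\Comp$ gives an element $\Int(g_1,g_2,g_3)$ of $\BAut(\TT)$, hence $\BGO(\Comp)\subseteq\BGO(\TT)$. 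Over an algebraic closure $\Falg$ of $F$, the converse part of Proposition~\ref{prop:isoTT} gives that any $(g_1,g_2,g_3)\in\BGO(\TT)(\Falg)$ satisfies $(\Int(g_1),\Int(g_2),\Int(g_3))=\Int(g_1',g_2',g_3')$ for some similitude $(g_1',g_2',g_3')$ of $\Comp_\Falg$; then $g_i$ and $g_i'$ differ by scalar factors, and rescaling the components of a similitude of compositions yields a similitude, so $(g_1,g_2,g_3)\in\BGO(\Comp)(\Falg)$. Since $\BGO(\Comp)$ and $\BGO(\TT)$ are smooth, hence reduced, by Proposition~\ref{prop:exseqOGOogo} and Corollary~\ref{cor:smoothBPGO}, the inclusion together with equality on $\Falg$-points forces $\BGO(\Comp)=\BGO(\TT)$.

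Granting this, $\BHomot(\TT)=\ker\bigl(\Int\colon\BGO(\TT)\to\BAut(\TT)\bigr)$ is the set of triples $(g_1,g_2,g_3)\in\BGO(\TT)$ with every $g_i$ central in $\End V_i$, i.e.\ the triples of homotheties, which is exactly $\BHomot(\Comp)$; hence $\BHomot(\Comp)=\BHomot(\TT)$ and consequently $\BPGO(\Comp)=\BGO(\Comp)/\BHomot(\Comp)=\BGO(\TT)/\BHomot(\TT)=\BPGO(\TT)$. It then remains to prove~\eqref{eq:diaglambdalambda}. Write $\rho_\Comp\colon\BGO(\Comp)\to\BGm$ for the homomorphism $g\mapsto\lambda_1\lambda_2\lambda_3$ ($\rho$ in the notation of Definition~\ref{defn:compmult}). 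Both $\lambda_\Comp$ and $\lambda_\TT$ have the shape $g=(g_1,g_2,g_3)\mapsto\bigl(\rho(g)\mu(g_1)^{-1},\rho(g)\mu(g_2)^{-1},\rho(g)\mu(g_3)^{-1}\bigr)$, with $\rho=\rho_\Comp$ for $\lambda_\Comp$ (Definition~\ref{defn:compmult}) and $\rho=\rho_\TT$ for $\lambda_\TT$ (Definition~\ref{defn:lambdaOTT}); so~\eqref{eq:diaglambdalambda} is equivalent to $\rho_\Comp=\rho_\TT$ as morphisms $\BGO(\TT)\to\BGm$.

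To check $\rho_\Comp=\rho_\TT$ I would use Proposition~\ref{prop:OmegaH}, which realises $\BGO(\TT)$ as a quotient of $\BOmega(\aqp_1)\times\BHomot(\TT)$; since $\rho_\Comp$ and $\rho_\TT$ are group homomorphisms it suffices to compare them on the images of $\psi_\TT$ and of $\BHomot(\TT)$. On $\BHomot(\TT)$ both equal the multiplication $(\nu_1,\nu_2,\nu_3)\mapsto\nu_1\nu_2\nu_3$ — for $\rho_\TT$ by~\eqref{eq:defrho}, for $\rho_\Comp$ by computing the composition multiplier of the homothety $(\nu_1\Id,\nu_2\Id,\nu_3\Id)$ via~\eqref{eq:lambdamu}. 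On the image of $\psi_\TT$, $\rho_\TT(\psi_\TT(\xi))=\mu(\chi_0(\xi))$ by~\eqref{eq:defrho}, while $\rho_\Comp(\psi_\TT(\xi))=\lambda_1\lambda_2\lambda_3$ for $(\lambda_1,\lambda_2,\lambda_3)$ the composition multiplier of the similitude $\psi_\TT(\xi)=(\chi_0(\xi),\varphi_+(\xi),\varphi_-(\xi))$ of $\Comp$; as $\mu(\chi_0(\xi))=\lambda_2\lambda_3$ by~\eqref{eq:lambdamu}, the ratio $\rho_\Comp(\psi_\TT(\xi))/\rho_\TT(\psi_\TT(\xi))$ equals $\lambda_1$. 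The main obstacle is exactly pinning down this sign: the relations~\eqref{eq:lambdamu} only give $\lambda_1^2=\mu(\varphi_+(\xi))\,\mu(\varphi_-(\xi))\,\mu(\chi_0(\xi))^{-1}$, which equals $1$ because $\mu(\varphi_\pm(\xi))=\varphi_\pm(\underline\mu(\xi))$ (see~\eqref{eq:lambdaTTpsi0}) and $\mu(\chi_0(\xi))=N_{Z_1/F}(\underline\mu(\xi))=\varphi_+(\underline\mu(\xi))\,\varphi_-(\underline\mu(\xi))$ by Proposition~\ref{prop:ClifinXClifbis} and the fact that $\varphi_0$ is an isomorphism. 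Thus $\lambda_1=\pm1$, and $\rho_\Comp/\rho_\TT$ is a group homomorphism $\BGO(\TT)\to\Bmu_2$; since a smooth connected affine group scheme admits no nontrivial homomorphism into $\Bmu_2$ (in characteristic $\neq2$ because $\Bmu_2$ is étale with trivial identity component, and in characteristic~$2$ because $\Bmu_2$ is infinitesimal while $\BGO(\TT)$ is reduced by Corollary~\ref{cor:smoothBPGO}), we conclude $\rho_\Comp=\rho_\TT$, hence~\eqref{eq:diaglambdalambda} and $\BOrth(\Comp)=\ker\lambda_\Comp=\ker\lambda_\TT=\BOrth(\TT)$.
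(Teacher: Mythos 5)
Your argument is correct, and it tracks the paper's structure closely in the reduction steps. The identification $\BGO(\Comp)=\BGO(\TT)$ via Proposition~\ref{prop:isoTT} and smoothness, the deduction of the $\BHomot$ and $\BPGO$ identifications, the reduction of~\eqref{eq:diaglambdalambda} to an equality of morphisms into $\BGm$ checked via the quotient description $\BGO(\TT)=\bigl(\BOmega(\aqp_1)\times\BHomot(\TT)\bigr)/R_{Z_1/F}(\BGm)$ from Proposition~\ref{prop:OmegaH}, and the verification on $\BHomot(\TT)$ all match the paper's proof.

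Where you genuinely diverge is the final step, the comparison of $\lambda_\Comp$ and $\lambda_\TT$ on the image of $\psi_\TT$. The paper computes $\lambda_\Comp\bigl(\psi_\TT(\xi)\bigr)$ explicitly: starting from the equation $\chi_0(\xi)(x_1)=\iota\bigl(\underline\mu(\xi)\bigr)\xi x_1\xi^{-1}$ of~\eqref{eq:defchi0}, it applies $C(\alpha)$ to each side, reads off the resulting relations between $r_{g_1(x_1)}$, $\ell_{g_1(x_1)}$ and $g_2$, $g_3$, and concludes directly that $\lambda_\Comp\bigl(\psi_\TT(\xi)\bigr)=\bigl(1,\mu(g_3),\mu(g_2)\bigr)$, matching $\lambda_\TT\bigl(\psi_\TT(\xi)\bigr)$ from~\eqref{eq:lambdaTTpsi}. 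You instead observe that the ratio $\rho_\Comp/\rho_\TT$ is a character of $\BGO(\TT)$ whose square is trivial (using~\eqref{eq:lambdamu}, \eqref{eq:lambdaTTpsi0} and Proposition~\ref{prop:ClifinXClifbis}), so it factors through $\Bmu_2$; you then invoke that a smooth connected affine group scheme admits no nontrivial homomorphism to $\Bmu_2$, by the \'etale argument in characteristic $\neq2$ and by comparing reduced versus infinitesimal in characteristic~$2$. This is a valid and cleaner way to fix the sign, avoiding the explicit identity the paper pushes through $C(\alpha)$. The trade-off is that the paper's computation also exhibits the explicit relation $g_3(x_3)*_2g_1(x_1)=\mu(g_3)g_2(x_3*_2x_1)$ linking the Clifford group action to the composition, which has independent interest, whereas your soft argument does not produce it.
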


\begin{proof}
  Let $R$ be a commutative $F$-algebra. For every
  $(g_1,g_2,g_3,\lambda_3)\in\BGO(\Comp)(R)$ the triple
  $(g_1,g_2,g_3)$ lies in $\BGO(\TT)(R)$, as seen in the first part of
  the proof of Proposition~\ref{prop:isoTT}. Since $\lambda_3$ is
  uniquely 
  determined by $g_1$, $g_2$ and $g_3$, mapping
  $(g_1,g_2,g_3,\lambda_3)$ to $(g_1,g_2,g_3)$ defines an injective
  map $\BGO(\Comp)\to\BGO(\TT)$.
  Proposition~\ref{prop:isoTT} also shows that for $\Falg$ an
  algebraic closure of $F$ the map $\BGO(\Comp)(\Falg)\to
  \BGO(\TT)(\Falg)$ is surjective. This is
  sufficient to prove that the map $\BGO(\Comp)\to
  \BGO(\TT)$ is surjective, since
  $\BGO(\TT)$ is smooth by
  Corollary~\ref{cor:smoothBPGO}. We have thus obtained
  a canonical isomorphism $\BGO(\Comp)\xrightarrow{\sim}
  \BGO(\TT)$. This isomorphism maps $\BHomot(\Comp)$
  to $\BHomot(\TT)$, hence it induces an isomorphism
  $\BPGO(\Comp)\xrightarrow{\sim}\BPGO(\TT)$.
  
  In order to prove that the isomorphism $\BGO(\Comp) = \BGO(\TT)$
  also maps $\BOrth(\Comp)$ 
  to $\BOrth(\TT)$, it suffices to prove that the
  diagram~\eqref{eq:diaglambdalambda} is commutative. For this, we use
  the description of $\BGO(\TT)$ in Proposition~\ref{prop:OmegaH} as a
  quotient of the product of $\BOmega(\aqp_1)$ and $\BHomot(\TT)$. It
  is clear from~\eqref{eq:lambdaTTH} that $\lambda_\Comp$ and
  $\lambda_\TT$  coincide on the image of
  $\BHomot(\Comp)=\BHomot(\TT)$ in $\BGO(\Comp)=\BGO(\TT)$. Therefore,
  it suffices to consider the image of $\BOmega(\aqp_1)$ under
  $\psi_\TT$.
  
  Let
  $\Comp=\bigl((V_1,q_1),\,(V_2,q_2),\,(V_3,q_3),\,*_3\bigr)$, so
  $\varphi_0=C_0(\alpha)$,
  $A_1=\End V_1$ and
  $\BOmega(\aqp_1)=\BOmega(q_1)$.
  Let $R$ be a commutative $F$-algebra and
  let $\xi\in\BOmega(q_1)(R)$.
  To simplify notation, write $g_1=\chi_0(\xi)\in\BGO^+(q_1)(R)$,
  $g_2=C_+(\alpha)(\xi)\in\BGO^+(q_2)(R)$ 
  and $g_3=C_-(\alpha)(\xi)\in\BGO^+(q_3)(R)$, so
  \[
    \psi_\TT(\xi) = (g_1,g_2,g_3).
  \]
  Now, \eqref{eq:lambdaTTpsi0} yields
  $C_0(\alpha)\bigl(\underline\mu(\xi)\bigr) = 
  \bigl(\mu(g_2),\mu(g_3)\bigr)$, and by~\eqref{eq:defchi0}, we
  have $g_1(x_1)=\iota\bigl(\underline\mu(\xi)\bigr)\xi x_1\xi^{-1}$
  for every $x_1\in V_{1R}$. By taking the image of each side of the
  last equation under $C(\alpha)$, we obtain
  \begin{equation*}
    \begin{pmatrix}
      0&r_{g_1(x_1)}\\ \ell_{g_1(x_1)}&0
    \end{pmatrix}
    =
    \begin{pmatrix}
      \mu(g_3)&0\\0&\mu(g_2)
    \end{pmatrix}
    \begin{pmatrix}
      g_2&0\\0&g_3
    \end{pmatrix}
    \begin{pmatrix}
      0&r_{x_1}\\ \ell_{x_1}&0
    \end{pmatrix}
    \begin{pmatrix}
      g_2^{-1}&0\\0&g_3^{-1}
    \end{pmatrix}.
  \end{equation*}
  This equation yields
  \[
    r_{g_1(x_1)}g_3=\mu(g_3)g_2r_{x_1} \qquad\text{and}\qquad
    \ell_{g_1(x_1)}g_2=\mu(g_2)g_3\ell_{x_1} \qquad\text{for all
      $x_1\in V_{1R}$},
  \]
  which means that for all $x_1\in V_{1R}$, $x_2\in V_{2R}$ and
  $x_3\in V_{3R}$ 
  \[
    g_3(x_3)*_2g_1(x_1) = \mu(g_3)g_2(x_3*_2x_1)
    \qquad\text{and}\qquad
    g_1(x_1)*_3g_2(x_2) = \mu(g_2)g_3(x_1*_3x_2).
  \]
  These equations show that
  $\bigl(g_1,g_2,g_3,\mu(g_2)\bigr)\in\BGO(\Comp)(R)$, hence
  by~\eqref{eq:lambdamu}
  \[
    \lambda_\Comp\bigl(\psi_\TT(\xi)\bigr) =
    \bigl(1,\mu(g_3),\mu(g_2)\bigr).
  \]
  Therefore, $\lambda_\Comp\bigl(\psi_\TT(\xi)\bigr) =
  \lambda_\TT\bigl(\psi_\TT(\xi)\bigr)$ by~\eqref{eq:lambdaTTpsi}, and
  the proof is complete.
\end{proof}

\begin{corol}
  \label{cor:OTTsmooth}
  For every trialitarian triple $\TT$, the algebraic group scheme
  $\BOrth(\TT)$ is smooth. 
\end{corol}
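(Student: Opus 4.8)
The plan is to reduce the statement to the split case, where it has already been proved. Smoothness of an affine algebraic group scheme over $F$ is insensitive to extension of the base field: for any field extension $K/F$ the morphism $\Spec K\to\Spec F$ is faithfully flat, and smoothness both ascends along base change and descends along faithfully flat morphisms. So it suffices to produce \emph{one} field extension $K/F$ over which $\BOrth(\TT)_K$ is smooth. I would choose $K$ to split all three algebras $A_1$, $A_2$, $A_3$ simultaneously (for instance a separable closure of $F$, or the compositum of splitting fields of the $A_i$).

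Over such a $K$ the scalar extension $\TT_K=(\aqp_{1K},\aqp_{2K},\aqp_{3K},\varphi_{0K})$ is a trialitarian triple whose underlying algebras are all split, so Theorem~\ref{thm:splitrial} provides a composition $\Comp$ of quadratic spaces of dimension~$8$ over $K$ together with an isomorphism of trialitarian triples $\TT_K\simeq\End(\Comp)$. Since $\BOrth(\TT)$ is defined as a closed subgroup scheme of $\BGO(\aqp_1)\times\BGO(\aqp_2)\times\BGO(\aqp_3)$ cut out by conditions ($\Int$-invariance of $\varphi_0$, and $\lambda_\TT=1$) whose formation commutes with base change and with isomorphisms of trialitarian triples, one has $\BOrth(\TT)_K=\BOrth(\TT_K)$ and $\BOrth(\TT_K)\simeq\BOrth\bigl(\End(\Comp)\bigr)$. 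By Proposition~\ref{prop:simEndComp} the latter is canonically identified with $\BOrth(\Comp)$, which is smooth by Proposition~\ref{prop:exseqOGOogo}. Descending back along $K/F$ gives that $\BOrth(\TT)$ is smooth.

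I do not expect a genuine obstacle here; the only point that deserves a line of verification is that $\BOrth(\TT)$ and the multiplier morphism $\lambda_\TT$ are functorial in $\TT$ and compatible with scalar extension, so that the isomorphism $\TT_K\simeq\End(\Comp)$ really does transport $\BOrth(\TT_K)$ onto $\BOrth(\End(\Comp))$. This is routine, because $\psi_\TT$, the multiplier maps, $\rho_\TT$ and hence $\lambda_\TT$ are all built from the structural data $\varphi_0$ and from the intrinsic multipliers $\mu(g_i)$, all of which behave well under conjugation by an isomorphism of trialitarian triples and under base change. An alternative, more self-contained route would be to establish smoothness directly by showing that the differential $\dot\lambda_\TT\colon\go(\TT)\to F^3$ is surjective (equivalently, that $\lambda_\TT$ is a smooth morphism, using that $\BGO(\TT)$ is smooth by Corollary~\ref{cor:smoothBPGO} together with \cite[(22.13)]{BoI}), relying on the description of $\BGO(\TT)$ in Proposition~\ref{prop:OmegaH} and the formulas \eqref{eq:lambdaTTH}, \eqref{eq:lambdaTTpsi}; but this essentially re-runs Step~2 of the proof of Proposition~\ref{prop:exseqOGOogo} after twisting, so the descent argument above is the shorter path.
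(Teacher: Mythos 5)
Your proof is correct and takes essentially the same approach as the paper: extend scalars to split all three algebras, identify $\TT_K\simeq\End(\Comp)$ by Theorem~\ref{thm:splitrial}, apply Propositions~\ref{prop:simEndComp} and \ref{prop:exseqOGOogo} to get smoothness of $\BOrth(\Comp)$, and descend. The paper uses an algebraic closure and cites \cite[(21.10)]{BoI} for the descent of smoothness, but that is an immaterial difference.
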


\begin{proof}
   Over an algebraic closure $\Falg$ of $F$ the trialitarian
  triple $\TT$ is split, hence by Theorem~\ref{thm:splitrial} we may
  find a composition $\Comp$ of 
  quadratic spaces of dimension~$8$ over $\Falg$ such that
  $\TT_{\Falg}\simeq\End(\Comp)$. Then
  $\BOrth(\TT_{\Falg})$ is isomorphic to $\BOrth(\Comp)$,
  which is smooth by Proposition~\ref{prop:exseqOGOogo}, hence
  $\BOrth(\TT)$ is smooth by~\cite[(21.10)]{BoI}.
\end{proof}

The final result in this subsection elucidates the structure of
$\BOrth(\TT)$.

\begin{thm}
  \label{thm:SpinOTT}
  For every trialitarian triple $\TT$, the morphism $\psi_\TT$
  restricts to an isomorphism
  \[
    \psi_\TT\colon \BSpin(\aqp_1) \xrightarrow{\sim} \BOrth(\TT).
  \]
\end{thm}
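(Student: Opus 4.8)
The plan is to show that $\psi_\TT$ maps $\BSpin(\aqp_1)$ into $\BOrth(\TT)$, that this restriction is injective, and that it is surjective; smoothness of the groups involved will let me check surjectivity on points over an algebraic closure. Recall that $\psi_\TT$ is injective on all of $\BOmega(\aqp_1)$ (noted right after~\eqref{eq:defnpsiTT}), so injectivity of the restriction is immediate. For the inclusion $\psi_\TT\bigl(\BSpin(\aqp_1)\bigr)\subset\BOrth(\TT)$, I would use formula~\eqref{eq:lambdaTTpsi}: for $\xi\in\BOmega(\aqp_1)(R)$ one has $\lambda_\TT\bigl(\psi_\TT(\xi)\bigr)=\bigl(1,\,\mu(\varphi_-(\xi)),\,\mu(\varphi_+(\xi))\bigr)$. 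Since $\BSpin(\aqp_1)=\ker\bigl(\underline\mu\colon\BOmega(\aqp_1)\to R_{Z_1/F}(\BGm)\bigr)$, for $\xi\in\BSpin(\aqp_1)(R)$ we have $\underline\mu(\xi)=1$, hence by~\eqref{eq:lambdaTTpsi0} $\mu(\varphi_+(\xi))=\mu(\varphi_-(\xi))=1$, so $\lambda_\TT\bigl(\psi_\TT(\xi)\bigr)=(1,1,1)$, i.e. $\psi_\TT(\xi)\in\BOrth(\TT)(R)$.

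For surjectivity, since $\BOrth(\TT)$ is smooth by Corollary~\ref{cor:OTTsmooth} and $\BSpin(\aqp_1)$ is smooth, it suffices by~\cite[(22.3)]{BoI} to prove that $\psi_\TT$ induces a surjection on rational points over an algebraic closure $\Falg$ of $F$. Let $g=(g_1,g_2,g_3)\in\BOrth(\TT)(\Falg)$. By Proposition~\ref{prop:OmegaH} we may write $g=\psi_\TT(\xi)\cdot\nu$ for some $\xi\in\BOmega(\aqp_1)(\Falg)$ and $\nu=(\nu_1,\nu_2,\nu_3)\in\BHomot(\TT)(\Falg)$. Applying $\lambda_\TT$ and using $\lambda_\TT(g)=(1,1,1)$ together with~\eqref{eq:lambdaTTH} and~\eqref{eq:lambdaTTpsi}, I get the relations $\nu_2\nu_3\nu_1^{-1}=1$, $\nu_3\nu_1\nu_2^{-1}=\mu(\varphi_-(\xi))$ and $\nu_1\nu_2\nu_3^{-1}=\mu(\varphi_+(\xi))$. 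The first relation gives $\nu_1=\nu_2\nu_3$; substituting into the other two yields $\nu_3^2=\mu(\varphi_-(\xi))$ and $\nu_2^2=\mu(\varphi_+(\xi))$. The idea is now to absorb $\nu$ into a modification of $\xi$: I want to find $z\in(Z_1)_\Falg^\times$ such that replacing $\xi$ by $\xi z$ changes $\psi_\TT(\xi)$ by exactly $\nu^{-1}$, so that $\psi_\TT(\xi z)=g$ and then to check that $\xi z$ actually lies in $\BSpin(\aqp_1)$. Concretely, by~\eqref{eq:psiTTonZ}, $\psi_\TT(z)=\bigl(N_{Z_1/F}(z),\varphi_+(z),\varphi_-(z)\bigr)$, so I need $N_{Z_1/F}(z)=\nu_1$, $\varphi_+(z)=\nu_2$, $\varphi_-(z)=\nu_3$; since $\varphi_0=(\varphi_+,\varphi_-)$ identifies $(Z_1)_\Falg$ with $\Falg\times\Falg$ and $N_{Z_1/F}(z)=\varphi_+(z)\varphi_-(z)$, the element $z=\varphi_0^{-1}(\nu_2,\nu_3)$ does the job because $\nu_1=\nu_2\nu_3$. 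Then $\psi_\TT(\xi z)=\psi_\TT(\xi)\psi_\TT(z)=g\cdot\nu^{-1}\cdot\nu=g$, using that $R_{Z_1/F}(\BGm)$ is central in $\BOmega(\aqp_1)$ so $\psi_\TT$ is a homomorphism on the relevant elements.

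It remains to verify $\xi z\in\BSpin(\aqp_1)(\Falg)$, i.e. $\underline\mu(\xi z)=1$. Since $\underline\mu$ is the multiplier for $\underline\sigma_1$ and $z\in(Z_1)_\Falg$ is fixed by $\underline\sigma_1$, one has $\underline\mu(\xi z)=\underline\sigma_1(\xi z)(\xi z)=\underline\sigma_1(\xi)\underline\sigma_1(z)\xi z=\underline\mu(\xi)z^2$ (the central factors commute past). Now $z^2=\varphi_0^{-1}(\nu_2^2,\nu_3^2)=\varphi_0^{-1}\bigl(\mu(\varphi_+(\xi)),\mu(\varphi_-(\xi))\bigr)$, which by~\eqref{eq:lambdaTTpsi0} equals $\underline\mu(\xi)^{-1}$ (read $\varphi_0(\underline\mu(\xi))=(\mu(\varphi_+(\xi)),\mu(\varphi_-(\xi)))$, so $\varphi_0^{-1}$ of that pair is $\underline\mu(\xi)$ — I need to track the inverse carefully here, choosing at the start $z$ with $\varphi_0(z)=(\nu_2,\nu_3)$ and noting $\varphi_0(z^2)=(\nu_2^2,\nu_3^2)=(\mu(\varphi_+(\xi)),\mu(\varphi_-(\xi)))=\varphi_0(\underline\mu(\xi))$, hence $z^2=\underline\mu(\xi)$; then I should instead have set $z$ so that $z^2=\underline\mu(\xi)^{-1}$, which amounts to taking $\varphi_0(z)=(\nu_2^{-1},\nu_3^{-1})$ and correspondingly multiplying $\xi$ by $z^{-1}$). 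Thus $\underline\mu(\xi z^{-1})=\underline\mu(\xi)z^{-2}=1$, and this corrected element maps to $g$ under $\psi_\TT$ by the same computation. The main obstacle is precisely this bookkeeping of inverses — matching the central correction to simultaneously kill $\nu$ in $\BGO(\TT)$ and kill the multiplier in $\BOmega(\aqp_1)$ — but the constraints $\nu_1=\nu_2\nu_3$, $\nu_2^2=\mu(\varphi_+(\xi))$, $\nu_3^2=\mu(\varphi_-(\xi))$ forced by $\lambda_\TT(g)=(1,1,1)$ are exactly what makes it consistent. Once surjectivity on $\Falg$-points is established, smoothness of source and target upgrades it to an isomorphism of algebraic group schemes.
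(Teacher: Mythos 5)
Your overall plan follows the paper's: decompose $g\in\BOrth(\TT)(\Falg)$ via Proposition~\ref{prop:OmegaH} as $g=\psi_\TT(\xi)\cdot\nu$, absorb $\nu$ into $\xi$ by a central element $z\in(Z_1)_\Falg^\times$, and verify the result lies in $\BSpin(\aqp_1)$. The inclusion and injectivity parts are fine. However, there is a sign error in the surjectivity argument that sends you off course, and the ``correction'' you introduce to patch the apparent inconsistency does not actually close the gap.

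Applying $\lambda_\TT$ to $g=\psi_\TT(\xi)\nu$ and using~\eqref{eq:lambdaTTH}, \eqref{eq:lambdaTTpsi} gives, componentwise,
\[
\bigl(1,\ \mu(\varphi_-(\xi)),\ \mu(\varphi_+(\xi))\bigr)\cdot
\bigl(\nu_2\nu_3\nu_1^{-1},\ \nu_3\nu_1\nu_2^{-1},\ \nu_1\nu_2\nu_3^{-1}\bigr) = (1,1,1).
\]
The first component forces $\nu_1=\nu_2\nu_3$; substituting into the other two yields $\nu_3^2=\mu(\varphi_-(\xi))^{-1}$ and $\nu_2^2=\mu(\varphi_+(\xi))^{-1}$ --- with inverses, not $\nu_3^2=\mu(\varphi_-(\xi))$, $\nu_2^2=\mu(\varphi_+(\xi))$ as you wrote. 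With the correct relations, the element $z=\varphi_0^{-1}(\nu_2,\nu_3)$ you first proposed already works: $\varphi_0(z^2)=(\nu_2^2,\nu_3^2)=\bigl(\mu(\varphi_+(\xi))^{-1},\mu(\varphi_-(\xi))^{-1}\bigr)=\varphi_0\bigl(\underline\mu(\xi)^{-1}\bigr)$ by~\eqref{eq:lambdaTTpsi0}, hence $z^2=\underline\mu(\xi)^{-1}$ and $\underline\mu(\xi z)=\underline\mu(\xi)\,z^2=1$. No correction is needed, and the ``obstacle'' you describe is entirely an artifact of the sign slip.

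Your proposed fix also fails on its own premises: with $\varphi_0(z)=(\nu_2^{-1},\nu_3^{-1})$ and your relations $\nu_2^2=\mu(\varphi_+(\xi))$, $\nu_3^2=\mu(\varphi_-(\xi))$, one gets $z^{-2}=\underline\mu(\xi)$, so $\underline\mu(\xi z^{-1})=\underline\mu(\xi)z^{-2}=\underline\mu(\xi)^2$, which is not~$1$ in general. So the asserted identity $\underline\mu(\xi z^{-1})=1$ is false and the surjectivity argument does not close as written.

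The paper sidesteps this bookkeeping entirely. After constructing $z$ with $\psi_\TT(z)=\nu$ (which uses only $\nu_1=\nu_2\nu_3$), one has $\psi_\TT(\xi z)=g$; then $\lambda_\TT(g)=(1,1,1)$ together with~\eqref{eq:lambdaTTpsi} gives $\mu(\varphi_+(\xi z))=\mu(\varphi_-(\xi z))=1$, and~\eqref{eq:lambdaTTpsi0} plus the fact that $\varphi_0$ is an isomorphism then force $\underline\mu(\xi z)=1$. The relations for $\nu_2^2,\nu_3^2$ are never needed. I would adopt this shortcut: the by-hand tracking of normalizing constants is precisely where the error crept in.
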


\begin{proof}
  When defining $\psi_\TT$, we already observed that this morphism is
  injective. 
  Recall from \S\ref{subsec:ClGrp} that $\BSpin(\aqp_1)$ is the kernel
  of $\underline\mu\colon\BOmega(\aqp_1)\to
  R_{Z_1/F}(\BGm)$. Therefore, \eqref{eq:lambdaTTpsi0} and
  \eqref{eq:lambdaTTpsi} show that $\psi_\TT$ map $\BSpin(\aqp_1)$ to
  $\BOrth(\TT)$.

  To prove that $\psi_\TT$ maps $\BSpin(\aqp_1)$ onto $\BOrth(\TT)$,
  it suffices to consider the groups of rational points over an
  algebraic closure $\Falg$ of $F$, because we know by
  Corollary~\ref{cor:OTTsmooth} that $\BOrth(\TT)$ is
  smooth. Proposition~\ref{prop:OmegaH} shows that $\psi_\TT\times
  i\colon \BOmega(\TT)(\Falg)\times \BHomot(\TT)(\Falg) \to
  \BGO(\TT)(\Falg)$ is surjective, hence for any
  $g\in\BOrth(\TT)(\Falg)$ we may find $\xi\in\BOmega(\TT)(\Falg)$ and
  $\nu=(\nu_1,\nu_2,\nu_3)\in\BHomot(\TT)(\Falg)$ such that
  $\psi_\TT(\xi)\cdot\nu=g$. Taking the image of each side under
  $\lambda_\TT$ and using~\eqref{eq:lambdaTTH}
  and~\eqref{eq:lambdaTTpsi}, we obtain 
  \[
    (1,\lambda_2,\lambda_3)\cdot(\nu_2\nu_3\nu_1^{-1},\,
    \nu_3\nu_1\nu_2^{-1},\, \nu_1\nu_2\nu_3^{-1}) = (1,1,1)
  \]
  for some $\lambda_2$, $\lambda_3\in\Falg^\times$, hence
  $\nu_1=\nu_2\nu_3$. Therefore, $\nu=\psi_\TT(z)$ for
  $z\in(Z_1)^\times_{\Falg}$ such that $\varphi_0(z)=(\nu_2,\nu_3)$,
  and $\psi_\TT(\xi z)=g$. Since $\lambda_\TT(g)=(1,1,1)$,
  \eqref{eq:lambdaTTpsi0} and \eqref{eq:lambdaTTpsi} show that
  $\underline\mu(\xi z)=1$, hence $\xi
  z\in\BSpin(\aqp_1)(\Falg)$. Thus, $\psi_\TT$ maps $\BSpin(\aqp_1)$
  onto $\BOrth(\TT)$.
\end{proof}

\begin{corol}
  \label{corol:commdiagSpinPGO}
  The following diagram, in which the vertical maps are isomorphisms,
  is commutative with exact rows:
  \begin{equation}
    \label{eq:commdiagSpinO}
    \begin{aligned}
    \xymatrix{1\ar[r]& R_{Z_1/F}(\Bmu_2)\ar[d]_{\psi_\TT}\ar[r]&
      \BSpin(\aqp_1)\ar[r]^-{\chi'}\ar[d]_{\psi_\TT}&
      \BPGO^+(\aqp_1)\ar[r]&1\\
      1\ar[r]&\BZO(\TT)\ar[r]&\BOrth(\TT)\ar[r]&
      \BPGO(\TT)\ar[r]\ar[u]_{\pi_\TT}&1}
    \end{aligned}
  \end{equation}
\end{corol}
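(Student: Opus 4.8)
The plan is to assemble the diagram entirely from results already in hand, so that the only real work is to check that the two squares commute and that the left-hand vertical arrow is well defined. The top row is the exact sequence~\eqref{eq:exseqBSpinBPGO} applied to $\aqp_1$ (whose centre is $Z_1$), and the bottom row is the top row of the commutative diagram~\eqref{eq:diagZOPGO}, which is exact. By Theorem~\ref{thm:SpinOTT} the middle vertical map $\psi_\TT\colon\BSpin(\aqp_1)\xrightarrow{\sim}\BOrth(\TT)$ is an isomorphism, and by Theorem~\ref{thm:psiTTrev} the right-hand vertical map $\pi_\TT\colon\BPGO(\TT)\xrightarrow{\sim}\BPGO^+(\aqp_1)$ is an isomorphism.

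First I would verify that the right-hand square commutes. For a commutative $F$-algebra $R$ and $\xi\in\BSpin(\aqp_1)(R)$ we have by definition $\psi_\TT(\xi)=\bigl(\chi_0(\xi),\varphi_+(\xi),\varphi_-(\xi)\bigr)$; passing through the canonical epimorphism $\BOrth(\TT)\to\BPGO(\TT)$ and then the first-component projection $\pi_\TT$ sends this triple to the class of $\chi_0(\xi)$ in $\BPGO^+(\aqp_1)(R)$. On the other hand, by construction the restriction of $\chi'$ to $\BSpin(\aqp_1)$ is $\chi_0\colon\BSpin(\aqp_1)\to\BGO^+(\aqp_1)$ followed by the canonical map $\BGO^+(\aqp_1)\to\BPGO^+(\aqp_1)$ (see \S\ref{subsec:ClGrp}), so $\chi'(\xi)$ is again the class of $\chi_0(\xi)$. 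Hence the square commutes.

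Next I would check that $\psi_\TT$ carries $R_{Z_1/F}(\Bmu_2)$ into $\BZO(\TT)$, so that the left-hand vertical arrow is defined. By~\eqref{eq:psiTTonZ}, $\psi_\TT$ maps $R_{Z_1/F}(\BGm)$ into $\BHomot(\TT)$; since $R_{Z_1/F}(\Bmu_2)\subset\BSpin(\aqp_1)$ and $\psi_\TT\bigl(\BSpin(\aqp_1)\bigr)=\BOrth(\TT)$ by Theorem~\ref{thm:SpinOTT}, it follows that $\psi_\TT\bigl(R_{Z_1/F}(\Bmu_2)\bigr)\subset\BHomot(\TT)\cap\BOrth(\TT)=\BZO(\TT)$. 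The left-hand square then commutes trivially, both composites being the restriction of $\psi_\TT$ to $R_{Z_1/F}(\Bmu_2)$ followed by inclusion.

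Finally, because the middle and right-hand verticals are isomorphisms and the right-hand square commutes, $\psi_\TT$ maps $\ker\bigl(\chi'\colon\BSpin(\aqp_1)\to\BPGO^+(\aqp_1)\bigr)=R_{Z_1/F}(\Bmu_2)$ isomorphically onto $\ker\bigl(\BOrth(\TT)\to\BPGO(\TT)\bigr)=\BZO(\TT)$; alternatively one invokes the five lemma. This shows all three vertical maps are isomorphisms and completes the proof. I do not expect a genuine obstacle here: the only point that demands attention is the bookkeeping in the right-hand square, namely identifying the definition of $\chi'$ on $\BSpin(\aqp_1)$ with the composite $\pi_\TT\circ\bigl(\BOrth(\TT)\to\BPGO(\TT)\bigr)\circ\psi_\TT$.
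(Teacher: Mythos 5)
Your proof is correct and follows the same route as the paper: assemble the two exact rows from~\eqref{eq:exseqBSpinBPGO} and~\eqref{eq:diagZOPGO}, use Theorems~\ref{thm:psiTTrev} and~\ref{thm:SpinOTT} for the middle and right vertical isomorphisms, and check commutativity from the definitions of $\chi'$, $\psi_\TT$ and $\pi_\TT$. You are in fact slightly more thorough than the paper's proof, which leaves implicit the verification that $\psi_\TT$ carries $R_{Z_1/F}(\Bmu_2)$ into $\BZO(\TT)$ and the deduction (via commutativity of the right square) that the left vertical is then forced to be an isomorphism.
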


\begin{proof}
  The upper sequence is~\eqref{eq:exseqBSpinBPGO}, and the lower
  sequence is from~\eqref{eq:diagZOPGO}. Commutativity of the right
  square follows from the definition of $\chi'$ as the composition of
  $\chi_0$ with the canonical map $\BGO^+(\aqp_1)\to\BPGO^+(\aqp_1)$,
  and bijectivity of the vertical maps is proved in
  Theorems~\ref{thm:psiTTrev} and \ref{thm:SpinOTT}.
\end{proof}

\subsection{Derived trialitarian triples}
\label{subsec:dertriple}

To every trialitarian triple $\TT=(\aqp_1,\aqp_2,\aqp_3,\varphi_0)$ we
attach in this subsection two derived trialitarian triples
\[
  \partial\TT = (\aqp_2,\aqp_3,\aqp_1, \varphi'_0)
  \quad\text{and}\quad
  \partial^2\TT = (\aqp_3,\aqp_1,\aqp_2,\varphi''_0)
\]
in such a way that for every composition $\Comp$ of quadratic spaces
of dimension~$8$
\[
  \partial\End(\Comp)=\End(\partial\Comp) \quad\text{and}\quad
  \partial^2\End(\Comp) = \End(\partial^2\Comp).
\]
The two components of the isomorphisms
\[
  \varphi_0'\colon \Clqp(\aqp_2)\to \aqp_3\times\aqp_1
  \quad\text{and}\quad
  \varphi_0''\colon\Clqp(\aqp_3)\to \aqp_1\times\aqp_2
\]
are determined as lifts (in the sense of Definition~\ref{defn:lift})
of Lie algebra homomorphisms
\[
  \theta'_+\colon\pgo(\aqp_2)\to\pgo(\aqp_3)
  \quad\text{and}\quad
  \theta''_+\colon\pgo(\aqp_3)\to\pgo(\aqp_1),
\]
\[
  \theta'_-\colon\pgo(\aqp_2)\to\pgo(\aqp_1)
  \quad\text{and}\quad
  \theta''_-\colon\pgo(\aqp_3)\to\pgo(\aqp_2).
\]

Our main result is the following:

\begin{thm}
  \label{thm:trial5rev}
  Let $\TT=(\aqp_1,\aqp_2,\aqp_3,\varphi_0)$ be a trialitarian triple
  over an arbitrary field $F$, and let
  \[
    \theta_+\colon\pgo(\aqp_1)\to\pgo(\aqp_2)
    \quad\text{and}\quad
    \theta_-\colon\pgo(\aqp_1)\to\pgo(\aqp_3)
  \]
  denote the Lie algebra homomorphisms induced (as per
  Definition~\ref{defn:lift}) by the two components of $\varphi_0$:
  \[
    \varphi_+\colon \Clqp_+(\aqp_1)\to \aqp_2
    \quad\text{and}\quad
    \varphi_-\colon \Clqp_-(\aqp_1)\to \aqp_3.
  \]
  The homomorphisms $\theta_+$ and $\theta_-$ are isomorphisms, and
  the following Lie algebra homomorphisms are liftable:
  \[
    \theta'_+=\theta_-\circ\theta_+^{-1},\qquad
    \theta'_-=\theta_+^{-1},\qquad
    \theta''_+=\theta_-^{-1},\qquad
    \theta''_-=\theta_+\circ\theta_-^{-1}.
  \]
  Moreover, $\theta'_+$ and $\theta'_-$ on one side, and $\theta''_+$
  and $\theta''_-$ on the other side, are of opposite signs (see
  Definition~\ref{defn:lift}). 
\end{thm}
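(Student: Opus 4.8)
The plan is to establish the three assertions of Theorem~\ref{thm:trial5rev} in turn, using reduction to the split case and the functoriality of the $\End$ construction.

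\textbf{Step 1: $\theta_+$ and $\theta_-$ are isomorphisms.} Since $\varphi_+\colon\Clqp_+(\aqp_1)\to\aqp_2$ and $\varphi_-\colon\Clqp_-(\aqp_1)\to\aqp_3$ are isomorphisms of algebras with quadratic pair (they are the components of the isomorphism $\varphi_0$), they induce isomorphisms on the groups of projective similitudes and hence on the Lie algebras $\pgo$; more precisely, by the description of the induced map $\theta$ in diagram~\eqref{eq:commdiagxcliehomo}, the map $\theta_+$ factors through $\dot\chi'\colon\xclie(\aqp_1)\to\pgo(\aqp_1)$ composed with $\varphi_+$ on $\xclie$, and since $\varphi_+$ is an algebra isomorphism its restriction to $\xclie(\aqp_1)$ is a Lie algebra isomorphism onto $\go(\aqp_2)$. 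Passing to the quotients by $F$ (respectively to the image of $Z_1$), one sees $\theta_+$ and $\theta_-$ are isomorphisms. I expect this step to be essentially formal.

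\textbf{Step 2: liftability of $\theta'_\pm$ and $\theta''_\pm$.} By Corollary~\ref{corol:trial3}, liftability descends from a Galois extension, so it suffices to treat the split case. Over a separable closure we may write $\TT\simeq\End(\Comp)$ for a composition $\Comp=\bigl((V_1,q_1),(V_2,q_2),(V_3,q_3),*_3\bigr)$ of dimension~$8$ by Theorem~\ref{thm:splitrial}. The point is that for $\End(\Comp)$, the isomorphism $C_0(\alpha')$ of~\eqref{eq:alpha'} attached to $\partial\Comp$ already provides the isomorphism $\varphi'_0\colon\Clqp(\aqp_2)\to\aqp_3\times\aqp_1$, and its two components $C_+(\alpha')\colon C_0(V_2,q_2)\to\End V_3$ and $C_-(\alpha')\colon C_0(V_2,q_2)\to\End V_1$ are honest homomorphisms of algebras with quadratic pair. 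So I would identify the Lie algebra homomorphisms these induce and check they equal $\theta_-\circ\theta_+^{-1}$ and $\theta_+^{-1}$ respectively. The cleanest route is via $\pgo(\Comp)$: Proposition~\ref{prop:pgoComp} realizes $\pgo(\Comp)$ inside $\pgo(q_1)\times\pgo(q_2)\times\pgo(q_3)$ as triples satisfying the local similitude condition~\eqref{eq:defnlocsim}, and the projections onto the three factors are compatible with $\partial$ by Proposition~\ref{prop:Liesimdef2}. Tracing through the definitions, the composite ``project to factor $1$, then invert, then project back to factor $2$'' on $\pgo(\Comp)$ is precisely $\theta_+^{-1}$, and similarly for the others; since $C_+(\alpha')$, $C_-(\alpha')$ realize these composites as genuine algebra homomorphisms, the corresponding Lie algebra maps are liftable. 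Then Theorem~\ref{thm:lift} guarantees the lift is unique, and Corollary~\ref{corol:trial3} pushes liftability back down to $F$.

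\textbf{Step 3: opposite signs.} For $\End(\Comp)$, the polarizations of $(V_1,q_1)$, $(V_2,q_2)$, $(V_3,q_3)$ coming from $C_0(\alpha)$, $C_0(\alpha')$, $C_0(\alpha'')$ (Remark~\ref{rem:polarcomp}) are fixed, and with these conventions $C_+(\alpha')\colon C_0(V_2,q_2)\to\End V_3$ factors through $z_+$ while $C_-(\alpha')\colon C_0(V_2,q_2)\to\End V_1$ factors through $z_-$; hence $\theta'_+=\theta_-\circ\theta_+^{-1}$ has the $+$ sign and $\theta'_-=\theta_+^{-1}$ has the $-$ sign, and they are opposite. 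The same argument applied to $\alpha''$ gives that $\theta''_+$ and $\theta''_-$ are of opposite signs. This requires being careful that the sign is intrinsic to the Lie algebra homomorphism (Theorem~\ref{thm:lift} together with Definition~\ref{defn:lift}), so that the conclusion drawn in the split case is valid in general.

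\textbf{Main obstacle.} The genuinely delicate part is Step~2: matching the abstractly-defined composites $\theta_-\circ\theta_+^{-1}$, $\theta_+^{-1}$, $\theta_-^{-1}$, $\theta_+\circ\theta_-^{-1}$ with the concrete maps induced by $C_\pm(\alpha')$ and $C_\pm(\alpha'')$ requires carefully unwinding how the shift $\partial$ on compositions interacts with the identification of $C(\aqp_i)$ with even Clifford algebras and with the $\xclie$–$\pgo$ machinery of \S\ref{subsec:xclie}. The bookkeeping of which factor of $\pgo(q_1)\times\pgo(q_2)\times\pgo(q_3)$ corresponds to which component of which Clifford algebra, and the interplay with the cyclic relabeling in $\partial\Comp$, is where the proof has real content; once that dictionary is set up, liftability and the sign computation follow mechanically.
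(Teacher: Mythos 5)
Your Steps 2 and 3 are essentially the paper's route: reduce to the split case $\TT=\End(\Comp)$ via Corollary~\ref{corol:trial3}, realize $\theta'_\pm$, $\theta''_\pm$ as the maps induced by $C_\pm(\alpha')$, $C_\pm(\alpha'')$ using the dictionary between $\pgo(\Comp)$ and the three projections onto $\pgo(q_i)$, check signs via Remark~\ref{rem:polarcomp}, and lean on the uniqueness Theorem~\ref{thm:lift}. The ``dictionary'' you defer is exactly Lemma~\ref{lem:dertri1} and Proposition~\ref{prop:dertri} in the paper, so your sketch identifies the correct lemma-level structure even though you do not carry out the computation.

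However, Step~1 contains a genuine error. You assert that since $\varphi_+$ is an algebra isomorphism, ``its restriction to $\xclie(\aqp_1)$ is a Lie algebra isomorphism onto $\go(\aqp_2)$.'' This is false: $\xclie(\aqp_1)$ sits inside the \emph{whole} Clifford algebra $C(\aqp_1)=C_+(\aqp_1)\times C_-(\aqp_1)$, not inside the $+$ component. The map $\varphi_+$ entering diagram~\eqref{eq:commdiagxcliehomo} is the composite $\pi_+$ followed by the isomorphism $\Clqp_+(\aqp_1)\xrightarrow{\sim}\aqp_2$, and $\pi_+$ kills $z_-\in Z_1\subset\xclie(\aqp_1)$, so the restriction is not injective. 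A dimension count confirms this: $\dim\xclie(\aqp_1)=m(2m-1)+2=30$ while $\dim\go(\aqp_2)=m(2m-1)+1=29$, so there cannot be an isomorphism at the $\xclie\to\go$ level. The induced map $\theta_+$ on the quotients $\pgo$ is indeed an isomorphism (both have dimension $28$), but this requires an argument; the paper obtains it indirectly in Lemma~\ref{lem:dertri1} by identifying $\theta_+=\pi_2\circ\pi_1^{-1}$ and $\theta_-=\pi_3\circ\pi_1^{-1}$, where the projections $\pi_i\colon\pgo(\Comp)\to\pgo(q_i)$ are shown to be isomorphisms as differentials of $\pi_\TT$, $\pi_{\partial\TT}\circ\partial$, $\pi_{\partial^2\TT}\circ\partial^2$ --- ultimately resting on Theorem~\ref{thm:psiTTrev}. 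Your Step~1 needs to be replaced by some such argument; the formal reasoning you give does not go through.
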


Corollary~\ref{corol:trial3} shows that we may extend scalars to a
Galois extension of $F$ in order to show that a Lie algebra
homomorphism is liftable. We may thus reduce to split trialitarian
triples, i.e., triples of the form $\End(\Comp)$. We investigate this
case first. The proof of Theorem~\ref{thm:trial5rev} will quickly
follow after~\eqref{eq:diagtrial}.
\medbreak

Let $\Comp=\bigl((V_1,q_1),\,(V_2,q_2),\,(V_3,q_3),\,*_3\bigr)$ be a
composition of quadratic spaces of dimension~$8$ over an arbitrary
field~$F$. Recall from Proposition~\ref{prop:pgoComp} that the Lie
algebra $\pgo(\Comp)$ can be described as a subalgebra of
$\pgo(q_1)\times\pgo(q_2)\times \pgo(q_3)$. Let
\[
  \pi_1\colon\pgo(\Comp)\to\pgo(q_1),\qquad
  \pi_2\colon\pgo(\Comp)\to\pgo(q_2),\qquad
  \pi_3\colon\pgo(\Comp)\to\pgo(q_3)
\]
denote the projections on the three components, and let
\[
  \theta_+\colon\pgo(q_1)\to\pgo(q_2)
  \qquad\text{and}\qquad
  \theta_-\colon\pgo(q_1)\to\pgo(q_3)
\]
be the Lie algebra homomorphisms induced by the two components of
$C_0(\alpha)$,
\[
  C_+(\alpha)\colon C_+(V_1,q_1)\to \End(V_2)
  \qquad\text{and}\qquad
  C_-(\alpha)\colon C_-(V_1,q_1)\to \End(V_3).
\]

\begin{lemma}
  \label{lem:dertri1}
  The following diagram, where all the maps are isomorphisms, is
  commutative:
  \begin{equation}
    \label{eq:diagdertri1}
    \begin{aligned}
    \xymatrix{
    \pgo(q_1) \ar[rr]^{\theta_+}
    \ar[dddr]_{\theta_-}
    && \pgo(q_2)
    \\
    &\pgo(\Comp)\ar[ul]_{\pi_1}
    \ar[ur]^{\pi_2}
      \ar[dd]_<<<<<<{\pi_3}&\\
    &&\\
    &\pgo(q_3)&
  }
  \end{aligned}
  \end{equation}
\end{lemma}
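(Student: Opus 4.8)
\textbf{Proof plan for Lemma~\ref{lem:dertri1}.}

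The plan is to reduce everything to the defining relation~\eqref{eq:defnlocsim} for the Lie algebra $\pgo(\Comp)$ together with the characterization of the maps $\theta_+$, $\theta_-$ as the differentials attached to $C_+(\alpha)$ and $C_-(\alpha)$ via diagram~\eqref{eq:commdiagxcliehomo}. First I would unwind the definitions: an element of $\pgo(\Comp)$ is a class $(g_1,g_2,g_3,\lambda_3)+\homot(\Comp)$, which Proposition~\ref{prop:pgoComp} identifies with the triple $(g_1+F,g_2+F,g_3+F)\in\pgo(q_1)\times\pgo(q_2)\times\pgo(q_3)$ subject to~\eqref{eq:defnlocsim} for some $\lambda_3\in F$; the projections $\pi_1,\pi_2,\pi_3$ are then literally the coordinate projections. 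So commutativity of the diagram amounts to the single assertion that for every such admissible $(g_1,g_2,g_3,\lambda_3)$ one has $\theta_+(g_1+F)=g_2+F$ and $\theta_-(g_1+F)=g_3+F$.

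Next I would pin down $\theta_+$ (the argument for $\theta_-$ being identical after replacing $V_2$ by $V_3$ and $*_3$ by the appropriate derived product). By Definition~\ref{defn:lift} and diagram~\eqref{eq:commdiagxcliehomo}, $\theta_+$ is induced by the homomorphism of algebras with quadratic pair $C_+(\alpha)\colon\Clqp_+(\aqp_1)\to(\End V_2,\sigma_{b_2},\strf_{q_2})$: concretely, given $h\in\pgo(q_1)$, lift it to $\xi\in\xclie(q_1)\subset C(\aqp_1)=C_0(V_1,q_1)$ with $\dot\chi'(\xi)=h$, and then $\theta_+(h)=C_+(\alpha)(\xi)+F$. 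Using the standard identification $C(\End V_1,\sigma_{b_1},\strf_{q_1})=C_0(V_1,q_1)$, I would take $\xi$ of the form $c(g_1)=\dot\chi_0$-preimage built from $g_1\in\go(q_1)$; since $\dot\chi_0(\xi)$ can be taken to be $g_1$ itself (Proposition~\ref{prop:xclie1new} and the exact sequence~\eqref{eq:exseqxclienew}), commutativity reduces to showing that $C_+(\alpha)$ carries this Clifford-algebra element of $C_0(V_1,q_1)$ to an endomorphism of $V_2$ congruent to $g_2$ modulo~$F$. This is exactly the infinitesimal shadow of the computation already carried out in the proof of Proposition~\ref{prop:simEndComp}: there, working over $F[\varepsilon]$, one shows $\psi_\TT(\xi)=(g_1,g_2,g_3,\ldots)$ lies in $\BGO(\Comp)$, and the identities $\ell_{g_1(x_1)}g_2=\mu(g_2)g_3\ell_{x_1}$, $r_{g_1(x_1)}g_3=\mu(g_3)g_2r_{x_1}$ differentiate to precisely the relation~\eqref{eq:defnlocsim} linking $C_+(\alpha)(\xi)$ (playing the role of $g_2$) and $C_-(\alpha)(\xi)$ (playing the role of $g_3$) with $g_1$. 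I would either invoke that computation directly over dual numbers or redo the two-line linearization of~\eqref{eq:trial20}-style identities to see that $C_+(\alpha)(\xi)-g_2\in F$ and $C_-(\alpha)(\xi)-g_3\in F$.

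The main obstacle is bookkeeping rather than mathematics: one must make sure the element of $\xclie(q_1)$ chosen to lift $g_1+F\in\pgo(q_1)$ is compatible with the element used in~\eqref{eq:defnlocsim}, i.e.\ that $\dot\chi_0(\xi)=g_1$ and not $g_1$ up to a scalar, and that the scalar ambiguities in $g_2$, $g_3$ (the ``$+F$'') exactly match the scalar ambiguity in $\xi$ coming from $Z=\ker\dot\chi'$ (Proposition~\ref{prop:xclie2new}). Once this is tracked carefully, commutativity $\theta_+=\pi_2\circ\pi_1^{-1}$ and $\theta_-=\pi_3\circ\pi_1^{-1}$ is immediate, and bijectivity of all maps in the diagram follows: $\pi_1$, $\pi_2$, $\pi_3$ are isomorphisms because $\pgo(\Comp)$, $\pgo(q_1)$, $\pgo(q_2)$, $\pgo(q_3)$ all have dimension $m(2m-1)=28$ (Propositions~\ref{prop:Lie1} and~\ref{prop:pgoComp}) and each $\pi_i$ is surjective — surjectivity of $\pi_1$ being a restatement that $\theta_+$, $\theta_-$ are defined on all of $\pgo(q_1)$, which holds since $C_+(\alpha)$, $C_-(\alpha)$ are genuine homomorphisms of algebras with quadratic pair by Proposition~\ref{prop:compdim8} — and a surjective map between equidimensional Lie algebras over a field is an isomorphism. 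Consequently $\theta_+$ and $\theta_-$ are isomorphisms as well.
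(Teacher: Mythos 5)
Your plan for the commutativity half is sound and parallels the paper's: both routes pass through the surjection $\dot\chi'\colon\xclie(q_1)\to\pgo(q_1)$, read off the images of a lift $\xi$ under $C_+(\alpha)$ and $C_-(\alpha)$, and compare against the relation~\eqref{eq:defnlocsim}. The paper packages this as the homomorphism $\Psi_\Comp=\bigl(\dot\chi',\overline{C_+(\alpha)},\overline{C_-(\alpha)}\bigr)\colon\xclie(q_1)\to\pgo(\Comp)$ and uses its surjectivity to cancel it from the equalities $\overline{C_\pm(\alpha)}=\theta_\pm\circ\dot\chi'$, whereas you propose a direct element-by-element verification via the dual-numbers linearization of the identities in the proof of Proposition~\ref{prop:simEndComp}; these amount to the same computation, and the bookkeeping concern you flag (that the scalar indeterminacy of $\xi$ along $Z=\ker\dot\chi'$ should match the ``$+F$'' on both sides) is exactly handled by Proposition~\ref{prop:xclie2new}. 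So that part is fine.

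The gap is in your bijectivity argument. You assert $\dim\pgo(\Comp)=m(2m-1)=28$ citing Propositions~\ref{prop:Lie1} and~\ref{prop:pgoComp}, but neither computes $\dim\pgo(\Comp)$: Proposition~\ref{prop:Lie1} concerns the $\pgo$ of a single algebra with quadratic pair, and Proposition~\ref{prop:pgoComp} only identifies $\pgo(\Comp)$ as a subalgebra of the product, with no dimension statement. Moreover, your argument that $\pi_1$ is surjective because ``$\theta_\pm$ are defined on all of $\pgo(q_1)$'' conflates two different things: well-definedness of $\theta_+$ as a map out of $\pgo(q_1)$ does not by itself guarantee that an arbitrary $h\in\pgo(q_1)$ extends to a triple in $\pgo(\Comp)$ — that requires verifying $\bigl(\dot\chi_0(\xi), C_+(\alpha)(\xi), C_-(\alpha)(\xi)\bigr)$ actually satisfies~\eqref{eq:defnlocsim}, which is the infinitesimal computation again (or the well-definedness of $\Psi_\Comp$), not a restatement. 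Even granting surjectivity of $\pi_1$, you have not shown surjectivity of $\pi_2$ and $\pi_3$, and without the equidimensionality your final step (surjection between equidimensional spaces) has nothing to bite on. The paper sidesteps all of this cleanly: $\pi_1$ is the differential of $\pi_{\End(\Comp)}\colon\BPGO(\Comp)\to\BPGO^+(q_1)$, which is an isomorphism of \emph{smooth} group schemes by Theorem~\ref{thm:psiTTrev} (after the identification of Proposition~\ref{prop:simEndComp}), hence $\pi_1$ is an isomorphism of Lie algebras; likewise for $\pi_2$ and $\pi_3$ using the shift. You should invoke Theorem~\ref{thm:psiTTrev} for this, which is available at this point, rather than a dimension count you cannot yet substantiate.
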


\begin{proof}
  First, observe that $\pi_1$ is the differential of the morphism
  $\pi_{\End(\Comp)}$ under the identification $\BPGO(\Comp) =
  \BPGO\bigl(\End(\Comp)\bigr)$ of
  Proposition~\ref{prop:simEndComp}. The morphism $\pi_{\End(\Comp)}$
  is an isomorphism by Theorem~\ref{thm:psiTTrev}, hence $\pi_1$ is an
  isomorphism. Similarly, $\pi_2$ is the differential of the
  isomorphism obtained by the composition
  \[
    \BPGO(\Comp)\xrightarrow{\partial}\BPGO(\partial\Comp)
    \xrightarrow{\pi_{\End(\partial\Comp)}} \BPGO^+(q_2),
  \]
  hence $\pi_2$ is an isomorphism. Likewise, $\pi_3$ is an
  isomorphism.
  
  Now, recall from~\eqref{eq:commdiagxcliehomo} that $\theta_+$ and
  $\theta_-$ are defined by the following commutative diagrams, where
  $\overline{C_+(\alpha)}$ and $\overline{C_-(\alpha)}$ are obtained
  by composing $C_+(\alpha)$ and $C_-(\alpha)$ with the canonical
  homomorphisms $\go(q_2)\to\pgo(q_2)$ or $\go(q_3)\to\pgo(q_3)$:
  \[
    \xymatrix{
      \xclie(q_1)\ar[r]^{\dot\chi'}
      \ar[d]_{\overline{C_+(\alpha)}}&\pgo(q_1)\ar[dl]^{\theta_+}\\
      \pgo(q_2)&
    }
    \qquad
    \xymatrix{
      \xclie(q_1)\ar[r]^{\dot\chi'}
      \ar[d]_{\overline{C_-(\alpha)}}&\pgo(q_1)\ar[dl]^{\theta_-}\\
      \pgo(q_3)&
    }
  \]
  Therefore,
  \begin{equation}
    \label{eq:dertri1}
    \overline{C_+(\alpha)}=\theta_+\circ\dot\chi'
    \qquad\text{and}\qquad
    \overline{C_-(\alpha)}=\theta_-\circ\dot\chi'.
  \end{equation}

  Next, define a Lie algebra homomorphism
\[
  \Psi_\Comp\colon\xclie(q_1)\to\pgo(\Comp)
\]
by composing
the differential $\dot\psi_{\End(\Comp)}\colon \xclie(q_1)\to
\go(\Comp)$ of the morphism $\psi_{\End(\Comp)}\colon \BOmega(q_1) \to
\BGO(\Comp)$ of \S\ref{subsec:simTT} with the canonical map
$\go(\Comp)\to\pgo(\Comp)$. Explicitly,
\[
  \Psi_\Comp(\xi) = (\dot\chi_0(\xi)+F,\; C_+(\alpha)(\xi)+F,\;
  C_-(\alpha)(\xi)+F)\qquad\text{for $\xi\in\xclie(q_1)$,}
\]
or, since
Proposition~\ref{prop:xclie2new} shows that $\dot\chi'(\xi) =
\dot\chi_0(\xi)+F$,
\[
  \Psi_\Comp(\xi) = \bigl(\dot\chi'(\xi),\;
  \overline{C_+(\alpha)}(\xi),\; 
  \overline{C_-(\alpha)}(\xi)\bigr)
  \qquad\text{for $\xi\in\xclie(q_1)$.}
\]
It follows
from the definitions that the following diagrams are commutative:
\[
  \xymatrix{
    \xclie(q_1)\ar[dr]^{\Psi_\Comp}\ar[r]^{\dot\chi'}
    \ar[d]_{\overline{C_+(\alpha)}}&
    \pgo(q_1)\\
    \pgo(q_2)&\pgo(\Comp)\ar[u]_{\pi_1} \ar[l]^{\pi_2}}
  \qquad\qquad
  \xymatrix{
    \xclie(q_1)\ar[dr]^{\Psi_\Comp}\ar[r]^{\dot\chi'}
    \ar[d]_{\overline{C_-(\alpha)}}&
    \pgo(q_1)\\
    \pgo(q_3)&\pgo(\Comp)\ar[u]_{\pi_1} \ar[l]^{\pi_3}}
\]
Therefore,
\[
  \overline{C_+(\alpha)}=\pi_2\circ\Psi_\Comp,
  \qquad
  \dot\chi'=\pi_1\circ\Psi_\Comp,
  \qquad
  \overline{C_-(\alpha)}=\pi_3\circ\Psi_\Comp.
\]
Substituting in~\eqref{eq:dertri1} yields
\[
  \pi_2\circ\Psi_\Comp=\theta_+\circ\pi_1\circ\Psi_\Comp
  \qquad\text{and}\qquad
  \theta_-\circ\pi_1\circ\Psi_\Comp=
  \pi_3\circ\Psi_\Comp.
\]
We know from Proposition~\ref{prop:xclie2new} that $\dot\chi'$ is
surjective, hence $\Psi_\Comp$ also is surjective since $\pi_1$ is an
isomorphism. Therefore, the last displayed equations yield
$\pi_2=\theta_+\circ\pi_1$ and $\pi_3=\theta_-\circ\pi_1$, proving the
commutativity of diagram~\eqref{eq:diagdertri1}. Bijectivity of
$\theta_+$ and $\theta_-$ follows, since
$\theta_+=\pi_2\circ\pi_1^{-1}$ and $\theta_-=\pi_3\circ\pi_1^{-1}$.
\end{proof}

We next apply Lemma~\ref{lem:dertri1} to the derived compositions
$\partial\Comp$ and $\partial^2\Comp$. Recall from~\eqref{eq:alpha'}
and \eqref{eq:alpha''} the trialitarian triples
\[
  \End(\partial\Comp) = \bigl((\End(V_2),\sigma_{b_2}, \strf_{q_2}),\,
  (\End(V_3),\sigma_{b_3}, \strf_{q_3}),\,
  (\End(V_1), \sigma_{b_1}, \strf_{q_1}),\,
  C_0(\alpha')\bigr)
\]
and
\[
  \End(\partial^2\Comp) = \bigl((\End(V_3),\sigma_{b_3},
  \strf_{q_3}),\, 
  (\End(V_1),\sigma_{b_1}, \strf_{q_1}),\,
  (\End(V_2), \sigma_{b_2}, \strf_{q_2}),\,
  C_0(\alpha'')\bigr).
\]
Let
\[
  \theta'_+\colon \pgo(q_2)\to\pgo(q_3)
  \qquad\text{and}\qquad
  \theta'_-\colon \pgo(q_2)\to\pgo(q_1)
\]
be the Lie algebra isomorphisms induced by $C_+(\alpha')$ and
$C_-(\alpha')$ respectively, and
\[
  \theta''_+\colon\pgo(q_3)\to\pgo(q_1)
  \qquad\text{and}\qquad
  \theta''_-\colon\pgo(q_3)\to\pgo(q_2)
\]
those induced by $C_+(\alpha'')$ and $C_-(\alpha'')$. Let also
\[
  \pi'_1\colon\pgo(\partial\Comp)\to\pgo(q_2),\qquad
  \pi'_2\colon\pgo(\partial\Comp)\to\pgo(q_3),\qquad
  \pi'_3\colon\pgo(\partial\Comp)\to\pgo(q_1)
\]
and
\[
  \pi''_1\colon\pgo(\partial^2\Comp)\to\pgo(q_3),\qquad
  \pi''_2\colon\pgo(\partial^2\Comp)\to\pgo(q_1),\qquad
  \pi''_3\colon\pgo(\partial^2\Comp)\to\pgo(q_2)
\]
be the projections on the various components of $\pgo(\partial\Comp)$
and $\pgo(\partial^2\Comp)$. Lemma~\ref{lem:dertri1} yields
\begin{equation}
  \label{eq:dertri2}
  \theta'_+=\pi'_2\circ{\pi'_1}^{-1},\qquad
  \theta'_-=\pi'_3\circ{\pi'_1}^{-1},\qquad
  \theta''_+=\pi''_2\circ{\pi''_1}^{-1},\qquad
  \theta''_-=\pi''_3\circ{\pi''_1}^{-1}.
\end{equation}

\begin{prop}
  \label{prop:dertri}
  The following equations hold:
  \[
    \theta'_+=\theta_-\circ\theta_+^{-1},\qquad
    \theta'_-=\theta_+^{-1},\qquad
    \theta''_+=\theta_-^{-1},\qquad
    \theta''_-=\theta_+\circ\theta_-^{-1}.
  \]
\end{prop}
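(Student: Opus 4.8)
The plan is to reduce everything to Lemma~\ref{lem:dertri1} and its analogues~\eqref{eq:dertri2}, by making the cyclic shift explicit at the level of the Lie algebras $\pgo(\Comp)$, $\pgo(\partial\Comp)$, $\pgo(\partial^2\Comp)$.

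First I would promote the shift operator to an isomorphism of Lie algebras. By Proposition~\ref{prop:Liesimdef2}, mapping $(g_1,g_2,g_3,\lambda_3)\in\go(\Comp)$ to $(g_2,g_3,g_1,\lambda_1)$ defines a homomorphism $\partial\colon\go(\Comp)\to\go(\partial\Comp)$, and iterating Proposition~\ref{prop:Liesimdef2} three times shows the composite $\go(\Comp)\to\go(\partial\Comp)\to\go(\partial^2\Comp)\to\go(\Comp)$ is the identity, so $\partial$ is bijective. A direct check on the explicit homotheties shows $\partial$ carries $\homot(\Comp)$ onto $\homot(\partial\Comp)$, hence it descends to an isomorphism $\partial\colon\pgo(\Comp)\xrightarrow{\sim}\pgo(\partial\Comp)$, and likewise $\partial^2\colon\pgo(\Comp)\xrightarrow{\sim}\pgo(\partial^2\Comp)$. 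Under the identifications of Proposition~\ref{prop:pgoComp}, the very formula defining $\partial$ gives $\pi'_1\circ\partial=\pi_2$, $\pi'_2\circ\partial=\pi_3$, $\pi'_3\circ\partial=\pi_1$, and $\pi''_1\circ\partial^2=\pi_3$, $\pi''_2\circ\partial^2=\pi_1$, $\pi''_3\circ\partial^2=\pi_2$.

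The four identities are then a formal computation. Combining~\eqref{eq:dertri2} with the relations above,
\[
  \theta'_+=\pi'_2\circ(\pi'_1)^{-1}=(\pi'_2\circ\partial)\circ(\pi'_1\circ\partial)^{-1}=\pi_3\circ\pi_2^{-1},
\]
and substituting $\pi_2=\theta_+\circ\pi_1$, $\pi_3=\theta_-\circ\pi_1$ from Lemma~\ref{lem:dertri1} yields $\theta'_+=\theta_-\circ\theta_+^{-1}$. In the same way $\theta'_-=\pi_1\circ\pi_2^{-1}=\theta_+^{-1}$, $\theta''_+=\pi_1\circ\pi_3^{-1}=\theta_-^{-1}$, and $\theta''_-=\pi_2\circ\pi_3^{-1}=\theta_+\circ\theta_-^{-1}$.

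The only delicate point — and the main, if modest, obstacle — is the bookkeeping in the second paragraph: one must be careful that the concrete description of $\pgo(\partial\Comp)$ as a subalgebra of $\pgo(q_2)\times\pgo(q_3)\times\pgo(q_1)$ furnished by Proposition~\ref{prop:pgoComp} applied to $\partial\Comp$ is precisely the one compatible with the shift $\partial$ coming from Proposition~\ref{prop:Liesimdef2}, so that the stated composition rules for the $\pi'_i$ and $\pi''_i$ hold on the nose. Once this compatibility is settled, the proposition drops out by the diagram chase above, in the same spirit as the proof of Lemma~\ref{lem:dertri1}.
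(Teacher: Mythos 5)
Your proof is correct and takes essentially the same route as the paper: both rely on the commutativity relations between the projections $\pi_i$, $\pi'_i$, $\pi''_i$ and the shift $\partial$, then substitute into~\eqref{eq:dertri2} and apply Lemma~\ref{lem:dertri1}. You are somewhat more explicit than the paper about why $\partial$ descends to an isomorphism on $\pgo$ (checking it preserves $\homot$), but the substance of the argument is identical.
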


\begin{proof}
  The switch maps $\partial$ fit in the following
  commutative diagrams:
  \[
    \xymatrix{
      \pgo(\Comp) \ar[rr]^{\partial}
      \ar[dr]^{\pi_1}
      && \pgo(\partial\Comp)
      \ar[dl]_{\pi'_3}
      \ar[dddl]^{\partial}
    \\
    &\pgo(q_1)&\\
    &&\\
    &\pgo(\partial^2\Comp)\ar[uu]_{\pi''_2}\ar[uuul]^{\partial}&
  }
  \qquad
  \xymatrix{
      \pgo(\Comp) \ar[rr]^{\partial}
      \ar[dr]^{\pi_2}
      && \pgo(\partial\Comp)
      \ar[dl]_{\pi'_1}
      \ar[dddl]^{\partial}
    \\
    &\pgo(q_2)&\\
    &&\\
    &\pgo(\partial^2\Comp)\ar[uu]_{\pi''_3}\ar[uuul]^{\partial}&
  }
\]
and
\[
  \xymatrix{
      \pgo(\Comp) \ar[rr]^{\partial}
      \ar[dr]^{\pi_3}
      && \pgo(\partial\Comp)
      \ar[dl]_{\pi'_2}
      \ar[dddl]^{\partial}
    \\
    &\pgo(q_3)&\\
    &&\\
    &\pgo(\partial^2\Comp)\ar[uu]_{\pi''_1}\ar[uuul]^{\partial}&
  }
\]
Substituting $\pi'_1=\pi_2\circ\partial^2$,
$\pi'_2=\pi_3\circ\partial^2$, $\pi'_3=\pi_1\circ\partial^2$ and
$\pi''_1=\pi_3\circ\partial$, $\pi''_2=\pi_1\circ\partial$,
$\pi''_3=\pi_2\circ\partial$ in~\eqref{eq:dertri2} yields
\[
  \theta'_+=\pi_3\circ\pi_2^{-1},\qquad
  \theta'_-=\pi_1\circ\pi_2^{-1},\qquad
  \theta''_+=\pi_1\circ\pi_3^{-1},\qquad
  \theta''_-=\pi_2\circ\pi_3^{-1}.
\]
The proposition follows by Lemma~\ref{lem:dertri1}.
\end{proof}

The maps $\theta_\pm$, $\theta'_\pm$, $\theta''_\pm$ thus fit in the
following commutative diagram, in which all the maps are isomorphisms:
\begin{equation}
  \label{eq:diagtrial}
  \begin{aligned}
  \xymatrix{
    \pgo(q_1) \ar@/^/[rr]^{\theta_+}
    \ar[dddr]^{\theta_-}
    && \pgo(q_2)\ar[ll]^{\theta'_-}
    \ar@/^/[dddl]^{\theta'_+}
    \\
    &\pgo(\Comp)\ar[ul]_{\pi_1}
    \ar[ur]^{\pi_2}
      \ar[dd]_<<<<<<{\pi_3}&\\
    &&\\
    &\pgo(q_3) \ar@/^/[uuul]^{\theta''_+}
    \ar[uuur]^{\theta''_-}&
  }
  \end{aligned}
\end{equation}

\begin{proof}[Proof of Theorem~\ref{thm:trial5rev}]
  Corollary~\ref{corol:trial3} shows that it suffices to prove the
  claim after a Galois scalar extension that splits the trialitarian
  triple $\TT$. We may thus assume that $\TT=\End(\Comp)$ for some
  composition $\Comp$ of quadratic spaces of dimension~$8$. Then
  Proposition~\ref{prop:dertri} shows that
  $\theta_-\circ\theta_+^{-1}$ and $\theta_+^{-1}$ (resp.\
  $\theta_-^{-1}$ and $\theta_+\circ\theta_-^{-1}$) are the Lie algebra
  homomorphisms induced by the isomorphisms $C_+(\alpha')$ and
  $C_-(\alpha')$ (resp.\ $C_+(\alpha'')$ and $C_-(\alpha'')$) of the
  trialitarian triple $\End(\partial\Comp)$ (resp.\
  $\End(\partial^2\Comp)$), hence they are liftable by
  definition. Moreover, $\theta_-\circ\theta_+^{-1}$ and
  $\theta_+^{-1}$ (resp.\   $\theta_-^{-1}$,
  $\theta_+\circ\theta_-^{-1}$) are of opposite signs, hence the proof
  is complete.
\end{proof}

\begin{definition}
  \label{defn:dertri}
  Given any trialitarian triple
  $\TT=(\aqp_1,\,\aqp_2,\,\aqp_3,\,\varphi_0)$ with Lie algebra
  isomorphisms
  \[
    \theta_+\colon\pgo(\aqp_1)\to\pgo(\aqp_2)
    \qquad\text{and}\qquad
    \theta_-\colon\pgo(\aqp_1)\to\pgo(\aqp_3)
  \]
  induced by the components $\varphi_+\colon\Clqp(\aqp_1)\to\aqp_2$
  and $\varphi_-\colon\Clqp(\aqp_1)\to\aqp_3$ of $\varphi_0$, the pair
  of opposite Lie algebra isomorphisms $(\theta'_+,\,\theta'_-)=
  (\theta_-\circ\theta_+^{-1},\,\theta_+^{-1})$ (resp.\
  $(\theta''_+,\,\theta''_-)=
  (\theta_-^{-1},\,\theta_+\circ\theta_-^{-1})$)
  lifts by 
  Theorem~\ref{thm:trial5rev} to an isomorphism
  \[
    \varphi_0'\colon\Clqp(\aqp_2)\to \aqp_3\times\aqp_1
    \qquad\qquad\text{(resp.\ $\varphi_0''\colon \Clqp(\aqp_3)\to
      \aqp_1\times\aqp_2$)}
  \]
  that defines a trialitarian triple
  \[
    \partial\TT = (\aqp_2,\,\aqp_3,\,\aqp_1,\,\varphi_0')
    \qquad\qquad\text{(resp.\ $\partial^2\TT =
      (\aqp_3,\,\aqp_1,\,\aqp_2,\,\varphi''_0)$)}.
  \]
  The trialitarian triples $\partial\TT$ and $\partial^2\TT$ are
  called the \emph{derived trialitarian triples} of $\TT$.

Note that $\theta'_-\circ{\theta'_+}^{-1}=\theta''_+$ and
${\theta'_+}^{-1} = \theta_-''$, hence $\partial(\partial\TT) =
\partial^2\TT$. Similarly, $\partial^2(\partial\TT) = \TT =
\partial(\partial^2\TT)$ and $\partial^2(\partial^2\TT)=\partial\TT$.

From the proof of Theorem~\ref{thm:trial5rev}, it is clear that
for every composition $\Comp$ of quadratic
spaces of dimension~$8$,
\[
  \partial\End(\Comp) = \End(\partial\Comp) \qquad\text{and}\qquad
  \partial^2\End(\Comp) = \End(\partial^2\Comp).
\]
\end{definition}

We next establish the functoriality of the $\partial$ operation.

\begin{prop}
  \label{prop:funcTT}
  If $\gamma=(\gamma_1,\gamma_2,\gamma_3)\colon\TT\to\widetilde\TT$ is
  an isomorphism of trialitarian triples, then
  $\partial\gamma:=(\gamma_2,\gamma_3,\gamma_1)$ is an isomorphism of
  trialitarian triples $\partial\TT\to\partial\widetilde\TT$.
\end{prop}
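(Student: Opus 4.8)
The plan is to reduce the statement to the already-established uniqueness of lifts (Theorem~\ref{thm:lift}) together with the compatibility between the $\partial$ operation on trialitarian triples and the $\partial$ operation on compositions. First I would spell out what needs to be checked: writing $\TT=(\aqp_1,\aqp_2,\aqp_3,\varphi_0)$ and $\widetilde\TT=(\tilde\aqp_1,\tilde\aqp_2,\tilde\aqp_3,\tilde\varphi_0)$, with derived triples $\partial\TT=(\aqp_2,\aqp_3,\aqp_1,\varphi_0')$ and $\partial\widetilde\TT=(\tilde\aqp_2,\tilde\aqp_3,\tilde\aqp_1,\tilde\varphi_0')$, the claim is that $(\gamma_2,\gamma_3,\gamma_1)$ is an isomorphism of trialitarian triples, i.e. that the square
\[
  \xymatrix{\Clqp(\aqp_2)\ar[r]^-{\varphi_0'}\ar[d]_{C(\gamma_2)}
    & \aqp_3\times\aqp_1\ar[d]^{\gamma_3\times\gamma_1}\\
    \Clqp(\tilde\aqp_2)\ar[r]^-{\widetilde\varphi_0'}
    & \tilde\aqp_3\times\tilde\aqp_1}
\]
commutes. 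Each $\gamma_i$ is already an isomorphism of algebras with quadratic pair by hypothesis, so the only content is the commutativity of this diagram.

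Next I would observe that both $(\gamma_3\times\gamma_1)\circ\varphi_0'$ and $\widetilde\varphi_0'\circ C(\gamma_2)$ are homomorphisms of algebras with quadratic pair from $\Clqp(\aqp_2)$ to $\tilde\aqp_3\times\tilde\aqp_1$; by Theorem~\ref{thm:lift} (uniqueness of lifts), once we know they induce the same pair of Lie algebra homomorphisms $\pgo(\aqp_2)\to\pgo(\tilde\aqp_3)\times\pgo(\tilde\aqp_1)$ on each component, they must be equal. So the task becomes: compute the Lie algebra homomorphism induced on $\pgo$ by each side and check agreement. By Definition~\ref{defn:dertri} the components of $\varphi_0'$ induce $\theta'_+=\theta_-\circ\theta_+^{-1}$ and $\theta'_-=\theta_+^{-1}$, where $\theta_\pm$ are the isomorphisms induced by the components of $\varphi_0$; similarly for $\widetilde\TT$ with tildes. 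On the other hand, the isomorphism $\gamma_i\colon\aqp_i\to\tilde\aqp_i$ of algebras with quadratic pair induces, by functoriality of $\pgo$, an isomorphism $\dot\gamma_i\colon\pgo(\aqp_i)\to\pgo(\tilde\aqp_i)$ (the differential of $\Int$), and the hypothesis that $\gamma$ is an isomorphism of trialitarian triples translates at the Lie algebra level into the naturality squares $\tilde\theta_+\circ\dot\gamma_1=\dot\gamma_2\circ\theta_+$ and $\tilde\theta_-\circ\dot\gamma_1=\dot\gamma_3\circ\theta_-$. These follow by taking the differential of the commuting square defining $\BAut(\TT)$-type compatibility, using that the map $\theta_\pm$ is defined (via~\eqref{eq:commdiagxcliehomo}) as the map induced on $\pgo$ by $\varphi_\pm$ and that $C(\gamma_1)$ restricts compatibly with $\gamma_2,\gamma_3$.

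From the two naturality relations one computes directly: the component $\theta'_+$-side of $(\gamma_3\times\gamma_1)\circ\varphi_0'$ induces $\dot\gamma_3\circ\theta'_+=\dot\gamma_3\circ\theta_-\circ\theta_+^{-1}=\tilde\theta_-\circ\dot\gamma_1\circ\theta_+^{-1}=\tilde\theta_-\circ\tilde\theta_+^{-1}\circ\dot\gamma_2=\tilde\theta'_+\circ\dot\gamma_2$, which is exactly the map induced by $\widetilde\varphi_0'\circ C(\gamma_2)$ on that component; the same computation with $\theta'_-=\theta_+^{-1}$ in place of $\theta'_+$ gives $\dot\gamma_1\circ\theta_+^{-1}=\tilde\theta_+^{-1}\circ\dot\gamma_2=\tilde\theta'_-\circ\dot\gamma_2$ on the other component. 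Hence both homomorphisms of algebras with quadratic pair induce the same Lie algebra homomorphism on each component, and also clearly have the same sign (the signs are matched since the sign assignment is preserved under the derivation construction and under isomorphism), so Theorem~\ref{thm:lift} forces them to coincide. The statement for $\partial^2$ then follows by applying $\partial$ twice (or by the identical argument with $\theta''_\pm$). I expect the main obstacle to be the bookkeeping in the previous paragraph: establishing cleanly that an isomorphism of trialitarian triples $\gamma$ induces, at the Lie algebra level, the naturality squares $\tilde\theta_\pm\circ\dot\gamma_1=\dot\gamma_{2,3}\circ\theta_\pm$ — this requires unwinding the definition of $\theta_\pm$ through~\eqref{eq:commdiagxcliehomo} and checking that $C(\gamma_1)$ intertwines $\dot\chi'$ for $\aqp_1$ and $\tilde\aqp_1$, which is routine but needs care about the extended Clifford Lie algebra $\xclie$.
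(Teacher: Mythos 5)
Your approach is essentially the one the paper uses: translate the isomorphism condition for trialitarian triples into the naturality squares $\tilde\theta_\pm\circ\dot\gamma_1 = \dot\gamma_{2,3}\circ\theta_\pm$ at the $\pgo$ level, derive the corresponding squares for $\theta'_\pm$ by a short computation, and then invoke uniqueness of lifts (Theorem~\ref{thm:lift}) to go back up to an equality of algebra homomorphisms. The paper's own proof is terser — it does not spell out the appeal to Theorem~\ref{thm:lift} or the passage between $C(\gamma_i)$ and $\dot\gamma_i$ — but it carries out exactly the same formal manipulation of the identities $\gamma_2\circ\theta_+ = \tilde\theta_+\circ\gamma_1$ and $\gamma_3\circ\theta_- = \tilde\theta_-\circ\gamma_1$.
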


\begin{proof}
  Let $(\theta_+,\theta_-)$ (resp.\
  $(\widetilde\theta_+,\widetilde\theta_-)$) be the pair of liftable
  homomorphisms attached to $\TT$ (resp.\ $\widetilde\TT$). The
  hypothesis that $\gamma$ is an isomorphism means that
  \[
    \gamma_2\circ\theta_+=\widetilde\theta_+\circ\gamma_1
    \qquad\text{and}\qquad
    \gamma_3\circ\theta_-=\widetilde\theta_-\circ\gamma_1.
  \]
  It then follows that
  \[
    \gamma_3\circ(\theta_-\circ\theta_+^{-1}) =
    \widetilde\theta_-\circ\gamma_1\circ\theta_+^{-1} =
    (\widetilde\theta_- \circ \widetilde\theta_+^{-1})\circ\gamma_2
    \quad\text{and}\quad
    \gamma_1\circ\theta_+^{-1} = \widetilde\theta_+^{-1}\circ\gamma_2.
  \]
  Since $(\theta_-\circ\theta_+^{-1},\,\theta_+^{-1})$ and
  $(\widetilde\theta_-\circ\widetilde\theta_+^{-1},\,
  \widetilde\theta_+^{-1})$ are the pairs of 
  liftable homomorphisms attached to $\partial\TT$ and
  $\partial\widetilde\TT$ respectively, it follows that
  $(\gamma_2,\gamma_3,\gamma_1)$ is an isomorphism
  $\partial\TT\to\partial\widetilde\TT$. 
\end{proof}

For the next corollary, observe that each trialitarian triple
$\TT=(\aqp_1,\,\aqp_2,\,\aqp_3,\,\varphi_0)$ yields a polarization of
$\aqp_1$ in the sense of Definition~\ref{defn:orientationaqp}: the
primitive idempotents in the center of $C(\aqp_1)$ are designated as
$z_{1+}$ and $z_{1-}$ according to the following convention:
\[
  \varphi_0(z_{1+})=(1,0) \qquad\text{and} \qquad
  \varphi_0(z_{1-})=(0,1),
\]
so that the two components of $\varphi_0$ are $\varphi_+\colon
\Clqp_+(\aqp_1)\xrightarrow{\sim}\aqp_2$ and $\varphi_-\colon
\Clqp_-(\aqp_1) \xrightarrow{\sim}\aqp_3$. Similarly, the maps
$\varphi_0'$ and $\varphi_0''$ of the derived trialitarian triples
$\partial\TT$ and $\partial^2\TT$ yield polarizations of $\aqp_2$ and
$\aqp_3$ so that
\[
  \varphi'_0(z_{2+})=(1,0),\quad \varphi'_0(z_{2-})=(0,1),\qquad
  \varphi''_0(z_{3+})=(1,0),\quad \varphi''_0(z_{3-})=(0,1),
\]
just as in the case of compositions of quadratic spaces: see
Remark~\ref{rem:polarcomp}.

\begin{corol}
  \label{corol:funcTT}
  Let $\TT=(\aqp_1,\,\aqp_2,\,\aqp_3,\,\varphi_0)$ and $\widetilde\TT=
  (\tilde\aqp_1,\,\tilde\aqp_2,\,\tilde\aqp_3,\,\tilde\varphi_0)$ be
  trialitarian triples. There are canonical one-to-one correspondences
  between the following sets:
  \begin{enumerate}
  \item[(i)]
    isomorphisms of trialitarian triples $\TT\to\widetilde\TT$;
  \item[(ii)]
    isomorphisms of algebras with quadratic pair
    $\aqp_1\to\tilde\aqp_1$ preserving the polarizations induced by
    $\TT$ and $\widetilde\TT$;
  \item[(iii)]
    isomorphisms of algebras with quadratic pair
    $\aqp_2\to\tilde\aqp_2$ preserving the polarizations induced by
    $\partial\TT$ and $\partial\widetilde\TT$;
  \item[(iv)]
    isomorphisms of algebras with quadratic pair
    $\aqp_3\to\tilde\aqp_3$ preserving the polarizations induced by
    $\partial^2\TT$ and $\partial^2\widetilde\TT$.
  \end{enumerate}
\end{corol}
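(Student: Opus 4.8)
The plan is to establish first the canonical bijection between~(i) and~(ii), and then to deduce the bijections between~(i) and~(iii) and between~(i) and~(iv) by applying that special case to the derived trialitarian triples $\partial\TT$, $\partial\widetilde\TT$ and $\partial^2\TT$, $\partial^2\widetilde\TT$, whose first entries are $\aqp_2$ and $\aqp_3$ respectively.

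For the correspondence between~(i) and~(ii), the first assignment sends an isomorphism of trialitarian triples $\gamma=(\gamma_1,\gamma_2,\gamma_3)\colon\TT\to\widetilde\TT$ to its first component $\gamma_1$. I would check that $\gamma_1$ preserves polarizations: commutativity of the square defining an isomorphism of trialitarian triples gives $\tilde\varphi_0\bigl(C(\gamma_1)(z_{1+})\bigr)=(\gamma_2\times\gamma_3)\bigl(\varphi_0(z_{1+})\bigr)=(\gamma_2\times\gamma_3)(1,0)=(1,0)$, whence $C(\gamma_1)(z_{1+})=\tilde z_{1+}$ by the normalization $\tilde\varphi_0(\tilde z_{1+})=(1,0)$, and likewise for the $-$ idempotents. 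In the other direction, given an isomorphism of algebras with quadratic pair $\gamma_1\colon\aqp_1\to\tilde\aqp_1$ preserving polarizations, the functorial map $C(\gamma_1)$ is an isomorphism of algebras with quadratic pair carrying $z_{1\pm}$ to $\tilde z_{1\pm}$, hence restricts to isomorphisms $\Clqp_\pm(\aqp_1)\xrightarrow{\sim}\Clqp_\pm(\tilde\aqp_1)$; I would then set $\gamma_2=\tilde\varphi_+\circ C(\gamma_1)\circ\varphi_+^{-1}$ and $\gamma_3=\tilde\varphi_-\circ C(\gamma_1)\circ\varphi_-^{-1}$, which are isomorphisms of algebras with quadratic pair making the defining square commute, so that $(\gamma_1,\gamma_2,\gamma_3)$ is an isomorphism $\TT\to\widetilde\TT$. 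These two constructions are manifestly mutually inverse: one recovers $\gamma_1$ after going through both, and conversely, for any isomorphism $(\gamma_1,\gamma_2,\gamma_3)\colon\TT\to\widetilde\TT$ commutativity of the square forces $\gamma_2$ and $\gamma_3$ to be exactly the maps just written down, so the first assignment is injective as well as surjective.

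To pass to~(iii) and~(iv), I would invoke the functoriality of $\partial$ (Proposition~\ref{prop:funcTT}): $\gamma\mapsto\partial\gamma=(\gamma_2,\gamma_3,\gamma_1)$ sends isomorphisms $\TT\to\widetilde\TT$ to isomorphisms $\partial\TT\to\partial\widetilde\TT$, and iterating, together with the identities $\partial^2(\partial\TT)=\TT$ and $\partial^2(\partial\gamma)=\gamma$ recorded in Definition~\ref{defn:dertri}, shows that $\gamma\mapsto\partial\gamma$ is a bijection with inverse $\delta\mapsto\partial^2\delta$. Composing this bijection with the correspondence between~(i) and~(ii) applied to $\partial\TT$ and $\partial\widetilde\TT$ yields the correspondence between~(i) and~(iii), using that the polarizations occurring in~(iii) are precisely those induced by $\partial\TT$ and $\partial\widetilde\TT$ via the convention $\varphi'_0(z_{2+})=(1,0)$ fixed before the corollary; composing $\gamma\mapsto\partial^2\gamma$ with the correspondence applied to $\partial^2\TT$, $\partial^2\widetilde\TT$ gives~(i)$\leftrightarrow$(iv) in the same way.

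I do not expect a genuine obstacle here: the argument is a diagram chase combined with the functoriality of $\partial$ proved in Proposition~\ref{prop:funcTT}. The only point requiring a little care is the bookkeeping that ``preserving the polarizations induced by $\partial\TT$ and $\partial\widetilde\TT$'' is exactly the condition that the~(i)$\leftrightarrow$(ii) correspondence produces for the derived triples, which is immediate from the normalization conventions for $\varphi'_0$ and $\varphi''_0$.
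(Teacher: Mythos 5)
Your proof is correct and follows essentially the same route as the paper: the bijection between (i) and (ii) is realized by projecting to the first component $\gamma_1$, with the inverse constructed by pushing $C(\gamma_1)$ through $\varphi_0$ and $\tilde\varphi_0$, and the bijections with (iii) and (iv) are then obtained by applying the (i)$\leftrightarrow$(ii) case to $\partial\TT$, $\partial\widetilde\TT$ and $\partial^2\TT$, $\partial^2\widetilde\TT$ via the functoriality of $\partial$ from Proposition~\ref{prop:funcTT}. Your write-up is somewhat more explicit (writing out the formulas $\gamma_2=\tilde\varphi_+\circ C(\gamma_1)\circ\varphi_+^{-1}$, $\gamma_3=\tilde\varphi_-\circ C(\gamma_1)\circ\varphi_-^{-1}$ and the polarization-preservation check), but these are just the details the paper leaves to the reader.
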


\begin{proof}
  By definition, an isomorphism $\gamma\colon\TT\to\widetilde\TT$ is a
  triple $(\gamma_1,\gamma_2,\gamma_3)$ where each $\gamma_i$ is an
  isomorphism $\aqp_i\to\tilde\aqp_i$ and the following square
  commutes:
  \begin{equation}
    \label{eq:sq}
    \begin{aligned}
      \xymatrix{\Clqp(\aqp_1)\ar[r]^-{\varphi_0}\ar[d]_{C(\gamma_1)} &
        \aqp_2\times\aqp_3\ar[d]^{\gamma_2\times\gamma_3} \\
        \Clqp(\tilde\aqp_1)\ar[r]^-{\tilde\varphi_0}&
        \tilde\aqp_2\times \tilde\aqp_3}
    \end{aligned}
  \end{equation}
  Commutativity of this square implies that $\gamma_1$ preserves the
  polarizations of $\aqp_1$ and $\tilde\aqp_1$ induced by $\TT$ and
  $\widetilde\TT$.

  Conversely, if $\gamma_1\colon\aqp_1\to\tilde\aqp_1$ is an
  isomorphism of algebras with quadratic pair preserving
  polarizations, then there are isomorphisms
  $\gamma_2\colon\aqp_2\to\tilde\aqp_2$ and $\gamma_3\colon\aqp_3\to
  \tilde\aqp_3$ uniquely determined by the condition that the
  square~\eqref{eq:sq} commute. The triple
  $(\gamma_1,\gamma_2,\gamma_3)$ is then an isomorphism
  $\TT\to\widetilde\TT$. Thus, the sets described in~(i) and (ii) are
  in bijection under the map carrying
  $\gamma=(\gamma_1,\gamma_2,\gamma_3)$ to $\gamma_1$. Similarly, the
  set in~(iii) is in bijection with the set of isomorphisms
  $\partial\TT\to\partial\widetilde\TT$, hence, by
  Proposition~\ref{prop:funcTT}, with the set of isomorphisms
  $\TT\to\widetilde\TT$: to each isomorphism 
  $\gamma\colon\TT\to\widetilde\TT$ corresponds the second component
  $\gamma_2\colon\aqp_2\to\tilde\aqp_2$. Likewise, mapping $\gamma$ to
  $\gamma_3$ defines a one-to-one correspondence between~(i) and (iv).
\end{proof}

In the particular case where $\widetilde\TT=\TT$,
Corollary~\ref{corol:funcTT} yields isomorphisms between the group of
automorphisms of $\TT$ and the groups of polarization-preserving
automorphisms of $\aqp_1$, $\aqp_2$ and $\aqp_3$, which are
$\PGO^+(\aqp_1)$, $\PGO^+(\aqp_2)$ and $\PGO^+(\aqp_3)$. We discuss
this case in detail in the next subsection.

\subsection{Trialitarian isomorphisms}
\label{subsec:triso}

Throughout this subsection, we fix a trialitarian triple
$\TT=(\aqp_1,\,\aqp_2,\,\aqp_3,\,\varphi_0)$. We show how to attach to
$\TT$ canonical isomorphisms, which we call \emph{trialitarian
  isomorphisms}:
\[
  \BSpin(\aqp_1)\simeq\BSpin(\aqp_2)\simeq\BSpin(\aqp_3)
  \qquad\text{and}\qquad
  \BPGO^+(\aqp_1)\simeq\BPGO^+(\aqp_2)\simeq\BPGO^+(\aqp_3).
\]
\medbreak

Proposition~\ref{prop:funcTT} shows that the switch map $\partial$
yields an isomorphism
\[
  \partial\colon\BGO(\TT)\to\BGO(\partial\TT).
\]
This isomorphism maps $\BHomot(\TT)$ to $\BHomot(\partial\TT)$, hence
it induces a switch isomorphism $\partial\colon\BPGO(\TT)\to
\BPGO(\partial\TT)$. The following proposition shows that $\partial$
also maps $\BOrth(\TT)$ to $\BOrth(\partial\TT)$:

\begin{prop}
  \label{prop:shiftlambda}
  The following diagram is commutative:
  \[
    \xymatrix{
      \BGO(\TT)\ar[r]^{\partial}\ar[d]_{\lambda_\TT}&
      \BGO(\partial\TT)\ar[d]^{\lambda_{\partial\TT}}\\
      \BGm^3\ar[r]^{\partial}&\BGm^3
    }
  \]
\end{prop}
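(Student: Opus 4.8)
The plan is to reduce the assertion to the case of split algebras, where it becomes the compatibility already recorded for compositions of quadratic spaces. It suffices to prove the proposition after scalar extension to an algebraic closure $\Falg$ of $F$: indeed, $\BGO(\TT)$ is smooth by Corollary~\ref{cor:smoothBPGO} and $\BGm^3$ is separated, so the two morphisms $\lambda_{\partial\TT}\circ\partial$ and $\partial\circ\lambda_\TT$ from $\BGO(\TT)$ to $\BGm^3$ agree once they agree over $\Falg$, and all the maps in sight are compatible with base change. So first I would pass to $\Falg$, where Theorem~\ref{thm:splitrial} lets me write $\TT_\Falg\simeq\End(\Comp)$ for some composition $\Comp$ of quadratic spaces of dimension~$8$, and then $\partial\TT_\Falg\simeq\partial\End(\Comp)=\End(\partial\Comp)$ by Definition~\ref{defn:dertri}.

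Next I would transport everything through the identifications $\BGO(\Comp)=\BGO(\TT_\Falg)$ and $\BGO(\partial\Comp)=\BGO(\partial\TT_\Falg)$ of Proposition~\ref{prop:simEndComp}. Under these, $\lambda_\TT$ and $\lambda_{\partial\TT}$ become the composition multiplier maps $\lambda_\Comp$ and $\lambda_{\partial\Comp}$ --- this is exactly the commutativity of diagram~\eqref{eq:diaglambdalambda} --- while the shift isomorphism $\partial\colon\BGO(\TT)\to\BGO(\partial\TT)$, which sends a similitude $(g_1,g_2,g_3)$ to $(g_2,g_3,g_1)$ (it is induced by the operation $(\gamma_1,\gamma_2,\gamma_3)\mapsto(\gamma_2,\gamma_3,\gamma_1)$ on $\BAut$ from Proposition~\ref{prop:funcTT}), becomes the shift $\BGO(\Comp)\to\BGO(\partial\Comp)$, $(g_1,g_2,g_3,\lambda_3)\mapsto(g_2,g_3,g_1,\lambda_1)$, introduced after Proposition~\ref{prop:exseqOGOogo}. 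With this done, the square to be verified is the one obtained from the statement by replacing $\TT$ by $\End(\Comp)$ and both vertical arrows by composition multiplier maps.

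That square is then immediate: for a commutative $\Falg$-algebra $R$ and $g=(g_1,g_2,g_3,\lambda_3)\in\BGO(\Comp)(R)$ with composition multiplier $\lambda(g)=(\lambda_1,\lambda_2,\lambda_3)$, Proposition~\ref{prop:simdef2} and Definition~\ref{defn:compmult} give $\partial g=(g_2,g_3,g_1,\lambda_1)$ with composition multiplier $(\lambda_2,\lambda_3,\lambda_1)$, whence $\lambda_{\partial\Comp}(\partial g)=(\lambda_2,\lambda_3,\lambda_1)=\partial\bigl(\lambda_\Comp(g)\bigr)$, where $\partial\colon\BGm^3\to\BGm^3$ is the cyclic shift $(\lambda_1,\lambda_2,\lambda_3)\mapsto(\lambda_2,\lambda_3,\lambda_1)$, consistent with the convention $\lambda(\partial g)=(\lambda_2,\lambda_3,\lambda_1)$ of Definition~\ref{defn:compmult}.

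The step I expect to be the only delicate one is the reduction to the split case --- specifically, checking that the identifications of Proposition~\ref{prop:simEndComp} intertwine the two shift isomorphisms, not merely the two composition multiplier maps. This comes down to the observation that both shifts act on points by cycling the first three components, $(g_1,g_2,g_3,\lambda_3)\mapsto(g_2,g_3,g_1,\lambda_1)$, together with the identity $\partial\End(\Comp)=\End(\partial\Comp)$ from Definition~\ref{defn:dertri}; once that bookkeeping is handled, nothing remains beyond unravelling definitions, and no computation is needed that was not already carried out in \S\ref{subsec:simiso}.
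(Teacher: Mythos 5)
Your proof is correct and takes essentially the same route as the paper: reduce by scalar extension to the split case $\TT_{\Falg}\simeq\End(\Comp)$, transport through the identifications of Proposition~\ref{prop:simEndComp}, and invoke~\eqref{eq:diaglambdalambda}. The paper's own argument is terser --- it simply says that commutativity "is clear from~\eqref{eq:diaglambdalambda}'' --- and you have usefully spelled out the two pieces of bookkeeping that this elides: that the identifications of Proposition~\ref{prop:simEndComp} intertwine the two shift isomorphisms (both act by cyclically permuting the components of a similitude), and that $\lambda_{\partial\Comp}\circ\partial=\partial\circ\lambda_\Comp$, which you verify directly from Proposition~\ref{prop:simdef2} and Definition~\ref{defn:compmult}.
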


\begin{proof}
  When $\TT=\End(\Comp)$ for some composition $\Comp$ of quadratic
  spaces of dimension~$8$, then $\BGO(\TT) = \BGO(\Comp)$ and
  $\BGO(\partial\TT) = \BGO(\partial\Comp)$ by
  Proposition~\ref{prop:simEndComp}, and commutativity of the diagram
  is clear from~\eqref{eq:diaglambdalambda}. Commutativity for an
  arbitrary 
  trialitarian triple follows by scalar extension to a splitting
  field. 
\end{proof}

Recall from Corollary~\ref{corol:commdiagSpinPGO} the
diagram~\eqref{eq:commdiagSpinO} relating the groups
$\BSpin(\aqp_1)$ and $\BPGO^+(\aqp_1)$ to $\BOrth(\TT)$ and
$\BPGO(\TT)$. Substituting $\partial\TT$ for $\TT$ in that diagram, we
obtain another commutative diagram, which involves $\aqp_2$ and
$\partial\TT$ instead of $\aqp_1$ and $\TT$. We may connect this new
diagram to~\eqref{eq:commdiagSpinO} by means of the shift map to
obtain the following commutative diagram with exact rows, where all
the vertical maps are isomorphisms and $Z_2$ denotes the center of
$C(\aqp_2)$:

\[
  \xymatrix{1\ar[r]& R_{Z_1/F}(\Bmu_2)\ar[d]_{\psi_\TT}\ar[r]&
      \BSpin(\aqp_1)\ar[r]^-{\chi'}\ar[d]_{\psi_\TT}&
      \BPGO^+(\aqp_1)\ar[r]&1\\
      1\ar[r]&\BZO(\TT)\ar[r]\ar[d]_{\partial}&
      \BOrth(\TT)\ar[r]\ar[d]_{\partial}&
      \BPGO(\TT)\ar[r]\ar[d]_{\partial}\ar[u]^{\pi_\TT}&1\\
      1\ar[r]&\BZO(\partial\TT)\ar[r]&\BOrth(\partial\TT)\ar[r]&
      \BPGO(\partial\TT)\ar[r]\ar[d]_{\pi_{\partial\TT}}&1\\
      1\ar[r]& R_{Z_2/F}(\Bmu_2)\ar[u]^{\psi_{\partial\TT}}\ar[r]&
      \BSpin(\aqp_2)\ar[r]^-{\chi'}\ar[u]^{\psi_{\partial\TT}}&
      \BPGO^+(\aqp_2)\ar[r]&1
      }
\]
Define
\[
  \Sigma_\TT\colon
  \BSpin(\aqp_1)\xrightarrow{\sim}\BSpin(\aqp_2)
  \quad\text{and}\quad \Theta_\TT\colon
  \BPGO^+(\aqp_1)\xrightarrow{\sim}\BPGO^+(\aqp_2)
\]
by composing the
vertical isomorphisms: 
$
  \Sigma_{\TT} =
  \psi_{\partial\TT}^{-1}\circ\partial\circ\psi_\TT$ and 
  $\Theta_\TT = \pi_{\partial\TT}\circ\partial\circ\pi_\TT^{-1}$.
Forgetting the two central lines of the last diagram, we obtain a
commutative diagram with exact rows:
\begin{equation}
  \label{eq:commdiagSpin}
  \begin{aligned}
  \xymatrix{1\ar[r]& R_{Z_1/F}(\Bmu_2)\ar[d]_{\Sigma_\TT}\ar[r]&
      \BSpin(\aqp_1)\ar[r]^-{\chi'}\ar[d]_{\Sigma_\TT}&
      \BPGO^+(\aqp_1)\ar[r]\ar[d]_{\Theta_\TT}&1\\
      1\ar[r]& R_{Z_2/F}(\Bmu_2)\ar[r]&
      \BSpin(\aqp_2)\ar[r]^-{\chi'}&
      \BPGO^+(\aqp_2)\ar[r]&1
    }
    \end{aligned}
\end{equation}

Applying the construction above to $\partial\TT$ and
$\partial^2\TT$ instead of $\TT$, we obtain isomorphisms
$\Sigma_{\partial\TT}\colon \BSpin(\aqp_2) \xrightarrow{\sim}
  \BSpin(\aqp_3)$ and
  $\Sigma_{\partial^2\TT} \colon \BSpin(\aqp_3) \xrightarrow{\sim}
  \BSpin(\aqp_1)$
such that $\Sigma_{\partial^2\TT}\circ\Sigma_{\partial\TT}\circ
\Sigma_\TT=\Id$,
which make the following diagram with exact rows commute:
\[
  \xymatrix{
    &\vdots\ar[d]&\vdots\ar[d]&\vdots\ar[d]& \\
    1\ar[r]&R_{Z_1/F}(\Bmu_2)\ar[r]\ar[d]_{\Sigma_\TT}&
    \BSpin(\aqp_1)\ar[r]^-{\chi'}\ar[d]_{\Sigma_\TT}&
    \BPGO^+(\aqp_1)\ar[r]\ar[d]_{\Theta_\TT}&1\\
    1\ar[r]&R_{Z_2/F}(\Bmu_2)\ar[r]\ar[d]_{\Sigma_{\partial\TT}}&
    \BSpin(\aqp_2)\ar[r]^-{\chi'}\ar[d]_{\Sigma_{\partial\TT}}&
    \BPGO^+(\aqp_2)\ar[r]\ar[d]_{\Theta_{\partial\TT}}&1\\
    1\ar[r]&R_{Z_3/F}(\Bmu_2)\ar[r]\ar[d]_{\Sigma_{\partial^2\TT}}&
    \BSpin(\aqp_3)\ar[r]^-{\chi'}\ar[d]_{\Sigma_{\partial^2\TT}}&
    \BPGO^+(\aqp_3)\ar[r]\ar[d]_{\Theta_{\partial^2\Comp}}&1\\
    1\ar[r]&R_{Z_1/F}(\Bmu_2)\ar[r]\ar[d]&
    \BSpin(\aqp_1)\ar[r]^-{\chi'}\ar[d]&
    \BPGO^+(\aqp_1)\ar[r]\ar[d]&1\\
    &\vdots&\vdots&\vdots& }
\]

Letting $\pi_i\colon\BPGO(\TT)\to\BPGO^+(\aqp_i)$ denote the
projection on the $i$-th component, we have
\[
  \pi_1=\pi_\TT,\qquad
  \pi_2= \pi_{\partial\TT}\circ\partial,\qquad
  \pi_3=\pi_{\partial^2\TT}\circ\partial^2,
\]
hence the following diagram, in which all the maps are isomorphisms,
is commutative: 
\begin{equation}
  \label{eq:diagsimcompdim8}
  \begin{aligned}
    \xymatrix{\BPGO^+(\aqp_1)\ar[rr]^{\Theta_\TT}&
      &\BPGO^+(\aqp_2)\ar[dddl]^{\Theta_{\partial\TT}}\\
&\BPGO(\TT)\ar[ul]_{\pi_1} \ar[ur]^{\pi_2}
\ar[dd]_{\pi_3}& \\
&&\\
&\BPGO^+(\aqp_3)\ar[uuul]^{\Theta_{\partial^2\TT}}&
}
\end{aligned}
\end{equation}

Similarly, defining $\psi_i\colon\BSpin(\aqp_i)\to\BOrth(\TT)$ for
$i=1$, $2$, $3$ by
\[
  \psi_1=\psi_\TT,\qquad
  \psi_2=\partial^{-1}\circ\psi_{\partial\TT}, \qquad
  \psi_3=\partial^{-2}\circ\psi_{\partial^2\TT},
\]
we obtain the following commutative diagram similar
to~\eqref{eq:diagsimcompdim8}, where all the maps are isomorphisms: 
\[
  \xymatrix{\BSpin(\aqp_1)\ar[rr]^{\Sigma_\TT}\ar[dr]^{\psi_1}
    &&\BSpin(\aqp_2)\ar[dddl]^{\Sigma_{\partial\TT}}\ar[dl]_{\psi_2}\\
&\BOrth(\TT)& \\
&&\\
&\BSpin(\aqp_3)\ar[uuul]^{\Sigma_{\partial^2\TT}}\ar[uu]^{\psi_3}&
}
\]

Restricting to the central subgroups, we also obtain a commutative
diagram of isomorphisms:
\[
  \xymatrix{R_{Z_1/F}(\Bmu_2)\ar[rr]^{\Sigma_\TT}\ar[dr]^{\psi_1}
    &&R_{Z_2/F}(\Bmu_2)\ar[dddl]^{\Sigma_{\partial\TT}}\ar[dl]_{\psi_2}\\
&\BZO(\TT)& \\
&&\\
&R_{Z_3/F}(\Bmu_2)\ar[uuul]^{\Sigma_{\partial^2\TT}}\ar[uu]^{\psi_3}&
}
\]

The action of the trialitarian isomorphism $\Sigma_\TT$ on
$R_{Z_1/F}(\Bmu_2)$ is easy to determine from the definition of
$\psi_\TT$:

\begin{prop}
  \label{prop:ZSpin}
For $i=1$, $2$, $3$, let $z_{i+}$ and $z_{i-}$ denote the primitive
idempotents of $Z_i$ (according to the polarization). Then
for every commutative $F$-algebra $R$ and $a_+$, $a_-\in R$ such that
$a_+^2=a_-^2=1$,
\begin{align}
  \label{eq:ZSpin1}
  \Sigma_\TT(a_+z_{1+}+a_-z_{1-}) & = a_-z_{2+}+a_+a_-z_{2-},\\
  \label{eq:ZSpin2}
  \Sigma_{\partial\TT}(a_+z_{2+}+a_-z_{2-}) & =
                                             a_-z_{3+}+a_+a_-z_{3-},\\
\label{eq:ZSpin3}
  \Sigma_{\partial^2\TT}(a_+z_{3+}+a_-z_{3-}) & = a_-z_{1+}+a_+a_-z_{1-}.
\end{align}
\end{prop}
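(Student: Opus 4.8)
The plan is to prove the three formulas by reducing to a single computation on the center, exploiting the symmetry of the construction under $\partial$. First I would observe that \eqref{eq:ZSpin2} and \eqref{eq:ZSpin3} follow from \eqref{eq:ZSpin1} by applying the latter to the derived trialitarian triples $\partial\TT$ and $\partial^2\TT$ in place of $\TT$, since $\Sigma_{\partial\TT}$ and $\Sigma_{\partial^2\TT}$ are defined from $\partial\TT$ and $\partial^2\TT$ exactly as $\Sigma_\TT$ is defined from $\TT$, and the polarizations $z_{2\pm}$, $z_{3\pm}$ are precisely the ones induced by $\partial\TT$ and $\partial^2\TT$ (see the convention stated before Corollary~\ref{corol:funcTT}). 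So the whole statement comes down to \eqref{eq:ZSpin1}.

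To prove \eqref{eq:ZSpin1}, I would unwind the definition $\Sigma_\TT = \psi_{\partial\TT}^{-1}\circ\partial\circ\psi_\TT$ restricted to $R_{Z_1/F}(\Bmu_2) = \BSpin(\aqp_1)\cap R_{Z_1/F}(\BGm)$. An element $z = a_+z_{1+}+a_-z_{1-}$ with $a_+^2=a_-^2=1$ lies in $(Z_1)_R^\times$, and by \eqref{eq:psiTTonZ} we have $\psi_\TT(z) = (N_{Z_1/F}(z),\,\varphi_+(z),\,\varphi_-(z)) \in \BHomot(\TT)(R)$. Since $\varphi_0(z_{1+}) = (1,0)$ and $\varphi_0(z_{1-})=(0,1)$, we get $\varphi_+(z) = a_+$, $\varphi_-(z) = a_-$, and $N_{Z_1/F}(z) = \varphi_+(z)\varphi_-(z) = a_+a_-$, so $\psi_\TT(z) = (a_+a_-,\,a_+,\,a_-)$ as a homothety in $\BGm^3$. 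Then $\partial\psi_\TT(z) = (a_+,\,a_-,\,a_+a_-) \in \BHomot(\partial\TT)(R)$, by the definition of the shift on $\BHomot$ (cyclic permutation of the three factors, as in Proposition~\ref{prop:funcTT} together with the identification $\BHomot(\TT)=\BGm^3$). Finally I must identify which element $w\in R_{Z_2/F}(\Bmu_2)$ satisfies $\psi_{\partial\TT}(w) = (a_+,\,a_-,\,a_+a_-)$: writing $w = b_+z_{2+}+b_-z_{2-}$ and using \eqref{eq:psiTTonZ} again for the triple $\partial\TT=(\aqp_2,\aqp_3,\aqp_1,\varphi_0')$, we have $\psi_{\partial\TT}(w) = (N_{Z_2/F}(w),\,\varphi'_+(w),\,\varphi'_-(w)) = (b_+b_-,\,b_+,\,b_-)$, so we need $b_+ = a_-$, $b_- = a_+a_-$, and consistency $b_+b_- = a_-\cdot a_+a_- = a_+$ (using $a_-^2=1$), which matches the first component. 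Hence $w = a_-z_{2+}+a_+a_-z_{2-}$, giving \eqref{eq:ZSpin1}.

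The one delicate point — and I expect this to be the main obstacle — is getting the bookkeeping of the homothety components right under the shift: one must check that under the identification $\BHomot(\partial\TT)=\BGm^3$ (with factors ordered to match $\aqp_2,\aqp_3,\aqp_1$) the shift isomorphism $\partial\colon\BHomot(\TT)\to\BHomot(\partial\TT)$ really is the cyclic permutation $(\nu_1,\nu_2,\nu_3)\mapsto(\nu_2,\nu_3,\nu_1)$, and that the convention $\varphi'_0(z_{2+})=(1,0)$ is the one making $\varphi'_+(w)=b_+$. Both are immediate from Proposition~\ref{prop:funcTT} and the polarization convention recalled before Corollary~\ref{corol:funcTT}, but they need to be invoked explicitly. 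Once that is pinned down, the rest is the short chain of equalities above, and no further computation is needed; in particular there is no need to touch the non-central parts of the groups or the quadratic-pair structure, since everything takes place inside $\BHomot$ and $R_{Z_i/F}(\BGm)$.
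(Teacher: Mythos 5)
Your proposal is correct and follows essentially the same route as the paper's proof: compute $\psi_\TT(a_+z_{1+}+a_-z_{1-})=(a_+a_-,\,a_+,\,a_-)$ from~\eqref{eq:psiTTonZ}, apply the shift $\partial$ to get $(a_+,\,a_-,\,a_+a_-)$, and identify this with $\psi_{\partial\TT}(a_-z_{2+}+a_+a_-z_{2-})$. The only difference is that you spell out explicitly that~\eqref{eq:ZSpin2} and~\eqref{eq:ZSpin3} come from applying~\eqref{eq:ZSpin1} to $\partial\TT$ and $\partial^2\TT$, whereas the paper compresses this to ``proved similarly''; your sanity check $b_+b_-=a_+$ is also implicit in the paper.
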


\begin{proof}
  From~\eqref{eq:psiTTonZ} it follows that
  $
    \psi_\TT(a_+z_{1+}+a_-z_{1-}) = (a_+a_-,\,a_+,\,a_-)
    $, hence
    \[
      \partial\circ\psi_\TT(a_+z_{1+}+a_-z_{1-}) =
      (a_+,\,a_-,\,a_+a_-) =
      \psi_{\partial\TT}(a_-z_{2+}+a_+a_-z_{2-}).
    \]
    Equation~\eqref{eq:ZSpin1} follows, since $\Sigma_\TT=
    \psi_{\partial\TT}^{-1}\circ\partial\circ\psi_\TT$. Equations~\eqref{eq:ZSpin2}
    and \eqref{eq:ZSpin3} are proved similarly.
  \end{proof}

Proposition~\ref{prop:ZSpin} shows that $\Sigma_\TT$ does \emph{not}
map the subgroup $\Bmu_2$ of $R_{Z_1/F}(\Bmu_2)$ to the subgroup
$\Bmu_2$ of $R_{Z_2/F}(\Bmu_2)$; this is a characteristic feature of
trialitarian isomorphisms.

\subsection{Compositions of $8$-dimensional quadratic spaces}
\label{subsec:comp8}

Let $\Comp=\bigl((V_1,q_1),\,(V_2,q_2),\,
(V_3,q_3),\,*_3\bigr)$ and $\widetilde\Comp=\bigl((\tilde V_1, \tilde
q_1),\, (\tilde V_2,\tilde q_2),\, (\tilde V_3, \tilde q_3),\,
\tilde*_3\,)$ denote compositions of quadratic spaces of dimension~$8$
over $F$ throughout this subsection. Recall from
Remark~\ref{rem:polarcomp} that $\Comp$ and 
$\widetilde\Comp$ induce polarizations of $(V_1,q_1)$ and $(\tilde
V_1, \tilde q_1)$ respectively. Our goal is to establish criteria for
the existence of a similitude or an isomorphism between $\Comp$ and
$\widetilde\Comp$.

\begin{thm}
  \label{thm:simcriterion}
  For every similitude $g_1\colon(V_1,q_1)\to(\tilde V_1, \tilde q_1)$
  preserving the polarizations induced by $\Comp$ and
  $\widetilde\Comp$, there exist similitudes $g_2\colon(V_2,q_2) \to
  (\tilde V_2, \tilde q_2)$ and $g_3\colon(V_3,q_3) \to (\tilde V_3,
  \tilde q_3)$ such that the triple $(g_1,g_2,g_3)$ is a similitude
  $\Comp\to \widetilde\Comp$. The similitudes $g_2$ and $g_3$ are
  uniquely determined up to a scalar factor.
\end{thm}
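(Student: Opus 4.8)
The plan is to reduce the statement about similitudes of compositions to a statement about isomorphisms of the associated trialitarian triples, and then to invoke Corollary~\ref{corol:funcTT}. First I would pass to the trialitarian triples $\End(\Comp)$ and $\End(\widetilde\Comp)$, whose first components are $(\End V_1,\sigma_{b_1},\strf_{q_1})$ and $(\End\tilde V_1,\sigma_{\tilde b_1},\strf_{\tilde q_1})$; by Proposition~\ref{prop:simiso} the similitude $g_1$ induces an isomorphism of algebras with quadratic pair $\Int(g_1)\colon(\End V_1,\sigma_{b_1},\strf_{q_1})\xrightarrow{\sim}(\End\tilde V_1,\sigma_{\tilde b_1},\strf_{\tilde q_1})$. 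The key point is that the hypothesis that $g_1$ \emph{preserves the polarizations} induced by $\Comp$ and $\widetilde\Comp$ translates, via the description of these polarizations in Remark~\ref{rem:polarcomp}, precisely into the condition that $\Int(g_1)$ preserves the polarizations of $\End(\Comp)$ and $\End(\widetilde\Comp)$ in the sense of Corollary~\ref{corol:funcTT}(ii). Indeed, the polarization of $(V_1,q_1)$ is defined so that $C_0(\alpha)(z_+)=(1,0)$, which is exactly the convention identifying the polarization of $\aqp_1=(\End V_1,\sigma_{b_1},\strf_{q_1})$ inside the trialitarian triple $\End(\Comp)$; and $C_0(\Int(g_1))=C_0(g_1)$ by the computation recalled in the proof of Proposition~\ref{prop:isoTT}.

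Next I would apply Corollary~\ref{corol:funcTT}: since $\Int(g_1)\colon\aqp_1\to\tilde\aqp_1$ is a polarization-preserving isomorphism of algebras with quadratic pair, there is a (unique) isomorphism of trialitarian triples $\gamma=(\gamma_1,\gamma_2,\gamma_3)\colon\End(\Comp)\to\End(\widetilde\Comp)$ with $\gamma_1=\Int(g_1)$. Then Proposition~\ref{prop:isoTT} produces a similitude $(g_1',g_2,g_3)\colon\Comp\to\widetilde\Comp$ with $\Int(g_1',g_2,g_3)=\gamma$; in particular $\Int(g_1')=\Int(g_1)$, so $g_1'$ and $g_1$ differ by a scalar factor, and after rescaling $g_2$ and $g_3$ accordingly (using~\eqref{eq:simcompdef} and the fact that multiplying $g_1$ by $c\in F^\times$ can be absorbed into the scalar $\lambda_3$) we may assume $g_1'=g_1$. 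This yields the desired similitude $(g_1,g_2,g_3)\colon\Comp\to\widetilde\Comp$.

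For uniqueness, suppose $(g_1,g_2,g_3)$ and $(g_1,g_2',g_3')$ are both similitudes $\Comp\to\widetilde\Comp$. Applying $\Int$ to both, Proposition~\ref{prop:isoTT} gives two isomorphisms of trialitarian triples $\End(\Comp)\to\End(\widetilde\Comp)$ with the same first component $\Int(g_1)$; by the uniqueness clause in Corollary~\ref{corol:funcTT} (the correspondence between (i) and (ii) is a bijection) these two isomorphisms coincide, so $\Int(g_2)=\Int(g_2')$ and $\Int(g_3)=\Int(g_3')$. By the center-of-$\End V_i$ argument (as at the end of the proof of Proposition~\ref{prop:simiso}) it follows that $g_2$ and $g_2'$, resp.\ $g_3$ and $g_3'$, differ by scalar factors. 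Thus $g_2$ and $g_3$ are determined up to a scalar.

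\textbf{Main obstacle.} I expect the one genuinely delicate point to be the bookkeeping of scalar factors and polarizations: verifying carefully that the polarization of $(V_1,q_1)$ in Remark~\ref{rem:polarcomp} matches the polarization of $\aqp_1$ induced by the trialitarian triple $\End(\Comp)$ in the sense of Corollary~\ref{corol:funcTT}, and then tracking how rescaling $g_1$ by a scalar propagates through~\eqref{eq:simcompdef} and the relations~\eqref{eq:lambdamu} so that one can normalize to $g_1'=g_1$ without destroying the similitude property. Everything else is a direct translation through Proposition~\ref{prop:isoTT} and Corollary~\ref{corol:funcTT}.
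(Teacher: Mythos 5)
Your proof is correct and follows essentially the same chain of reductions as the paper: translate $g_1$ into a polarization-preserving isomorphism $\Int(g_1)$ of the first components of $\End(\Comp)$ and $\End(\widetilde\Comp)$ via Proposition~\ref{prop:simiso}, invoke Corollary~\ref{corol:funcTT} to complete it to a unique isomorphism of trialitarian triples, and then use Proposition~\ref{prop:isoTT} together with the up-to-scalar uniqueness from Proposition~\ref{prop:simiso} to land back in the world of similitudes of compositions. The only (harmless) variation is that you first invoke Proposition~\ref{prop:isoTT} to produce a similitude $(g_1',g_2,g_3)$ and then rescale, whereas the paper picks $g_2,g_3$ with $\Int(g_2)=\gamma_2$, $\Int(g_3)=\gamma_3$ directly and then applies Proposition~\ref{prop:isoTT} to conclude that $(g_1,g_2,g_3)$ is a similitude.
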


\begin{proof}
  The similitude $g_1$ defines an isomorphism of algebras with
  quadratic pair
  \[
    \Int(g_1)\colon (\End V_1,\sigma_{b_1}, \strf_{q_1}) \to (\End
    \tilde V_1, \sigma_{\tilde b_1}, \strf_{\tilde q_1}),
  \]
  see Proposition~\ref{prop:simiso}. Since $g_1$ preserves the
  polarizations of $(V_1,q_1)$ and $(\tilde V_1,\tilde q_1)$, it
  follows that $\Int(g_1)$ preserves the polarizations of $(\End
  V_1,\sigma_{b_1}, \strf_{q_1})$ and $(\End
    \tilde V_1, \sigma_{\tilde b_1}, \strf_{\tilde q_1})$ induced by
    the trialitarian triples $\End(\Comp)$ and $\End(\widetilde\Comp)$
    respectively, hence Corollary~\ref{corol:funcTT} yields uniquely
    determined isomorphisms
    \[
      \gamma_2\colon (\End V_2,\sigma_{b_2}, \strf_{q_2}) \to (\End
      \tilde V_2, \sigma_{\tilde b_2}, \strf_{\tilde q_2})
      \quad\text{and}\quad
      \gamma_3\colon (\End V_3,\sigma_{b_3}, \strf_{q_3}) \to (\End
      \tilde V_3, \sigma_{\tilde b_3}, \strf_{\tilde q_3})
    \]
    such that $(\Int(g_1),\gamma_2,\gamma_3)$ is an isomorphism
    $\End(\Comp)\to\End(\widetilde\Comp)$. Proposition~\ref{prop:simiso}
    shows that there exist similitudes $g_2\colon(V_2,q_2)\to(\tilde
    V_2,\tilde q_2)$ and $g_3\colon(V_3,q_3) \to (\tilde V_3, \tilde
    q_3)$, uniquely determined up to a scalar factor, such that
    $\gamma_2=\Int(g_2)$ and $\gamma_3=\Int(g_3)$. It follows from
    Proposition~\ref{prop:isoTT} that $(g_1,g_2,g_3)$ is a similitude
    $\Comp\to\widetilde\Comp$. 
\end{proof}

\begin{corol}
  \label{corol:simcriterion}
  Let $n_\Comp$ and $n_{\tilde\Comp}$ denote the $3$-fold Pfister
  forms associated to $\Comp$ and $\widetilde\Comp$ by
  Proposition~\ref{prop:compdim8}. The following conditions are
  equivalent:
  \begin{enumerate}
  \item[(i)]
    $\Comp$ is similar to $\widetilde\Comp$;
  \item[(ii)]
    $n_\Comp\simeq n_{\tilde\Comp}$.
  \end{enumerate}
\end{corol}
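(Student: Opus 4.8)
The plan is to deduce the equivalence from Proposition~\ref{prop:compdim8}, which provides $3$-fold quadratic Pfister forms $n_\Comp$, $n_{\tilde\Comp}$ together with presentations $q_1\simeq\qf{\lambda_1}n_\Comp$ and $\tilde q_1\simeq\qf{\tilde\lambda_1}n_{\tilde\Comp}$ of the first quadratic forms of $\Comp$ and $\widetilde\Comp$, and from Theorem~\ref{thm:simcriterion}. Throughout I will use the standard fact that two similar quadratic Pfister forms are isometric (Pfister forms being round)---this is already implicit in the uniqueness part of Proposition~\ref{prop:compdim8}---so that ``$n_\Comp\simeq n_{\tilde\Comp}$'' and ``$n_\Comp$ is similar to $n_{\tilde\Comp}$'' may be used interchangeably.

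The implication (i)~$\Rightarrow$~(ii) is essentially immediate: if $(g_1,g_2,g_3)\colon\Comp\to\widetilde\Comp$ is a similitude of compositions, then by Definition~\ref{defn:simcomp2} its first component $g_1\colon(V_1,q_1)\to(\tilde V_1,\tilde q_1)$ is a similitude of quadratic spaces, so $q_1$ is similar to $\tilde q_1$; comparing with the presentations above shows that $n_\Comp$ is similar to $n_{\tilde\Comp}$, hence isometric to it. For (ii)~$\Rightarrow$~(i) the idea is to reduce to Theorem~\ref{thm:simcriterion}: it suffices to exhibit a similitude $g_1\colon(V_1,q_1)\to(\tilde V_1,\tilde q_1)$ preserving the polarizations induced by $\Comp$ and $\widetilde\Comp$ (see Remark~\ref{rem:polarcomp}), since that theorem then supplies $g_2$ and $g_3$ making $(g_1,g_2,g_3)$ a similitude of compositions. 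From $n_\Comp\simeq n_{\tilde\Comp}$ and Proposition~\ref{prop:compdim8}, the forms $q_1$ and $\tilde q_1$ are similar, so a similitude $h\colon(V_1,q_1)\to(\tilde V_1,\tilde q_1)$ exists; the only issue is compatibility with polarizations. Since $C_0(h)$ is an $F$-algebra isomorphism, it carries the two primitive central idempotents of $C_0(V_1,q_1)$ bijectively onto those of $C_0(\tilde V_1,\tilde q_1)$, so $h$ either already preserves the two polarizations or interchanges them. In the latter case I would replace $h$ by $\rho\circ h$, where $\rho$ is an improper auto-isometry of $(\tilde V_1,\tilde q_1)$: then $C_0(\rho)$ restricts to the nontrivial automorphism of the center of $C_0(\tilde V_1,\tilde q_1)$, hence swaps its two primitive central idempotents, and $\rho\circ h$ preserves the polarizations.

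The main obstacle is the last point: one needs an improper isometry of $(\tilde V_1,\tilde q_1)$ to exist in every characteristic. This holds because $\tilde q_1$ is nonsingular and nonzero, so there is a vector $v$ with $\tilde q_1(v)\neq0$, and the orthogonal transvection $x\mapsto x-\tilde b_1(v,x)\tilde q_1(v)^{-1}v$ is then an isometry of $(\tilde V_1,\tilde q_1)$ with nontrivial Dickson invariant (equivalently, of determinant $-1$ when $\charac F\neq2$), hence improper. Granting this, $g_1$ is a polarization-preserving similitude, and Theorem~\ref{thm:simcriterion} completes the proof that $\Comp$ is similar to $\widetilde\Comp$.
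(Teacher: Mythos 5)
Your proof is correct and follows essentially the same route as the paper: similarity of $q_1$ and $\tilde q_1$ is read off from the Pfister presentations of Proposition~\ref{prop:compdim8}, and for the converse one adjusts a similitude $(V_1,q_1)\to(\tilde V_1,\tilde q_1)$ by an improper isometry so as to match polarizations, then invokes Theorem~\ref{thm:simcriterion}. The paper simply states ``composing $g_1$ with an improper isometry if necessary'' without elaboration; your explicit construction of an improper isometry (a reflection, with nontrivial Dickson invariant) valid in every characteristic is a welcome extra justification but not a different argument.
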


\begin{proof}
  Recall that $q_1\simeq\langle\lambda_1\rangle n_\Comp$ and $\tilde
  q_1\simeq \langle\tilde\lambda_1\rangle n_{\tilde\Comp}$ for some
  $\lambda_1$, $\tilde\lambda_1\in F^\times$. If $\Comp$ is similar to
  $\widetilde\Comp$, then $q_1$ is similar to $\tilde q_1$, hence
  $n_\Comp\simeq n_{\tilde\Comp}$ because similar Pfister forms are
  isometric. Conversely, if $n_\Comp\simeq n_{\tilde\Comp}$, then
  there is a similitude $g_1\colon(V_1,q_1)\to (\tilde V_1, \tilde
  q_1)$. Composing $g_1$ with an improper isometry if necessary, we
  may assume $g_1$ preserves the polarizations of $(V_1,q_1)$ and
  $(\tilde V_1, \tilde q_1)$. Then Theorem~\ref{thm:simcriterion}
  yields a similitude $\Comp\to\widetilde\Comp$. 
\end{proof}

In the particular case where $\widetilde\Comp=\Comp$,
Theorem~\ref{thm:simcriterion} is a direct generalization of the
Principle of Triality discussed by
Springer--Veldkamp~\cite[Th.~3.2.1]{SpV}, as follows:

\begin{corol}
  \label{corol:PoT}
  For every proper similitude $g_1\in\GO^+(q_1)$, there exist
  similitudes $g_2\in\GO(q_2)$ and $g_3\in\GO(q_3)$ such that
  \[
    g_1(x_2*_1x_3) = g_2(x_2)*_1g_3(x_3) \qquad\text{for all $x_2\in
      V_2$ and $x_3\in V_3$.}
  \]
\end{corol}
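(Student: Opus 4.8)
The plan is to deduce Corollary~\ref{corol:PoT} directly from Theorem~\ref{thm:simcriterion} by specializing to the case $\widetilde\Comp = \Comp$ and interpreting the statement in terms of the derived composition $\partial\Comp$. First I would observe that the equation to be established, namely $g_1(x_2*_1x_3) = g_2(x_2)*_1g_3(x_3)$, is precisely the defining equation~\eqref{eq:simcompdef} (with scalar $\lambda=1$) for the triple $(g_2,g_3,g_1)$ to be an \emph{isomorphism} of the composition $\partial\Comp = \bigl((V_2,q_2),(V_3,q_3),(V_1,q_1),*_1\bigr)$ to itself — or, more precisely, the first-component equation for $(g_2, g_3, g_1)$ to be a similitude of $\partial\Comp$ with composition multiplier of the form $(1, \ast, \ast)$. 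So the goal is really to produce an automorphism-type similitude of $\partial\Comp$ whose last component (acting on $V_1$) is the prescribed proper similitude $g_1$.

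The key step is to recognize that a proper similitude $g_1 \in \GO^+(q_1)$ is exactly a similitude of $(V_1,q_1)$ that \emph{preserves the polarization} induced by $\Comp$ on $(V_1,q_1)$, as recalled in Remark~\ref{rem:polarcomp} and \S\ref{subsec:ClAlg}: the polarization is the designation of the primitive central idempotents $z_+, z_-$ of $C_0(V_1,q_1)$, and $C_0(g_1)$ fixes the center $Z$ of $C_0(V_1,q_1)$ (hence permutes $z_+, z_-$ trivially) precisely when $g_1$ is proper. Then I would apply Theorem~\ref{thm:simcriterion} with $\Comp = \widetilde\Comp$: it furnishes similitudes $g_2 \colon (V_2,q_2)\to(V_2,q_2)$ and $g_3\colon(V_3,q_3)\to(V_3,q_3)$ such that $(g_1, g_2, g_3)$ is a similitude $\Comp\to\Comp$. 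By Proposition~\ref{prop:simdef2}, $\partial(g_1,g_2,g_3) = (g_2,g_3,g_1)$ is then a similitude $\partial\Comp\to\partial\Comp$, and the third defining equation from that proposition (the one with target $V_1$, i.e. indexed by~$1$ after the shift) reads exactly
\[
  \lambda_1\, g_1(x_2*_1x_3) = g_2(x_2)*_1 g_3(x_3) \qquad\text{for all } x_2\in V_2,\ x_3\in V_3,
\]
for the appropriate scalar $\lambda_1 \in F^\times$.

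The only remaining point is that $\lambda_1 = 1$, or equivalently that we may rescale $g_2$ (or $g_3$) to arrange this. Here I would invoke the freedom in Theorem~\ref{thm:simcriterion}: $g_2$ and $g_3$ are determined only up to independent scalar factors. Replacing $g_2$ by $\lambda_1^{-1} g_2$ (which is still a similitude of $(V_2,q_2)$, with multiplier multiplied by $\lambda_1^{-2}$) changes the right-hand side by $\lambda_1^{-1}$ and hence absorbs the scalar, yielding $g_1(x_2*_1x_3) = g_2(x_2)*_1g_3(x_3)$ with the rescaled $g_2$. This completes the proof. I do not expect any serious obstacle here: the statement is essentially a repackaging of Theorem~\ref{thm:simcriterion} and Proposition~\ref{prop:simdef2}, and the mild bookkeeping is identifying "proper similitude" with "polarization-preserving similitude" (standard, from \S\ref{subsec:ClAlg}) and tracking which of the three equations of Proposition~\ref{prop:simdef2} gives the desired formula after applying $\partial$.
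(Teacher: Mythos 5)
Your proposal is correct and follows essentially the same route as the paper's proof: apply Theorem~\ref{thm:simcriterion} with $\widetilde\Comp=\Comp$ (after noting that ``proper'' means ``polarization-preserving'') to get a similitude $(g_1,g'_2,g'_3)$ of $\Comp$, read off the equation $\lambda_1\,g_1(x_2*_1x_3)=g'_2(x_2)*_1g'_3(x_3)$ from Proposition~\ref{prop:simdef2}, and absorb $\lambda_1$ by rescaling $g'_2$. (One small slip in wording: the equation you use is the \emph{first} displayed equation of Proposition~\ref{prop:simdef2}, not the ``third,'' though you wrote down the correct formula.)
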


\begin{proof}
  Theorem~\ref{thm:simcriterion} yields similitudes $g'_2\in\GO(q_2)$
  and $g'_3\in\GO(q_3)$ such that
  $(g_1,g'_2,g'_3)\in\BGO(\Comp)(F)$. Letting
  $\lambda_\Comp(g_1,g'_2,g'_3) = (\lambda_1,\lambda_2,\lambda_3)$, we
  have by Proposition~\ref{prop:simdef2}
  \[
    \lambda_1\,g_1(x_2*_1x_3) = g'_2(x_2)*_1g'_3(x_3) \qquad\text{for
      all $x_2\in V_2$ and $x_3\in V_3$.}
  \]
  Then $g_2=\lambda_1^{-1}g'_2$ and $g_3=g'_3$ satisfy the requirement.
\end{proof}

In the special case where $*_1$ is the multiplication in an octonion
algebra, Corollary~\ref{corol:PoT} is (the main part
of)~\cite[Th.~3.2.1]{SpV}.

Corollary~\ref{corol:PoT} also has a ``local'' version:

\begin{corol}
  \label{corol:localPoT}
  For every $g_1\in\go(q_1)$, there exist $g_2\in\go(q_2)$ and
  $g_3\in\go(q_3)$ such that
  \[
    g_1(x_2*_1x_3) = g_2(x_2)*_1x_3 + x_2*_1g_3(x_3) +\dot\mu(g_1)\,
    x_2*_1x_3\qquad\text{for all $x_2\in V_2$ and $x_3\in V_3$.}
  \]
\end{corol}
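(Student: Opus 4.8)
The plan is to deduce this ``local'' (Lie algebra) version of the Principle of Triality from the global version Corollary~\ref{corol:PoT} by working over the ring of dual numbers $F[\varepsilon]$, where $\varepsilon^2=0$. Concretely, given $g_1\in\go(q_1)$ with $\dot\mu(g_1)=\mu$, consider the element $1+\varepsilon g_1 \in \End(V_1)_{F[\varepsilon]}$. By Proposition~\ref{prop:Lie1bis}, the condition $g_1\in\go(q_1)$ with $\dot\mu(g_1)=\mu$ is equivalent to $b_1(g_1(u),u)=\mu\,q_1(u)$ for all $u\in V_1$; linearizing and using that $\sigma_{b_1}$ is the adjoint involution one checks that $1+\varepsilon g_1$ is a similitude of $(V_1,q_1)_{F[\varepsilon]}$ with multiplier $1+\varepsilon\mu$. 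Moreover, $1+\varepsilon g_1$ reduces to the identity modulo $\varepsilon$, so it is a \emph{proper} similitude: it preserves the polarization induced by $\Comp$ on $(V_1,q_1)$, since the primitive central idempotents of $C_0$ are defined over $F$ and conjugation by $1+\varepsilon g_1$ fixes them.

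The next step is to apply Corollary~\ref{corol:PoT} (or rather Theorem~\ref{thm:simcriterion}) over the base ring $F[\varepsilon]$ — this is legitimate because all the structures involved, and in particular $\BGO(\Comp)$, are algebraic group schemes, and Theorem~\ref{thm:simcriterion} together with Proposition~\ref{prop:isoTT} shows that every polarization-preserving similitude of $(V_1,q_1)$ over any commutative $F$-algebra extends to a similitude of the composition. Thus there exist $\varepsilon$-automorphisms $h_2$ of $(V_2,q_2)_{F[\varepsilon]}$ and $h_3$ of $(V_3,q_3)_{F[\varepsilon]}$, reducing to the identity mod $\varepsilon$, such that $(1+\varepsilon g_1, h_2, h_3)\in\BGO(\Comp)(F[\varepsilon])$. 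Writing $h_2 = 1+\varepsilon g_2$ and $h_3 = 1+\varepsilon g_3$ with $g_2\in\End(V_2)$, $g_3\in\End(V_3)$, the fact that $h_2$, $h_3$ are similitudes forces $g_2\in\go(q_2)$, $g_3\in\go(q_3)$ again by Proposition~\ref{prop:Lie1bis}. More precisely, this is exactly the identification of the Lie algebra $\go(\Comp)$ of $\BGO(\Comp)$ described just before Proposition~\ref{prop:Liesimdef2}: the tangent vectors to $\BGO(\Comp)$ at the identity are the $4$-tuples $(g_1,g_2,g_3,\lambda_3)$ satisfying~\eqref{eq:defnlocsim}.

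The final step is to transfer the relation to $\partial\Comp$ and read off the coefficient. By Proposition~\ref{prop:Liesimdef2}, from $(g_1,g_2,g_3,\lambda_3)\in\go(\Comp)$ we obtain $\partial(g_1,g_2,g_3,\lambda_3) = (g_2,g_3,g_1,\lambda_1)\in\go(\partial\Comp)$, where the scalars are related by $\dot\mu(g_1) = \lambda_2+\lambda_3$, $\dot\mu(g_2) = \lambda_3+\lambda_1$, $\dot\mu(g_3) = \lambda_1+\lambda_2$. Substituting these into the first displayed identity of Proposition~\ref{prop:Liesimdef2},
\[
  g_1(x_2*_1x_3) = g_2(x_2)*_1x_3 + x_2*_1g_3(x_3) - \lambda_1\,x_2*_1x_3,
\]
and using $\lambda_1 = \tfrac12\bigl(\dot\mu(g_2)+\dot\mu(g_3)-\dot\mu(g_1)\bigr)$... but wait — to avoid division by $2$ and work in arbitrary characteristic, I will instead simply note that the $(g_1,g_2,g_3)$ produced satisfy $\dot\mu(g_1)=\mu$ by construction (the multiplier of $1+\varepsilon g_1$ is $1+\varepsilon\mu$), and I am free to replace $g_2$ by $g_2 + \nu_2\Id_{V_2}$ and $g_3$ by $g_3+\nu_3\Id_{V_3}$ for any $\nu_2,\nu_3\in F$, since homotheties lie in $\go(q_i)$; adjusting by such a homothety changes $\lambda_1$ by $\nu_2+\nu_3$. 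Choosing $\nu_2,\nu_3$ so that $\lambda_1 = -\mu = -\dot\mu(g_1)$ then gives exactly
\[
  g_1(x_2*_1x_3) = g_2(x_2)*_1x_3 + x_2*_1g_3(x_3) + \dot\mu(g_1)\,x_2*_1x_3
\]
for all $x_2\in V_2$, $x_3\in V_3$, after renaming the adjusted maps $g_2$, $g_3$. The only subtlety — and the step requiring the most care — is justifying that Theorem~\ref{thm:simcriterion}, stated over $F$, applies over $F[\varepsilon]$: this follows formally from the fact that $\psi_{\End(\Comp)}$ and the projection $\pi_{\End(\Comp)}$ are morphisms of algebraic group schemes and $\BGO(\Comp)$ is smooth (Proposition~\ref{prop:exseqOGOogo}), so one may equivalently phrase the whole argument as: differentiating the isomorphism $\pi_\TT\colon\BPGO(\TT)\xrightarrow{\sim}\BPGO^+(\aqp_1)$ of Theorem~\ref{thm:psiTTrev} (with $\TT=\End(\Comp)$) shows that $\pi_1\colon\pgo(\Comp)\to\pgo(q_1)$ is an isomorphism, which is precisely the surjectivity of $\go(\Comp)\to\go(q_1)$ modulo homotheties needed to lift $g_1$ to $(g_1,g_2,g_3,\lambda_3)\in\go(\Comp)$. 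This reformulation makes the dual-numbers argument unnecessary and keeps everything characteristic-free.
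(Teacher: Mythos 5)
Your proposal is correct and, after the reformulation you give at the end, is essentially the paper's own proof: Lemma~\ref{lem:dertri1} (obtained, exactly as you say, by differentiating the isomorphism of Theorem~\ref{thm:psiTTrev}) gives bijectivity of $\pi_1\colon\pgo(\Comp)\to\pgo(q_1)$, one lifts $g_1+F$ to $\pgo(\Comp)$, applies Proposition~\ref{prop:Liesimdef2} to pass to $\partial\Comp$, and normalizes by homotheties to fix the coefficient of $x_2*_1x_3$. The dual-numbers detour is a sound but unnecessary restatement of the same Lie-algebra lifting argument, as you yourself observe.
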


\begin{proof}
  Lemma~\ref{lem:dertri1} shows that projection on the first component
  $\pi_1\colon\pgo(\Comp)\to\pgo(q_1)$ is bijective, hence there exist
  $g'_2\in\go(q_2)$ and $g'_3\in\go(q_3)$ such that $(g_1+F, g'_2+F,
  g'_3+F)$ lies in $\pgo(\Comp)$, which means that there exists
  $\lambda_3\in F$ such that
  \[
    g'_3(x_1*_3x_2) = g_1(x_1)*_3x_2 + x_1*_3g'_2(x_2) -\lambda_3\,
    x_1*_3x_2 \qquad\text{for all $x_1\in V_1$ and $x_2\in V_2$.}
  \]
  By Proposition~\ref{prop:Liesimdef2}, there also exists
  $\lambda_1\in F$ such that
  \[
    g_1(x_2*_1x_3) = g'_2(x_2)*_1x_3 + x_2*_1g'_3(x_3) - \lambda_1\,
    x_2*_1x_3 \qquad\text{for all $x_2\in V_2$ and $x_3\in V_3$.}
  \]
  Then $g_2=g'_2-\lambda_1$ and $g_3=g'_3+\dot\mu(g_1)$ satisfy the
  required condition.
\end{proof}

Specializing $*_1$ to be the multiplication in an octonion algebra
(resp.\ the multiplication in a symmetric composition algebra of
dimension~$8$) yields Elduque's Principle of Local Triality
\cite[Th.~3.2]{Eld} (resp.\ \cite[Th.~5.2]{Eld}).
\medbreak

By contrast with similitudes in Theorem~\ref{thm:simcriterion},
isometries $(V_1,q_1)\to (\tilde V_1, \tilde q_1)$ do not necessarily
extend to isomorphisms $\Comp\to\widetilde\Comp$ since $(V_i,q_i)$ may
not be isometric to $(\tilde V_i,\tilde q_i)$ for $i=2$,
$3$. Nevertheless, we will obtain in
Theorem~\ref{thm:isocriterion} below an isomorphism criterion for
compositions of quadratic spaces by using the following construction
of similitudes.
\medbreak

For $\Comp$ as above, define a new composition of quadratic spaces
$\Comp'$ as follows:
\[
  \Comp'=\bigl((V_2,q_2),\,(V_1,q_1),\,(V_3,q_3),\,*'_3\bigr)
\]
where
\[
  x_2*'_3x_1=x_1*_3x_2\qquad\text{for $x_2\in V_2$ and $x_1\in V_1$.}
\]
To every anisotropic vector $u_3\in V_3$, we associate the map
\begin{equation}
  \label{eq:rhodef}
  \rho_{u_3}(x_3)=u_3q_3(u_3)^{-1}b_3(u_3,x_3) - x_3 \qquad\text{for
    $x_3\in V_3$.}
\end{equation}
Computation shows that $\rho_{u_3}$ is an isometry fixing $u_3$.

\begin{prop}
  \label{prop:constsim}
  For every anisotropic vector $u_3\in V_3$, the triples
  $(\ell_{u_3},r_{u_3},\rho_{u_3})\colon\Comp\to\Comp'$ and
  $(r_{u_3},\ell_{u_3},\rho_{u_3})\colon \Comp'\to\Comp$ are
  similitudes with composition multiplier $\bigl(1,1,q_3(u_3)\bigr)$.
\end{prop}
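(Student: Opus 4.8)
The plan is to verify directly that each of the two triples satisfies the definition of a similitude of compositions (Definition~\ref{defn:simcomp2}), with the announced composition multiplier. First I would record what must be checked: for $(\ell_{u_3},r_{u_3},\rho_{u_3})\colon\Comp\to\Comp'$ we need that $\ell_{u_3}\colon V_1\to V_3$, $r_{u_3}\colon V_3\to V_2$ wait---here the roles are shifted, so more carefully: in $\Comp'$ the three spaces are $(V_2,q_2)$, $(V_1,q_1)$, $(V_3,q_3)$ with map $*'_3$, so a similitude $\Comp\to\Comp'$ is a triple $(g_1,g_2,g_3)$ of similitudes $g_1\colon V_1\to V_2$, $g_2\colon V_2\to V_1$, $g_3\colon V_3\to V_3$ together with $\lambda_3\in F^\times$ with $\lambda_3\,g_3(x_1*_3x_2)=g_1(x_1)*'_3g_2(x_2)=g_2(x_2)*_3g_1(x_1)$. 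So I must check (a) $\ell_{u_3}\colon(V_1,q_1)\to(V_2,q_2)$ is a similitude wait no: $\ell_{u_3}$ as defined in \S\ref{subsec:Climaps} sends $x_1\mapsto x_1*_3 u_3$? Actually $\ell_{x_1}\colon V_2\to V_3$; one must use instead the maps attached to $\partial^2\Comp$, where $\ell_{u_3}\colon V_1\to V_2$ carries $x_1$ to $u_3*_2x_1$ and $r_{u_3}\colon V_2\to V_1$ carries $x_2$ to $x_2*_1u_3$. With that reading, (a) $\ell_{u_3}$ is a similitude $(V_1,q_1)\to(V_2,q_2)$ with multiplier $q_3(u_3)$ by \eqref{eq:compp0}; (b) $r_{u_3}$ is a similitude $(V_2,q_2)\to(V_1,q_1)$ with multiplier $q_3(u_3)$ by \eqref{eq:compp0} again; (c) $\rho_{u_3}$ is an isometry of $(V_3,q_3)$, which follows from the displayed formula \eqref{eq:rhodef} by a short computation using that reflections preserve $q_3$; and (d) the compatibility identity with $\lambda_3=q_3(u_3)$.

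The heart of the argument is step (d), the compatibility identity
\[
  q_3(u_3)\,\rho_{u_3}(x_1*_3x_2) = (u_3*_2x_1)*_3 \bigl(\text{wait}\bigr)
\]
---let me instead describe it abstractly. I would expand $\rho_{u_3}(x_1*_3x_2)$ using \eqref{eq:rhodef}: it equals $u_3q_3(u_3)^{-1}b_3(u_3,x_1*_3x_2)-x_1*_3x_2$. Now by \eqref{eq:comp45}, $b_3(u_3,x_1*_3x_2)=b_1(x_1,x_2*_1u_3)=b_2(x_2,u_3*_2x_1)$, and by \eqref{eq:compp7} or \eqref{eq:comp7linbis} one can re-express $x_1*_3x_2$ in terms of $(u_3*_2x_1)$ and $(x_2*_1u_3)$ after multiplying by $q_3(u_3)$. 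Specifically, the linearized identity \eqref{eq:compp6lin} (or its cyclic companions) applied with one slot equal to $u_3$ should produce exactly
\[
  q_3(u_3)\,(x_1*_3x_2) + \bigl(\text{correction term involving }b\bigr)
  = (x_2*_1u_3)*'_3\!\big/\!*_3\,(u_3*_2x_1),
\]
so that after substituting the $b_3(u_3,-)$ expression the correction terms cancel and one is left with $q_3(u_3)\,\rho_{u_3}(x_1*_3x_2)=(u_3*_2x_1)\,*_3\,$ --- no, in $\Comp'$ the product of $g_1(x_1)\in V_2$ and $g_2(x_2)\in V_1$ under $*'_3$ is $g_2(x_2)*_3g_1(x_1) = r_{u_3}(x_2)*_3\ell_{u_3}(x_1)$; but that is not a legal $*_3$ product since $r_{u_3}(x_2)\in V_1$ and $\ell_{u_3}(x_1)\in V_2$, so in fact it IS $=(x_2*_1u_3)*_3(u_3*_2x_1)$. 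Thus the identity to prove is
\[
  q_3(u_3)\,\rho_{u_3}(x_1*_3x_2) = (x_2*_1u_3)*_3(u_3*_2x_1)
  \qquad\text{for all }x_1\in V_1,\ x_2\in V_2,
\]
and I expect this to drop out of one application of \eqref{eq:compp6linbis} together with \eqref{eq:comp45}; the second similitude $(r_{u_3},\ell_{u_3},\rho_{u_3})\colon\Comp'\to\Comp$ is then the same statement read backwards, since $\rho_{u_3}$ is an involution fixing $u_3$.

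Having the compatibility identity, the composition multiplier is determined: Proposition~\ref{prop:simdef2} (more precisely the relations \eqref{eq:lambdamu}) forces $\lambda(g)=(\rho(g)\mu(g_1)^{-1},\rho(g)\mu(g_2)^{-1},\rho(g)\mu(g_3)^{-1})$; here $\mu(g_3)=\mu(\rho_{u_3})=1$ and $\lambda_3=q_3(u_3)$, so $\rho(g)=q_3(u_3)$, whence $\lambda_1=\lambda_2=1$ because $\mu(g_1)=\mu(g_2)=q_3(u_3)$. This yields the stated composition multiplier $(1,1,q_3(u_3))$, and the proof is complete. The one genuinely computational point---and the place I expect to spend the most care---is pinning down which of the linearized relations \eqref{eq:compp5lin}--\eqref{eq:comp7linbis} gives the compatibility identity with the correct placement of slots; once the right identity is selected the cancellation is immediate, and everything else is bookkeeping with \eqref{eq:compp0} and \eqref{eq:comp45}.
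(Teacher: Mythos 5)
Your reduction is exactly the one the paper uses: you correctly sort out that $\ell_{u_3}$ and $r_{u_3}$ are the canonical Clifford-type maps attached to $\partial^2\Comp$, namely $\ell_{u_3}(x_1)=u_3*_2x_1$ and $r_{u_3}(x_2)=x_2*_1u_3$, that each is a similitude with multiplier $q_3(u_3)$ by \eqref{eq:compp0}, and that both claims in the proposition collapse to the single identity
\[
  q_3(u_3)\,\rho_{u_3}(x_1*_3x_2) = (x_2*_1u_3)*_3(u_3*_2x_1)
  \qquad\text{for all $x_1\in V_1$, $x_2\in V_2$,}
\]
from which the stated composition multiplier follows via \eqref{eq:lambdamu} exactly as you compute.

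The gap is that you stop short of actually verifying this identity, and the lemma you commit to is the wrong one. You settle on \eqref{eq:compp6linbis}, but that relation lives in $V_2$ (its left side is a sum of $*_2$-products), whereas the identity you must establish is an equality in $V_3$. The relation that works is \eqref{eq:comp7linbis}, specialized with $x_3=u_3$ and $y_1=x_2*_1u_3$:
\[
  x_1*_3\bigl(u_3*_2(x_2*_1u_3)\bigr) + (x_2*_1u_3)*_3(u_3*_2x_1) = u_3\,b_1(x_1,\,x_2*_1u_3).
\]
To finish one also needs \eqref{eq:compp6} to replace $u_3*_2(x_2*_1u_3)$ by $x_2\,q_3(u_3)$, and \eqref{eq:comp45} to rewrite $b_1(x_1,\,x_2*_1u_3)$ as $b_3(u_3,\,x_1*_3x_2)$; comparison with the definition \eqref{eq:rhodef} of $\rho_{u_3}$ then yields the identity. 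So the missing ingredient is the pair \eqref{eq:comp7linbis} and \eqref{eq:compp6}, not \eqref{eq:compp6linbis}; with those cited and the short calculation carried out, your argument becomes complete and coincides with the paper's.
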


\begin{proof}
  Since $\mu(r_{u_3})=\mu(\ell_{u_3})=q_3(u_3)$, to prove
  $(\ell_{u_3},r_{u_3},\rho_{u_3})$ is a similitude with composition
  multiplier 
  $\bigl(1,1,q_3(u_3)\bigr)$ it suffices to show
  \[
    q_3(u_3)\rho_{u_3}(x_1*_3x_2)= \ell_{u_3}(x_1)*'_3r_{u_3}(x_2)
    \qquad\text{for all $x_1\in V_1$, $x_2\in V_2$.}
  \]
  Likewise, to prove $(r_{u_3},\ell_{u_3},\rho_{u_3})$ is a similitude
  with composition multiplier $\bigl(1,1,q_3(u_3)\bigr)$
  it suffices to show
  \[
    q_3(u_3)\rho_{u_3}(x_2*'_3x_1)=r_{u_3}(x_2)*_3\ell_{u_3}(x_1)
    \qquad\text{for all $x_2\in V_2$, $x_1\in V_1$.}
  \]
  Each of these equations amounts to
  \[
    u_3b_3(u_3,x_1*_3x_2) - (x_1*_3x_2)q_3(u_3) =
    (x_2*_1u_3)*_3(u_3*_2x_1).
  \]
  By~\eqref{eq:comp7linbis}, we may rewrite the right side as
  \[
    (x_2*_1u_3)*_3(u_3*_2x_1)= u_3b_1(x_2*_1u_3,x_1) -
    x_1*_3\bigl(u_3*_2(x_2*_1u_3)\bigr).
  \]
  Since $b_1(x_2*_1u_3,x_1)=b_3(u_3,x_1*_3x_2)$ by \eqref{eq:comp45},
  and $u_3*_2(x_2*_1u_3) = x_2q_3(u_3)$ by \eqref{eq:compp6}, the
  proposition follows.
\end{proof}

Proposition~\ref{prop:constsim} allows us to describe the group
\[
  G(\Comp) = \lambda_\Comp\bigl(\BGO(\Comp)(F)\bigr)\subset
  F^\times\times F^\times\times F^\times
\]
of composition multipliers of auto-similitudes of $\Comp$. In the next
corollary, we write $G(n_\Comp)$ for the group of multipliers of
similitudes of the Pfister form associated to $\Comp$, which is also
the set of represented values of this form because Pfister forms are
round (see~\cite[Cor.~9.9]{EKM}).

\begin{corol}
  \label{corol:GComp}
  $G(\Comp) = \{(\lambda_1,\lambda_2,\lambda_3)\in
    F^\times\times F^\times\times F^\times\mid
    \lambda_1\equiv\lambda_2\equiv\lambda_3\bmod G(n_\Comp)\}$. 
  \end{corol}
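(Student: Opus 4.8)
The plan is to prove the two inclusions separately, using the explicit similitudes constructed in Proposition~\ref{prop:constsim} together with the structure of $\BGO(\Comp)$ obtained in the previous subsections.

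\textbf{The inclusion $G(\Comp)\subseteq\{(\lambda_1,\lambda_2,\lambda_3)\mid\lambda_1\equiv\lambda_2\equiv\lambda_3\bmod G(n_\Comp)\}$.} Let $(g_1,g_2,g_3)\in\BGO(\Comp)(F)$ with composition multiplier $(\lambda_1,\lambda_2,\lambda_3)$. By Proposition~\ref{prop:compdim8} we have $q_i\simeq\qf{\lambda_i^{(0)}}n_\Comp$ for scalars $\lambda_i^{(0)}$ with product~$1$, and each $g_i\colon(V_i,q_i)\to(V_i,q_i)$ is a similitude with multiplier $\mu(g_i)$; since $q_i$ is similar to the round form $n_\Comp$, the multiplier group of $(V_i,q_i)$ equals $G(n_\Comp)$, so $\mu(g_i)\in G(n_\Comp)$ for each $i$. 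The relations~\eqref{eq:lambdamu}, namely $\mu(g_1)=\lambda_2\lambda_3$, $\mu(g_2)=\lambda_3\lambda_1$, $\mu(g_3)=\lambda_1\lambda_2$, then give $\lambda_1\lambda_2^{-1}=\mu(g_3)\mu(g_1)^{-1}\in G(n_\Comp)$ (using that $G(n_\Comp)$ is a group), and similarly $\lambda_2\lambda_3^{-1}\in G(n_\Comp)$. Hence $\lambda_1\equiv\lambda_2\equiv\lambda_3\bmod G(n_\Comp)$, proving one inclusion.

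\textbf{The reverse inclusion.} Given $(\lambda_1,\lambda_2,\lambda_3)$ with all pairwise ratios in $G(n_\Comp)$, I want to exhibit an auto-similitude of $\Comp$ with this composition multiplier. First, the homotheties from Step~1 of the proof of Proposition~\ref{prop:exseqOGOogo} already realize every triple of the form $(\nu_2\nu_3\nu_1^{-1},\nu_3\nu_1\nu_2^{-1},\nu_1\nu_2\nu_3^{-1})$ with $\nu_i\in F^\times$; these are exactly the triples $(\mu_1,\mu_2,\mu_3)$ with $\mu_1\mu_2\mu_3\in (F^\times)^2$ and — more to the point — they realize all triples in which all three entries are squares, hence all triples with entries in $(F^\times)^2\subseteq G(n_\Comp)$. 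To get the full group $G(n_\Comp)$ in the ratios I use Proposition~\ref{prop:constsim}: for an anisotropic $u_3\in V_3$, composing the similitude $(\ell_{u_3},r_{u_3},\rho_{u_3})\colon\Comp\to\Comp'$ with $(r_{u_3'},\ell_{u_3'},\rho_{u_3'})\colon\Comp'\to\Comp$ for a second anisotropic vector $u_3'$ produces an auto-similitude of $\Comp$ whose composition multiplier is the product $(1,1,q_3(u_3))\cdot(1,1,q_3(u_3'))=(1,1,q_3(u_3)q_3(u_3'))$ — wait, one must be careful about how composition multipliers compose, so more precisely I would compute directly that $(r_{u_3'},\ell_{u_3'},\rho_{u_3'})\circ(\ell_{u_3},r_{u_3},\rho_{u_3})=(r_{u_3'}\ell_{u_3},\ell_{u_3'}r_{u_3},\rho_{u_3'}\rho_{u_3})$ is an auto-similitude of $\Comp$ with composition multiplier $(1,1,q_3(u_3)q_3(u_3'))$. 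As $u_3$, $u_3'$ range over anisotropic vectors, $q_3(u_3)q_3(u_3')$ ranges over all products of two represented values of $q_3$, which (since $q_3$ is similar to the round form $n_\Comp$, so its represented values form the coset $\lambda_3^{(0)}G(n_\Comp)$) is exactly $G(n_\Comp)$. Applying $\partial$ and $\partial^2$ to these similitudes (Proposition~\ref{prop:simdef2}) gives auto-similitudes with composition multipliers $(G(n_\Comp),1,1)$ and $(1,G(n_\Comp),1)$ as well. Multiplying these together, the composition multipliers realized by $\BGO(\Comp)(F)$ contain all triples $(\mu_1,\mu_2,\mu_3)$ with each $\mu_i\in G(n_\Comp)$, in particular those with $\mu_1\mu_2\mu_3=1$; combined with the homotheties this yields every $(\lambda_1,\lambda_2,\lambda_3)$ with all entries congruent modulo $G(n_\Comp)$. [Indeed: write $\lambda_i=\mu_i\lambda_i^{(0)}\cdot(\text{square})$ appropriately — it is cleanest to note that given the target triple, the product $\lambda_1\lambda_2\lambda_3$ need not be constrained since homotheties give arbitrary $(\nu_2\nu_3/\nu_1,\dots)$, and then correct the individual ratios by the similitudes just built.]

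\textbf{Main obstacle.} The delicate point is the bookkeeping of how composition multipliers multiply under composition of similitudes of compositions, and making sure the subgroup of $F^\times\times F^\times\times F^\times$ generated by the homothety multipliers together with $(G(n_\Comp),1,1)$, $(1,G(n_\Comp),1)$, $(1,1,G(n_\Comp))$ is exactly the claimed set — i.e. verifying that $\{(\mu_1,\mu_2,\mu_3)\mid\mu_i\in G(n_\Comp)\}$ together with the homothety image $\{(\nu_2\nu_3\nu_1^{-1},\nu_3\nu_1\nu_2^{-1},\nu_1\nu_2\nu_3^{-1})\}$ generates $\{\lambda_1\equiv\lambda_2\equiv\lambda_3\bmod G(n_\Comp)\}$. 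Since $(F^\times)^2\subseteq G(n_\Comp)$, the homotheties contribute nothing new once we have the three ``coordinate'' copies of $G(n_\Comp)$, so in fact the three coordinate subgroups already generate the whole set (a triple with $\lambda_1\equiv\lambda_2\equiv\lambda_3$ is the product of $(\lambda_1\lambda_3^{-1},1,1)$, $(1,\lambda_2\lambda_3^{-1},1)$ and the homothety $(\lambda_3,\lambda_3,\lambda_3)$ — the last using that $(\lambda_3,\lambda_3,\lambda_3)$ is a homothety multiplier, taken with $\nu_1=\nu_2=\nu_3^{-1}\lambda_3$, giving $(\lambda_3,\lambda_3,\lambda_3^{-3}\lambda_3^2)=(\lambda_3,\lambda_3,\lambda_3^{-1})$, so one more coordinate correction by $(1,1,\lambda_3^2)\in(1,1,G(n_\Comp))$ finishes it). I would write this elementary group-theoretic reduction out carefully as the final step.
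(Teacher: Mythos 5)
Your proof follows essentially the same route as the paper's: the forward inclusion uses the relations~\eqref{eq:lambdamu} to express the pairwise ratios of the $\lambda_i$ as ratios of multipliers of similitudes of the $q_i$, which lie in $G(n_\Comp)$; the reverse inclusion builds $(1,1,\nu)$ for $\nu\in G(n_\Comp)$ from two applications of Proposition~\ref{prop:constsim}, uses the shift to populate the other coordinates, and then combines with diagonal homotheties.

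A few slips in the bookkeeping are worth fixing, though none is fatal. In the forward direction, $\mu(g_3)\mu(g_1)^{-1}=\lambda_1\lambda_3^{-1}$, not $\lambda_1\lambda_2^{-1}$; use $\mu(g_2)\mu(g_1)^{-1}=\lambda_1\lambda_2^{-1}$ instead (the conclusion is unchanged). The set of homothety composition multipliers is $\{(\mu_1,\mu_2,\mu_3)\mid \mu_1\equiv\mu_2\equiv\mu_3\bmod (F^\times)^2\}$, not the triples with $\mu_1\mu_2\mu_3\in(F^\times)^2$ (the product $\nu_1\nu_2\nu_3$ is arbitrary); but this parenthetical remark plays no role in your argument. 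The main point to clean up is the use of $\partial$: Proposition~\ref{prop:simdef2} sends an auto-similitude of $\Comp$ to an auto-similitude of $\partial\Comp$, not of $\Comp$, so you do not directly get $(\nu,1,1)\in G(\Comp)$ by ``applying $\partial$'' to an element of $G(\Comp)$. The fix, which is what the paper does, is to produce $(1,1,\nu)\in G(\partial\Comp)$ by running the same construction in $\partial\Comp$ (valid since $n_{\partial\Comp}\simeq n_\Comp$) and then shift \emph{back} by $\partial^2$ to land in $G(\Comp)$ with composition multiplier $(\nu,1,1)$. Finally, the last group-theoretic step is needlessly convoluted: $(\lambda_3,\lambda_3,\lambda_3)$ is already a homothety multiplier (take $\nu_1=\nu_2=\nu_3=\lambda_3$), so no correction by $(1,1,\lambda_3^2)$ is required.
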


\begin{proof}
  If
  $(\lambda_1,\lambda_2,\lambda_3)=\lambda_\Comp(g_1,g_2,g_3,\lambda_3)$
  for some $(g_1,g_2,g_3,\lambda_3)\in\BGO(\Comp)(F)$, then by
  definition of $\lambda_\Comp$ (see~\eqref{eq:lambdaCompdef})
  \[
    \lambda_1=\mu(g_2)\lambda_3^{-1} \qquad\text{and}\qquad
    \lambda_2=\mu(g_1)\lambda_3^{-1}.
  \]
  Since $q_1$ and $q_2$ are multiples of $n_\Comp$, multipliers of
  similitudes of $q_1$ and of $q_2$ lie in $G(n_\Comp)$, hence
  $\lambda_1\lambda_3\in G(n_\Comp)$ and $\lambda_2\lambda_3\in
  G(n_\Comp)$. Therefore,
  $\lambda_1\equiv\lambda_2\equiv\lambda_3\bmod G(n_\Comp)$.
  \medbreak

  For the converse, we first establish:
  \smallbreak\par\noindent
  \textit{Claim: $(1,1,\nu)\in G(\Comp)$ for every $\nu\in
  G(n_\Comp)$.}
  \par\noindent
  To see this, pick any anisotropic vector $u_3\in V_3$, and let
  $v_3\in V_3$ be the image of $u_3q_3(u_3)^{-1}$ under any similitude
  of $(V_3,q_3)$ with multiplier $\nu$, so that $q_3(v_3)=\nu
  q_3(u_3)^{-1}$. By Proposition~\ref{prop:constsim}, the composition
  of maps $(r_{v_3}, \ell_{v_3},\rho_{v_3})\circ (\ell_{u_3}, r_{u_3},
  \rho_{u_3})$ is an auto-similitude of $\Comp$ with multiplier
  $(1,1,q_3(v_3))(1,1,q_3(u_3))=(1,1,\nu)$. This proves the claim.

  Since for the derived composition $\partial\Comp$ we have
  $n_{\partial\Comp}\simeq n_\Comp$, it follows that $(1,1,\nu)\in
  G(\partial\Comp)$ for 
  every $\nu\in G(n_\Comp)$, hence $(\nu,1,1)\in G(\Comp)$ for every
  $\nu\in G(n_\Comp)$.
  
  Now, suppose $(\lambda_1,\lambda_2,\lambda_3)\in
  F^\times\times F^\times\times F^\times$ is such that
  $\lambda_1\lambda_2^{-1}$, $\lambda_2^{-1}\lambda_3\in
  G(n_\Comp)$. The previous observations show
  \[
    (\lambda_1\lambda_2^{-1},1,1),\; (1,1,\lambda_2^{-1}\lambda_3)\in
    G(\Comp).
  \]
  Moreover, $(\lambda_2\Id_{V_1}, \lambda_2\Id_{V_2},
  \lambda_2\Id_{V_3}, \lambda_2)\in\BGO(\Comp)(F)$ is a similitude
  with composition 
  multiplier $(\lambda_2,\lambda_2,\lambda_2)$. Therefore, the group
  $G(\Comp)$ also contains the product
  \[
    (\lambda_1\lambda_2^{-1},1,1)\cdot(1,1,\lambda_2^{-1}\lambda_3)
    \cdot (\lambda_2,\lambda_2,\lambda_2) = (\lambda_1, \lambda_2,
    \lambda_3).
    \qedhere
  \]
\end{proof}

\begin{remark}
  Proposition~\ref{prop:constsim} and Corollary~\ref{corol:GComp} also
  hold, with the same proof,
  for compositions of quadratic spaces of dimension~$2$ or $4$.
\end{remark}

\begin{thm}
  \label{thm:isocriterion}
  The compositions $\Comp$ and $\widetilde\Comp$ are isomorphic if and
  only if 
  $(V_i,q_i)$ and $(\tilde V_i,\tilde q_i)$ are isometric for $i=1$,
  $2$ and $3$.
\end{thm}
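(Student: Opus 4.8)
The plan is to prove the two implications separately, the forward one being trivial. If $(g_1,g_2,g_3)\colon\Comp\to\widetilde\Comp$ is an isomorphism of compositions, then each $g_i$ is a similitude $(V_i,q_i)\to(\tilde V_i,\tilde q_i)$ whose multiplier equals $\lambda_j\lambda_k$ by~\eqref{eq:lambdamu}; since the composition multiplier is $(1,1,1)$, every $g_i$ has multiplier~$1$, i.e.\ is an isometry. So $(V_i,q_i)\simeq(\tilde V_i,\tilde q_i)$ for $i=1$, $2$, $3$.

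For the converse, suppose $(V_i,q_i)\simeq(\tilde V_i,\tilde q_i)$ for all three indices. First I would pick an isometry $h_1\colon(V_1,q_1)\to(\tilde V_1,\tilde q_1)$; composing $h_1$ on one side with an improper isometry of one of the spaces if necessary (improper isometries exist since $\dim V_1=8>0$), we may assume $h_1$ preserves the polarizations induced by $\Comp$ and $\widetilde\Comp$. By Theorem~\ref{thm:simcriterion}, $h_1$ extends to a similitude $(h_1,h_2,h_3)\colon\Comp\to\widetilde\Comp$, with some composition multiplier $(\lambda_1,\lambda_2,\lambda_3)$; by~\eqref{eq:lambdamu} this satisfies $\lambda_2\lambda_3=\mu(h_1)=1$, $\lambda_3\lambda_1=\mu(h_2)$, $\lambda_1\lambda_2=\mu(h_3)$. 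The composition $\widetilde\Comp$ has associated $3$-fold Pfister form $n_{\widetilde\Comp}$, and $\tilde q_1\simeq\langle\tilde\lambda_1\rangle n_{\widetilde\Comp}$; since $q_1\simeq\tilde q_1$ we have $n_\Comp\simeq n_{\widetilde\Comp}$ by Corollary~\ref{corol:simcriterion} (similar Pfister forms are isometric). Because $q_2\simeq\tilde q_2$ and $q_3\simeq\tilde q_3$, the multipliers $\mu(h_2)$ and $\mu(h_3)$ are multipliers of similitudes between multiples of $n_{\widetilde\Comp}$, hence lie in $G(n_{\widetilde\Comp})$.

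The key step is then to correct $(h_1,h_2,h_3)$ to an \emph{isomorphism} by composing with a suitable auto-similitude of $\widetilde\Comp$. From $\lambda_2\lambda_3=1$ and $\lambda_3\lambda_1,\lambda_1\lambda_2\in G(n_{\widetilde\Comp})$ one gets $\lambda_1\lambda_2\equiv\lambda_1\lambda_3^{-1}\pmod{G(n_{\widetilde\Comp})}$, and a short computation shows that the triple $(\lambda_1^{-1},\lambda_2^{-1},\lambda_3^{-1})$ satisfies the congruence condition of Corollary~\ref{corol:GComp}: indeed $\lambda_1^{-1}\lambda_2=\lambda_1^{-1}\lambda_2^{-1}\cdot\lambda_2^2$, and since $\lambda_1\lambda_2=\mu(h_3)\in G(n_{\widetilde\Comp})$ while $\lambda_2^2\in G(n_{\widetilde\Comp})$ (every square of a nonzero scalar is a similitude multiplier of the round form $n_{\widetilde\Comp}$), we get $\lambda_1^{-1}\lambda_2^{-1}\in G(n_{\widetilde\Comp})$, and similarly for the other pair using $\lambda_2\lambda_3=1$. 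Hence by Corollary~\ref{corol:GComp} there is $(f_1,f_2,f_3,\lambda_3^{-1})\in\BGO(\widetilde\Comp)(F)$ with composition multiplier $(\lambda_1^{-1},\lambda_2^{-1},\lambda_3^{-1})$. Then $(f_1\circ h_1,\,f_2\circ h_2,\,f_3\circ h_3)\colon\Comp\to\widetilde\Comp$ is a similitude with composition multiplier $(\lambda_1^{-1}\lambda_1,\lambda_2^{-1}\lambda_2,\lambda_3^{-1}\lambda_3)=(1,1,1)$, i.e.\ an isomorphism $\Comp\simeq\widetilde\Comp$.

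The main obstacle is the bookkeeping in this last paragraph: one must verify that the congruence-class condition of Corollary~\ref{corol:GComp} is met by the inverse multipliers $(\lambda_1^{-1},\lambda_2^{-1},\lambda_3^{-1})$, which requires combining the relations~\eqref{eq:lambdamu} with the fact that $G(n_{\widetilde\Comp})$ contains all nonzero squares (because $n_{\widetilde\Comp}$ is a round form, so $G(n_{\widetilde\Comp})$ is a group containing every $\mu(h)$ for $h$ a similitude, in particular every $h=\nu\,\Id$ with multiplier $\nu^2$). Everything else is a direct citation of Theorem~\ref{thm:simcriterion}, Corollary~\ref{corol:simcriterion} and Corollary~\ref{corol:GComp}.
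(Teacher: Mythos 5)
Your proof is correct and follows essentially the same route as the paper's: extend an isometry of the first component to a similitude via Theorem~\ref{thm:simcriterion}, then correct it to an isomorphism by composing with an auto-similitude supplied by Corollary~\ref{corol:GComp}. The only cosmetic difference is bookkeeping: the paper first rescales $g_2$ to bring the composition multiplier to the form $(\mu(g_3),1,1)$ before invoking Corollary~\ref{corol:GComp}, while you apply that corollary directly to $(\lambda_1^{-1},\lambda_2^{-1},\lambda_3^{-1})$, using that $G(n_{\widetilde\Comp})$ contains all squares to verify the congruence condition — both are equivalent applications of the same lemma.
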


\begin{proof}
  If $g=(g_1,g_2,g_3)\colon\Comp\to\widetilde\Comp$ is an isomorphism,
  then from the relations between the multipliers of $g_1$, $g_2$,
  $g_3$ and the composition multiplier $\lambda(g)$
  in~\eqref{eq:lambdamu} it follows that $g_1$, $g_2$ and $g_3$ are
  isometries, hence $(V_i,q_i)\simeq(\tilde V_i,\tilde q_i)$ for all
  $i$.

  For the converse, assume $(V_i,q_i)$ is isometric to $(\tilde V_i,
  \tilde q_i)$ for $i=1$, $2$, $3$, and pick an isometry
  $g_1\colon(V_1,q_1) \to (\tilde V_1, \tilde q_1)$. Composing it with
  an improper isometry if needed, we may assume $g_1$ preserves the
  polarizations induced by $\Comp$ and
  $\widetilde\Comp$. Theorem~\ref{thm:simcriterion} then yields a
  similitude $g=(g_1,g_2,g_3)\colon\Comp\to\widetilde\Comp$. Let
  $\lambda(g)=(\lambda_1,\lambda_2,\lambda_3)$. From the
  relations~\eqref{eq:lambdamu} between $\lambda(g)$ and the
  multipliers of $g_1$, $g_2$, $g_3$ it follows that
  $\mu(g_1)=\lambda_2\lambda_3$, hence $\lambda_2\lambda_3=1$ since
  $g_1$ is an isometry. The triple $(g_1,\lambda_2g_2,g_3)$ also is a
  similitude $\Comp\to\widetilde\Comp$, and
  \[
    \lambda(g_1,\lambda_2g_2,g_3) =
    \lambda(g)\cdot(\lambda_2,\lambda_2^{-1}, \lambda_2) =
    (\lambda_1\lambda_2,1,1) = (\mu(g_3),1,1).
  \]
  Since $(\tilde V_3,\tilde q_3)\simeq(V_3,q_3)$, the multiplier
  $\mu(g_3)$ is the multiplier of a similitude of $q_3$, hence also of
  $n_\Comp$. Corollary~\ref{corol:GComp} then shows that there exists
  an auto-similitude $(g'_1,g'_2,g'_3)$ of $\Comp$ such that
  $\lambda(g'_1,g'_2,g'_3) = (\mu(g_3)^{-1},1,1)$. Then $(g_1\circ
  g'_1, \lambda_2g_2\circ g'_2, g_3\circ g'_3)$ is a similitude
  $\Comp\to\widetilde\Comp$ with composition multiplier $(1,1,1)$,
  i.e., it is an isomorphism.
\end{proof}

Corollary~\ref{corol:simcriterion} and
Theorem~\ref{thm:isocriterion} can be given a cohomological
interpretation: over a separable closure of $F$,
Corollary~\ref{corol:2.30} (or Theorem~\ref{thm:isocriterion})
shows that all the compositions of quadratic spaces of dimension~$8$
are isomorphic. Therefore, if $\Comp_0$ is a composition of hyperbolic
quadratic spaces of dimension~$8$ over $F$ (such as the composition
associated to the split para-octonion algebra), standard arguments of
nonabelian Galois cohomology (see for instance~\cite[\S29]{BoI}) yield
canonical bijections
\[
  H^1\bigl(F,\BOrth(\Comp_0)\bigr) \quad\longleftrightarrow\quad
  \fbox{\parbox{60mm}{isomorphism
    classes of compositions of quadratic spaces of dimension~$n$ over
    $F$}}
\]
and
\[
  H^1\bigl(F,\BGO(\Comp_0)\bigr) \quad\longleftrightarrow\quad
  \fbox{\parbox{60mm}{similarity
    classes of compositions of quadratic spaces of dimension~$n$ over
    $F$}}
\]
because $\BOrth(\Comp_0)$ (resp.\ $\BGO(\Comp_0)$) is the group of
automorphisms (resp.\ auto-similitudes) of $\Comp_0$. Since by
Proposition~\ref{prop:simEndComp} the group
$\BPGO(\Comp_0)$ is the automorphism group of the trialitarian triple
$\End(\Comp_0)$, there is an additional canonical bijection
\[
  H^1\bigl(F,\BPGO(\Comp_0)\bigr) \quad\longleftrightarrow\quad
  \fbox{isomorphism classes of trialitarian triples over $F$}
\]

Now, Corollary~\ref{corol:simcriterion} yields a bijection between
$H^1\bigl(F,\BGO(\Comp_0)\bigr)$ and the set of isometry classes of
$3$-fold quadratic Pfister forms. Similarly,
Theorem~\ref{thm:isocriterion} yields a bijection between
$H^1\bigl(F,\BOrth(\Comp_0)\bigr)$ and the set of triples of quadratic
forms $(q_1,q_2,q_3)$ up to isometry, subject to the condition that
there exists a $3$-fold quadratic Pfister form $n$ such that $q_1$,
$q_2$, $q_3$ are similar to $n$ and the orthogonal sum $n\perp
q_1\perp q_2\perp q_3$ is a $5$-fold quadratic Pfister form. This can
also be viewed as a description of $H^1(F,\BSpin_8)$ for $\BSpin_8$
the spin group of $8$-dimensional hyperbolic quadratic forms, because
Theorem~\ref{thm:SpinOTT} yields a canonical isomorphism
$\BSpin_8\simeq \BOrth(\Comp_0)$. We may use this description to give
an interpretation of the $\bmod\:2$ cohomological invariants of $\BSpin_8$
determined by Garibaldi in~\cite[\S18.1]{Gar} under the hypothesis
that $\charac F\neq2$, as follows: for $n=3$, $4$, $5$, let $e_n$
denote the Elman--Lam cohomological invariant of $n$-fold Pfister
forms, defined by
\[
  e_n\bigl(\langle1, -a_1\rangle\cdot\ldots\cdot\langle1, -a_n\rangle\bigr)
  = (a_1)\cup\ldots\cup(a_n)\in H^n(F,\Bmu_2),
\]
where $(a_i)\in H^1(F,\Bmu_2)$ is the cohomology class corresponding
to the square class of $a_i\in F^\times$ by Kummer theory,
see~\cite[\S16]{EKM}. For every triple $(q_1,q_2,q_3)$ as above, the
cohomology classes
\[
  e_3(n),\quad e_4(n\perp q_1),\quad e_4(n\perp
  q_2), \quad e_4(n\perp q_3),\quad e_5(n\perp q_1\perp
  q_2\perp q_3)
\]
define cohomological invariants, which distinguish these triples up to
isometry. According to~\cite[\S18.1]{Gar}, these invariants
generate the $H^*(F,\mathbb{Z}/2\mathbb{Z})$-module of mod~$2$
invariants of $\BSpin_8$. Note that these invariants are not
independent: since
\[
  (n\perp q_1) \perp (n\perp q_2) \perp (n\perp q_3) = 2n \perp
  (n\perp q_1\perp q_2\perp q_3)
\]
and $n\perp q_1\perp q_2\perp q_3$ is a $5$-fold Pfister form, it
follows that
\[
  e_4(n\perp q_1)+e_4(n\perp q_2) + e_4(n\perp q_3) = e_4(2n)
  = (-1)\cup e_3(n).
\]

\subsection{The structure group of $8$-dimensional composition algebras}
\label{subsec:strgrp8}

Let $\Calg=(A,q,\diamond)$ be a composition algebra of
dimension~$8$. Recall from 
Defintion~\ref{defn:isotopy} the structure group $\BStr(A,\diamond)$,
which is the group of autotopies of
$(A,\diamond)$. Corollary~\ref{corol:isot} identifies
$\BStr(A,\diamond)$ with a subgroup of $\BGO\bigl(\Comp(\Calg)\bigr)$,
for $\Comp(\Calg)$ the composition of quadratic spaces associated to
$\Calg$ as in~\eqref{eq:CompCalg}.

In the trialitarian triple $\TT=\End\bigl(\Comp(\Calg)\bigr)$ we have
$\aqp_1=\aqp_2=\aqp_3=(\End A,\sigma_b,\strf_q)$. Mimicking the
construction in~\S\ref{subsec:simTT}, we obtain a morphism
$\psi_{\partial^2\TT}\colon\BOmega(\aqp_3)\to\BGO(\partial^2\Comp)$ as
in~\eqref{eq:defnpsiTT}. We use it to define a morphism of algebraic
groups
\[
  \psi_\Calg\colon\BOmega(q)\to\BGO\bigl(\Comp(\Calg)\bigr)
\]
by specializing to the case where $\TT=\End\bigl(\Comp(\Calg)\bigr)$
the map $\partial\circ\psi_{\partial^2\TT}\colon \BOmega(\aqp_3)\to
\BGO(\TT)$, where $\partial$ is the shift map. Thus, for any
commutative $F$-algebra $R$ and $\xi\in\BOmega(q)(R)$,
\[
  \psi_\Calg(\xi) = \bigl(C_+(\alpha'')(\xi),\, C_-(\alpha'')(\xi),\,
  \chi_0(\xi)\bigr)
\]
(viewing $\BGO\bigl(\Comp(\Calg)\bigr)$ as a subgroup of
$\BGO(q)\times\BGO(q)\times\BGO(q)$, as in the proof of
Proposition~\ref{prop:simEndComp}), where $C_\pm(\alpha'')$ are the
canonical Clifford maps attached to $\partial^2\Comp(\Calg)$,
see~\eqref{eq:alpha''}.

\begin{thm}
  \label{thm:strgrp}
  The map $\psi_\Calg$ is an isomorphism
  $\BOmega(q)\xrightarrow{\sim}\BStr(A,\diamond)$. 
\end{thm}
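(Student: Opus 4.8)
The plan is to show that $\psi_\Calg$ is an isomorphism onto $\BStr(A,\diamond)$ by combining the structural results already available: the identification $\BStr(A,\diamond)=\ker\lambda'$ from Corollary~\ref{corol:isot}, the isomorphism $\psi_\TT\colon\BSpin(\aqp)\xrightarrow{\sim}\BOrth(\TT)$ and the description of $\BGO(\TT)$ as a quotient of $\BOmega(\aqp_1)\times\BHomot(\TT)$ from Proposition~\ref{prop:OmegaH}, together with the relation $\partial\End(\Comp)=\End(\partial\Comp)$ from Definition~\ref{defn:dertri}. First I would verify that $\psi_\Calg$ indeed lands in $\BStr(A,\diamond)$: since $\psi_\Calg(\xi)=\partial\bigl(\psi_{\partial^2\TT}(\xi)\bigr)$ for $\TT=\End\bigl(\Comp(\Calg)\bigr)$ and $\psi_{\partial^2\TT}$ maps into $\BGO(\partial^2\Comp(\Calg))$, functoriality of $\partial$ (Proposition~\ref{prop:funcTT}, Proposition~\ref{prop:shiftlambda}) puts $\psi_\Calg(\xi)$ in $\BGO\bigl(\Comp(\Calg)\bigr)$. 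To land it in $\BStr(A,\diamond)$, I must check that the third component of the composition multiplier of $\psi_\Calg(\xi)$ is $1$, i.e.\ $\lambda'\bigl(\psi_\Calg(\xi)\bigr)=1$. This follows from the explicit formula~\eqref{eq:lambdaTTpsi}: $\lambda_{\partial^2\TT}\bigl(\psi_{\partial^2\TT}(\xi)\bigr)=\bigl(1,\mu(\varphi_-''(\xi)),\mu(\varphi_+''(\xi))\bigr)$, and after applying the shift $\partial$ the entry that was forced to $1$ moves to the third slot (the composition multiplier of $\partial g$ is the cyclic permutation of that of $g$). Hence $\psi_\Calg$ is a well-defined morphism $\BOmega(q)\to\BStr(A,\diamond)$.

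\textbf{Injectivity and surjectivity.} Injectivity of $\psi_\Calg$ is inherited from injectivity of $\psi_{\partial^2\TT}$, which was observed when $\psi_\TT$ was introduced (because $\varphi_0$ is an isomorphism, the pair of last two components of $\psi_{\partial^2\TT}(\xi)$ determines $\xi$), combined with the fact that $\partial$ is an isomorphism of algebraic groups. For surjectivity I would argue on the groups of $\Falg$-points, using smoothness: $\BStr(A,\diamond)$ is smooth by Corollary~\ref{corol:isot} and $\BOmega(q)$ is smooth, so it suffices to prove $\psi_\Calg$ induces a surjection on $\Falg$-points for $\Falg$ an algebraic closure. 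Given $g=(g_1,g_2,g_3)\in\BStr(A,\diamond)(\Falg)\subset\BGO\bigl(\Comp(\Calg)\bigr)(\Falg)$ with third composition multiplier $1$, apply $\partial^{-1}$ to get $g'=(g_3,g_1,g_2)\in\BGO(\partial^2\Comp(\Calg))(\Falg)$, whose composition multiplier has its \emph{first} entry equal to $1$. By Proposition~\ref{prop:OmegaH} applied to $\partial^2\TT$, write $g'=\psi_{\partial^2\TT}(\xi)\cdot\nu$ with $\xi\in\BOmega(q)(\Falg)$ and $\nu\in\BHomot(\partial^2\TT)(\Falg)$; applying $\lambda_{\partial^2\TT}$ and using~\eqref{eq:lambdaTTH} and~\eqref{eq:lambdaTTpsi} as in the proof of Theorem~\ref{thm:SpinOTT}, the condition that the first composition multiplier of $g'$ is $1$ forces $\nu$ to be of the special form $\psi_{\partial^2\TT}(z)$ for some $z$ in the center of the relevant Clifford algebra. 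Then $g'=\psi_{\partial^2\TT}(\xi z)$, so $g=\partial(g')=\psi_\Calg(\xi z)$, proving surjectivity.

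\textbf{Main obstacle.} The delicate point is the bookkeeping with the shift operator: one must track carefully how a composition multiplier $(\lambda_1,\lambda_2,\lambda_3)$ transforms under $\partial$ (it becomes $(\lambda_2,\lambda_3,\lambda_1)$ by Definition~\ref{defn:compmult}) and how this interacts with the asymmetric formulas~\eqref{eq:lambdaTTpsi} for $\lambda_\TT\circ\psi_\TT$, which single out the \emph{first} coordinate. Getting the indices aligned so that ``third coordinate $=1$'' in $\BStr(A,\diamond)\subset\BGO\bigl(\Comp(\Calg)\bigr)$ matches ``first coordinate $=1$'' in the $\partial^2\TT$-picture is the one place where a sign-or-index slip is easy. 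Once that identification is pinned down, everything else reduces to invoking Proposition~\ref{prop:OmegaH} and Theorem~\ref{thm:SpinOTT} for $\partial^2\TT$ almost verbatim. An alternative, cleaner route that I would mention is: $\psi_\Calg$ restricts to $\psi_3\colon\BSpin(\aqp_3)\xrightarrow{\sim}\BOrth(\TT)=\BOrth\bigl(\Comp(\Calg)\bigr)$ from~\S\ref{subsec:triso}, and the quotient $\BOmega(q)/\BSpin(q)\cong R_{Z/F}(\BGm)$ maps isomorphically onto $\BStr(A,\diamond)/\BOrth\bigl(\Comp(\Calg)\bigr)\cong\BGm^2$ via $\mu'$ (Corollary~\ref{corol:isot}), so $\psi_\Calg$ is an isomorphism of extensions; this makes the surjectivity argument formal once the two quotient maps are checked to agree.
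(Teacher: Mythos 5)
Your proof is correct, but it takes a genuinely different route from the paper's. The paper proves surjectivity \emph{directly over an arbitrary commutative $F$-algebra $R$}: given an autotopy $(g_1,g_2,g_3)$, it packages the derived equations from Proposition~\ref{prop:simdef2} into a single conjugation identity in $\End(A\oplus A)$, uses that $C(\alpha'')$ is an algebra isomorphism to pull back $(g_1,g_2)$ to an element $\xi\in C_0(A,q)_R$, and then invokes Lemma~\ref{lem:eqcondXClifgrp} to conclude that $\xi\in\BOmega(q)(R)$ with $\chi_0(\xi)=g_3$. This avoids the smoothness reduction to $\Falg$-points entirely and does not route through Proposition~\ref{prop:OmegaH} or the proof of Theorem~\ref{thm:SpinOTT}; the only substantial input is Lemma~\ref{lem:eqcondXClifgrp}. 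Your argument instead reuses the already-built machinery --- smoothness of $\BStr(A,\diamond)$ and $\BOmega(q)$, the quotient description $\BGO(\TT)\simeq(\BOmega(\aqp_1)\times\BHomot(\TT))/R_{Z_1/F}(\BGm)$, and the multiplier bookkeeping from Theorem~\ref{thm:SpinOTT} --- adapted to $\partial^2\TT$. What your route buys is brevity and conceptual transparency (one sees immediately that $\psi_\Calg$ is just $\psi_3$ extended from $\BSpin$ to $\BOmega$, fitting into the exact-sequence picture you sketch at the end); what the paper's route buys is a self-contained elementary verification valid for arbitrary $R$, which is in the same spirit as the proof of Lemma~\ref{lem:eqcondXClifgrp} itself. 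One point to be more careful about in your write-up: when you pass to $\Falg$-points for surjectivity, you are implicitly using that an injective homomorphism of affine algebraic group schemes which is surjective on $\Falg$-points is an isomorphism; this is fine (and is used elsewhere in the paper, e.g.\ in Theorem~\ref{thm:SpinOTT}), but it should be stated. Your index bookkeeping for the shift ($\lambda_3=1$ for $g$ becoming $\lambda_1=1$ for $\partial^2 g$) is correct.
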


\begin{proof}
  The map $\psi_\Calg$ is injective because $C(\alpha'')$ is an
  isomorphism $C(A,q)\to\End(A\oplus A)$, and the computation of
  $\lambda_\TT\circ\psi_\TT$ in~\eqref{eq:lambdaTTpsi} together with
  Corollary~\ref{corol:isot} shows that $\psi_\Calg$ maps $\BOmega(q)$
  to $\BStr(A,\diamond)$.

  To complete the proof, we show that for any commutative
  $F$-algebra $R$ the group $\BStr(A,\diamond)(R)$ is the image of
  $\BOmega(q)(R)$ under $\psi_\Calg$. Let $(g_1,g_2,g_3)$ be an
  autotopy of $(A,\diamond)_R$, which means that
  \[
    g_3(x_1\diamond x_2) = g_1(x_1)\diamond g_2(x_2) \qquad\text{for
      all $x_1$, $x_2\in A_R$.}
  \]
  By Proposition~\ref{prop:simdef2} it follows that for all $x_1$,
  $x_2$, $x_3\in A_R$
  \[
    \mu(g_2)\,g_1(x_2\diamond_1x_3) = g_2(x_2)\diamond_1 g_3(x_3)
    \quad\text{and}\quad
    \mu(g_1)\,g_2(x_3\diamond_2x_1) = g_3(x_3)\diamond_2g_1(x_1).
  \]
  Equivalently,
  \[
    \mu(g_2)\,g_1\circ r_{x_3} = r_{g_3(x_3)}\circ g_2
    \quad\text{and}\quad
    \mu(g_1)\,g_2\circ\ell_{x_3}=\ell_{g_3(x_3)}\circ g_1,
  \]
  which can be reformulated as an equation in $\End(A\oplus A)$ as
  follows:
  \begin{equation}
    \label{eq:strgrp}
    \begin{pmatrix}
      0&r_{g_3(x_3)}\\ \ell_{g_3(x_3)}&0
    \end{pmatrix}
    =
    \begin{pmatrix}
      \mu(g_2)&0\\0& \mu(g_1)
    \end{pmatrix}
    \begin{pmatrix}
      g_1&0\\0&g_2
    \end{pmatrix}
    \begin{pmatrix}
      0&r_{x_3}\\ \ell_{x_3}&0
    \end{pmatrix}
    \begin{pmatrix}
      g_1^{-1}&0\\ 0&g_2^{-1}
    \end{pmatrix}.
  \end{equation}
  Since $C(\alpha'')$ is an isomorphism, there exists $\xi\in
  C_0(A,q)_R$ such that $C_0(\alpha'')(\xi)=(g_1,\,g_2)$. Then
  $C_0(\alpha'')\bigl(\underline\mu(\xi)\bigr) =
  \bigl(\mu(g_1),\,\mu(g_2)\bigr)$, and~\eqref{eq:strgrp} yields
  \[
    C(\alpha'')\bigl(g_3(x_3)\bigr) =
    C(\alpha'')(\iota(\underline\mu(\xi)) \xi x_3\xi^{-1})
    \qquad\text{for all $x_3\in A_R$.}
  \]
  Since $C(\alpha'')$ is an isomorphism, it follows from
  Lemma~\ref{lem:eqcondXClifgrp} that $\tau_0(\xi)x_3\xi =
  \sigma_b(g_3)(x_3)$ for all $x_3\in A_R$, hence
  $\xi\in\BOmega(q)(R)$ and $g_3=\chi_0(\xi)$. Thus, $(g_1,g_2,g_3) =
  \psi_\Calg(\xi)$. 
\end{proof}

Recall from Proposition~\ref{prop:chi0ontobis} the exact sequence
\begin{equation}
  \label{eq:strgrpeq}
  1\to R^1_{Z/F}(\BGm) \to\BOmega(q) \xrightarrow{\chi_0} \BGO^+(q)
  \to 1.
\end{equation}
Since the discriminant of $q$ is trivial, we have $Z\simeq F\times F$,
hence $R^1_{Z/F}(\BGm)\simeq\BGm$ and the Galois cohomology exact
sequence derived from~\eqref{eq:strgrpeq} takes the form
\[
  1\to F^\times \to \BOmega(q)(F) \to \GO^+(q)\to 1.
\]
Substituting $\BStr(A,\diamond)(F)$ for $\BOmega(q)(F)$, we recover
the exact sequence
obtained by Petersson~\cite[(4.13)]{P} for $\Calg$ an octonion
algebra.


\begin{thebibliography}{99}

\bibitem{Alb}
  A. A. Albert, Non-associative algebras. I. Fundamental concepts and
  isotopy, \emph{Ann. of Math.} (2) {\bf 43} (1942), 685--707.

\bibitem{AlsGille}
  S. Alsaody\ and\ P. Gille, Isotopes of octonion algebras,
  ${\bf{G}}_2$-torsors and triality, \emph{Adv. Math.} {\bf 343}
  (2019), 864--909.

\bibitem{Als}
  S. Alsaody, Albert algebras over rings and related torsors,
  \emph{Canad. J. Math.} {\bf 73} (2021), no.~3, 875--898. 

\bibitem{BGBT}
  K. Becher, N. Grenier-Boley\ and\ J.-P. Tignol, The discriminant
  Pfister form of an algebra with involution of capacity~$4$. 2020
  arXiv preprint 2008.08953

\bibitem{Bou}
  N. Bourbaki, {\it \'{E}l\'{e}ments de
    math\'{e}matique. Alg\`ebre. Chapitre 8. Modules et anneaux
    semi-simples}, second revised edition of the 1958 edition,
  Springer, Berlin, 2012.

\bibitem{CKT}
  V. Chernousov, M.-A. Knus\ and\ J.-P. Tignol, Conjugacy classes of
  trialitarian automorphisms and symmetric compositions,
  \emph{J. Ramanujan Math. Soc.} {\bf 27} (2012), no.~4, 479--508. 

\bibitem{DQM}
  A. Dolphin, A. Qu\'eguiner-Mathieu, The canonical quadratic pair on
  a Clifford algebra and triality, \emph{Israel J. Math.} {\bf 242}
  (2021), 171--213. 

\bibitem{Eld}
  A. Elduque, On triality and automorphisms and derivations of
  composition algebras, \emph{Linear Algebra Appl.} {\bf 314} (2000),
  no.~1-3, 49--74.

\bibitem{EKM}
  R. Elman, N. Karpenko\ and\ A. Merkurjev, {\it The algebraic and
    geometric theory of quadratic forms}, American Mathematical
  Society Colloquium Publications, 56, American Mathematical Society,
  Providence, RI, 2008.

\bibitem{Gar}
  S. Garibaldi, \emph{Cohomological invariants: exceptional groups and
    spin groups,} Mem. Amer. Math. Soc. {\bf 200} (2009), no.~937,
  xii+81 pp.  

\bibitem{Kap}
  I. Kaplansky, Infinite-dimensional quadratic forms admitting
  composition, \emph{Proc. Amer. Math. Soc.} {\bf 4} (1953), 956--960.

\bibitem{Knus}
  M.-A. Knus, {\it Quadratic and Hermitian forms over rings},
  Grundlehren der mathematischen Wissenschaften, 294, Springer-Verlag,
  Berlin, 1991. 

\bibitem{BoI}
  M.-A. Knus\ et al., {\it The book of involutions}, American
  Mathematical Society Colloquium Publications, 44, American
  Mathematical Society, Providence, RI, 1998.

\bibitem{KV}
  M.-A. Knus\ and\ O. Villa, Quadratic quaternion forms, involutions
  and triality, \emph{Doc. Math.} {\bf 2001}, Extra Vol., 201--218. 

\bibitem{P}
  H. P. Petersson, The structure group of an alternative algebra,
  \emph{Abh. Math. Sem. Univ. Hamburg} {\bf 72} (2002), 165--186. 

\bibitem{RST}
  M. Rost, J.-P. Serre\ and\ J.-P. Tignol, La forme trace d'une
  alg\`ebre simple centrale de degr\'{e} 4,
  \emph{C. R. Math. Acad. Sci. Paris} {\bf 342} (2006), no.~2, 83--87.

\bibitem{Sch}
  W. Scharlau, {\it Quadratic and Hermitian forms}, Grundlehren der
  Mathematischen Wissenschaften, 270, Springer-Verlag, Berlin, 1985.
  
\bibitem{Shapiro}
  D. B. Shapiro, {\it Compositions of quadratic forms}, De Gruyter
  Expositions in Mathematics, 33, Walter de Gruyter \& Co., Berlin,
  2000.

\bibitem{SpV}
  T. A. Springer\ and\ F. D. Veldkamp, {\it Octonions, Jordan algebras
    and exceptional groups}, Springer Monographs in Mathematics,
  Springer-Verlag, Berlin, 2000.

\bibitem{T}
  J.-P. Tignol, La forme seconde trace d'une alg\`ebre simple centrale
  de degr\'{e} 4 de caract\'{e}ristique 2,
  \emph{C. R. Math. Acad. Sci. Paris} {\bf 342} (2006), no.~2, 89--92. \end{thebibliography}
\end{document}